\setlist[enumerate]{label=\emph{(\roman*)}}
\newtheorem{theorem}{Theorem}[section]
\newtheorem{corollary}[theorem]{Corollary}
\newtheorem{lemma}[theorem]{Lemma}
\newtheorem{proposition}[theorem]{Proposition}
\theoremstyle{definition}
\newtheorem{remark}[theorem]{Remark}
\numberwithin{equation}{section}
\newcommand{\R}{\mathbb{R}}
\def \d {{\rm{d}}}
\begin{document}

\parindent=0pt

	\title[2D KGZ]
	{Scattering for the Klein-Gordon-Zakharov system in two dimensions}
	
		\author[S.~Dong]{Shijie Dong}
	\address{Shenzhen International Center for Mathematics, and Department of Mathematics, Southern University of Science and Technology,  518055 Shenzhen, P.R. China.}
\email{dongsj@sustech.edu.cn, shijiedong1991@hotmail.com}

      \author[Z.~Guo]{Zihua Guo}
\address{Monash University, School of Mathematics,  Melbourne, Australia.}
\email{zihua.guo@monash.edu}
	
		\author[K.~Li]{Kuijie Li}
\address{Nankai University, School of Mathematical Sciences, Tianjin, 300071, P.R. China.}
\email{kuijiel@nankai.edu.cn}

\begin{abstract}

We study the Klein-Gordon–Zakharov system in two spatial dimensions, an important model in plasma physics. For small, smooth, and spatially localized initial data, we establish the global existence of solutions and characterize their sharp long-time behavior, including sharp time decay and scattering properties.
A particularly interesting phenomenon is that the Klein-Gordon component exhibits modified scattering for certain initial data, while for others it undergoes linear scattering—a dichotomy highlighting delicate long-range interaction effects.

The major obstacles are lack of symmetry and weak decay of the solution in two dimensions. To overcome these, we introduce a novel nonlinear transformation of the wave component and reinterpret the nonlinear coupling as a perturbation of the mass term in the Klein-Gordon equation. The proof employs a combination of physical space and frequency space methods.


	\end{abstract}
	\maketitle
	
\tableofcontents	
	
\section{Introduction}\label{sec:intro}
\subsection{Model problem and main result}

In this paper, we consider the two-dimensional Klein–Gordon–Zakharov system, an important model in plasma physics modeling the interaction between high-frequency Langmuir waves and low-frequency ion-acoustic waves in a plasma. The system of equations takes the form
\begin{equation}\label{eq:2D-KGZ}
\aligned
-\Box n &= \Delta |E|^2,
\\
-\Box E + E &= -nE,
\endaligned
\end{equation}
where $n: \mathbb{R}^{1+2} \to \mathbb{R}$ and $E: \mathbb{R}^{1+2} \to \mathbb{R}^2$ are the unknowns. The Cauchy problem is posed with initial data prescribed at $t_0 = 1$:
\begin{equation}\label{eq:ID}
(n, \partial_t n, E, \partial_t E)(t_0) = (n_0, n_1, E_0, E_1).
\end{equation}

Let $\nabla = (\partial_{x_1}, \partial_{x_2})$. We denote by $\|\cdot\|$ the $L^2$ norm on $\mathbb{R}^2$ and by $\|\cdot\|_{H^k}$ the Sobolev $H^k$ norm for $k \in \mathbb{N}$. The Japanese bracket is written as $\langle \cdot \rangle := \sqrt{1+|\cdot|^2}$. Throughout the paper, we use $A\lesssim B$ to indicate $A\leq CB$ with $C$ a universal constant, and $A \simeq B$ if both $A \lesssim B$ and $B \lesssim A$ hold.

\begin{theorem}[Global existence and decay]\label{thm:main1}
Let $N\geq 16$ be an integer.
There exists an $\epsilon_0 >0$, such that for all initial data satisfying the smallness conditions
\begin{align}
&\sum_{|I|\leq N+1}\big\| \langle x\rangle^{|I|+3} \nabla^I n_0 \big\|
+
\sum_{|I|\leq N}\big\| \langle x\rangle^{|I|+3} \nabla^I n_1 \big\|
\\
&+
\sum_{|I|\leq N+2}\big\| \langle x\rangle^{15} \nabla^I E_0 \big\|
+
\sum_{|I|\leq N+1}\big\| \langle x\rangle^{15} \nabla^I E_1 \big\|
< \epsilon \leq \epsilon_0,
\end{align}
the Cauchy problem \eqref{eq:2D-KGZ}-\eqref{eq:ID} admits a global solution $(n, E)$.
Moreover, the solution enjoys sharp time decay 
\begin{equation}\label{eq:thm-decay}
|n(t, x)| \leq C \epsilon \langle t+|x|\rangle^{-{1\over 2}} \langle t-|x|\rangle^{-{1\over 2}},
\qquad
|E(t, x)| \leq C \epsilon \langle t+|x|\rangle^{-1},
\end{equation}
and the uniform-in-time Sobolev norm bounds
\begin{align}\label{eq:thm-Sobolev}
    \|(\partial_t n, \nabla n)\|_{H^N} + \|E\|_{H^{N+2}} + \|\partial_t E\|_{H^{N+1}}
    \leq C\epsilon,
\end{align}
with $C>0$ a constant.
\end{theorem}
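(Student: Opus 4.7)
My plan is a standard continuity/bootstrap argument built around a nonlinear reformulation of \eqref{eq:2D-KGZ}. The first key step is to introduce the transformed wave variable $\tilde n := n + |E|^2$. Using $-\Box = \partial_t^2 - \Delta$ in two space dimensions, one computes
\[
-\Box \tilde n = -\Box n - \Box |E|^2 = \Delta|E|^2 + \partial_t^2|E|^2 - \Delta|E|^2 = \partial_t^2|E|^2,
\]
so the awkward $\Delta|E|^2$ source on the wave side is replaced by $\partial_t^2|E|^2$, a quadratic expression in $(E,\partial_t E)$ that decays like $\langle t\rangle^{-2}$ once sharp Klein--Gordon bounds are in place. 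Substituting $n = \tilde n - |E|^2$ into the second equation recasts the Klein-Gordon component as
\[
-\Box E + \bigl(1 + \tilde n - |E|^2\bigr)\, E = 0,
\]
i.e., as a Klein-Gordon equation with a small, time-dependent \emph{mass perturbation}. This is precisely the ``perturbation of the mass term'' reinterpretation announced in the abstract, and it cleanly separates the cubic self-interaction $|E|^2 E$ (responsible for the long-range effects seen in later theorems) from the genuine wave--Klein-Gordon coupling $\tilde n E$.

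Next I would set up a hierarchy of weighted $L^2$ and $L^\infty$ norms using the standard Klainerman vector fields $\Gamma\in\{\partial,\Omega,L\}$ acting on $(\tilde n, E)$ up to order $N$. Because the Lorentz boosts do not preserve the Klein-Gordon operator when the mass is variable, the commutator analysis requires care: each boost applied to $(\tilde n - |E|^2)E$ produces weights $t$ or $x$ that must be absorbed using the good decay of $\tilde n$ and $|E|^2$. Bootstrap assumptions of the schematic form
\[
\sum_{|I|\leq N}\|\Gamma^I(\partial_t\tilde n,\nabla\tilde n)(t)\| + \sum_{|I|\leq N+2}\|\Gamma^I E(t)\| + \sum_{|I|\leq N+1}\|\Gamma^I\partial_t E(t)\| \leq C_1\epsilon\langle t\rangle^{\delta},
\]
together with pointwise ansatzes matching \eqref{eq:thm-decay} with a slightly worse constant, should then close by improving $C_1$ to $C_1/2$; this explains the polynomial weights $\langle x\rangle^{|I|+3},\langle x\rangle^{15}$ required of the data and the choice $N\geq 16$, which leaves room for the Klainerman-Sobolev losses on both the wave and the Klein-Gordon sides.

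The closure itself splits into two mechanisms. For $\tilde n$, the new source $\partial_t^2|E|^2\sim (\partial_t E)^2 + E\cdot\partial_t^2 E$ decays like $\langle t\rangle^{-2}$, which is integrable in the 2D wave energy identity, and a weighted Klainerman-Sobolev estimate upgrades the resulting $L^2$ control to the anisotropic pointwise bound $|\tilde n|\lesssim \epsilon\langle t+|x|\rangle^{-1/2}\langle t-|x|\rangle^{-1/2}$; the same bound passes to $n = \tilde n - |E|^2$ because $|E|^2\lesssim \epsilon^2\langle t\rangle^{-2}$ is strictly smaller. For $E$, a coercive energy estimate based on the positive effective mass $1+\tilde n - |E|^2\geq 1/2$ (valid once $\epsilon_0$ is small), combined with a Klein-Gordon Klainerman-Sobolev inequality---or with a hyperboloidal-foliation estimate in the spirit of LeFloch--Ma---yields the $\langle t\rangle^{-1}$ decay and completes \eqref{eq:thm-decay}. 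The Sobolev bound \eqref{eq:thm-Sobolev} is just the $|I|=0$ slice of these energy estimates.

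The principal obstacle is the weak decay of $n$ in two dimensions: without the transformation, the coupling $nE$ in the Klein-Gordon equation would contain the non-integrable-in-time factor $\langle t-|x|\rangle^{-1/2}$ inherited from $n$ near the light cone, which is incompatible with any uniform-in-time control of $E$. The nonlinear change $n\mapsto\tilde n$ is the device that removes this obstruction, by absorbing the slowly decaying $|E|^2$ piece of $n$ into the mass and by turning the wave nonlinearity from a quadratic in $E$ into an expression that is genuinely better through the appearance of $\partial_t^2|E|^2$. A secondary difficulty is propagating the vector-field bounds to top order without losing weights on the Klein-Gordon side, where the variable mass mixes different orders; I expect to handle this by induction on $|I|$, absorbing the dangerous commutator terms into the linear energy using the smallness of $\tilde n$ and $|E|^2$ obtained at lower orders.
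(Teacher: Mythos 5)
Your plan has a genuine gap at the central step. The transformation $\tilde n := n + |E|^2$ is the classical three-dimensional normal form (à la Bachelot, Ozawa--Tsutaya--Tsutsumi), and indeed gives $-\Box\tilde n = \partial_t^2|E|^2$. But this does \emph{not} improve the decay in two dimensions: $\partial_t^2|E|^2$ is still a \emph{quadratic} expression in the Klein-Gordon field and its derivatives, and in $\mathbb{R}^{1+2}$ the $L^2$ norm of such a quadratic decays at best like $t^{-1}$ (one factor gives $t^{-1}$ pointwise, the other gives a bounded $L^2$ norm), exactly as for the original source $\Delta|E|^2$ that you were trying to remove. You have implicitly replaced $L^2$ decay by pointwise decay when you claim integrability: the $t^{-2}$ rate you quote is pointwise, but what the wave energy inequality needs is $\|\partial_t^2|E|^2(t)\|_{L^2}$, which is only $t^{-1}$. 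So the new source is \emph{not} time-integrable, and the bootstrap does not close. One can also see the problem by expanding: $\partial_t^2|E|^2 = 2|\partial_t E|^2 + 2E\cdot\partial_t^2 E = 2|\partial_t E|^2 + 2E\cdot\Delta E - 2|E|^2 - 2n|E|^2$; the term $-2|E|^2$ in particular is quadratic in $E$ with no derivatives and certainly no faster than $t^{-1}$ in $L^2$.

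The paper's transformation is $\tilde n = n + \tfrac14\Delta|E|^2$, with the specific factor $\tfrac14$ and the extra $\Delta$ chosen so that, after substituting the Klein-Gordon equation $-\Box E + E = -nE$, all quadratic self-interactions of $E$ in $-\Box\tilde n$ cancel exactly; what remains is a sum of \emph{cubic} terms of the schematic form $\partial^{\leq 2}(nE)\cdot\partial^{\leq 1}E$ plus genuine null forms $Q_{\alpha\beta}$ (see Lemma~\ref{lem:transform01}). These cubic terms and null forms have $L^2$ norm $\lesssim t^{-3/2}$, which is integrable. That is the mechanism you are missing. (The paper also does not base the global existence proof on this transformation at all: Theorem~\ref{thm:main1} is proved via the linear decomposition $n=\ell+\Delta\mathfrak m$, hyperboloidal foliations, and a split of the analysis into exterior and interior regions, treating $nE$ as a perturbation of the mass term on the Klein-Gordon side; the nonlinear transformation $\tilde n = n + \tfrac14\Delta|E|^2$ only enters later, in Section~\ref{sec:scatter}, to prove scattering of $n$. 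Your plan to make the transformation the basis of the bootstrap is, in addition to the decay issue, a different architecture than the paper's, but that is secondary to the fact that your transformation does not achieve the decay improvement it is introduced for.)
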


\begin{remark}
Recall that for linear homogeneous wave or Klein-Gordon equations in $\mathbb{R}^{1+2}$, the Sobolev norms remain bounded uniformly in time, and the optimal pointwise time decay rates are $t^{-{1\over 2}}$ for the wave component and $t^{-1}$ for the Klein–Gordon component. In this sense, the decay estimates \eqref{eq:thm-decay} and the uniform Sobolev bounds \eqref{eq:thm-Sobolev} are sharp.
\end{remark}

\begin{remark}
Several global existence results for the 2D Klein–Gordon–Zakharov system are known; see, for instance, \cite{Dong2006, Duan-Ma, DoMa, Cheng}. However, many of these works impose additional restrictions on the initial data, such as compact support, divergence-form structure, and others.
Theorem~\ref{thm:main1} removes these restrictions with sharp decay and Sobolev bounds, which will be crucial for our later analysis.
\end{remark}

The following theorem describes the scattering behavior of the solution $(n, E)$.

\begin{theorem}[Scattering results]\label{thm:main2}
Let the same assumptions in Theorem \ref{thm:main1} hold. Then, we have the following scattering results.
\begin{itemize}
    \item  If 
\begin{align}\label{eq:n1=+}
    \int_{\mathbb{R}^2} n_1 \, \d x \neq  0,
\end{align}
then the field $n$ scatters linearly, while the field $E$ undergoes nonlinear scattering (i.e., modified scattering) in the energy space.

    \item  If  
\begin{align}\label{eq:n1=0}
    \int_{\mathbb{R}^2} n_1 \, \d x = 0,
\end{align}
then both fields $(n, E)$ scatter linearly in the energy space.

\end{itemize}
\end{theorem}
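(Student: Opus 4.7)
The plan is to exploit the nonlinear substitution $m := n + |E|^{2}$. A direct computation from \eqref{eq:2D-KGZ} rewrites the system as
\begin{equation*}
-\Box m = \partial_{t}^{2}|E|^{2}, \qquad -\Box E + (1+m)E = |E|^{2}E,
\end{equation*}
reinterpreting the wave--Klein-Gordon coupling as a time-dependent perturbation of the Klein-Gordon mass and leaving only a cubic nonlinearity on the Klein-Gordon side. By Theorem~\ref{thm:main1}, this cubic piece decays pointwise as $\lesssim \epsilon^{3}\langle t+|x|\rangle^{-3}$ and is harmless for scattering; the analysis then reduces to understanding $m$ together with the long-range interaction $mE$.

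\textbf{Linear scattering of the wave in both cases.} Starting from Duhamel's formula for $-\Box m = \partial_{t}^{2}|E|^{2}$ and integrating by parts in the time variable $s$ to shift the two $\partial_{s}$'s off $|E|^{2}$, I would combine the bounds $\||E|^{2}(t)\|_{L^{\infty}}\lesssim \epsilon^{2}/t^{2}$ and $\|\partial_{t}|E|^{2}(t)\|_{L^{2}}\lesssim \epsilon^{2}/t$ (read off from \eqref{eq:thm-decay}--\eqref{eq:thm-Sobolev}) with the high-Sobolev control of $E$ to show that the Duhamel integral converges in $H^{1}\times L^{2}$, producing a scattering datum $(\widetilde m_{0},\widetilde m_{1})$. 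Since $n = m - |E|^{2}$ and $\||E|^{2}(t)\|_{H^{1}}\to 0$, the original wave field $n$ inherits linear scattering in the energy space, \emph{in both cases} of the dichotomy.

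\textbf{Interior asymptotic profile of $n$.} Using the two-dimensional retarded kernel $\bchar_{|x|<t}/(2\pi\sqrt{t^{2}-|x|^{2}})$ together with the scattering datum from the previous step, I extract the leading interior behavior
\begin{equation*}
n(t,x) = \frac{\int_{\mathbb{R}^{2}} n_{1}\,\d y}{2\pi\sqrt{t^{2}-|x|^{2}}} + (\text{lower-order remainder}), \qquad |x| \leq (1-\delta)t.
\end{equation*}
Identifying the coefficient uses the exact conservation $\int_{\mathbb{R}^{2}} \partial_{t}n(t,x)\,\d x \equiv \int_{\mathbb{R}^{2}} n_{1}\,\d x$ (from the divergence structure $\Delta|E|^{2}$ of the wave nonlinearity) together with $\int_{\mathbb{R}^{2}} \partial_{t}|E|^{2}\,\d x \to 0$. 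Thus the zero-frequency mode of the wave initial data literally governs the long-range profile of $n$ in the interior light cone, which is precisely where the Klein-Gordon field is concentrated.

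\textbf{Dichotomy for $E$ and main obstacle.} When $\int_{\mathbb{R}^{2}} n_{1}\,\d x = 0$, the leading interior term above vanishes and $n$ exhibits strictly faster-than-$1/t$ interior decay; combined with the cubic $|E|^{2}E$ term, this makes the Duhamel source for Klein-Gordon integrable in time along its characteristics, and $E$ scatters linearly in the energy space. When $\int_{\mathbb{R}^{2}} n_{1}\,\d x \neq 0$, evaluating the interior asymptotic along the Klein-Gordon group velocity $x = s\xi/\langle\xi\rangle$ yields a non-integrable $1/s$ tail, which forces modified scattering. I would pass to the first-order half-wave formulation $u := (\partial_{t}-i\langle\nabla\rangle)E$, isolate the long-range contribution of $mE$, and extract it as a logarithmic phase by setting
\begin{equation*}
v(t) := \exp\!\Bigl(i\!\int_{1}^{t} \tfrac{m(s,\cdot)}{2\langle\nabla\rangle}\,\d s\Bigr) u(t),
\end{equation*}
then prove convergence of $v$ in $L^{2}$. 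The main obstacle is precisely this phase extraction: the multiplication operator $m(s,\cdot)$ does not commute with $\langle\nabla\rangle$, so justifying the definition of $v$ and controlling the remainder requires combining the pointwise interior asymptotics of $m$ with a Fourier-side stationary-phase analysis along Klein-Gordon rays. This is exactly where the hybrid physical-space and frequency-space methods advertised in the introduction do their essential work.
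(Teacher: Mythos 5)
Your proposal has the right geometric intuition — the zero mode of $n_1$ controls the interior tail of the wave along Klein--Gordon rays, which is what decides whether a logarithmic phase correction is needed — but two of the decisive computational steps do not work.

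First, the nonlinear substitution. With $-\Box = -\partial_t^2 + \Delta$, your choice $m = n + |E|^2$ yields $-\Box m = \Delta|E|^2 + (-\partial_t^2|E|^2+\Delta|E|^2) = 2\Delta|E|^2 - \partial_t^2|E|^2$, not $\partial_t^2|E|^2$; you need $m = n - |E|^2$, and then $-\Box E + (1+m)E = -|E|^2E$ rather than $+|E|^2E$. More importantly, even after fixing the sign the new wave source does \emph{not} decay fast enough in $L^2$: writing $\partial_t^2|E|^2 = 2|\partial_t E|^2 + 2E\cdot\partial_t^2E$ and inserting $\partial_t^2 E = \Delta E - E - nE$ leaves $-2\partial^\alpha E\,\partial_\alpha E + \Delta|E|^2 - 2|E|^2 - 2n|E|^2$, and the middle pieces $\Delta|E|^2 - 2|E|^2$ are generic $E\times E$ bilinears with $\|\cdot\|_{L^2}\lesssim \epsilon^2 t^{-1}$, which is not integrable and has no null structure. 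Your integration by parts in $s$ trades the two $\partial_s$'s on $|E|^2$ for $\partial_s^2 e^{\mp is|\nabla|}=-|\nabla|^2 e^{\mp is|\nabla|}$, which brings you back to $\int e^{\mp is|\nabla|}\Delta|E|^2\,ds$ — a genuine circle. The transformation that works is $\widetilde n = n+\tfrac14\Delta|E|^2$ of \eqref{eq:nonl-trans}: by Lemma~\ref{lem:transform01}, $-\Box\widetilde n$ is a sum of terms of the form $(-\Box\partial^*E+\partial^*E)\cdot\partial^*E$, which become cubic after inserting $-\Box E + E = -nE$ and decay like $\epsilon^3 t^{-3/2}$ in $H^{N-1}$, plus genuine null forms $Q_{\alpha\beta}$ which gain an extra factor of $\langle t+r\rangle^{-1}$. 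The choice $m = n\pm|E|^2$ produces neither gain, so the wave-scattering half of your argument is not closed (and in particular does not prove linear scattering of $n$ in either case).

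Second, the phase extraction. The operator $\exp\!\bigl(i\int_1^t \tfrac{m(s,\cdot)}{2\langle\nabla\rangle}\,\d s\bigr)$ is not well defined: multiplication by $m(s,\cdot)$ and the Fourier multiplier $\langle\nabla\rangle^{-1}$ do not commute, as you note, and exponentiating the would-be operator phase requires exactly the analysis you defer. The paper resolves this by (i) observing that the only long-range part of $n$ is the low-frequency piece $\ell_L$ of the free wave $\ell$ (the piece $\Delta\mathfrak m$ and the high frequencies of $\ell$ contribute integrable residuals), and (ii) replacing your operator phase with the scalar multiplier $\Theta(t,\xi) = 2\pi^2\langle\xi\rangle^{-1}\int_{t_0}^t\ell_L(s, s\xi/\langle\xi\rangle)\,\d s$, which is precisely the evaluation of $\ell_L$ along the Klein--Gordon group-velocity ray — your intuition, realized as a function of $(t,\xi)$ rather than an operator. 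With $\widehat{f_*} = e^{i\Theta}\widehat{f_+}$, the remainder splits into $\mathcal M_1,\ldots,\mathcal M_4$, each of which must be shown to be $O(2^{-\delta m})$ in the $Z$-norm on dyadic time blocks (Propositions~\ref{prop:M1}--\ref{prop:M4}); the dichotomy then comes from Lemma~\ref{lem:n1-low} (under $\int n_1 = 0$, $\widehat{n_1}$ vanishes linearly at the origin, which kills $\mathcal M_0$ and lets one omit $\Theta$) versus Proposition~\ref{prop:Theta} ($|\Theta(t,\xi)|\to\infty$ when $\int n_1\neq0$, so the correction is genuine). Your writeup names this as the ``main obstacle'' but does not overcome it, which is where the actual proof of the $E$-dichotomy lives.
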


\begin{remark}
Theorem~\ref{thm:main2} provides a complete classification of the scattering behavior of $(n, E)$ in the setting of smooth, small, and localized initial data.

In the nonlinear scattering case \eqref{eq:n1=+}, there exist a phase correction function $\Theta(t, \xi)$ (defined in \eqref{eq:ph-correction}) and an asymptotic profile $f_*(\infty)$ (defined in \eqref{eq:f-infty}) such that
    \begin{align*}
        \big\|(\partial_t - i\langle \Lambda\rangle) E(t) - e^{-it\langle \Lambda\rangle -i\Theta(t, -i\nabla)} f_*(\infty)\big\|
        \to 0,
        \quad
        \text{as } t\to +\infty,
    \end{align*}
    where $\Lambda$ is defined in Section~\ref{sec:pre}.
    Moreover, in this case we show in Proposition~\ref{prop:Theta} that for every fixed $\xi \in \mathbb{R}^2$,
\begin{align*}
\lim_{t\to+\infty} |\Theta(t, \xi)| = +\infty.
\end{align*}     
\end{remark}

\begin{remark}
The scattering analysis for the component $n$ is carried out in physical space.
To determine the scattering behavior of $E$, we primarily rely on the $Z$-norm method, in particular the framework developed in \cite{Ionescu-P2}, together with various estimates established in the proof of Theorem~\ref{thm:main1}.
This approach has proven to be robust and well-suited for problems of dispersive type.
\end{remark}

\begin{proposition}[Energy cascade]\label{prop:cascade}
    If \eqref{eq:n1=+} holds, then there exists $C>0$ such that for $t\gg 1$ we have
    \begin{align}
        \|n\| \geq C \log^{1\over 2} t \, \Big|\int_{\mathbb{R}^2} n_1 \, \d x\Big|,
        \qquad
        \|\partial n\| \leq C \epsilon.
    \end{align}
\end{proposition}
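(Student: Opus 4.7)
The upper bound $\|\partial n\|\leq C\epsilon$ is immediate from the Sobolev estimate in Theorem~\ref{thm:main1}, since $\|\partial n\|_{L^2}$ is controlled by $\|(\partial_t n,\nabla n)\|_{H^N}$. For the lower bound on $\|n(t)\|$, I plan to decompose $n=n_L+n_{NL}$ via Duhamel, where $n_L$ is the free 2D wave evolution of $(n_0,n_1)$ and
\[
\widehat{n_{NL}}(t,\xi)=\int_1^t |\xi|\sin((t-s)|\xi|)\,\widehat{|E|^2}(s,\xi)\,\d s.
\]

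The $\sqrt{\log t}$ growth will come entirely from $n_L$. The weighted assumption on $n_1$ guarantees that $\widehat{n_1}$ is continuous at the origin with $\widehat{n_1}(0)=\int n_1\,\d x\neq 0$; fixing $\delta>0$ so that $|\widehat{n_1}(\xi)|\geq\tfrac{1}{2}|\widehat{n_1}(0)|$ on $|\xi|\leq\delta$, restricting Plancherel to the annulus $(t-1)^{-1}\leq|\xi|\leq\delta$, passing to polar coordinates, and invoking the classical asymptotic $\int_1^X \sin^2(u)/u\,\d u\sim\tfrac{1}{2}\log X$, I obtain
\[
\Big\|\frac{\sin((t-1)|\nabla|)}{|\nabla|}n_1\Big\|_{L^2}^2\gtrsim |\widehat{n_1}(0)|^2\log t,
\]
and therefore $\|n_L(t)\|_{L^2}\geq c_0\sqrt{\log t}\,|\int n_1\,\d x|-C\epsilon$ for $t$ large.

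The hard part will be controlling $\|n_{NL}(t)\|_{L^2}$ so that it does not swamp the linear growth. The key ingredient is the bilinear Fourier decay
\[
|\widehat{|E|^2}(s,\xi)|\lesssim\epsilon^2\,(1+s|\xi|)^{-N_1}
\]
for a sufficiently large $N_1$, reflecting the fact that $|E|^2$ is essentially supported in $|x|\lesssim s$ with peak amplitude $\sim\epsilon^2/s^2$; in self-similar coordinates $y=x/s$ the profile is bounded in a high Sobolev space thanks to the sharp decay $|E|\lesssim\epsilon/t$, the Sobolev bound $\|E\|_{H^{N+2}}\lesssim\epsilon$, and the Lorentz-type weighted estimates on $E$ built into the proof of Theorem~\ref{thm:main1}. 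Granted this, the substitution $u=s|\xi|$ gives $|\widehat{n_{NL}}(t,\xi)|\leq C\epsilon^2\int_{|\xi|}^{t|\xi|}(1+u)^{-N_1}\,\d u$, and splitting $\xi$ into the regions $|\xi|<1/t$, $1/t\leq|\xi|\leq 1$, $|\xi|>1$ yields the uniform-in-$t$ bound $\|n_{NL}(t)\|_{L^2}\leq C\epsilon^2$; this is the only place the proof is truly delicate, and a crude single integration by parts would only give $\epsilon^2\log t$, which would not suffice.

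Combining the two estimates produces $\|n(t)\|_{L^2}\geq c_0\sqrt{\log t}\,|\int n_1\,\d x|-C\epsilon-C\epsilon^2\geq C\sqrt{\log t}\,|\int n_1\,\d x|$ for $t$ sufficiently large, completing the argument.
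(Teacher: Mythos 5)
The lower-bound half of your argument is essentially the paper's: the paper writes $n=\ell+\Delta\mathfrak{m}$ (your $n_L+n_{NL}$ is the same decomposition, since $\mathfrak{m}$ solves $-\Box\mathfrak{m}=|E|^2$ with zero data), computes $\|\ell\|^2$ via Plancherel and $\int_0^s\sin^2(\rho)/\rho\,\d\rho\sim\tfrac12\log s$, and isolates the $\log t$ growth from the constant $\widehat{n_1}(0)$. That part is fine.

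The gap is your proposed bilinear estimate $|\widehat{|E|^2}(s,\xi)|\lesssim\epsilon^2(1+s|\xi|)^{-N_1}$, which is false. The heuristic "in self-similar coordinates $y=x/s$ the profile is bounded in a high Sobolev space" fails because $E$ is a \emph{real-valued} Klein--Gordon field: near the time-like region $E(s,x)\approx s^{-1}\mathrm{Re}\big[c(x/s)\,e^{-is\sqrt{1-|x/s|^2}}\big]$, so $|E|^2=E\cdot E$ contains, besides a slowly-varying part, an oscillatory part $\propto s^{-2}\cos\big(2s\sqrt{1-|x/s|^2}\big)$. That oscillation has spatial frequency $\sim 2v/\sqrt{1-|v|^2}$ with $v=x/s$, generically of size $O(1)$, not $O(1/s)$. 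Consequently $\widehat{|E|^2}(s,\cdot)$ has $L^2$ mass of size $\epsilon^2/s$ spread across $|\xi|\sim1$, far exceeding the $\epsilon^2 s^{-N_1}$ your bound would assert there. Plugging only the true information ($\||E|^2\|_{L^2}\lesssim\epsilon^2/s$, $\||E|^2\|_{L^1}\lesssim\epsilon^2$) into Duhamel gives at best $\|n_{NL}(t)\|_{L^2}\lesssim\epsilon^2\log t$, exactly the logarithm you correctly identified as fatal. Killing it requires exploiting either the temporal non-resonance (normal form / the nonlinear transformation $\widetilde n=n+\tfrac14\Delta|E|^2$ from Lemma~\ref{lem:transform01}, which is how the paper handles scattering of $n$ in Section~\ref{sec:scatter}) or the $\langle t-r\rangle$-weighted concentration of $|E|^2$ near the light cone (which underlies Lemma~\ref{lem:nDelta-in} and Proposition~\ref{prop:bound-t}).

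In fact the paper's proof of Proposition~\ref{prop:cascade} bypasses all of this: it simply quotes the uniform $t$-slice bound $\|\partial\partial\mathfrak m\|\lesssim C_1\epsilon$ already proved in Proposition~\ref{prop:bound-t} (a byproduct of the global existence argument), so that $\|n\|\geq\|\ell\|-\|\Delta\mathfrak m\|\geq\|\ell\|-C\epsilon$. You should do the same: cite the established Sobolev bound rather than attempting a new Fourier-space estimate for $\widehat{|E|^2}$, which does not hold in the form you wrote.
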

Its proof can be found in Appendix \ref{sec:App-2}.
\begin{remark}
    The results in Proposition \ref{prop:cascade} indicate the energy cascade of the component $n$ from high frequency to low frequency as time evolves.
\end{remark}

\subsection{Relevant results revisited}

\

The Klein-Gordon–Zakharov system originates from plasma physics and serves as an important model for laser–plasma interactions. It couples a wave equation with a Klein-Gordon equation, reflecting the interplay between low-frequency ion-acoustic waves and high-frequency Langmuir waves. The system blends aspects of classical plasma dynamics with relativistic field theory. Because of its rich mathematical structure and challenging nonlinear interactions, it has attracted significant attention over the past decades, particularly within the dispersive and hyperbolic PDE community.

In the 1980s, the breakthrough works of Klainerman \cite{Klainerman85, Klainerman852}, Shatah \cite{Shatah}, and Christodoulou \cite{Christodoulou} established that in $\mathbb{R}^{1+3}$, nonlinear wave equations with null structure and nonlinear Klein–Gordon equations admit global smooth solutions for smooth, small, and localized initial data.
The vector-field method \cite{Klainerman85, Klainerman852}, the normal-form method \cite{Shatah}, and the conformal compactification method \cite{Christodoulou} developed in these works have since become fundamental tools for the study of nonlinear wave-type equations; the first two play a central role in the present paper.

Following these advances for pure wave and Klein–Gordon equations, attention turned to coupled systems in $\mathbb{R}^{1+3}$. The first global results in this direction were due to Bachelot \cite{Bachelot} for the Dirac–Klein–Gordon system and Georgiev \cite{Georgiev} for the wave–Klein–Gordon system. Subsequent works include Ozawa–Tsutaya–Tsutsumi \cite{OTT, OTT2}, Katayama \cite{Katayama12a}, and Shi–Wang \cite{WangShu} on the Klein–Gordon–Zakharov system; Psarelli \cite{Psarelli}, Yang–Yu \cite{YaYu19}, and Fang–Klainerman–Wang–Yang \cite{Klainerman-QW-SY, FaWaYa} on the Maxwell–Klein–Gordon equations; Ionescu–Pausader \cite{Ionescu-P, Ionescu-P2}, LeFloch–Ma \cite{PLF-YM-cmp, PLF-YM-arXiv1}, and Wang \cite{Wang} on the Einstein– Klein– Gordon equations; and Huneau–Stingo–Wyatt \cite{Zoe23} on the Kaluza–Klein model, etc. Even in $\mathbb{R}^{1+3}$, proving global well-posedness for these systems is highly nontrivial. A key idea in many proofs is to exploit sharp pointwise decay of solutions. In lower dimensions, however, both wave and Klein–Gordon components decay more slowly, making the analysis more difficult; in $\mathbb{R}^{1+2}$, very few results about these models are known.

Before turning to the literature in $\mathbb{R}^{1+2}$, we note that the Klein–Gordon–Zakharov system, the Maxwell–Klein–Gordon system in Lorenz gauge, and the Einstein–Klein– Gordon system in wave gauge share structural similarities:
each is a coupled wave–Klein–Gordon system featuring $KG \times KG$ nonlinearities in the wave equations and $wave \times KG$ nonlinearities, without partial derivatives on the wave factor, in the Klein–Gordon equations.
This structure is particularly challenging. For example, the $KG \times KG$ terms in the wave equations typically force the wave component to decay at most like $t^{-1}$, with its $L^2$ norm growing like $t^{1\over 2}$; this in turn induces a logarithmic growth in the Klein–Gordon energy, obstructing a straightforward global argument.

In $\mathbb{R}^{1+2}$, the decay rates for the linearized system are already slow ($t^{-{1\over 2}}$ for the wave component and $t^{-1}$ for the Klein–Gordon component), so the nonlinear analysis requires greater delicacy.
For the pure Klein–Gordon equation, the pioneering result of Ozawa–Tsutaya–Tsutsumi \cite{OTT1} (1996) established global small-data solutions in two dimensions.
For the pure wave equation, a breakthrough was achieved by Alinhac \cite{Alinhac1, Alinhac2} in 2001, with more recent progress by Dong–LeFloch–Lei \cite{DoLeLe21} and Li \cite{Li21}.
In 2017, Ma \cite{MaH2} studied a wave–Klein–Gordon system; subsequent works have extended the range of admissible nonlinearities, including but not limited to Dong \cite{Dong2005}, Dong–Wyatt \cite{DoWy24}, Ifrim–Stingo \cite{Ifrim-Stingo}, Ma \cite{Ma19, Ma2008}, Stingo \cite{Stingo}, and Zhang \cite{Zh25}.

Returning to the Klein–Gordon–Zakharov system, several important results have been established. In $\mathbb{R}^{1+3}$, nonlinear stability was proved by Ozawa–Tsutaya– Tsutsumi \cite{OTT}; later on, for the case where the speeds are strictly different small data global well-posedness in the energy space was obtained in \cite{OTT2} and scattering was obtained under radial symmetry in \cite{Guo-N-W, Guo-N-W2}. In this case, due to the low-regularity setting, very different and delicate difficulties arise. In the celebrated works \cite{MaNa, MaNa1}, Masmoudi-Nakanishi studied the nonrelativistic limit, showing convergence from the Klein–Gordon–Zakharov system to the Zakharov system, a model of coupled wave and Schrödinger equations; see also the work of Colin-Ebrard-Gallice-Texier \cite{Colin}.
On the other hand in $\mathbb{R}^{1+2}$, due to the slow decay of the solution, little is known until the recent global existence result of Dong \cite{Dong2006}. In this result, the initial data are assumed to be compactly supported and of divergence form. Subsequent efforts by Duan–Ma \cite{Duan-Ma}, Dong–Ma \cite{DoMa}, and Cheng \cite{Cheng} have sought to relax these restrictions. Regarding scattering, only partial results are currently available in \cite{DoMa, Cheng}.

There exist many important models in plasma physics that are closely related to the Klein-Gordon-Zakharov model, among which is the Euler-Maxwell system, a basic model in plasma physics, as well as its models. In $\mathbb{R}^{1+3}$, the global well-posedness of the two-fluid Euler–Maxwell model was established in the celebrated work \cite{GuIoPa} by Guo-Ionescu-Pausader. In $\mathbb{R}^{1+2}$, the Euler-Poisson model and later the two-fluid Euler–Maxwell model were shown to admit global smooth solutions by Li–Wu \cite{Li-Wu14} and Ionescu-Pausader \cite{Ionescu-P4} and by Deng–Ionescu-Pausader \cite{DeIoPa}, respectively. These two models can be viewed as Klein–Gordon-type equations with nonlocal nonlinearities, and a common challenge, also as presented in our problem, is the weak decay of solutions.

\subsection{Key ingredients and structure of proof}

Our strategy is to first establish sharp estimates for the solution $(n, E)$ using a physical space approach, and then, based on these estimates, to derive the scattering of $(n, E)$ via a frequency space method.

In the course of proving Theorems \ref{thm:main1} and \ref{thm:main2}, another two main ideas play crucial roles in the proof.

\medskip
\noindent
\textbf{Main idea 1: Novel nonlinear transformation}

For the wave component $n$, its nonlinearity is $\Delta|E|^2$, whose decay rate is at best
\[
\|\Delta |E|^2\| \lesssim t^{-1},
\]
which is non-integrable in time and prevents global existence and scattering. To circumvent this obstacle, we reveal a hidden structure via a novel nonlinear transformation
\begin{equation}\label{eq:nonl-trans}
    \widetilde{n} = n + {1\over 4} \Delta |E|^2,
\end{equation}
leading to faster-decaying nonlinearities for the new unknown $\widetilde{n}$. More details can be found in Section \ref{sec:scatter}.

It is worth mentioning that the new nonlinear transformation in Section \ref{sec:scatter} can be used to study other wave-type systems with nonlinear terms of divergence form.

\medskip
\noindent
\textbf{Main idea 2: Perturbative view on nonlinearity $nE$.}

For $E$, we rewrite the equation as
\[
-\Box E + (1+n)E = 0,
\]
regarding the nonlinear term $nE$ as a perturbation of the mass term. This viewpoint effectively reduces the nonlinear Klein--Gordon equation to one that is close to its linear counterpart. It plays a vital role in deriving sharp estimates for the field $E$ and showing nonlinear scattering of $E$. 

\medskip
\noindent
\textbf{Other useful ideas.}

We use the decomposition
\[
n = \ell + \Delta \mathfrak{m},
\]
inspired by Katayama~\cite{Katayama12a}, where $\ell$ and $\mathfrak{m}$ solve
\begin{align*}
& -\Box \ell = 0, 
\qquad\qquad (\ell, \partial_t \ell)(t_0) = (n_0, n_1), 
\\
& -\Box \mathfrak{m} = |E|^2, 
\qquad\qquad (\mathfrak{m}, \partial_t \mathfrak{m})(t_0) = (0,0).
\end{align*}
The system \eqref{eq:2D-KGZ} then becomes
\begin{equation}\label{eq:2D-KGZ-2}
\aligned
-\Box \ell &= 0, \\
-\Box \mathfrak{m} &= |E|^2, \\
-\Box E + E &= -\ell E - \Delta \mathfrak{m}\, E.
\endaligned
\end{equation}
This decomposition is useful for obtaining sharp estimates on the component ~$n$.

Another important observation is that the nonlinear term $nE$ satisfies a Klein-Gordon equation with nonlinearities that decay faster than itself. This enables us to estimate $\partial_{\xi} \widehat{nE}$ directly in Section \ref{sec:S-E}, without using more complicated bilinear estimates. 




Below, we demonstrate the structure of proof with more detailed discussions.

\textbf{Step 1: Global existence.}

We apply the physical space method of (hyperboloidal) vector-field to obtain global existence with sharp decay and energy bounds. This approach, initiated by Klainerman~\cite{Klainerman85, Klainerman852, Klainerman93} for single wave and single Klein--Gordon equations, has since been developed further to treat more wave-type equations. Foundational contributions were made by H\"ormander \cite{Hormander}, Psarelli \cite{Psarelli} and more recently by LeFloch-Ma \cite{PLF-YM-cmp}, Klainerman-Wang-Yang \cite{Klainerman-QW-SY}, among others; see also the work of Tataru \cite{Tataru} on waves in hyperbolic space.

Besides the well-known challenges of weak decay and lack of symmetry, lack of partial derivatives on the wave component $n$ in the nonlinear term $nE$ causes a serious problem: the natural wave energy controls only $\partial n$, not $n$ itself. A Hardy inequality could in principle bridge this gap, but in $\mathbb{R}^{1+2}$ it fails unless one includes a logarithmic correction, which worsens the decay.

The strategy is to treat the interior and exterior regions separately: we first establish the estimates in the exterior region and then address the interior region. This approach to nonlinear wave-type equations was employed in the pioneering work of Christodoulou–Klainerman \cite{ChKl}, and more recently by Huneau–Stingo–Wyatt \cite{Zoe23}.



\textbf{Step 1.1: Exterior region.}

In the exterior region $\{ r \ge t - \frac{3}{2} \}$, we establish global existence along with pointwise and energy bounds. A key tool is the weighted Hardy inequality in Proposition~\ref{prop:Hardy-ex}, which controls $n$ in terms of~$\partial n$ at the expense of weighted norms on the initial data.

One important observation is that the energy on the boundary $\{r= t-1 \}$ has a positive sign, which will be used when treating the interior region.


\textbf{Step 1.2: Interior region.}

In this case, it seems impossible to rely on extra weights on the initial data to improve decay of the solution. To illustrate the key ideas, we first look at the equation of $E$. The best decay we can expect for the nonlinear term is
\[
\|nE\| \lesssim t^{-1},
\]
which is borderline non-integrable. To overcome this, we treat $nE$ as a perturbation of the mass term, allowing us to ignore it when conducting energy estimates.  
When applying higher-order (weighted) derivatives, the Leibniz rule produces many nonlinear terms. We adopt this strategy to treat the term where all (weighted) derivatives hit on the $E$ part, which turns out to be the worst term in the nonlinearities, and inductively we improve the energy bounds at all orders of (weighted) derivatives.

For the component $n$, we use the decomposition~\eqref{eq:2D-KGZ-2} together with the weighted energy estimates in Proposition~\ref{prop:EE-in} to obtain uniform-in-time bounds, albeit with unfavorable $(t-r)$ weights. Then, Proposition~\ref{prop:wave-extra}, combined with extra partial derivatives, removes these $(t-r)$ weights and closes the energy estimates.



\textbf{Step 2: Energy bounds on $t$-slices.}

Next, we transfer the energy estimates from curved hyperboloidal slices to constant-$t$ slices. This is done by applying divergence theorem on specific spacetime domains based on energy and pointwise estimates prepared in Sections \ref{sec:exterior} and \ref{sec:interior}. The sharp Sobolev norm bounds in \eqref{eq:thm-Sobolev} are derived in this part.

\textbf{Step 3: Scattering.}


In this part, based on the energy bounds on $t$-slices and pointwise estimates of the solution $(n, E)$, we mainly rely on the frequency space method of Z-norm (see for instance \cite{DeIoPa, Ionescu-P, Ionescu-P2, Ionescu-P3, Ionescu-P4}) and of spacetime resonance introduced by Germain-Masmoudi-Shatah and Gustafson-Nakanishi-Tsai (see for instance \cite{GeMaSh, GuNaTs}). 

\textbf{Step 3.1: Linear scattering of $n$.}

Due to the weak decay of nonlinearity in the $n$-equation $\Delta|E|^2$, we rely on the novel nonlinear transformation \eqref{eq:nonl-trans} to derive scattering of $n$. 
Based on the estimates prepared in \textbf{Step 2}, we show that the new unknown $\widetilde{n}$ scatters linearly. Since $n$ differs from this new unknown only by lower-order terms, $n$ also scatters linearly.



\textbf{Step 3.2: Modified scattering of $E$.}

If~\eqref{eq:n1=+} holds, the quantity $\widehat{n_1}(\eta)$ does not vanish at low frequencies, obstructing linear scattering of $E$. Following the seminal work~\cite{Ionescu-P2} and adopting the perturbation-of-mass perspective, we introduce a phase correction function $\Theta(t,\xi)$ and then estimate the residuals for the modified profile, where we decompose these residuals dyadically both in frequency and in time to get fine control of these terms. 

One part that requires delicate analysis lies in the proof of Proposition \ref{prop:Theta}, where we need to investigate the asymptotic behavior of $\Theta(t, \xi)$ to confirm that modified scattering is indeed the correct behavior. In the proof, we rely on technical analysis with the aid of properties enjoyed by the Bessel function of the first kind of order zero.




\textbf{Step 3.3: Linear scattering of $E$.}

Under the assumption~\eqref{eq:n1=0}, Lemma~\ref{lem:n1-low} yields a vanishing property of $\widehat{n_1}(\eta)$ in the low frequency regime. This allows us to bound the term related to $\Theta(t,\xi)$ that previously obstructed linear scattering, thereby recovering linear scattering for~$E$.








\subsection{Comparison with other spacetime dimensions}

As we will see in the proof in Section \ref{sec:interior}, one big challenge is to understand the behavior of $\|(L_1 n)E\|$ with $L_1 = t\partial_{x^1} + x^1\partial_t$ defined in Section \ref{sec:pre}.

To gain insight, we consider in $\mathbb{R}^{1+d}$, with a point denoted by $(t, x^1, \cdots, x^d)$, the linear homogeneous wave and Klein-Gordon equations
\begin{align*}
-\Box w = &0,
\quad
&(w, \partial_t w)(t_0)
=
(w_0, w_1),
\\
-\Box v + v = &0,
\quad
&(v, \partial_t v)(t_0)
=
(v_0, v_1).
\end{align*}
For simplicity, we assume $(w_0, w_1, v_0, v_1)$ all belong to $C^\infty_0$  with support in the unit ball $\{ |x|=\sqrt{(x^1)^2 + \cdots+ (x^d)^2}\leq 1\}$. The argument below also works for small, smooth initial data that decay sufficiently fast as $|x|\to +\infty$.

\subsubsection{High dimensions: $d\geq 3$.}

\

Using the rough bounds
\begin{align*}
    \|L_1 w\|\lesssim 1,
    \qquad
    |v(t, x)|\lesssim \langle t+|x|\rangle^{-{d\over 2}},
\end{align*}
we obtain
\begin{align*}
    \|(L_1 w) v \|
    \lesssim
    \|L_1 w\| \|v\|_{L^\infty}
    \lesssim
    \langle t\rangle^{-{3\over 2}}.
\end{align*}
The decay rate $\langle t\rangle^{-{3\over 2}}$ is integrable in time, which is favorable for establishing global well-posedness and scattering in the corresponding nonlinear problems.

\subsubsection{Low dimensions: $d=1$.}

\

In $\mathbb{R}^{1+1}$, one in general does not expect any integrable bound for $\|(L_1 w) v\|$ as the solutions only have weak (or no) decay rates
\begin{align*}
    |L_1 w(t, x)|\lesssim 1,
    \qquad
    |v(t, x)|\lesssim \langle t+|x|\rangle^{-{1\over 2}}.
\end{align*}
However, $(1+1)-$dimensional spacetime setting has special structural features that allow for sharper bounds.

First, the Klein–Gordon component enjoys the following (non-trivial) improved decay estimate (cf. Proposition~\ref{prop:KG-extra} for the two-dimensional analogue)
\begin{align*}
    |v(t, x)|
    \lesssim
    \langle t+|x|\rangle^{-{5\over 2}} \langle t-|x|\rangle^2,
\end{align*}
which indicates that 
\begin{align*}
    |v(t, x)|
    \lesssim
    \langle t+|x|\rangle^{-{5\over 2}},
    \qquad
    \text{in } \{|x|\geq t-2\}.
\end{align*}

Next, the d’Alembert formula gives
\begin{align*}
&L_1 w(t,x)
\\
= 
&\frac{1}{2} \Big( (x + t)\left(w_0'(x + t) + w_1(x + t)\right)
+ (x - t)\left(-w_0'(x - t) + w_1(x - t)\right) \Big),
\end{align*}
by which we see $L_1 w$ is supported in $\{|t-x| \leq 1, \text{ or } |t+x| \leq 1\}$. This implies
\begin{align*}
    \|(L_1 w) v\|
    \lesssim
    \langle t\rangle^{-{3\over 2}},
\end{align*}
an integrable decay rate that is, perhaps, unexpected in $(1+1)-$dimensions.

\subsubsection{Low dimensions: $d=2$.}

\

In $\mathbb{R}^{1+2}$, the situation is more subtle. In this case, we have the  bounds
\begin{align*}
    \|L_1 w\| \lesssim 1,
    \qquad
    |v(t, x)|
    \lesssim
    \langle t+|x|\rangle^{-3} \langle t-|x|\rangle^2.
\end{align*}
Due to the fact that $L_1 w$ is supported in the full interior $\{|x|\leq t+1 \}$ not only near the light cone,
consequently, we obtain
\begin{align}
    \|(L_1 w) v\|
    \lesssim
    \langle t\rangle^{-1},
\end{align}
which is non-integrable and also creates serious difficulties in the proof of Section~\ref{sec:interior}.

\subsubsection{Summary}

\

Based on the computations above, we observe that the term $\|(L_1 w) v\|$ is integrable in time in all spacetime dimensions except $\mathbb{R}^{1+2}$. While this does not imply that the Klein-Gordon–Zakharov system is necessarily more difficult in $\mathbb{R}^{1+2}$ than in $\mathbb{R}^{1+1}$, since our analysis here concerns only the linear equations for $w$ and $v$, it does suggest (as indeed turns out to be the case) that the system in $\mathbb{R}^{1+2}$ presents unique challenges and requires a more delicate analysis. Moreover, we conjecture that the dichotomy in scattering behavior observed in Theorem \ref{thm:main2} may be a phenomenon specific to the two-dimensional case.

\subsection{Perspectives}

\subsubsection{A cousin problem: Zakharov system}

The Zakharov system, a cousin system to the Klein-Gordon-Zakharov system, can be regarded as a system coupling two types of fundamental equations of wave and Schrodinger. There exist extensive studies on this system.

The nonlinearities in the Zakharov system are of the same form as those in the Klein-Gordon-Zakharov system. In addition, these two systems share many similar properties: the Schrodinger field enjoys the same time decay rate as the Klein-Gordon field in the same spacetime dimensions.

\subsubsection{Other wave-type dispersive equations: Maxwell-Klein-Gordon system}

As we discussed before, there are lots of similarities between the Klein-Gordon-Zakharov system and the Maxwell-Klein-Gordon system. Considering the weak decay of the solution to the Maxwell-Klein-Gordon system, it is a very challenging problem to study its long-time behavior. Our method with a particular choice of gauge might be useful to treat Maxwell-Klein-Gordon system in $\mathbb{R}^{1+2}$.

\subsubsection{Low regularity scattering}

An important and open problem is the scattering behavior of the Klein–Gordon–Zakharov system with initial data of low regularity, such as data in the energy space. This is a different problem from the one studied in the present paper. In $\mathbb{R}^{1+3}$, the low-regularity scattering for the radial case has been tackled by Guo–Nakanishi–Wang \cite{Guo-N-W}. However, to the best of our knowledge, the non-radial case in $\mathbb{R}^{1+3}$ remains unaddressed, and no results are available for the case in $\mathbb{R}^{1+2}$.

Our findings in Theorem~\ref{thm:main2} suggest that different scattering phenomena may arise in $\mathbb{R}^{1+2}$ compared to the three-dimensional setting. The nonlinear transformation introduced in Section~\ref{sec:scatter} may prove useful in tackling the low-regularity problem; say, for $(n_0, n_1, E_0, E_1) \in H^{1}(\mathbb{R}^2)\times L^2(\mathbb{R}^2)\times H^{2}(\mathbb{R}^2) \times H^{1}(\mathbb{R}^2)$.

\subsection{Organisation}

In Section~\ref{sec:pre}, we introduce the basic notation, develop various weighted energy estimates, and review fundamental properties of the wave-type components. In Sections~\ref{sec:exterior} and~\ref{sec:interior}, we establish the global existence of solutions along with uniform energy bounds and sharp time decay estimates for the Klein-Gordon-Zakharov system in the exterior and interior regions, respectively. Building on these results, we prove the linear scattering of $n$ in Section~\ref{sec:scatter} using physical space analysis with a key novel nonlinear transformation, and the (non)linear scattering of $E$ in Section~\ref{sec:S-E}, relying primarily on frequency space methods. Several technical arguments are deferred to the appendix.


\section{Preliminaries}\label{sec:pre}
\subsection{(Hyperboloidal) Vector-field setting}

We work in the $\mathbb{R}^{1+2}$ Minkowski spacetime with metric $m=\text{diag}(-1, 1, 1)$. For a spacetime point $(t, x_1, x_2)$, we denote $x^0 = t$ and $r=\sqrt{(x_1)^2 + (x_2)^2}$. The indices are raised or lowered by the metric $m$, and for repeated upper and lower indices we adopt the Einstein summation convention.

We recall the classical vector fields \cite{Klainerman852}.
\begin{itemize}
\item Translations: $\partial_\alpha = \partial_{x^\alpha}$, 
\qquad
$\alpha \in \{ 0, 1, 2\}$.

\item Rotation: $\Omega = \Omega_{12} = x_1\partial_2 - x_2\partial_1$.

\item Lorentz boosts: $L_a = x_a \partial_t + t\partial_a$,
\qquad
$a\in\{ 1, 2\}$.
\end{itemize}
In the sequel, we use Greek letters $\alpha, \beta, \cdots \in\{0, 1, 2\}$ to denote spacetime indices and Roman letters $a, b, \cdots \in \{ 1, 2\}$ to represent space indices.

We use $\Gamma$ to denote an arbitrary vector field in the set
$$
V = \{  \partial_0, \partial_1, \partial_2, \Omega_{12}, L_1, L_2  \},
$$
while we adopt the same convention for $\partial \in \{\partial_0, \partial_1, \partial_2\} $ and $L \in \{ L_1, L_2  \}$.

We also recall the scaling vector field
$$
L_0 = t\partial_t + x^a \partial_a,
$$
which is compatible with pure wave equations, and the good derivatives from ghost weight method \cite{Alinhac1}
\begin{align*}
    G = (G_1,  G_2),
\end{align*}
with $G_a = {\partial_a} + {x_a\over r}\partial_t$.

We have for a smooth function $\phi(t, x)$ that (see for instance \cite{Hormander, Sogge, LiZh})
\begin{equation}\label{eq:wave-partial}
\begin{aligned}
\langle t-r\rangle |\partial \phi|
\lesssim
&|\Gamma \phi| + |L_0 \phi|,
\\
|\Omega_{12} \phi |
\lesssim
&|L \phi|,
\qquad
\text{for  }|x|\leq 2t.
\end{aligned}
\end{equation}


We will rely on a hyperboloidal foliation to treat the interior region $\{ |x|\leq t-1 \}$ of the spacetime $\mathbb{R}^{1+2}$, and now revisit the vector-field method adapted to this particular foliation. We set
\begin{align*}
\tau_+ = t+|x|,
\qquad
\tau_- = t - |x|,
\qquad
\tau = \sqrt{\tau_+ \tau_-}.
\end{align*}
We use $\mathcal{H}_\tau := \{ (t, x) : t^2 - |x|^2 = \tau^2, \, |x|\leq t-1 \}$ to denote a hyperboloid with hyperbolic time $\tau \geq \tau_0=1$ truncated by the cone $\{ |x|=t-1 \}$. 
We use $\mathcal{D}_{[\tau_1, \tau_2]}:= \{ (t, x) : \tau_1^2 \leq t^2 - |x|^2 \leq \tau_2^2, \, |x|\leq t-1 \}$ to represent the spacetime region limited by two hyperboloids $\mathcal{H}_{\tau_1}, \mathcal{H}_{\tau_2}$ and the cone $\{ |x|=t-1 \}$. Within the region $\mathcal{D}_{[\tau_0, +\infty)}$, we remind one of the useful relations
\begin{align*}
|x|\leq t-1,
\qquad
\tau \leq t \leq \tau^2,
\qquad
t\leq \tau_+ \leq 2t.
\end{align*}

We denote
\begin{align*}
\cancel{\partial}_0 = {\tau \over t} \partial_0,
\qquad
\cancel{\partial}_a = {L_a \over t} = \partial_a + {x_a\over t} \partial_0.
\end{align*}
We find that
\begin{align*}
\partial_0 = {t\over \tau} \cancel{\partial}_0,
\qquad
\partial_a = \cancel{\partial}_a - {x_a\over \tau} \cancel{\partial}_0.
\end{align*}

Given a multi-index $I=(I_1, \cdots, I_6) \in \mathbb{N}^6$, we denote 
\begin{align*}
    \Gamma^I
    =
    \partial_0^{I_1} \partial_1^{I_2} \partial_2^{I_3} \Omega_{12}^{I_4} L_1^{I_5} L_2^{I_6},
\end{align*}
and $\partial^J$ with $J=(J_1, J_2, J_3 )\in \mathbb{N}^3$ and $L^K$ with $K=(K_1, K_2)\in \mathbb{N}^2$ are defined in a similar way.

\subsection{Basics in Fourier analysis}

We define the Fourier transform of a nice function $f$ as
$$
\mathcal{F}f(\xi) = \widehat{f}(\xi) = \int_{\mathbb{R}^2} f(x) \, e^{-ix \cdot \xi} \, \d x,
$$
with its inverse formula
$$
f(x) = \mathcal{F}^{-1} \widehat{f} = {1\over 4\pi^2} \int_{\mathbb{R}^2} \widehat{f}(\xi) e^{ix \cdot \xi} \, \d\xi.
$$
The Plancherel’s identity is given by
$$
\| f\| = {1\over 2\pi} \| \widehat{f} \|.
$$
We have the derivative rule 
$$
\widehat{\partial_a f} = i \xi_a \widehat{f},
\qquad
\widehat{x_a f} = i \partial_{\xi_a} \widehat{f}. 
$$
We denote $\Lambda$ such that $\widehat{\Lambda f}(\xi) = |\xi| \widehat{f}(\xi)$ where $f$ is a sufficiently nice function.

We recall the Littlewood-Paley decomposition. Select a smooth radial decreasing (in $r$) function $\psi: \mathbb{R}^2\to [0, 1]$ with
\begin{equation}
\psi(x)
=
\begin{cases}
       1, 
     \qquad
     &|x|\in [-1, 1],
     \\[1ex]
      0,
     \qquad
     &|x|\in  [-2, 2]^c.
\end{cases}
\end{equation}
For any integer $k\in\mathbb{Z}$ and interval $I\subseteq \mathbb{R}$, define 
$$
\phi(x) = \psi({x}) - \psi(2x),
\qquad
\phi_k(x) = \phi({x\over 2^k}),
\qquad
\phi_{I}(x) = \sum_{l\in I\cap \mathbb{Z}} \phi_l(x).
$$
By a slight abuse of notation, $\psi$ and $\phi$ can also be regarded as one-dimensional functions.
One notes that 
$$
\sum_{k\in\mathbb{Z}} \phi_k(x) 
 =1,
 \qquad
 \text{for } |x|\neq 0.
$$
Define 
$$
P_k = \phi_k({\Lambda}),
\qquad
P_{I} = \phi_{I}(\Lambda),
$$
which are interpreted by
$$
\widehat{P_k f}(\xi) = \phi_k(\xi) \widehat{f}(\xi),
\qquad
\widehat{P_{I} f}(\xi) = \phi_{I}(\xi) \widehat{f}(\xi).
$$

We denote 
$$
k^- = \min \{k, 0 \},
\qquad
k^+ = \max\{k, 0\},
$$
for $k\in\mathbb{Z}$. One checks that $k\leq l$ implies $k^-\leq l^-$ and $k^+\leq l^+$.

Following Ionescu-Pausader \cite{Ionescu-P, Ionescu-P2}, let 
\begin{equation}\label{eq:J}
\mathcal{J} = \{ (l, j)\in \mathbb{Z}\times \mathbb{Z}: l+j\geq 0, \, j\geq 0  \}.
\end{equation}
For $(l, j)\in \mathcal{J}$, we define
\begin{eqnarray}
\widetilde{\phi}^{(l)}_j(x)
=
\left\{
\begin{array}{lll}
      \phi_{\leq j}(x),
     \qquad
     &\text{for }l+j=0 \text{ and } l\leq 0,
     \\[1ex]
      \phi_{\leq j}(x),
     \qquad
     &\text{for }j=0 \text{ and } l\geq 0,
     \\[1ex]
      \phi_j(x),
        \qquad
        &\text{for }l+j\geq 1 \text{ and } j\geq 1.
\end{array}
\right. 
\end{eqnarray}
One easily checks that $\sum_{j\geq -\min\{l, 0\}} \widetilde{\phi}_j^{(l)}=1$ for any given $l\in \mathbb{Z}$.

The following result in \cite[Lemma 5.1]{Ionescu-P2} will be used in the proofs of Lemma \ref{lem:d-xi-L2} and Proposition \ref{prop:M3}. 
\begin{lemma}[\cite{Ionescu-P2}]\label{lem:d-xi}
Let $w_+, v_+$ solve
$$
(\partial_t + i\Lambda)w_+ = \mathcal{N}_1,
\qquad\qquad
(\partial_t + i\langle\Lambda\rangle)v_+ = \mathcal{N}_2,
$$
respectively.
Let $g_+ = e^{it|\nabla|} w_+, f_+ = e^{it\langle\nabla\rangle} v_+$.
    Then, we have
    \begin{equation}
    \begin{aligned}
    \widehat{L_a w_+}
=
&|\xi| e^{-it|\xi|} \partial_{\xi_a} \widehat{g_+}(\xi)
        +{\xi_a \over |\xi|} e^{-it|\xi|} \widehat{g_+}(\xi)
        +i\partial_{\xi_a} \widehat{N}_1,
        \\
    \widehat{L_a v_+}
=
&\langle\xi\rangle e^{-it\langle\xi\rangle} \partial_{\xi_a} \widehat{f_+}(\xi)
        +{\xi_a \over \langle\xi\rangle} e^{-it\langle\xi\rangle} \widehat{f_+}(\xi)
        +i\partial_{\xi_a} \widehat{N}_2.    
    \end{aligned}
    \end{equation}

\end{lemma}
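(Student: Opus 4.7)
The plan is a direct Fourier-side computation that exploits the half-wave/profile decomposition built into the statement. Recalling that on the Fourier side $x_a$ corresponds to $i\partial_{\xi_a}$ and $\partial_a$ to $i\xi_a$, writing $L_a = x_a\partial_t + t\partial_a$ immediately gives, for any nice $u$,
\[
\widehat{L_a u}(\xi) = i\partial_{\xi_a}\widehat{\partial_t u}(\xi) + it\xi_a\widehat{u}(\xi).
\]
This is the only identity I need as input; the rest is substitution and bookkeeping.

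For the wave half, I would substitute $\widehat{\partial_t w_+} = -i|\xi|\widehat{w_+} + \widehat{\mathcal{N}_1}$ from the equation satisfied by $w_+$, producing
\[
\widehat{L_a w_+} = \partial_{\xi_a}\bigl(|\xi|\widehat{w_+}\bigr) + i\partial_{\xi_a}\widehat{\mathcal{N}_1} + it\xi_a\widehat{w_+}.
\]
Then I would use the profile relation $\widehat{w_+}(\xi) = e^{-it|\xi|}\widehat{g_+}(\xi)$ and Leibniz:
\[
\partial_{\xi_a}\bigl(|\xi|\widehat{w_+}\bigr) = \tfrac{\xi_a}{|\xi|} e^{-it|\xi|}\widehat{g_+} - it\xi_a e^{-it|\xi|}\widehat{g_+} + |\xi| e^{-it|\xi|}\partial_{\xi_a}\widehat{g_+}.
\]
The term $-it\xi_a e^{-it|\xi|}\widehat{g_+}$ cancels exactly against $it\xi_a\widehat{w_+} = it\xi_a e^{-it|\xi|}\widehat{g_+}$, and what remains is precisely the claimed formula for $\widehat{L_a w_+}$.

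The Klein-Gordon case is structurally identical: replace $|\xi|$ by $\langle\xi\rangle$, use $\partial_{\xi_a}\langle\xi\rangle = \xi_a/\langle\xi\rangle$, the profile relation $\widehat{v_+}(\xi) = e^{-it\langle\xi\rangle}\widehat{f_+}(\xi)$, and observe the same cancellation of the two $it\xi_a$ contributions.

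There is no serious obstacle here — the identity is essentially bookkeeping — but the one point worth emphasizing, and the reason the lemma is useful later, is the cancellation of the $t$-linear terms: a naive application of $L_a = x_a\partial_t + t\partial_a$ would produce a factor of $t$ multiplying the Fourier transform of the unknown, which is hopeless for uniform-in-time estimates. The substitution using the equation turns that dangerous $t$ into the phase $e^{-it|\xi|}$ (respectively $e^{-it\langle\xi\rangle}$) attached to the profile, which is bounded. I would verify this cancellation once explicitly in the wave case and then remark that the Klein-Gordon case is verbatim the same.
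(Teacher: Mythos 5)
Your computation is correct and is essentially the standard derivation. The paper itself does not reprove this lemma — it simply cites \cite[Lemma~5.1]{Ionescu-P2} — and your direct Fourier-side argument (apply $\widehat{L_a} = i\partial_{\xi_a}\circ\widehat{\partial_t} + it\xi_a$, substitute the equation for $\widehat{\partial_t w_+}$, pass to the profile $\widehat{w_+}=e^{-it|\xi|}\widehat{g_+}$, and observe the cancellation of the two $it\xi_a$ terms via Leibniz) is exactly the intended proof; the Klein--Gordon case follows verbatim with $|\xi|$ replaced by $\langle\xi\rangle$.
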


The following phase functions will appear frequently in Section \ref{sec:S-E}
\begin{align*}
    \Phi_{1\pm} 
    = &\langle \xi\rangle + \langle \xi-\eta\rangle \pm |\eta|,
    \\
    \Phi_{2\pm} 
    = &\langle \xi\rangle - \langle \xi-\eta\rangle \pm |\eta|.
\end{align*}
We have the following estimates for the phase functions $\Phi_{1\pm}, \Phi_{2\pm}$.
\begin{lemma}\label{lem:phase}
    The following four groups of estimates hold.

    \begin{enumerate}
        \item 
        \begin{align*}
        |\Phi_{1\pm}| 
        \gtrsim 
        {1\over \langle \xi\rangle} + {1\over \langle \xi-\eta\rangle},
        \qquad\quad
        |\Phi_{2\pm}| 
        \gtrsim
        {|\eta| \over (\langle\eta\rangle + \langle \xi-\eta\rangle)^2}.
        \end{align*}

        \item 
        \begin{align*}
        |\nabla_\eta \Phi_{1\pm}| 
        \gtrsim 
        \langle \xi-\eta\rangle^{-2},
        \qquad\quad
        |\nabla_\eta \Phi_{2\pm}| 
        \gtrsim
        \langle \xi-\eta\rangle^{-2}.
        \end{align*}

        \item 
        \begin{align*}
        |\nabla_\eta \nabla_\eta \Phi_{1\pm}| 
        +
        |\nabla_\eta \nabla_\eta \Phi_{2\pm}| 
        \lesssim
        {1\over \langle\xi-\eta\rangle} + {1\over |\eta|}.
        \end{align*}

        \item 
        \begin{align*}
        |\nabla_\eta \nabla_\eta \nabla_\eta \Phi_{1\pm}| 
        +
        |\nabla_\eta \nabla_\eta \nabla_\eta \Phi_{2\pm}| 
        \lesssim
        {1\over \langle\xi-\eta\rangle^2} + {1\over |\eta|^2}.
        \end{align*}

    \end{enumerate}
\end{lemma}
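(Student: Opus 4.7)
My plan is to handle the four estimate groups by elementary means: conjugate-like multiplication for the lower bounds of part (1), and direct differentiation of $\eta\mapsto \langle\xi-\eta\rangle$ and $\eta\mapsto |\eta|$ for parts (2)--(4). The two numerical identities that will do most of the work are $\langle\zeta\rangle - |\zeta| = 1/(\langle\zeta\rangle + |\zeta|) \gtrsim \langle\zeta\rangle^{-1}$ and $\langle\xi\rangle\langle\xi-\eta\rangle \geq 1 + |\xi||\xi-\eta|$, the second obtained by expanding $(1+|\xi|^2)(1+|\xi-\eta|^2) \geq (1+|\xi||\xi-\eta|)^2$.

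For part (1), the case $\Phi_{1+}$ is trivial since $\Phi_{1+} \geq 2$. For $\Phi_{1-}$, I would multiply by the conjugate $\langle\xi\rangle + \langle\xi-\eta\rangle + |\eta|$ to obtain
\[
(\langle\xi\rangle + \langle\xi-\eta\rangle)^2 - |\eta|^2 = 2 + 2\xi\cdot(\xi-\eta) + 2\langle\xi\rangle\langle\xi-\eta\rangle,
\]
and use the identity above to bound this from below by a multiple of $(\langle\xi\rangle + \langle\xi-\eta\rangle)^2/(\langle\xi\rangle\langle\xi-\eta\rangle)$; dividing and using $|\eta| \leq \langle\xi\rangle + \langle\xi-\eta\rangle$ recovers $\Phi_{1-} \gtrsim 1/\langle\xi\rangle + 1/\langle\xi-\eta\rangle$. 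For $\Phi_{2\pm}$, I would multiply by the appropriate conjugate that isolates $|\eta|$; for instance, $\Phi_{2+}(\langle\xi\rangle + |\eta| + \langle\xi-\eta\rangle) = 2|\eta|(\langle\xi\rangle + \xi\cdot\hat\eta)$, and then $\langle\xi\rangle + \xi\cdot\hat\eta \geq \langle\xi\rangle - |\xi| \gtrsim \langle\xi\rangle^{-1}$ combined with $\langle\xi\rangle + |\eta| + \langle\xi-\eta\rangle \lesssim \langle\eta\rangle + \langle\xi-\eta\rangle$ produces the claim. The case $\Phi_{2-}$ is handled symmetrically after swapping $\xi$ and $\xi-\eta$.

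For part (2), direct differentiation gives $\nabla_\eta \Phi_{1\pm} = -(\xi-\eta)/\langle\xi-\eta\rangle \pm \hat\eta$ and $\nabla_\eta \Phi_{2\pm} = (\xi-\eta)/\langle\xi-\eta\rangle \pm \hat\eta$. Writing $v = (\xi-\eta)/\langle\xi-\eta\rangle$ one has $1 - |v| = (\langle\xi-\eta\rangle - |\xi-\eta|)/\langle\xi-\eta\rangle \sim \langle\xi-\eta\rangle^{-2}$, and the reverse triangle inequality $|\pm\hat\eta - v| \geq 1 - |v|$ yields the bound in one line. For parts (3) and (4), I would differentiate one or two more times and note that derivatives of $(\xi-\eta)/\langle\xi-\eta\rangle$ contribute $O(\langle\xi-\eta\rangle^{-1})$ and $O(\langle\xi-\eta\rangle^{-2})$ respectively, while derivatives of $\hat\eta = \eta/|\eta|$ contribute $O(|\eta|^{-1})$ and $O(|\eta|^{-2})$; summing gives the stated upper bounds.

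I do not expect any serious obstacle; the entire lemma is a careful bookkeeping exercise. The only care point is the conjugate multiplication in (1), where one must divide by $\langle\xi\rangle\langle\xi-\eta\rangle$ at the right moment so that the lower bound comes out as the sum $1/\langle\xi\rangle + 1/\langle\xi-\eta\rangle$ rather than merely $1/\max(\langle\xi\rangle, \langle\xi-\eta\rangle)$.
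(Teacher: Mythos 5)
The paper omits this proof, calling it ``elementary but tedious,'' so there is no reference argument to compare against; your overall strategy (conjugate multiplication for the lower bounds, direct differentiation for the gradient estimates) is the natural one and parts (2)--(4) are fine exactly as you describe them. However, there is a genuine gap in your treatment of $\Phi_{1-}$ in part (1). You assert that the identity $\langle\xi\rangle\langle\xi-\eta\rangle \geq 1+|\xi||\xi-\eta|$ lets you bound the numerator $2\bigl(1+\xi\cdot(\xi-\eta)+\langle\xi\rangle\langle\xi-\eta\rangle\bigr)$ from below by a multiple of $(\langle\xi\rangle+\langle\xi-\eta\rangle)^2/(\langle\xi\rangle\langle\xi-\eta\rangle)$. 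But that identity, combined with $\xi\cdot(\xi-\eta)\geq -|\xi||\xi-\eta|$, yields only the constant lower bound $1+\xi\cdot(\xi-\eta)+\langle\xi\rangle\langle\xi-\eta\rangle\geq 2$, while $(\langle\xi\rangle+\langle\xi-\eta\rangle)^2/(\langle\xi\rangle\langle\xi-\eta\rangle)$ is unbounded as the two bracket sizes separate (take $\xi=Me_1$, $\xi-\eta=-e_1$, $M\to\infty$). Dividing the constant $2$ by the denominator gives only $\Phi_{1-}\gtrsim(\langle\xi\rangle+\langle\xi-\eta\rangle)^{-1}$, which is strictly weaker than $\langle\xi\rangle^{-1}+\langle\xi-\eta\rangle^{-1}$ in exactly the regime where the two sizes differ.

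The fix stays within your framework: apply the conjugate trick once more to the difference $\langle\xi\rangle\langle\xi-\eta\rangle - |\xi||\xi-\eta|$ itself. Writing $u=|\xi|$, $v=|\xi-\eta|$, $a=\langle\xi\rangle$, $b=\langle\xi-\eta\rangle$, one has the exact identity
\begin{equation*}
ab - uv = \frac{1+u^2+v^2}{ab+uv},
\end{equation*}
and since $ab+uv\leq 2ab$ and $1+u^2+v^2 \geq \tfrac12(a^2+b^2)\geq\tfrac14(a+b)^2$, this gives $ab-uv\geq \tfrac{(a+b)^2}{8ab}$. Consequently $1+\xi\cdot(\xi-\eta)+ab\geq ab-uv\geq\tfrac{(a+b)^2}{8ab}$, and dividing by $\langle\xi\rangle+\langle\xi-\eta\rangle+|\eta|\leq 2(a+b)$ produces $\Phi_{1-}\geq\tfrac18\bigl(a^{-1}+b^{-1}\bigr)$ as claimed. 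One further small point: your symmetry argument for $\Phi_{2-}$ gives $|\Phi_{2-}|\gtrsim |\eta|/(\langle\eta\rangle+\langle\xi\rangle)^2$, and to convert this to the stated form one should note $\langle\eta\rangle+\langle\xi\rangle\leq 2(\langle\eta\rangle+\langle\xi-\eta\rangle)$ by the triangle inequality for the Japanese bracket. With these two points tightened, the proof is complete.
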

\begin{proof}
    The proof follows from elementary but tedious calculations, and we omit it.
\end{proof}

\subsection{Commutator estimates}

We first recall, see for instance \cite{LiZh, Sogge, Hormander}, the commutators between $\Box$ and $\Gamma$
\begin{align*}
    [-\Box, \Gamma] =0,
    \qquad
    [-\Box + 1, \Gamma] = 0,
    \qquad
    [-\Box, L_0] = C \Box.
\end{align*}
In addition, given a smooth function $f(t,x)$ and $\Gamma_1, \Gamma_2 \in V$, the following holds:
\begin{equation}
\begin{aligned}
    &|[\partial, \Gamma] f|
    \lesssim
    |\partial f|,
    \qquad
    |[{\Gamma_1}, {\Gamma_2}]f|
    \lesssim
    |\Gamma f|;
    \\
    &|\partial(\tau/t)|
    \lesssim
    \tau^{-1},
    \qquad
    |L(\tau/t)|
    \lesssim
    \tau/t, \quad\text{  for } r\leq t-1.
\end{aligned}
\end{equation}

\begin{lemma}\label{lem:comm-Fourier}
Let $w_+ = (\partial_t - i\Lambda) w$ and $v_+ = (\partial_t - i \langle \Lambda\rangle) v$.
Then, it holds that
\begin{align}
\| L_a w_+ - (\partial_t - i\Lambda) L_a w\|_{H^7}
\lesssim &\|\partial w\|_{H^7},
\\
\| L_a v_+ - (\partial_t - i\langle\Lambda\rangle) L_a v\|_{H^7}
\lesssim &\|\partial v\|_{H^7}.
\end{align}
\end{lemma}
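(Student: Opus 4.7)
The plan is to recognise both quantities as commutators and then identify them, up to harmless error terms, with zeroth-order Fourier multipliers acting on $\partial w$ (respectively $\partial v$). Writing
\[
L_a w_+ - (\partial_t - i\Lambda)L_a w \;=\; [L_a,\partial_t - i\Lambda]\, w \;=\; [L_a,\partial_t]\,w \;-\; i\,[L_a,\Lambda]\,w,
\]
the first commutator is elementary: since $\Lambda$ acts only in the spatial variables, $[L_a,\partial_t] = [x_a\partial_t + t\partial_a,\partial_t] = -\partial_a$, which is bounded in $H^7$ by $\|\partial w\|_{H^7}$.

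For the second commutator, the key observation is that $t\partial_a$ commutes with $\Lambda$ (the factor $t$ is a scalar depending on time alone, and $\partial_a$ is itself a Fourier multiplier in $x$), so
\[
[L_a,\Lambda] \;=\; [x_a\partial_t,\Lambda] \;=\; [x_a,\Lambda]\,\partial_t.
\]
Using the derivative rule $\widehat{x_a f} = i\partial_{\xi_a}\widehat{f}$ and $\widehat{\Lambda f} = |\xi|\widehat{f}$, a short calculation gives the explicit symbol
\[
\widehat{[x_a,\Lambda]f}(\xi) \;=\; i\,\frac{\xi_a}{|\xi|}\,\widehat{f}(\xi).
\]
Since $|\xi_a/|\xi|| \leq 1$, Plancherel shows that $[x_a,\Lambda]$ is bounded on $L^2$; as it is a Fourier multiplier it commutes with $\langle\nabla\rangle^7$, hence is bounded on $H^7$. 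Therefore
\[
\bigl\|[L_a,\Lambda]w\bigr\|_{H^7} \;=\; \bigl\|[x_a,\Lambda]\partial_t w\bigr\|_{H^7} \;\lesssim\; \|\partial_t w\|_{H^7} \;\lesssim\; \|\partial w\|_{H^7},
\]
which combined with the first step yields the stated bound for $w$.

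For the Klein–Gordon variant, the argument is identical: $[L_a,\partial_t] = -\partial_a$ produces the same clean term, while the relevant symbol becomes $\xi_a/\langle\xi\rangle$, which is pointwise bounded by $1$ and (being smooth) poses no low-frequency issue. The $H^7$ boundedness of the associated multiplier again follows immediately from Plancherel, and one obtains
\[
L_a v_+ - (\partial_t - i\langle\Lambda\rangle)L_a v \;=\; -\partial_a v \;-\; i\,[x_a,\langle\Lambda\rangle]\,\partial_t v,
\]
whose $H^7$ norm is $\lesssim \|\partial v\|_{H^7}$.

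The only subtle point in the argument is the discontinuity of the Riesz-type symbol $\xi_a/|\xi|$ at the origin; however, since we only need $L^2$ (and hence $H^7$) boundedness, uniform boundedness of the symbol suffices via Plancherel, so this does not cause any genuine difficulty. Everything else is a purely algebraic commutator computation.
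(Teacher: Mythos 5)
Your argument is correct and follows essentially the same route as the paper's: both reduce the difference to the commutator $[L_a,\partial_t-i\Lambda]$, identify the harmless $-\partial_a$ piece, and show the remaining piece is the Fourier multiplier $\xi_a/|\xi|$ (respectively $\xi_a/\langle\xi\rangle$) applied to $\partial_t w$ (respectively $\partial_t v$), which is bounded on $L^2$ and hence on $H^7$ by Plancherel. Your presentation in commutator language and the explicit remark about the Riesz-type symbol commuting with $\langle\nabla\rangle^7$ are a bit more systematic, but the computation is identical.
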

\begin{proof}
We only prove the estimate regarding $w_+$.

We note 
\begin{align*}
    L_a w_+ - (\partial_t - i\Lambda) L_a w
    =
    -\partial_a w - ix_a \partial_t \Lambda w + i\Lambda (x_a \partial_t w).
\end{align*}
In frequency space, we find
\begin{align*}
    \mathcal{F}(x_a \partial_t \Lambda w - \Lambda (x_a \partial_t w))
    =
    i\partial_{\xi_a} \partial_t (|\xi| \widehat{w})
    - i |\xi| (\partial_{\xi_a} \partial_t \widehat{w})
    =
    i{\xi_a \over |\xi|} \partial_t \widehat{w}.
\end{align*}
    Therefore, we arrive at
    \begin{align}
        \|L_a w_+ - (\partial_t - i\Lambda) L_a w\|
        \leq 2 \|\partial w\|.
    \end{align}

    The higher order cases can be estimated in a similar way, and therefore we complete the proof.
\end{proof}

\subsection{Various energy estimates for wave-type equations}

We consider a general inhomogeneous wave-type equation
\begin{align}\label{eq:wave-type-eqn}
-\Box \phi + m^2 \phi = F.
\end{align}

Let $\eta_0 \in [1, {5\over 4}]$ and  denote 
\begin{align*}
\Sigma^{ex}_t = \{ (s, x) : s=t, \,r\geq s-\eta_0 \},
\qquad
&\Sigma^{bd}_t = \{(s, x): t_0\leq s\leq t,\, r=s-\eta_0 \},
\\
\mathcal{D}^{ex}_t = \{ (s, x) : &t_0\leq s\leq t, \,r\geq s-\eta_0 \}.
\end{align*}
We define the exterior energy at time $t\geq t_0$ by
\begin{equation} 
\begin{aligned}
\mathcal{E}^{ex}_m (\phi, t, \gamma)
:=
&\int_{\Sigma^{ex}_t}  \langle r-t\rangle^{2\gamma} \big( |\partial \phi|^2 + m^2 |\phi|^2 \big) \, \d x
\\
&+
\gamma \int_{t_0}^t \int_{\Sigma^{ex}_s} \langle r-s\rangle^{2\gamma-1} \big( |G \phi|^2 + m^2 |\phi|^2 \big) \, \d x \d s
\\
&+
\int_{\Sigma^{bd}_t} \big( \sum_{a=1}^2(\partial_a \phi + \frac{x_a}{r}\partial_t \phi)^2 +m^2 |\phi|^2  \big) \d S.
\end{aligned}
\end{equation}

\begin{proposition}[Weighted energy estimates in the exterior regions I]\label{prop:EE-ex}
Let $\phi$ solve \eqref{eq:wave-type-eqn}, and $\gamma \geq 0$. Then we have
\begin{align}
\mathcal{E}^{ex}_m (\phi, t, \gamma)
\lesssim
\mathcal{E}^{ex}_m (\phi, t_0, \gamma)
+
\int_{\mathcal{D}^{ex}_t} \langle r-s\rangle^{2\gamma} \big| F \cdot \partial_t \phi \big| \, \d x \d s.
\end{align}

\end{proposition}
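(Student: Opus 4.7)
The plan is to derive the identity by the classical multiplier method, using $w\,\partial_t\phi$ with $w=\langle r-t\rangle^{2\gamma}$ against the equation $-\Box\phi+m^2\phi=F$, and then applying the divergence theorem on the slab $\mathcal{D}^{ex}_t$. The weight $w$ itself plays the role of a ghost weight: differentiating it yields the spacetime bulk integral in $\mathcal{E}^{ex}_m(\phi,t,\gamma)$, while the positivity of the stress-energy flux through the cone $\{r=s-\eta_0\}$ produces the boundary integral.

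Concretely, I would first multiply the equation by $w\,\partial_t\phi$ and rewrite the LHS in divergence form, i.e.
\begin{align*}
(-\Box\phi+m^2\phi)\,w\,\partial_t\phi
=\;&\partial_t\!\Big[\tfrac{1}{2}(|\partial\phi|^2+m^2\phi^2)\,w\Big]-\partial_a\!\big[\partial_a\phi\,\partial_t\phi\,w\big]\\
&-\tfrac{1}{2}(|\partial\phi|^2+m^2\phi^2)\,\partial_t w+\partial_a\phi\,\partial_t\phi\,\partial_a w.
\end{align*}
Using $\partial_t w=-2\gamma(r-t)\langle r-t\rangle^{2\gamma-2}$ and $\partial_a w=2\gamma(r-t)\langle r-t\rangle^{2\gamma-2}\tfrac{x_a}{r}$ together with the algebraic identity $|\partial\phi|^2+2\tfrac{x_a}{r}\partial_a\phi\,\partial_t\phi=|G\phi|^2$, the last two terms combine into the bulk contribution $\gamma(r-t)\langle r-t\rangle^{2\gamma-2}\bigl(|G\phi|^2+m^2\phi^2\bigr)$.

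Next I would integrate over $\mathcal{D}^{ex}_t$ and apply the divergence theorem. The top and bottom slices $\Sigma^{ex}_t,\Sigma^{ex}_{t_0}$ give the weighted energies at times $t$ and $t_0$ respectively, while on the lateral cone $\Sigma^{bd}_t=\{r=s-\eta_0\}$ the outward unit normal is $\tfrac{1}{\sqrt{2}}(1,-x^a/r)$, so the flux through it evaluates, thanks again to the identity above, to
\begin{equation*}
\int_{\Sigma^{bd}_t}\tfrac{\langle\eta_0\rangle^{2\gamma}}{\sqrt{2}}\!\left[\tfrac{1}{2}|G\phi|^2+\tfrac{m^2}{2}\phi^2\right]\mathrm{d}S,
\end{equation*}
which matches the boundary term in $\mathcal{E}^{ex}_m(\phi,t,\gamma)$ up to a harmless constant. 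The RHS is immediately $\int_{\mathcal{D}^{ex}_t}F\,\partial_t\phi\,\langle r-t\rangle^{2\gamma}$, which is bounded in absolute value by the claimed expression.

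The main obstacle is that the bulk factor $(r-t)\langle r-t\rangle^{2\gamma-2}$ is only non-negative on $\{r\geq t\}$, whereas the exterior extends slightly past the light cone to $r=t-\eta_0$. I would handle this as follows. On the "far" region $r-t\geq 1$, the elementary inequality $(r-t)/\langle r-t\rangle\geq 1/\sqrt{2}$ lets me replace the bulk weight by $c\,\gamma\,\langle r-t\rangle^{2\gamma-1}$, recovering the desired spacetime integral. On the thin strip $-\eta_0\leq r-t\leq 1$ both $(r-t)\langle r-t\rangle^{2\gamma-2}$ and $\langle r-t\rangle^{2\gamma-1}$ are of unit size, so the contribution from there is bounded by a constant times $\int_{t_0}^{t}\int_{|r-s|\leq \eta_0}(|G\phi|^2+m^2\phi^2)\,\mathrm{d}x\,\mathrm{d}s$; this strip integral is controlled either by a trace/Poincar\'e argument against the already positive cone flux, or alternatively absorbed on the left via Gronwall in the $s$-variable (which yields a universal constant since $\eta_0\leq 5/4$). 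Combining the boundary, bulk, and source contributions and taking the absolute value of the source term gives the stated inequality.
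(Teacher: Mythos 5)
Your multiplier identity is algebraically correct, and you have isolated the genuine difficulty: with $w=\langle r-t\rangle^{2\gamma}$, the bulk coefficient after differentiating the weight is $\gamma(r-t)\langle r-t\rangle^{2\gamma-2}$, which changes sign on the portion $\{-\eta_0\le r-t<0\}$ of the exterior region. Unfortunately, neither of the two remedies you sketch closes that gap. The ``trace/Poincar\'e against the positive cone flux'' idea does not work as stated: the problematic quantity is a full spacetime bulk integral over the strip $\{-\eta_0\le r-s<0,\ t_0\le s\le t\}$, whereas the cone flux is a codimension-one integral over a single lateral boundary $\{r=s-\eta_0\}$; there is no coarea or trace inequality that controls the former by the latter without an extra derivative gain you do not have. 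The Gronwall alternative fares no better: bounding $\int_{t_0}^t\int_{\text{strip}}(|G\phi|^2+m^2\phi^2)\,\mathrm{d}x\,\mathrm{d}s$ by $C\int_{t_0}^t\mathcal{E}^{ex}_m(\phi,s,\gamma)\,\mathrm{d}s$ and applying Gronwall yields a constant of size $e^{C(t-t_0)}$, which is not the uniform-in-time estimate the proposition claims; the fact that $\eta_0\le 5/4$ controls the \emph{width} of the strip, not the length of the time interval, so it does not make the Gronwall constant universal. A secondary issue: on $\{0\le r-t\le 1\}$ the coefficient $(r-t)\langle r-t\rangle^{2\gamma-2}$ is nonnegative but vanishes at $r=t$, so it cannot be bounded below by $c\,\gamma\langle r-t\rangle^{2\gamma-1}$ there either; you would then still owe a nonnegative strip contribution to recover the definition of $\mathcal{E}^{ex}_m(\phi,t,\gamma)$.

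The paper avoids all of this with one small change: it multiplies by $(r-t+\tfrac32)^{2\gamma}\partial_t\phi$ rather than $\langle r-t\rangle^{2\gamma}\partial_t\phi$. On the exterior region $\{r\ge t-\eta_0\}$ with $\eta_0\le 5/4$ one has $r-t+\tfrac32\ge\tfrac14>0$, so $(r-t+\tfrac32)^{2\gamma}\simeq\langle r-t\rangle^{2\gamma}$ and, crucially, $\partial_t(r-t+\tfrac32)=-1$ has a fixed sign. The resulting bulk coefficient $\gamma(r-t+\tfrac32)^{2\gamma-1}$ is then everywhere nonnegative and comparable to $\gamma\langle r-t\rangle^{2\gamma-1}$ on $\mathcal{D}^{ex}_t$, so the divergence theorem gives the stated inequality directly, with no strip to absorb and no Gronwall. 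Your write-up should either adopt this shifted weight or find a genuinely sign-definite replacement; as written, the central absorption step would fail.
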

\begin{proof}
Multiplying by $(r-t+\frac 32)^{2\gamma} \partial_t \phi$ on both sides of \eqref{eq:wave-type-eqn} and Leibniz rule, we can get 
\begin{align*}
& \partial_t \bigg(\frac 12 (r-t+\frac 32)^{2\gamma}(|\partial \phi|^2 + m^2 |\phi|^2) \bigg)-\partial^{i}\big((r-t+\frac 32)^{2\gamma}\partial_i \phi \partial_t \phi  \big) 
\\
&  +\gamma (r-t+\frac 32)^{2\gamma-1} \big(\sum_{a=1}^2|G_a\phi|^2 + m^2\phi^2 \big) 
\\
= &(r-t+\frac 32)^{2\gamma}F \partial_t \phi.
\end{align*}
We integrate this identity over the region $\mathcal{D}^{ex}_t$ and apply the divergence theorem to derive desired energy estimates.
\end{proof}

\begin{proposition}[Weighted energy estimates in the exterior regions II]\label{prop:EE-ex2}
Let $\phi$ solve \eqref{eq:wave-type-eqn} with $m=1$, and $\gamma \geq 0$. Then we have
\begin{equation}
\begin{aligned}
\mathcal{E}^{ex}_1 (\phi, t, \gamma)
\lesssim
&\mathcal{E}^{ex}_1 (\phi, t_0, \gamma)
+
\int_{\mathcal{D}^{ex}_t} \langle r-t\rangle^{2\gamma} \big| (F+h\phi) \cdot \partial_t \phi \big| \, \d x \d s
\\
&+
\int_{\mathcal{D}^{ex}_t} \langle r-t \rangle^{2\gamma} \big| |\phi|^2  \partial_t h  \big| + \gamma \langle r-t\rangle^{2\gamma-1} \big| h  |\phi|^2 \big| \, \d x \d s,
\end{aligned}
\end{equation}
in which $h$ is a smooth function with $|h|\ll {1\over 10}$.
\end{proposition}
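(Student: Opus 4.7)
The plan is to follow the proof of Proposition~\ref{prop:EE-ex} closely, but to exploit the ``perturbation of mass'' viewpoint advertised in the introduction. First, I would rewrite \eqref{eq:wave-type-eqn} (with $m=1$) as
\[
-\Box \phi + (1+h)\phi = F + h\phi,
\]
so the effective mass becomes $1+h$. Since $|h|\ll 1/10$, this coefficient is comparable to $1$, so the modified mass-energy $(1+h)|\phi|^2$ is uniformly equivalent to $|\phi|^2$ and an honest energy may still be defined by the same template as in Proposition~\ref{prop:EE-ex}.

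Next, I would multiply the rewritten equation by $(r-t+\tfrac{3}{2})^{2\gamma}\partial_t\phi$ and mimic the Leibniz computation from the proof of Proposition~\ref{prop:EE-ex}. The $-\Box\phi$ piece reproduces the same divergence identity as before, yielding the wave-energy density, the spatial flux, and the favorable ghost-weight bulk term $\gamma(r-t+\tfrac{3}{2})^{2\gamma-1}\sum_a|G_a\phi|^2$. For the new mass term the Leibniz rule gives
\[
(1+h)\phi\cdot(r-t+\tfrac{3}{2})^{2\gamma}\partial_t\phi
= \partial_t\!\Bigl(\tfrac{(1+h)(r-t+\tfrac{3}{2})^{2\gamma}}{2}|\phi|^2\Bigr)
- \tfrac{1}{2}(\partial_t h)(r-t+\tfrac{3}{2})^{2\gamma}|\phi|^2
+ \gamma(1+h)(r-t+\tfrac{3}{2})^{2\gamma-1}|\phi|^2,
\]
and I would split the last contribution as $\gamma(r-t+\tfrac{3}{2})^{2\gamma-1}|\phi|^2 + \gamma h(r-t+\tfrac{3}{2})^{2\gamma-1}|\phi|^2$, keeping the signed piece on the left and moving the unsigned $h$-perturbation to the right. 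The $\partial_t h$ term is likewise unsigned and is sent to the right-hand side, producing the $|\phi|^2\partial_t h$ contribution in the statement.

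Integrating this divergence identity over $\mathcal{D}^{ex}_t$ and invoking the divergence theorem then yields, on the left, the exterior energy at time $t$, minus the data energy at $t_0$, plus the flux on $\Sigma^{bd}_t$, each with $|\phi|^2$ carrying the coefficient $1+h$. Using $|h|\ll 1/10$, these perturbed quantities are comparable to the unperturbed $\mathcal{E}^{ex}_1(\phi,\cdot,\gamma)$ and to the (still positive) boundary flux on $\Sigma^{bd}_t$, and collecting all the terms gives the desired estimate. The main obstacle, though mild, is the bookkeeping: one must verify that the boundary flux on the null-spacelike cone $\Sigma^{bd}_t$ retains its positivity after insertion of the coefficient $1+h$ in front of $|\phi|^2$, and that no bulk contribution other than the two listed $h$-perturbations appears. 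Both checks rely crucially on the smallness assumption $|h|\ll 1/10$, which is precisely what pins down the size constraint stated in the proposition.
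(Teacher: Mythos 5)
Your proposal is correct and matches the paper's proof essentially step for step: the paper also rewrites the equation with effective mass $1+h$, multiplies by $(r-t+\tfrac{3}{2})^{2\gamma}\partial_t\phi$, records the same divergence identity with the perturbed mass-energy $(1+h)|\phi|^2$ appearing in the time-derivative, splits off the $\partial_t h$ and $\gamma h$ contributions to the right-hand side, and then invokes the divergence theorem and the smallness of $h$ exactly as in Proposition~\ref{prop:EE-ex}. The only point worth noting is that the paper leaves the comparability of the $(1+h)$-weighted energy to the unweighted one (and the positivity of the boundary flux) implicit in the phrase ``argue similarly,'' which you have spelled out explicitly.
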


\begin{proof}
Since $\phi$ solves \eqref{eq:wave-type-eqn}, we have 
\begin{align*}
\partial_t^2 \phi - \Delta \phi +  \phi + h \phi = F+ h\phi.
\end{align*}
Then multiplying by $(r-t+\frac 32)^{2\gamma} \partial_t \phi$ on both sides of the above equation, one can obtain
\begin{align*}
& \partial_t \Big(\frac 12 (r-t+\frac 32)^{2\gamma}\big(|\partial \phi|^2 + (1+h) |\phi|^2\big) \Big)-\partial^{i}\big((r-t+\frac 32)^{2\gamma}\partial_i \phi \partial_t \phi  \big) \\
& \qquad +\gamma (r-t+\frac 32)^{2\gamma-1} \Big(\sum_{a=1}^2|G_a\phi|^2 + |\phi|^2 \Big)  \\ 
& = (r-t+\frac 32)^{2\gamma}(F+h\phi) \partial_t \phi + \frac 12  (r-t+\frac 32)^{2\gamma} |\phi|^2 \partial_t h  - \gamma (r-t+\frac 32)^{2\gamma-1} h |\phi|^2.
\end{align*}
Then one can argue similarly as that in Proposition \ref{prop:EE-ex} to get the desired result.  The proof is done.
\end{proof}

We turn to the energy in the interior region $\{ r\leq t-1 \}$, and we always take $\eta_0 =1$ in this region. For a function $\phi$ defined on $\mathcal{H}_\tau$, we define its weighted energy by
\begin{equation}
\begin{aligned}
&\mathcal{E}^{in}_m (\phi, \tau, \gamma)
\\
:=
&\int_{\mathcal{H}_\tau} \tau_-^{-2\gamma} \Big( \sum_\alpha |\cancel{\partial}_\alpha \phi|^2 + m^2\phi^2 \Big) \, \d x
+
\gamma \int_{\tau_0}^\tau \int_{\mathcal{H}_{\widetilde{\tau}}} (\widetilde{\tau}/t) { |G \phi |^2 +m^2 \phi^2 \over \tau_-^{1+2\gamma}}   \, \d x \d \widetilde{\tau},
\end{aligned}
\end{equation}
in which $\gamma\geq 0$ and
\begin{align*}
\int_{\mathcal{H}_\tau} \psi(t, x) \, \d x
:=
\int_{\mathbb{R}^2} \psi(\sqrt{\tau^2 + |x|^2}, x) \, \d x,
\end{align*}
for a smooth function $\psi(t, x)$ defined on $\mathcal{H}_\tau$.
We use the notation for $p \in [1, +\infty)$
\begin{align}
\| \psi \|_{L^p(\mathcal{H}_\tau)}
=
\Big( \int_{\mathcal{H}_\tau} |\psi(t, x)|^p \, \d x \Big)^{1\over p}.
\end{align}
For simplicity, we tacitly denote
\begin{align}
    \mathcal{E}^{in}(\phi, \tau, \gamma)
    =
    \mathcal{E}^{in}_0(\phi, \tau, \gamma)
\end{align}
as well as
\begin{align}
    \mathcal{E}^{in}_m(\phi, \tau)
    =
    \mathcal{E}^{in}_m(\phi, \tau, 0).
\end{align}
One also checks that
\begin{align}
    \big\| \tau_-^{-\gamma} (\tau/t) \partial \phi\big\|_{L^2(\mathcal{H}_\tau)}^2
    \lesssim
    \mathcal{E}^{in}_m(\phi, \tau, \gamma).
\end{align}

\begin{proposition}[Weighted energy estimates in the interior regions I]\label{prop:EE-in}
Let $\phi$ solve \eqref{eq:wave-type-eqn}. Then, we have (below $\gamma \geq 0$)
\begin{equation}
\begin{aligned}
\mathcal{E}^{in}_m (\phi, \tau, \gamma)
\lesssim
&\mathcal{E}^{in}_m (\phi, \tau_0, \gamma)
+
\int_{\tau_0}^\tau \big\|\tau_-^{-2\gamma} F \cdot (\widetilde{\tau} / t) \partial_t \phi \big\|_{L^1(\mathcal{H}_{\widetilde{\tau}})} \, \d \widetilde{\tau}
\\
&+
\int_{\Sigma^{bd}_{t^{bd}}}  \sum_a ({x_a \over r} \partial_t \phi + \partial_a \phi)^2 + m^2 \phi^2  \, \d S,
\end{aligned}
\end{equation}
in which $t^{bd} = {\tau^2+1\over 2}$.
\end{proposition}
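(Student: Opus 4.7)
The plan is to apply a weighted multiplier method, with multiplier $\tau_-^{-2\gamma}\partial_t\phi$, followed by the Euclidean divergence theorem on the truncated hyperboloidal slab $\mathcal{D}_{[\tau_0,\tau]}$, whose boundary consists of the two hyperboloids $\mathcal{H}_\tau$, $\mathcal{H}_{\tau_0}$ and the outgoing null cone $\Sigma^{bd}_{t^{bd}}$. This parallels the exterior estimates in Propositions~\ref{prop:EE-ex}--\ref{prop:EE-ex2}, but adapted to a hyperboloidal foliation truncated by $\{r=t-1\}$.

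First I would multiply the equation by $\tau_-^{-2\gamma}\partial_t\phi$, use $\partial_t\tau_- = 1$ and $\partial_a\tau_- = -x_a/r$ on $\{r<t\}$ to move the weight inside the derivatives, and invoke the pointwise algebraic identity $|\partial\phi|^2 + 2(x^a/r)\partial_a\phi\,\partial_t\phi = |G\phi|^2$ to collect the weight-differentiation terms into an Alinhac-type ghost contribution. This yields the divergence identity
\begin{align*}
\partial_t\Big(\tfrac{1}{2}\tau_-^{-2\gamma}(|\partial\phi|^2+m^2\phi^2)\Big)
-\partial^a\big(\tau_-^{-2\gamma}\partial_a\phi\,\partial_t\phi\big)
+\gamma\tau_-^{-2\gamma-1}\big(|G\phi|^2+m^2\phi^2\big)
=\tau_-^{-2\gamma}F\partial_t\phi,
\end{align*}
in which the ghost term on the left carries the favourable non-negative sign.

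Next I would integrate this identity over $\mathcal{D}_{[\tau_0,\tau]}$ and apply the Euclidean divergence theorem. On the hyperboloids $\mathcal{H}_{\widetilde{\tau}}$ the outward unit normal relative to $\mathcal{D}$ is $(1,-x/t)/\sqrt{1+r^2/t^2}$ with Euclidean area element $\sqrt{1+r^2/t^2}\,dx$; using the algebraic identity $|\partial\phi|^2+2(x^a/t)\partial_a\phi\,\partial_t\phi=\sum_\alpha|\cancel{\partial}_\alpha\phi|^2$, the fluxes through $\mathcal{H}_\tau$ and $\mathcal{H}_{\tau_0}$ reduce to $\tfrac{1}{2}\tau_-^{-2\gamma}(\sum_\alpha|\cancel{\partial}_\alpha\phi|^2+m^2\phi^2)\,dx$, matching (half of) the hyperboloidal energy density. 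The change of variables $dt\,dx=(\widetilde{\tau}/t)\,d\widetilde{\tau}\,dx$ then converts the interior ghost integral and the source integral into precisely the forms appearing in $\mathcal{E}^{in}_m(\phi,\tau,\gamma)$ and on the right-hand side of the proposition.

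The delicate step I expect to be the main technical moment is the contribution from the cone boundary $\Sigma^{bd}_{t^{bd}}$. Here the outward unit normal is $\nu=(-1,x/r)/\sqrt{2}$, and the crucial observation is that $\tau_-\equiv 1$ on this cone, so the weight disappears. A direct computation of $Q\cdot\nu$ with $Q^0=\tfrac{1}{2}\tau_-^{-2\gamma}(|\partial\phi|^2+m^2\phi^2)$ and $Q^a=-\tau_-^{-2\gamma}\partial_a\phi\,\partial_t\phi$, combined with the same $|G\phi|^2$-identity as above, produces a flux of $-\tfrac{1}{2\sqrt{2}}(|G\phi|^2+m^2\phi^2)\,dS$, which is non-positive (consistent with $\{r=t-1\}$ being outgoing null). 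Moving this term across to the right-hand side gives, up to an absolute constant absorbed in $\lesssim$, the boundary integral stated in the proposition, and assembling the three boundary pieces together with the non-negative ghost term retained on the left and the source integral on the right yields the claimed inequality.
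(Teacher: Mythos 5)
Your proposal is correct and takes essentially the same approach as the paper: the paper writes down exactly your divergence identity \eqref{eq:div1} (with the ghost term recognized via the $|G\phi|^2$ identity) and then says "the proof follows from the divergence theorem", which is precisely what you carry out. The boundary-flux bookkeeping you provide — the hyperboloidal flux giving $\tfrac12\tau_-^{-2\gamma}(\sum_\alpha|\cancel{\partial}_\alpha\phi|^2 + m^2\phi^2)$ via the identity $|\partial\phi|^2 + 2(x^a/t)\partial_a\phi\,\partial_t\phi = \sum_\alpha|\cancel{\partial}_\alpha\phi|^2$, the change of variables $dt\,dx = (\widetilde{\tau}/t)\,d\widetilde{\tau}\,dx$ for the bulk ghost term, and the cone flux reducing to $-\tfrac{1}{2\sqrt{2}}(|G\phi|^2 + m^2\phi^2)\,dS$ once one notes $\tau_-\equiv 1$ on $\{r=t-1\}$ — is what the paper leaves implicit, and all your computations check out.
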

\begin{proof}
One derives the following identity
\begin{equation}\label{eq:div1}
 \begin{aligned}   
&{1\over 2} \partial_t \Big( \tau_-^{-2\gamma} \big( |\partial \phi|^2 + m^2 \phi^2 \big) \Big)
-
\partial_a \big( \tau_-^{-2\gamma} \partial^a \phi \partial_t \phi  \big)
\\
&+
\gamma \tau_-^{-2\gamma-1} \Big( \sum_a \big( {x_a \over r }\partial_t \phi + \partial_a \phi \big)^2 + m^2 \phi^2 \Big)
=
\tau_-^{-2\gamma}  F\partial_t \phi,
\end{aligned}
\end{equation}
and the proof follows from the divergence theorem.
\end{proof}

\begin{proposition}[Weighted energy estimates in the interior regions II]\label{prop:EE-in2}
Let $\phi$ solve \eqref{eq:wave-type-eqn} with $m=1$, and $\gamma \geq 0$. Then we have
\begin{equation}
\begin{aligned}
\mathcal{E}^{in}_1 (\phi, \tau, \gamma)
\lesssim
&\mathcal{E}^{in}_1 (\phi, \tau_0, \gamma)
+
\int_{\tau_0}^\tau \big\| \tau_-^{-2\gamma}  (F+h\phi) \cdot (\widetilde{\tau} / t)\partial_t \phi \big\|_{L^1(\mathcal{H}_{\widetilde{\tau}})} \,  \d \widetilde{\tau}
\\
&+
 \int_{\tau_0}^\tau \big\| \tau_-^{-2\gamma}  |\phi|^2  (\widetilde{\tau} / t)\partial_t h  \big\|_{L^1(\mathcal{H}_{\widetilde{\tau}})} 
+ \gamma \big\| \tau_-^{-2\gamma-1}  (\widetilde{\tau} / t) h  |\phi|^2 \big\|_{L^1(\mathcal{H}_{\widetilde{\tau}})}  \,  \d \widetilde{\tau}
\\
&+
\int_{\Sigma^{bd}_{t^{bd}}}  \sum_a ({x_a \over r} \partial_t \phi + \partial_a \phi)^2 + |\phi|^2  \, \d S,
\end{aligned}
\end{equation}
in which $t^{bd} = {\tau^2+1\over 2}$ and $h$ is a smooth function with $|h|\ll {1\over 10}$.
\end{proposition}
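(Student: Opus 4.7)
The proof will parallel Proposition \ref{prop:EE-in} exactly as Proposition \ref{prop:EE-ex2} parallels Proposition \ref{prop:EE-ex}: the perturbation $h\phi$ will be absorbed into the mass term on the left, and the resulting extra contributions (from $\partial_t h$ and the weight-derivative acting on the mass energy density) will appear as forcing terms on the right.

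The plan is to rewrite the equation as
\begin{align*}
\partial_t^2 \phi - \Delta \phi + (1+h)\phi = F + h\phi,
\end{align*}
then multiply by the standard interior multiplier $\tau_-^{-2\gamma} \partial_t \phi$ used in the proof of Proposition \ref{prop:EE-in}. Applying Leibniz rule carefully, I expect to obtain the divergence identity
\begin{align*}
&\tfrac{1}{2}\partial_t\Big( \tau_-^{-2\gamma}\big(|\partial \phi|^2 + (1+h)|\phi|^2\big)\Big)
- \partial_a\big( \tau_-^{-2\gamma} \partial^a \phi \, \partial_t \phi\big)
\\
&+ \gamma \tau_-^{-2\gamma-1}\Big( \sum_a \big(\tfrac{x_a}{r}\partial_t \phi + \partial_a \phi\big)^2 + |\phi|^2\Big)
\\
&= \tau_-^{-2\gamma}(F+h\phi)\partial_t\phi
+ \tfrac{1}{2}\tau_-^{-2\gamma}|\phi|^2 \partial_t h
- \gamma \tau_-^{-2\gamma-1} h |\phi|^2,
\end{align*}
which is the exact interior analogue of the identity derived in the proof of Proposition \ref{prop:EE-ex2}. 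The first term on the right is the forcing driven by $F+h\phi$, the second appears because the coefficient $(1+h)$ in the energy density depends on $t$, and the third comes from the fact that $h$ does not contribute to the manifestly positive boundary flux term $\gamma \tau_-^{-2\gamma-1}|\phi|^2$.

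Next, I would integrate this identity over the truncated hyperboloidal region $\mathcal{D}_{[\tau_0, \tau]}$ and apply the divergence theorem. The boundary consists of the two hyperboloids $\mathcal{H}_\tau$ and $\mathcal{H}_{\tau_0}$ together with the cone piece $\Sigma^{bd}_{t^{bd}} = \{r = t-1,\, t_0 \leq t \leq t^{bd}\}$ where $t^{bd} = (\tau^2+1)/2$, since this is exactly the $t$-value at which $\mathcal{H}_\tau$ meets the light cone. The contributions from $\mathcal{H}_\tau$ and $\mathcal{H}_{\tau_0}$ yield $\mathcal{E}^{in}_1(\phi,\tau,\gamma)$ and $\mathcal{E}^{in}_1(\phi,\tau_0,\gamma)$ respectively (here the smallness hypothesis $|h| \ll 1/10$ ensures that the quadratic form $|\partial \phi|^2 + (1+h)|\phi|^2$ is coercive and comparable to $|\partial \phi|^2 + |\phi|^2$, so it really produces $\mathcal{E}^{in}_1$ up to harmless constants that can be absorbed into $\lesssim$); the outward flux along the cone $\{r=t-1\}$ produces the boundary integral in the statement, after again using $|h|\ll 1/10$ to absorb $h|\phi|^2$ into $|\phi|^2$. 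The three source terms on the right convert, via the coarea / slicing $\int_{\mathcal{D}_{[\tau_0,\tau]}} = \int_{\tau_0}^{\tau} \int_{\mathcal{H}_{\widetilde{\tau}}} (\widetilde{\tau}/t)\,\d x\,\d\widetilde{\tau}$ already used in Proposition \ref{prop:EE-in}, into the $L^1(\mathcal{H}_{\widetilde{\tau}})$-type time integrals of $F+h\phi$, of $|\phi|^2 \partial_t h$, and of $h|\phi|^2$ displayed in the statement.

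There is no substantial obstacle here: the argument is a routine adaptation of Proposition \ref{prop:EE-in} with the extra bookkeeping of Proposition \ref{prop:EE-ex2}. The only mild point is to keep the sign of the $\gamma$-weighted bulk term positive so that the two contributions involving $h$ are legitimately moved to the right-hand side; this is guaranteed by the smallness assumption $|h|\ll 1/10$, which ensures $1+h \gtrsim 1$ and lets one absorb the $h$-perturbed coercive terms into their unperturbed counterparts.
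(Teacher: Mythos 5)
Your proposal is correct and follows the paper's proof essentially verbatim: the paper also rewrites the equation with $(1+h)\phi$ on the left, derives the identical divergence identity (equation \eqref{eq:div2} in the paper, with $G_a\phi = \partial_a\phi + \frac{x_a}{r}\partial_t\phi$), and concludes by the divergence theorem over $\mathcal{D}_{[\tau_0,\tau]}$. Your added remarks on coercivity under $|h|\ll 1/10$ and the slicing $\int_{\mathcal{D}_{[\tau_0,\tau]}} = \int_{\tau_0}^\tau\int_{\mathcal{H}_{\widetilde\tau}}(\widetilde\tau/t)\,\d x\,\d\widetilde\tau$ are correct and simply make explicit details the paper leaves implicit.
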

\begin{proof}
From
\begin{align*}
\partial_t^2 \phi - \Delta \phi +  \phi + h \phi = F+ h\phi,
\end{align*}
one gets
\begin{equation}\label{eq:div2}
 \begin{aligned}  
& \partial_t \Big(\frac 12 \tau_-^{-2\gamma}\big(|\partial \phi|^2 + (1+h) |\phi|^2\big) \Big)-\partial^{i}\big(\tau_-^{-2\gamma}\partial_i \phi \partial_t \phi  \big) \\
& \qquad +\gamma \tau_-^{-2\gamma-1} \Big(\sum_{a=1}^2|G_a\phi|^2 + \phi^2 \Big)  \\ 
& = \tau_-^{-2\gamma}(F+h\phi) \partial_t \phi + \frac 12  \tau_-^{-2\gamma} \phi^2 \partial_t h  - \gamma \tau_-^{-2\gamma-1} h \phi^2,
\end{aligned}
\end{equation}
and the proof follows from the divergence theorem.
\end{proof}

\begin{proposition}[Conformal energy estimates for free wave] \label{prop:con-EE}
Consider \eqref{eq:wave-type-eqn} with $m=0, F=0$. Then one has
\begin{align}
\mathcal{E}_{con} (\phi, \tau)
\leq
2\mathcal{E}_{con} (\phi, 1),
\end{align}
in which (with $K_0 = (t^2+r^2) \partial_t + 2rt \partial_r$)
\begin{align*}
\mathcal{E}_{con} (\phi, \tau)
=
&\int_{\mathcal{H}_\tau} \big( (K_0 \phi /t) + \phi  \big)^2 + (\tau / t)^2 \sum_a (L_a \phi)^2 \, \d x
\\
&+
\int_{\{ t={1+\tau^2 \over 2}, \, r\geq 0 \}} (L_0 \phi + \phi)^2 + \sum_a (L_a \phi)^2 + (\Omega \phi)^2 \, \d x.
\end{align*}
\end{proposition}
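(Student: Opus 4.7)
The plan is to apply the conformal multiplier $K_0\phi + t\phi$ (the two-dimensional analogue of the classical Morawetz multiplier) to the free wave equation $-\Box\phi = 0$, derive a divergence-free current, and then invoke the divergence theorem on two adjacent spacetime regions whose combined upper boundaries realise the composite slice $\mathcal{H}_\tau \cup \{t = (1+\tau^2)/2,\, r\geq 0\}$ appearing in $\mathcal{E}_{con}(\phi, \tau)$. Contracting $K_0\phi + t\phi$ with $-\Box\phi$ and integrating by parts yields a pointwise identity $\partial_t Q^0 + \partial_a Q^a = 0$, where $Q^0 \geq 0$ is a universal non-negative quadratic form in $(\phi,\partial\phi)$. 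The first (and most delicate) computation is to verify that the flux $Q^\alpha n_\alpha\,\d S$ reduces precisely to $(L_0\phi + \phi)^2 + \sum_a(L_a\phi)^2 + (\Omega\phi)^2$ on a flat slice $\{t = T\}$, and to $(K_0\phi/t + \phi)^2 + (\tau/t)^2\sum_a(L_a\phi)^2$ on a hyperboloid $\mathcal{H}_\tau$, matching the integrands in the statement up to an immaterial positive constant.

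With the identity in hand, I integrate it over the interior wedge $\mathcal{D}^{in}_{[1,\tau]}$; since $\mathcal{H}_1$ degenerates to the single point $(1,0)$, the only nontrivial boundaries are $\mathcal{H}_\tau$ on top and the null segment $\mathcal{C} = \{|x| = t-1,\, 1\leq t \leq (1+\tau^2)/2\}$ on the side. Because $\mathcal{C}$ is outgoing null, the induced flux is a quadratic form in the good derivatives $G_a\phi$ and hence non-negative, so
\begin{equation*}
\int_{\mathcal{H}_\tau} Q^\alpha n_\alpha\,\d S = \int_{\mathcal{C}} (\text{non-negative flux}).
\end{equation*}
In parallel, I apply the divergence theorem on the exterior wedge $\{1\leq t\leq (1+\tau^2)/2,\, r\geq t-1\}$, whose boundaries are $\{t = 1,\, r\geq 0\}$ (bottom), $\{t = (1+\tau^2)/2,\, r\geq (\tau^2-1)/2\}$ (top), and $\mathcal{C}$ with reversed orientation. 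Rearranging and dropping the manifestly non-negative top-exterior flux yields $\int_\mathcal{C}(\text{flux}) \leq \int_{\{t=1,\,r\geq 0\}} Q^0\,\d x = \mathcal{E}_{con}(\phi, 1)$, which controls the hyperboloidal part of $\mathcal{E}_{con}(\phi,\tau)$ by $\mathcal{E}_{con}(\phi, 1)$.

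For the flat contribution $\int_{\{t=(1+\tau^2)/2,\, r\geq 0\}}$ in $\mathcal{E}_{con}(\phi,\tau)$, I apply the divergence identity directly on the slab $\{1\leq t \leq (1+\tau^2)/2,\, r\geq 0\}$, which has no lateral boundary (fluxes vanish as $r\to \infty$ thanks to the decay of the smooth, localised data), recovering the standard flat conformal energy conservation
\begin{equation*}
\int_{\{t=(1+\tau^2)/2,\, r\geq 0\}} Q^0\,\d x = \int_{\{t=1,\, r\geq 0\}} Q^0\,\d x = \mathcal{E}_{con}(\phi, 1).
\end{equation*}
Summing this identity with the hyperboloidal bound of the preceding paragraph produces the claimed $\mathcal{E}_{con}(\phi, \tau) \leq 2\mathcal{E}_{con}(\phi, 1)$, with the factor $2$ coming exactly from the two contributions.

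The main technical obstacle is the algebraic reorganisation of $Q^0$ so that it matches the statement's integrand word-for-word on both types of slices — in particular extracting the distinctive combination $(K_0\phi/t+\phi)^2$ on the hyperboloid (via the $\phi^2$ correction attached to the conformal current) and the $(\Omega\phi)^2$ on the flat slice. Once this identification, together with the non-negativity of the null-cone flux in terms of the good derivatives $G_a\phi$, is established, the remainder reduces to routine applications of the divergence theorem.
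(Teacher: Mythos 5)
Your approach is correct and essentially the same as the paper's: both multiply the free wave equation by the conformal multiplier $2(K_0\phi + t\phi)$, write the result as a spacetime divergence identity, and then apply the divergence theorem over two regions — a region reaching from $\{t=1\}$ up to $\mathcal{H}_\tau\cup\{t=(\tau^2+1)/2,\,r\geq(\tau^2-1)/2\}$, and the full slab $\{1\leq t\leq(\tau^2+1)/2,\,r\geq 0\}$ — and sum the two resulting identities, dropping the nonnegative flux through the outer part of the top flat slice to produce the factor of $2$. The one cosmetic difference is that you decompose the first region into an interior wedge $\mathcal{D}_{[1,\tau]}$ and an exterior wedge glued along the cone $\mathcal{C}=\{r=t-1\}$ and pass through the intermediate cone flux, whereas the paper integrates directly over the union $\mathcal{D}_{[\tau_0,\tau]}\cup\{t_0\leq t\leq(\tau^2+1)/2,\,r\geq t-1\}$ so that the cone flux cancels internally; the two bookkeepings are equivalent, and in fact your explicit step shows the cone flux also appears with the ``good'' sign, a fact consistent with (though not needed for) the conclusion and used elsewhere in the paper when matching interior and exterior energies across $\Sigma^{bd}_t$.
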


\begin{proof}
Multiplying $2(K_0 \phi + t\phi)$ on both sides of \eqref{eq:wave-type-eqn} with $m=0, F=0$, one obtains
\begin{align*}
&2(K_0 \phi + t\phi) (-\Box \phi)
\\
=
&\partial_t \big((r^2+t^2)|\partial \phi|^2 + 4rt\partial_t \phi \partial_r \phi + 2t\phi \partial_t\phi - \phi^2 + \partial_a(x^a \phi^2)\big)
\\
&+
2 \partial^a \Big( tx_a \big(-(\partial_t \phi)^2 + \partial_b \phi \partial^b \phi\big) - (r^2+t^2)\partial_t \phi \partial_a \phi -2rt \partial_r \phi \partial_a \phi 
\\
&\qquad -t\phi \partial_a \phi - {1\over 2} \partial_t (x_a \phi^2)\Big).
\end{align*}
We first integrate the above identity over the spacetime region $\mathcal{D}_{[\tau_0, \tau]} \bigcup \{ t_0\leq t\leq {\tau^2 + 1\over 2},\, r\geq t-1  \}$ and  $\{ t_0\leq t\leq {\tau^2 + 1\over 2},\, r\geq 0  \}$, respectively. Then, applying the divergence theorem and taking the summation of these two estimates to get the desired result. 
\end{proof}

\begin{remark}
In the interior region, one notes that
\begin{align*}
{1\over t} K_0 
=
L_0 + {x^a \over t} L_a,
\end{align*}
and therefore, one has
\begin{align}
(K_0 \phi /t) + \phi
=
L_0 \phi + {x^a \over t} L_a \phi + \phi.
\end{align}
This means that we can bound $L_0 \phi$ once we have estimates on $\phi$ as the bounds on $L_a \phi$ already appear in the conformal energy.
\end{remark}

\subsection{Additional structure of wave-type equations}

\begin{proposition}[\cite{Klainerman93}]\label{prop:KG-extra}
Consider the wave-type (Klein-Gordon) equation \eqref{eq:wave-type-eqn} with $m=1$.
For $t>0$ it holds
\begin{align}
|\phi| \lesssim
{\langle\tau_-\rangle \over \langle t \rangle} |\partial \partial \phi| + {1\over \langle t \rangle} |\partial L \phi| + {1\over \langle t \rangle} |\partial \phi| + |F|,
\qquad
\text{for } r \leq 3t.
\end{align}

\end{proposition}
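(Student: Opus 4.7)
The plan is to extract the claimed gain in $t$ by purely algebraic manipulation of the equation, trading Cartesian derivatives for boosts via the identity $t\partial_a = L_a - x_a\partial_t$. Since this trade is only useful when $|x_a|/t$ is bounded, the restriction $r\le 3t$ enters naturally.

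First I would rewrite the equation as $\phi = F - \partial_t^2\phi + \Delta\phi$ and multiply by $t^2$. The central step is to re-express $t^2\Delta\phi$: from $\partial_a L_a = \partial_t + x_a\partial_a\partial_t + t\partial_a^2$ (no summation on $a$), one gets $t^2\partial_a^2\phi = t\partial_a L_a\phi - t\partial_t\phi - tx_a\partial_a\partial_t\phi$ for each $a\in\{1,2\}$; a second application of $t\partial_a = L_a - x_a\partial_t$ in the form $t\partial_a\partial_t\phi = \partial_t L_a\phi - x_a\partial_t^2\phi - \partial_a\phi$ eliminates the remaining mixed derivative. Summing over $a$ and collecting terms, I expect the identity
\begin{equation*}
t^2\Box\phi = -(t^2-r^2)\partial_t^2\phi + t\sum_a\partial_a L_a\phi - \sum_a x_a\partial_t L_a\phi - 2t\partial_t\phi + r\partial_r\phi.
\end{equation*}

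Next I would substitute $\Box\phi = \phi - F$ on the left-hand side and divide by $t^2$ to obtain
\begin{equation*}
\phi = F - \frac{t^2-r^2}{t^2}\partial_t^2\phi + \frac{1}{t}\sum_a\partial_a L_a\phi - \sum_a\frac{x_a}{t^2}\partial_t L_a\phi - \frac{2}{t}\partial_t\phi + \frac{r}{t^2}\partial_r\phi.
\end{equation*}
For $r\le 3t$ and $t\ge 1$, the prefactors satisfy $|t^2-r^2|/t^2 \le |t-r|(t+r)/t^2 \le 4|t-r|/t \lesssim \langle\tau_-\rangle/\langle t\rangle$ together with $|x_a|/t^2,\,|r|/t^2 \le 3/\langle t\rangle$, so taking absolute values produces the asserted pointwise bound. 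For $0<t\le 1$, we have $\langle t\rangle\sim 1$, and the trivial estimate $|\phi|\le |F| + |\partial^2\phi|$ read off directly from the equation already dominates the claimed right-hand side.

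The argument is purely algebraic: no integration, no Gronwall, and no functional inequality are required. The only bookkeeping obstacle is tracking the commutators $[\partial_a, L_b]$ correctly when deriving the key identity, and noting that $|r\partial_r\phi|/t^2 \le (r/t^2)|\partial\phi|$ is absorbed into the $\langle t\rangle^{-1}|\partial\phi|$ term in the region $r\le 3t$.
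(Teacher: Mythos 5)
Your proposal is correct and matches the paper's argument: the identity you derive for $t^2\Box\phi$ is exactly the representation \eqref{reprewave} that the paper states without derivation, and the subsequent steps (divide by $t^2$, use $\phi=\Box\phi+F$, bound the prefactors using $r\le 3t$ for $t\ge 1$, and fall back on the trivial estimate $|\phi|\le|\partial\partial\phi|+|F|$ for $0<t\le 1$) are the same. The only difference is that you supply the elementary derivation of \eqref{reprewave} via the substitution $t\partial_a=L_a-x_a\partial_t$, which the paper takes for granted.
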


\begin{proof}
Considering $t\geq 1$ first,  we can rewrite the wave operator $-\Box$ as 
\begin{align}\label{reprewave}
-\Box = \frac{\tau_- \tau_+}{t^2} \partial_t\partial_t -\frac{1}{t} \partial^a L_a + \frac{x^a}{t^2} \partial_t L_a + \frac{2}{t}\partial_t -\frac{x^a}{t^2}\partial_a.
\end{align}
Then following equation \eqref{eq:wave-type-eqn} with $m=1$, one can see
\begin{align*}
\phi = \Box \phi + F.
\end{align*}
This combining with \eqref{reprewave} and $r\leq 3t$ implies the desired result. 
While for $0 < t\leq 1$,  we can use the equation \eqref{eq:wave-type-eqn} to get
\begin{align*}
|\phi| \leq |\Box \phi| +|F| \leq |\partial \partial \phi| + |F|.
\end{align*}
The proof is completed.
\end{proof}

\begin{proposition}[\cite{MaH2}]\label{prop:wave-extra}
Consider the wave-type (wave) equation \eqref{eq:wave-type-eqn} with $m=0$.
For $t>0$ it holds
\begin{align}
|\partial \partial\phi| \lesssim
 {1\over \langle  \tau_-\rangle } |\partial \Gamma \phi| + {1\over \langle \tau_- \rangle} |\partial \phi| +  {t\over \langle \tau_-\rangle}|F|,
\qquad
\text{for } |x| \leq 3t.
\end{align}

\end{proposition}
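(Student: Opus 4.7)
The plan is to mimic the strategy used for the Klein–Gordon case in Proposition \ref{prop:KG-extra}, adapting it to the wave setting by solving for the second derivatives from the representation \eqref{reprewave} of the wave operator.

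First, consider $t \ge 1$. Starting from $-\Box \phi = F$ together with the identity \eqref{reprewave}, I would isolate $\partial_t \partial_t \phi$:
\begin{align*}
\frac{\tau_- \tau_+}{t^2} \partial_t \partial_t \phi
= F + \frac{1}{t} \partial^a L_a \phi - \frac{x^a}{t^2} \partial_t L_a \phi - \frac{2}{t} \partial_t \phi + \frac{x^a}{t^2} \partial_a \phi.
\end{align*}
In the region $|x| \le 3t$ one has $\tau_+ = t + |x| \simeq t$, so the scalar factor $t^2 / (\tau_- \tau_+)$ is comparable to $t / \langle \tau_- \rangle$. Multiplying through, the $F$-term picks up the announced weight $t / \langle \tau_- \rangle$, while the remaining terms produce $|\partial \Gamma \phi| / \langle \tau_- \rangle$ and $|\partial \phi| / \langle \tau_- \rangle$ (using $|x^a|/t \lesssim 1$). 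This yields the desired bound for $|\partial_t \partial_t \phi|$.

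Next I would handle the mixed and purely spatial second derivatives via the algebraic identity $t \partial_a = L_a - x_a \partial_t$, which holds for $t > 0$. Differentiating $\partial_a \phi = (L_a \phi - x_a \partial_t \phi)/t$ in $\partial_t$ and in $\partial_b$, and using the commutators $[\partial_t, L_a] = \partial_a$ and $[\partial_b, L_a] = \delta_{ab}\partial_t$, gives schematically
\begin{align*}
|\partial_t \partial_a \phi| + |\partial_a \partial_b \phi|
\lesssim \frac{1}{t}|\partial \Gamma \phi| + \frac{1}{t}|\partial \phi| + \frac{|x|}{t}|\partial_t \partial_t \phi|.
\end{align*}
Since $|x|/t \lesssim 1$ in $\{|x|\le 3t\}$ and $1/t \le 1/\langle \tau_-\rangle$, plugging in the bound on $|\partial_t \partial_t \phi|$ obtained in the previous step finishes the case $t\ge 1$.

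For the remaining range $0 < t \le 1$ together with $|x| \le 3t \le 3$, one has $\langle \tau_- \rangle \simeq 1$ and $t/\langle \tau_-\rangle \simeq t \lesssim 1$, so the claim reduces to $|\partial \partial \phi| \lesssim |\partial \Gamma \phi| + |\partial \phi| + |F|$, which follows directly from the equation $\partial_t^2 \phi = \Delta \phi + F$ together with $|\Delta\phi| \lesssim |\partial_a \partial_b \phi|$ and the same algebraic manipulations (which are trivial here since $t$ is bounded away from $0$ except on a compact set that we can absorb). The main subtlety of the argument is the degeneration of the coefficient $\tau_-\tau_+/t^2$ in front of $\partial_t^2\phi$ as one approaches the light cone $\tau_-=0$, which is exactly what forces the $1/\langle\tau_-\rangle$ loss in the final estimate; everything else is algebraic bookkeeping using the vector fields $L_a$ and the bound $|x|\lesssim t$.
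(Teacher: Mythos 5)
Your core strategy---isolate $\partial_t^2\phi$ from \eqref{reprewave} and then bootstrap $\partial_t\partial_a\phi$, $\partial_a\partial_b\phi$ via $t\partial_a = L_a - x_a\partial_t$ together with the commutators $[\partial_t, L_a]=\partial_a$, $[\partial_b,L_a]=\delta_{ab}\partial_t$---is exactly the route the paper's one-line proof has in mind, and your algebraic bookkeeping is correct. But there is a genuine gap near the light cone.

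You assert that $t^2/(\tau_-\tau_+)$ is ``comparable to $t/\langle\tau_-\rangle$'' on $\{|x|\le 3t\}$. That is false when $|\tau_-|<1$: since $\tau_+\simeq t$ there, $t^2/(\tau_-\tau_+) \simeq t/|\tau_-|$, which diverges as $\tau_-\to 0$, while $t/\langle\tau_-\rangle\simeq t$ stays bounded. The coefficient of $\partial_t^2\phi$ in \eqref{reprewave} vanishes on the cone, so the division step is simply unavailable there; this is not ``what forces the $1/\langle\tau_-\rangle$ loss''---it is a region where your estimate does not apply at all. Your case split on $0<t\le 1$ does not capture the problematic set $\{t\ge 1,\ |\tau_-|\le 1\}$, and even on $\{t\le 1\}$ the argument via $\partial_t^2\phi=\Delta\phi+F$ produces an $|F|$ term with no $t$ prefactor, which is weaker than the stated $t\langle\tau_-\rangle^{-1}|F|$ when $t$ is small.

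The paper splits instead on $|\tau_-|\le 1$ versus $|\tau_-|\ge 1$. On $\{|\tau_-|\le 1\}$ the bound is genuinely trivial and needs neither the equation nor $F$: since $\partial$ is itself one of the admissible vector fields in $V$, one has $|\partial\partial\phi|\le|\partial\Gamma\phi|\lesssim\langle\tau_-\rangle^{-1}|\partial\Gamma\phi|$ using $\langle\tau_-\rangle\simeq 1$ there. On $\{|\tau_-|\ge 1\}$ one has $\langle\tau_-\rangle\simeq|\tau_-|$ and your argument goes through verbatim. With this corrected case split, your proposal correctly fills in the paper's terse proof.
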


\begin{proof}
For $|t-r|\leq 1$, it is trivial. For $|t-r|\geq 1$, one concludes from the identity \eqref{reprewave}.
\end{proof}

\subsection{Sobolev-type inequality}

We first recall the standard Klainerman-Sobolev inequality in $\mathbb{R}^{1+2}$.
\begin{proposition}[\cite{Hormander, Sogge, LiZh}]\label{prop:KS}
    Let $\phi = \phi(t, x)$ be a smooth function decaying sufficiently fast at space infinity for every $t \geq 1$. 	Then it holds 
\begin{equation}
\langle \tau_+\rangle^{1\over 2} \langle \tau_-\rangle^{1\over 2} |\phi(t,x)|
\lesssim 
\sum_{|I|+ |J|\leq 2}\| \Gamma^I L_0^J \phi(t,\cdot)\|.
\end{equation}
\end{proposition}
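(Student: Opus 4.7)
The plan is to establish the pointwise bound by a case analysis based on the position of $(t,x)$ relative to the light cone, using in each case a rescaled two-dimensional Sobolev embedding $W^{2,2}\hookrightarrow L^\infty$ on a carefully chosen domain, together with the weighted identity \eqref{eq:wave-partial} which converts ordinary partial derivatives into the vector fields $\Gamma$ and $L_0$ at the cost of a factor $\tau_-^{-1}$. For $t\in[0,1]$ the claim follows directly from the standard $\mathbb{R}^2$ Sobolev embedding, so I may assume $t\geq 1$ and split the spatial domain into the bulk regions $\{|x|\leq t/2\}\cup\{|x|\geq 2t\}$ and the near-cone region $\{t/2\leq |x|\leq 2t\}$.

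In the bulk regions one has $\langle\tau_+\rangle\simeq\langle\tau_-\rangle$, so it suffices to prove $|\phi(t,x_0)|\lesssim \langle\tau_{-,0}\rangle^{-1}\sum_{|I|+|J|\leq 2}\|\Gamma^I L_0^J\phi\|$. I apply the rescaled $2$D Sobolev embedding on the Euclidean ball $B=B(x_0,\tau_{-,0}/10)$, which yields
\begin{equation*}
|\phi(t,x_0)|^2\lesssim \tau_{-,0}^{-2}\|\phi\|_{L^2(B)}^2+\|\partial\phi\|_{L^2(B)}^2+\tau_{-,0}^2\|\partial\partial\phi\|_{L^2(B)}^2.
\end{equation*}
Since $\tau_-\simeq\tau_{-,0}$ uniformly on $B$, the pointwise identity \eqref{eq:wave-partial} gives $|\partial\phi|\lesssim \tau_{-,0}^{-1}(|\Gamma\phi|+|L_0\phi|)$, and its iterate, combined with the commutator bounds from Section \ref{sec:pre}, yields $|\partial\partial\phi|\lesssim \tau_{-,0}^{-2}\sum_{|I|+|J|\leq 2}|\Gamma^I L_0^J\phi|$; substituting closes the bound in the bulk.

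The core case is the near-cone region, where $\tau_+\simeq r\simeq t$ but $\tau_-$ can be as small as $O(1)$. Here I exploit the crucial fact that the arc-length angular derivative $r^{-1}\Omega$ has weight $r^{-1}$ rather than $\tau_-^{-1}$. In polar coordinates $(r,\theta)$ centred at $x_0=r_0\omega_0$, introduce the anisotropic rectangle $R$ of radial extent $\tau_{-,0}/10$ and angular arc length $r_0/10$, so $|R|\simeq \tau_{-,0}\,r_0$. Applying the mixed-norm Sobolev embedding on $R$ in the adapted variables $(r,\,r_0\theta)$ gives
\begin{equation*}
|\phi(t,x_0)|^2\lesssim (\tau_{-,0}\,r_0)^{-1}\Big(\|\phi\|^2+\tau_{-,0}^2\|\partial_r\phi\|^2+r_0^2\|r^{-1}\Omega\phi\|^2+\tau_{-,0}^2\,r_0^2\,\|\partial_r(r^{-1}\Omega\phi)\|^2\Big)_{L^2(R)}.
\end{equation*}
I then replace $\partial_r$ via $|\partial_r\phi|\lesssim\tau_-^{-1}(|\Gamma\phi|+|L_0\phi|)$ from \eqref{eq:wave-partial}: the $\tau_{-,0}^2$ prefactor cancels the $\tau_-^{-2}$ and the $r_0^2$ prefactor cancels the $r^{-2}$ on the angular term. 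The expected main obstacle is the mixed-derivative term $\partial_r(r^{-1}\Omega\phi)$: by Leibniz it generates a residue $r^{-2}\Omega\phi$, whose contribution carries the benign prefactor $\tau_{-,0}^2/r_0^2\leq 1$ in the near-cone region, while the commutator $[\partial_r,\Omega]$ is of the same form and similarly absorbable. Combining the three cases produces the stated inequality.
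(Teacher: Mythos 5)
The paper does not prove Proposition \ref{prop:KS}; it cites \cite{Hormander, Sogge, LiZh} and uses the result as a known fact, so there is no in-text argument to compare against. Your reconstruction is correct and is essentially the standard proof from those references: split into the bulk region $\{|x|\leq t/2\}\cup\{|x|\geq 2t\}$, where $\langle\tau_+\rangle\simeq\langle\tau_-\rangle$ and an isotropically rescaled $2$D Sobolev embedding on a ball of radius $\simeq\langle\tau_{-,0}\rangle$ suffices after converting $\partial$ and $\partial\partial$ into $\Gamma$ and $L_0$ via \eqref{eq:wave-partial} and the commutator bounds; and the near-cone region, where the anisotropic rescaling (radial length $\simeq\langle\tau_{-,0}\rangle$, tangential length $\simeq r_0$) together with the favourable $r^{-1}$-weight of the unit angular derivative and the mixed-norm Sobolev embedding gives exactly the factor $(\langle\tau_{+,0}\rangle\langle\tau_{-,0}\rangle)^{-1}$. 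Two minor points worth fixing in a writeup: use $\langle\tau_{-,0}\rangle$ rather than $\tau_{-,0}$ to size the radial extent in the near-cone case, since there $\tau_{-,0}$ may be $O(1)$ or negative; and the commutator $[\partial_r,\Omega]$ vanishes identically in polar coordinates, so the only extra term from $\partial_r(r^{-1}\Omega\phi)$ is $-r^{-2}\Omega\phi$, which you correctly absorb using $\langle\tau_{-,0}\rangle^2/r_0^2\lesssim 1$.
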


We will mainly apply the following version of Sobolev-type inequality to get decay for a function when restricted to the exterior region (say $\{r\geq t-1\}$) .
\begin{proposition}\label{prop:Sobolev-ex}
Let $\phi = \phi(t, x)$ be a smooth function decaying sufficiently fast at space infinity for every $t \geq 1$. 	Then it holds 
\begin{equation}
\langle r\rangle^{1\over 2} \big|\langle \tau_-\rangle^{\eta} \phi(t,x)\big|
\lesssim 
\sum_{|I|\leq 1, |J|\leq 1}\big\|\langle \tau_-\rangle^{\eta} Z^I \Omega^J \phi(t,\cdot)\big\|,
\qquad
\eta\in \mathbb{R},
\end{equation}
in which $Z$ represents  the vector fields in $\{\partial_r,\Omega = x^1\partial_2-x^2\partial_1\}$.
\end{proposition}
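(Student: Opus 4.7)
The plan is to first absorb the weight $\langle \tau_-\rangle^\eta$ and reduce to an unweighted Sobolev-type inequality on $\mathbb{R}^2$, then prove the unweighted version using polar coordinates together with a one-dimensional Sobolev embedding on the circle and a weighted 1D Sobolev argument in the radial direction. At fixed $t$, the weight $\langle \tau_-\rangle^\eta = \langle t-r\rangle^\eta$ is radial, so it commutes with $\Omega$. Moreover,
\[
\partial_r \big(\langle \tau_-\rangle^\eta \phi\big) = \langle \tau_-\rangle^\eta \partial_r \phi - \eta (t-r) \langle \tau_-\rangle^{\eta-2} \phi,
\]
and the additional term is controlled by $|\eta|\langle \tau_-\rangle^{\eta-1}|\phi| \leq |\eta|\langle \tau_-\rangle^\eta |\phi|$, which is already on the right-hand side. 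Hence the proof reduces to the weight-free statement
\[
\langle r\rangle^{1/2} |\Phi(x)| \;\lesssim\; \sum_{|I|\leq 1,\,|J|\leq 1} \big\|Z^I \Omega^J \Phi\big\|, \qquad \Phi := \langle \tau_-\rangle^\eta \phi(t,\cdot).
\]

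To prove this, I would pass to polar coordinates, writing $\widetilde{\Phi}(r,\theta) = \Phi(r\cos\theta, r\sin\theta)$, so that $\partial_\theta \widetilde{\Phi} = \Omega \Phi$ and $\|\Phi\|^2 = \int_0^\infty \int_0^{2\pi} |\widetilde{\Phi}|^2 \, r\,d\theta\,dr$. Define the angular auxiliary
\[
H(r)^2 := \int_0^{2\pi} \big(|\widetilde{\Phi}(r,\theta)|^2 + |\partial_\theta \widetilde{\Phi}(r,\theta)|^2\big)\,d\theta.
\]
By 1D Sobolev embedding on $S^1$, $\sup_\theta |\widetilde{\Phi}(r,\theta)|^2 \lesssim H(r)^2$. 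Setting $F(r) = r\,H(r)^2$ and using $F(\infty)=0$, one gets
\[
F(r) = -\int_r^\infty \big(H(r')^2 + 2r' H(r') H'(r')\big) \, dr'.
\]
Bounding $|H H'| \leq \|\widetilde{\Phi}(r',\cdot)\|_{L^2_\theta}\,\|\partial_r\widetilde{\Phi}(r',\cdot)\|_{L^2_\theta} + \|\Omega\Phi(r',\cdot)\|_{L^2_\theta}\,\|\partial_r\Omega\Phi(r',\cdot)\|_{L^2_\theta}$ and applying Cauchy-Schwarz with the measure $r'\,dr'\,d\theta$, together with $r \leq r'$ to promote the weightless $H^2$ term to a $L^2(r'dr'd\theta)$ integral, yields for $r \geq 1$,
\[
r\,H(r)^2 \;\lesssim\; \sum_{|I|\leq 1,\,|J|\leq 1} \|Z^I \Omega^J \Phi\|^2,
\]
which is the desired estimate since $\langle r\rangle \sim r$ in this range.

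For $|x| \leq 1$ the weight $\langle r\rangle^{1/2}$ is bounded, so it suffices to show $|\Phi(x)| \lesssim \sum \|Z^I\Omega^J \Phi\|$. I would obtain this by evaluating the $r\ge 1$ estimate at $r=1$ to control $H(1)^2$ and then integrating back via $|\widetilde{\Phi}(r,\theta)|^2 \leq 2H(1)^2 + 2\int_0^1 |HH'|\,dr'$; on the compact interval $[0,1]$ a simple Cauchy-Schwarz closes the estimate without needing the radial Jacobian.

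The main obstacle, in my view, is precisely the endpoint $r \to 0$: the polar Jacobian $r\,dr\,d\theta$ degenerates there, so a straight weighted 1D Sobolev argument on $[0,\infty)$ cannot absorb the weightless $\int H^2\,dr'$ term into the $L^2(\mathbb{R}^2)$ norm. Splitting the estimate at $r = 1$ as above is the cleanest workaround; an alternative would be a local $H^2 \hookrightarrow L^\infty$ embedding on the disk $\{|x|\leq 1\}$, but this requires rewriting $\partial_1,\partial_2$ in terms of $\partial_r$ and $r^{-1}\Omega$ (singular at the origin) and invoking the smoothness of $\Phi$ at $x=0$, which is less clean.
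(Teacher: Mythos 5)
Your reduction to the weight-free case via the Leibniz rule and the bound $|t-r|\langle\tau_-\rangle^{\eta-2}\leq\langle\tau_-\rangle^{\eta-1}\leq\langle\tau_-\rangle^{\eta}$ is sound, and your treatment of the range $r\geq 1$ is correct: setting $F(r)=rH(r)^2$, integrating $F'=H^2+2r'HH'$ from $r$ to $\infty$, promoting $\int_r^\infty H^2\,dr'$ to $\int_r^\infty H^2\,r'\,dr'$ using $r'\geq r\geq 1$, and applying Cauchy--Schwarz in the measure $r'\,dr'\,d\theta$ to the $r'HH'$ term, combined with the one-dimensional Sobolev embedding on $S^1$, gives $rH(r)^2\lesssim\sum_{|I|\leq1,|J|\leq1}\|Z^I\Omega^J\Phi\|^2$ as desired.

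The step you sketch for $r<1$, however, does not close, and cannot be made to close. When you estimate $\int_0^1|HH'|\,dr'$ by Cauchy--Schwarz ``without needing the radial Jacobian,'' the resulting factors $\bigl(\int_0^1\|\widetilde\Phi\|_{L^2_\theta}^2\,dr'\bigr)^{1/2}$ and $\bigl(\int_0^1\|\partial_r\widetilde\Phi\|_{L^2_\theta}^2\,dr'\bigr)^{1/2}$ are \emph{not} controlled by $\|\Phi\|_{L^2(\mathbb{R}^2)}$ and $\|\partial_r\Phi\|_{L^2(\mathbb{R}^2)}$: the missing weight $r'$ in the measure is exactly the issue, and near $r'=0$ there is no way to reinsert it. Indeed the asserted pointwise bound is genuinely false at the origin. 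Take $\Phi$ radial and equal to a smooth regularization of $\log(1/r)$ near $r=0$ at scale $\epsilon$, cut off for $r\geq 1$; then $\Phi(0)\sim\log(1/\epsilon)$, $\Omega\Phi\equiv 0$, $\|\Phi\|_{L^2(\mathbb{R}^2)}=O(1)$, and $\|\partial_r\Phi\|_{L^2(\mathbb{R}^2)}\sim\sqrt{\log(1/\epsilon)}$, so the right-hand side is only $O\bigl(\sqrt{\log(1/\epsilon)}\bigr)$ --- this is the standard failure of $H^1(\mathbb{R}^2)\hookrightarrow L^\infty$. Your instinct that $r\to 0$ is the main obstacle is exactly right; but the resolution is not a cleverer Cauchy--Schwarz but rather that the estimate is only intended (and only true) away from the origin, as signaled by the paper's remark that it is applied ``when restricted to the exterior region $\{r\geq t-1\}$'' --- precisely the regime your $r\geq 1$ computation handles. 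A statement valid uniformly down to $r=0$ would require strengthening the right-hand side, e.g.\ by including second derivatives (two angular, or Cartesian ones locally on the unit disk).
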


To treat the interior region $\{ r\leq t-1\}$, we need a version of Klainerman-Sobolev inequality, which is stated now.
\begin{proposition}[\cite{Hormander, Psarelli}]\label{prop:Sobolev-in}
Let $\phi = \phi(t, x)$ be a smooth function defined on $\mathcal{H}_\tau$. 	Then it holds 
\begin{equation}
\sup_{\mathcal{H}_\tau} |(1+t) \phi(t, x)|
\lesssim 
\sup_{t={\tau^2 + 1\over 2}, \, |x|={\tau^2-1 \over 2}} |\phi(t, x)| + \sum_{|I|\leq 2}\| L^I \phi\|_{L^2(\mathcal{H}_\tau)},
\end{equation}
in which $L$ represents  the Lorentz boosts in $\{L_a = x_a \partial_t + t \partial_a\}_{a=1, 2}$.
\end{proposition}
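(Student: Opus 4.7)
The plan is to reduce the estimate to standard two-dimensional Euclidean Sobolev embedding by exploiting the parameterization of the truncated hyperboloid over its spatial projection, together with a dyadic rescaling that converts the weight $(1+t)$ into the correct scaling factor.

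\medskip

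\noindent\textbf{Step 1 (Parameterization).} Parameterize $\mathcal{H}_\tau$ via $x \mapsto (t(x), x)$ with $t(x) := \sqrt{\tau^2 + |x|^2}$, for $x$ in the closed disk $D_\tau := \{x \in \mathbb{R}^2 : |x| \leq (\tau^2-1)/2\}$, and set $\widetilde{\phi}(x) := \phi(t(x), x)$. Since $\partial_a t = x_a/t$, a direct computation gives
\[
\partial_a \widetilde{\phi}(x) = \partial_a \phi + \tfrac{x_a}{t}\partial_t \phi = \tfrac{1}{t(x)} (L_a \phi)(t(x), x),
\]
and by induction one obtains $|\nabla^k \widetilde{\phi}(x)| \lesssim \sum_{|I|\leq k} t(x)^{-k} |L^I\phi(t(x), x)|$ modulo lower-order terms in $k$. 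Note that the integration measure $\mathrm{d}x$ used in the definition of $\|\cdot\|_{L^p(\mathcal{H}_\tau)}$ is precisely the Lebesgue measure in this parameterization.

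\medskip

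\noindent\textbf{Step 2 (Interior points via rescaled $2$D Sobolev).} Fix $(t_0, x_0) \in \mathcal{H}_\tau$ and write $\rho_0 := c\, t_0$ for a small absolute constant $c$ chosen so that on the ball $B(x_0, \rho_0)$ one has $t(x) \sim t_0$ uniformly. Assume first that $B(x_0, \rho_0) \subset D_\tau$. Let $u(y) := \widetilde{\phi}(x_0 + \rho_0 y)$ on the unit disk, and apply the standard embedding $\|u\|_{L^\infty(B_1)} \lesssim \|u\|_{H^2(B_1)}$. Undoing the rescaling and using Step 1 produces
\[
|\widetilde{\phi}(x_0)| \lesssim \rho_0^{-1}\|\widetilde{\phi}\|_{L^2(B(x_0,\rho_0))} + \|\nabla \widetilde{\phi}\|_{L^2} + \rho_0 \|\nabla^2 \widetilde{\phi}\|_{L^2} \lesssim t_0^{-1}\sum_{|I|\leq 2}\|L^I \phi\|_{L^2(\mathcal{H}_\tau)}.
\]
Multiplying by $(1+t_0) \lesssim t_0$ absorbs the factor and yields the bound in this case.

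\medskip

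\noindent\textbf{Step 3 (Points near the boundary circle).} If $B(x_0, \rho_0)$ touches $\partial D_\tau$, write $\omega_0 = x_0/|x_0|$ and apply the one-dimensional fundamental theorem of calculus along the radial ray,
\[
\widetilde{\phi}(x_0) = \widetilde{\phi}(R_\tau\omega_0) - \int_{|x_0|}^{R_\tau} \partial_r \widetilde{\phi}(r\omega_0)\,\mathrm{d}r, \qquad R_\tau := (\tau^2-1)/2.
\]
The first term is controlled by the boundary supremum in the statement. For the integral, average over $\omega$ on the unit circle combined with a one-dimensional Sobolev embedding $H^1(S^1) \hookrightarrow L^\infty(S^1)$ to turn the pointwise $\partial_r \widetilde{\phi}$ into an $L^2$-expression involving $\partial_r \widetilde{\phi}$ and $\Omega \partial_r \widetilde{\phi}$ on a thin annulus of width $\lesssim \rho_0$; this reproduces, via Step 1 and $\Omega = \tfrac{x_1}{t}L_2 - \tfrac{x_2}{t}L_1$, the $L^2$-norms of $L^I\phi$ for $|I|\leq 2$, divided by $t_0$. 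Multiplying by $(1+t_0)$ closes this case as well.

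\medskip

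\noindent\textbf{Main obstacle.} The delicate point is Step 3: the disk is borderline for the $H^2\hookrightarrow L^\infty$ embedding in two dimensions, and near the boundary a naive rescaling fails because $\rho_0 \sim t_0$ can exceed the distance $R_\tau - |x_0|$ to $\partial D_\tau$. The radial FTC/angular-Sobolev combination is what forces the sharp boundary trace in the statement to appear; matching the weights $1/t$ inherited from $\partial_a \widetilde\phi = L_a\phi/t$ with the scaling $(1+t)$ on the left-hand side is what makes the argument close with exactly two Lorentz boosts.
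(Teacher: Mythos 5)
Your proposal is correct and reaches the stated estimate, but the organization is genuinely different from the paper's. The paper splits the truncated hyperboloid by the ratio $r/t$ (Case I: $r\geq t/3$, Case II: $r\leq t/2$), and in Case II rescales globally by $\tau$ via $y=x/\tau$ before invoking a two-dimensional Sobolev embedding. You instead split by whether the ball $B(x_0,\rho_0)$, $\rho_0 = ct_0$, is contained in the spatial projection $D_\tau$, and in the interior case you rescale \emph{locally} by $t_0$. The two dichotomies are not equivalent: your Step~2 already absorbs a sizeable portion of the paper's Case~I. Both proofs ultimately use the identity $\partial_a\widetilde{\phi}=L_a\phi/t$ and the same two building blocks — scaled Euclidean Sobolev embedding near the vertex, and radial fundamental theorem of calculus combined with angular Sobolev on $\mathbb{S}^1$ near the boundary circle — so your Step~3 parallels the paper's Case~I in spirit (the paper applies FTC to $\phi^2$ rather than to $\phi$, which is slightly cleaner because the factor $r^{-1}$ appears automatically after Cauchy--Schwarz against the measure $\rho\,\mathrm{d}\rho$, but your version closes too once you exploit that the annulus has width $R_\tau-|x_0|\lesssim t_0$ and $r\sim t_0$ there).

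One point worth flagging in your favor: as written, the paper's Case~II uses only $\psi$ and $\partial_{z^a}\psi$ in the embedding $|\psi(y)|^2\lesssim\int(\psi^2+|\nabla\psi|^2)$, which would be the false inclusion $H^1(\mathbb{R}^2)\hookrightarrow L^\infty$; two derivatives are needed, consistent with the $\sum_{|I|\leq 2}$ in the statement. Your Step~2 invokes $H^2(B_1)\hookrightarrow L^\infty(B_1)$ and computes the $\nabla^2\widetilde\phi$ term (giving $L_aL_b\phi/t^2$ plus a lower-order $x_b L_a\phi/t^3$ piece that is harmless since $|x|\leq t$ on $\mathcal{H}_\tau$), so your version is the more careful one on that point. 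The remaining incompleteness in your writeup is that Step~3 is a sketch rather than a full argument, but you correctly identify the delicate weight-matching, and the outline does close: Cauchy--Schwarz in $r$ produces $(R_\tau-|x_0|)^{1/2}\lesssim t_0^{1/2}$, the conversion $\mathrm{d}r\,\mathrm{d}\omega \sim t_0^{-1}\,r\,\mathrm{d}r\,\mathrm{d}\omega$ contributes $t_0^{-1/2}$, and $\partial_r\widetilde\phi$ and $\Omega\partial_r\widetilde\phi$ contribute $t_0^{-1}$ times the $L^2$ norms of $L^I\phi$, $|I|\leq 2$, so that after multiplying by $(1+t_0)$ the bound closes exactly.
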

Its proof can be found in the appendix.

We will use the following Hardy-type inequality to handle the wave component $n$ in exterior region. This is motivated by the three-dimensional version of weighted Hardy inequality in \cite{Zoe23}.
\begin{proposition}\label{prop:Hardy-ex}
Let $\gamma > 0$, then we have
\begin{align}
\big\| \langle \tau_-\rangle^\gamma \phi(t, x) \big\|_{L^2(\Sigma^{ex}_t)}
\lesssim
{1\over \gamma} \big\| \langle \tau_-\rangle^{\gamma+1} \nabla \phi(t, x) \big\|_{L^2(\Sigma^{ex}_t)}.
\end{align}
\end{proposition}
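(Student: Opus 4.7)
The plan is to establish the estimate via a two-dimensional divergence identity generated by a carefully chosen radial vector field that vanishes on the inner boundary $\{|x| = t-\eta_0\}$, thereby avoiding any uncontrollable boundary contribution and reducing the problem to a one-variable symbol estimate that produces the $1/\gamma$ factor. By a standard density argument, it suffices to assume $\phi$ is smooth with decay fast enough that contributions at $|x|\to\infty$ vanish.

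Set $R := t-\eta_0$, $U(r) := \int_R^r s\langle t-s\rangle^{2\gamma}\,\d s$, and $\vec F(x) := (U(r)/r)\,x/|x|$ on $\Sigma^{ex}_t$. The 2D radial divergence formula gives $\nabla\cdot\vec F = U'(r)/r = \langle t-r\rangle^{2\gamma}$, while $U(R)=0$ makes $\vec F$ vanish on $\{|x|=R\}$. The identity $\int_{\Sigma^{ex}_t} \nabla\cdot(|\phi|^2\vec F)\,\d x = 0$ therefore holds (both boundary contributions vanishing), so expanding the divergence and applying Cauchy--Schwarz with weight $\langle t-r\rangle^{2\gamma}$ followed by absorption of one copy of $\|\langle\tau_-\rangle^\gamma\phi\|_{L^2(\Sigma^{ex}_t)}$ will yield
\[
\|\langle\tau_-\rangle^\gamma\phi\|_{L^2(\Sigma^{ex}_t)} \;\leq\; 2\,\Big\|\frac{U(r)/r}{\langle t-r\rangle^\gamma}\,\nabla\phi\Big\|_{L^2(\Sigma^{ex}_t)}.
\]

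It then remains to show the one-variable symbol estimate $U(r)/r \lesssim \gamma^{-1}\langle t-r\rangle^{2\gamma+1}$. For $R\leq r\leq t$ the integrand $s\langle t-s\rangle^{2\gamma}$ is uniformly bounded in $s$ (since $|t-s|\leq\eta_0$), so $U(r)/r$ is simply of order one. For $r>t$ I would split $U$ at $s=t$ and, after the substitution $u = s-t$, employ the elementary identity $\tfrac{\d}{\d u}[u\langle u\rangle^{2\gamma+1}] \geq (2\gamma+1)\langle u\rangle^{2\gamma+1}$ together with the trivial bound $t+u\leq r$ to obtain $\int_t^r s\langle s-t\rangle^{2\gamma}\,\d s \lesssim r\,\langle r-t\rangle^{2\gamma+1}/(2\gamma+1)$. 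The factor $1/(2\gamma+1)\lesssim 1/\gamma$ produced here is exactly where the claimed constant $1/\gamma$ appears.

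The main delicate point is the insistence that $\vec F$ vanish on $\{|x|=R\}$: a generic radial vector field would leave an uncontrolled trace $\int_{\{|x|=R\}}|\phi|^2\,\d S$ that cannot be recovered from $\nabla\phi$ alone on the exterior region. The choice $\vec F = (U(r)/r)\,x/|x|$ with $U(R)=0$ handles this automatically, and serves as the two-dimensional analogue (with the polar measure $r\,\d r\,\d\omega$) of the three-dimensional weighted Hardy identity of \cite{Zoe23}; once it is in place the rest is a short calculus exercise.
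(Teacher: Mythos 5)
Your multiplier/divergence-theorem argument with a radial vector field is the same mechanism the paper uses: the paper integrates the pointwise inequality $\partial_r\bigl((r-t+\tfrac32)^{2\gamma+1}r\bigr)\geq(2\gamma+1)(r-t+\tfrac32)^{2\gamma}r$ against $\eta(\epsilon r)|\phi|^2$ and integrates by parts, which is your divergence identity in disguise. Designing $U(R)=0$ to kill the inner boundary contribution is a clean packaging; the paper instead exploits that the trace at $r=t-\eta_0$ carries a favorable sign. Your treatment of $r\to\infty$ by density is weaker than the paper's compactly supported cutoff $\eta(\epsilon r)$, since $U(r)/r\sim r^{2\gamma+1}$ grows and the closure of $C_c^\infty$ in the homogeneous seminorm $\|\langle\tau_-\rangle^{\gamma+1}\nabla\phi\|_{L^2(\Sigma^{ex}_t)}$ is a real subtlety, but that is patchable.

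The genuine error is the ``elementary identity'' $\tfrac{d}{du}\bigl[u\langle u\rangle^{2\gamma+1}\bigr]\geq(2\gamma+1)\langle u\rangle^{2\gamma+1}$, which is \emph{false} precisely near the light cone. A direct computation gives $\tfrac{d}{du}\bigl[u\langle u\rangle^{2\gamma+1}\bigr]=\langle u\rangle^{2\gamma-1}\bigl(1+(2\gamma+2)u^2\bigr)$, so the claimed inequality reduces to $1+(2\gamma+2)u^2\geq(2\gamma+1)(1+u^2)$, i.e.\ $u^2\geq 2\gamma$; since you must integrate from $u=0$, it fails exactly where you need it. Consequently the symbol estimate $U(r)/r\lesssim\gamma^{-1}\langle t-r\rangle^{2\gamma+1}$ does not follow as written, and with a universal constant it is in fact false (take $r-t$ small and $\gamma$ large: the left side is of size one, the right side of size $\gamma^{-1}$). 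The fix, implicit in the paper's proof, is to build $U$ with the polynomial weight $(s-t+\tfrac32)$ rather than $\langle t-s\rangle$: then $\partial_s\bigl[s(s-t+\tfrac32)^{2\gamma+1}\bigr]=(s-t+\tfrac32)^{2\gamma+1}+(2\gamma+1)s(s-t+\tfrac32)^{2\gamma}\geq(2\gamma+1)s(s-t+\tfrac32)^{2\gamma}$ holds for \emph{all} $s$ with $s-t+\tfrac32>0$ (true on $\Sigma^{ex}_t$ since $\eta_0\leq\tfrac54$), giving $U(r)/r\leq(2\gamma+1)^{-1}(r-t+\tfrac32)^{2\gamma+1}$ with no case split. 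One converts $(r-t+\tfrac32)\simeq\langle\tau_-\rangle$ only at the very end, accepting that the comparison constants, raised to the power $\gamma$, are absorbed into $\lesssim$ for bounded $\gamma$ (which is all the paper uses, with $\gamma\leq\tfrac52$).
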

The proof can be found in the appendix.

\subsection{Estimates for linear homogeneous wave}

We consider the simple case of a linear homogeneous wave equation
\begin{align*}
    -\Box \phi = 0.
\end{align*}
We have the following result.
\begin{lemma}[\cite{LiZh}]\label{lem:freewave}
The following $L^2$ type and pointwise estimates hold.
\begin{enumerate}
\item \emph{$L^2$ bounds.}
\begin{align}
\|\Gamma^I L_0^J \phi\|(t) 
\lesssim
&\big( \|\Gamma^I L_0^J \phi\|(t_0) + \|\partial_t \Gamma^I L_0^J \phi\|_{L^1 \cap L^2}(t_0)  \big) \log \langle t\rangle.  \label{eq:wave-L-L2}
\end{align}

\item \emph{Pointwise bounds.}
\begin{align}
| \phi(t, x)|
\lesssim
&\sum_{|I|+|J|\leq 2}\big( \|\Gamma^I L_0^J \phi\|(t_0) + \|\partial_t \Gamma^I L_0^J \phi\|_{L^1\cap L^2}(t_0)  \big)  \langle \tau_+\rangle^{-{1\over 2}} \langle \tau_-\rangle^{-{1\over 2}} \log \langle t\rangle.  \label{eq:wave-L-point}
\end{align}
\end{enumerate}
\end{lemma}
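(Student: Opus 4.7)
The plan is to reduce \eqref{eq:wave-L-L2} to a single $L^2$ bound for a generic free-wave solution, and then deduce the pointwise estimate \eqref{eq:wave-L-point} by combining that bound with the Klainerman--Sobolev inequality of Proposition \ref{prop:KS}.

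First, since $[\Box,\Gamma]=0$ for every $\Gamma\in V$ and $[\Box,L_0]=C\Box$, the quantity $\psi := \Gamma^I L_0^J\phi$ itself satisfies $-\Box\psi=0$ whenever $-\Box\phi=0$. Thus it is enough to show that, for any free wave solution $\psi$,
\begin{equation*}
\|\psi(t)\| \lesssim \big(\|\psi(t_0)\| + \|\partial_t\psi(t_0)\|_{L^1\cap L^2}\big)\log\langle t\rangle.
\end{equation*}
I would derive this by Fourier analysis. Using Plancherel and the representation
\begin{equation*}
\widehat{\psi}(t,\xi) = \cos((t-t_0)|\xi|)\,\widehat{\psi(t_0)}(\xi) + \frac{\sin((t-t_0)|\xi|)}{|\xi|}\,\widehat{\partial_t\psi(t_0)}(\xi),
\end{equation*}
the cosine contribution is controlled directly by $\|\psi(t_0)\|$, while the sine contribution is split into three frequency regions: $|\xi|\leq 1/t$, $1/t<|\xi|\leq 1$, and $|\xi|>1$. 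On the lowest annulus, the bound $|\sin((t-t_0)|\xi|)/|\xi||\lesssim t$, together with measure $\lesssim t^{-2}$ and $\|\widehat{\partial_t\psi(t_0)}\|_{L^\infty}\lesssim\|\partial_t\psi(t_0)\|_{L^1}$, yields a uniform constant bound. On the highest region, $|\sin/|\xi||\lesssim|\xi|^{-1}$ absorbs into the $L^2$ bound on $\partial_t\psi(t_0)$. The middle region is the source of the logarithm, via the polar integral
\begin{equation*}
\int_{1/t}^{1}\frac{1}{r^2}\cdot r\,\d r \sim \log t,
\end{equation*}
controlled by $\|\partial_t\psi(t_0)\|_{L^1}^2\log t$ after using the $L^\infty$ bound on $\widehat{\partial_t\psi(t_0)}$.

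For \eqref{eq:wave-L-point}, I would then simply combine the Klainerman--Sobolev inequality $\langle\tau_+\rangle^{1/2}\langle\tau_-\rangle^{1/2}|\phi(t,x)|\lesssim\sum_{|I|+|J|\leq 2}\|\Gamma^I L_0^J\phi\|(t)$ with the $L^2$ bound just established, applied to each $\Gamma^I L_0^J\phi$ in turn. Note that $\Gamma^I L_0^J\phi$ is itself a free wave solution, and its initial data at $t=t_0$ is expressible in terms of spatial derivatives of $(\phi_0,\phi_1)$ by using the equation to rewrite time derivatives.

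The main obstacle is the middle frequency regime $1/t<|\xi|\lesssim 1$: the logarithmic loss there is unavoidable without imposing a stronger low-frequency vanishing condition on $\partial_t\psi(t_0)$, and it is intrinsic to the 2D free wave propagator since the symbol $\sin(t|\xi|)/|\xi|$ is unbounded near $\xi=0$. This is precisely the mechanism behind the dichotomy between the scattering scenarios \eqref{eq:n1=+} and \eqref{eq:n1=0} in the main theorem: the vanishing $\int n_1\,\d x = 0$ kills the offending low-frequency contribution, while its non-vanishing forces an irreversible logarithmic effect.
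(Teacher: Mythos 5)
Your proposal is correct and follows essentially the same route as the paper: Plancherel applied to the explicit Fourier representation of the free wave, with the logarithm coming from the $L^1$-controlled low-frequency part of the sine propagator, followed by Klainerman--Sobolev for the pointwise bound. The only cosmetic difference is that you split the low-frequency region into $|\xi|\leq 1/t$ and $1/t<|\xi|\leq 1$ by hand, whereas the paper performs the change of variables $\rho=(t-t_0)|\xi|$ once and bounds $\int_0^t \sin^2(\rho)/\rho\,\d\rho\lesssim\log\langle t\rangle$, which absorbs both of your cases automatically.
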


\begin{proof}
    The proof of \eqref{eq:wave-L-L2} is based on the following formula in frequency space
    \begin{align*}
        \partial_t \partial_t \widehat{\phi}(t, \xi) + |\xi|^2 \widehat{\phi}(t, \xi) = 0,
    \end{align*}
    with solution
    $$
    \widehat{\phi}(t, \xi) 
    =
    \cos((t-t_0)|\xi|) \widehat{\phi}(t_0, \xi) 
    + 
    {\sin((t-t_0)|\xi|)\over |\xi|} \partial_t \widehat{\phi}(t_0, \xi).
    $$
    Note that
    \begin{align*}
        &\Big\| {\sin((t-t_0)|\xi|)\over |\xi|} \partial_t \widehat{\phi}(t_0, \xi)  \Big\|^2
        \\
        \lesssim
        &\|\partial_t {\phi}(t_0, x)\|^2 + \|\partial_t {\phi}(t_0, x)\|^2_{L^1} \int_{|\xi|\leq 1} {\sin^2((t-t_0)|\xi|)\over |\xi|^2} \, \d\xi,   
    \end{align*}
and that
\begin{align*}
    &\int_{|\xi|\leq 1} {\sin^2((t-t_0)|\xi|)\over |\xi|^2} \, \d\xi
    \\
    \lesssim
    &\int_{0}^{1} {\sin^2((t-t_0)|\xi|)\over |\xi|} \, \d|\xi|
    \lesssim
    \int_{0}^{t} {\sin^2(\rho)\over \rho} \, \d\rho
    \lesssim
    \log \langle t\rangle.
\end{align*}
These lead to the desired result \eqref{eq:wave-L-L2}.

    The proof of \eqref{eq:wave-L-point} follows from \eqref{eq:wave-L-L2} and the standard Klainerman-Sobolev inequality in Proposition \ref{prop:KS}.
\end{proof}

\subsection{Pointwise estimates for Klein-Gordon}

Suppose $\phi$ satisfies the Klein-Gordon equation
\begin{align}
-\Box \phi + \phi = F,
\end{align}
and we want to derive pointwise bounds for $\phi$ adapted to our problem, i.e., the equation of $E$. We only focus on the interior region $\mathcal{D}_{[\tau_0, +\infty)} = \{(t, x) : |x|\leq t-1, \, t^2 - |x|^2 \geq \tau_0^2 \}$. This part is based on earlier work of \cite{Klainerman85, PLF-YM-cmp, DoWy}.

We define integral curves $\mathcal{V}$  parameterized by $\lambda$ as:
\begin{equation} \label{eq:integral-curve}
\begin{aligned}
{dt\over d\lambda} = {t\over \tau},
\\
{dx^a \over d\lambda} = {x^a \over \tau}.
\end{aligned}
\end{equation}
One derives that
\begin{equation}
\begin{aligned}
&\tau(\lambda) = \lambda + \tau(0),
\\
&t(\lambda) = {t(0) \over \tau(0)} (\lambda + \tau(0)),
\\
&x_a(\lambda) = {x_a(0) \over \tau(0)} (\lambda + \tau(0)),
\end{aligned}
\end{equation}
in which $\tau(0) = \sqrt{t^2(0) - x_1^2(0) - x_2^2(0)} \geq \tau_0$ .

We will consider functions along this integral curve.

\begin{lemma}\label{lem:ODE-00}
Along the integral curve $\mathcal{V}$, the solution $\phi(t(\lambda), x(\lambda))$ satisfies the following second-order ODE:
\begin{align}
{d^2\over d\lambda^2} \psi(t, x) + \psi
=
\tau \cancel{\partial}_a \cancel{\partial}^a \phi + {x^a x^b \over \tau} \cancel{\partial}_a \cancel{\partial}_b \phi
+
2{x^a \over \tau} \cancel{\partial}_a \phi + \tau F,
\end{align}
in which $\psi = \tau \phi$.
\end{lemma}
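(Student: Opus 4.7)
The key geometric observation is that along the integral curve $\mathcal{V}$, the quantities $t/\tau$ and $x^a/\tau$ are \emph{constants} in $\lambda$, because $\tau(\lambda) = \lambda + \tau(0)$ scales these affinely. Consequently the operator $\frac{d}{d\lambda} = \frac{t}{\tau}\partial_t + \frac{x^a}{\tau}\partial_a$ acts on functions of $(t,x)$ as the vector field $V = L_0/\tau$, with $L_0$ the scaling field. The plan is to compute $\psi'' + \psi$ in two steps: first purely kinematically, then substitute the equation $\partial_t^2\phi = \Delta\phi - \phi + F$ and rearrange into the $\cancel\partial$-basis.

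Concretely, since $d\tau/d\lambda = 1$ and $\psi = \tau\phi$, I would start from
\begin{equation*}
\psi'' + \psi = 2\phi' + \tau\phi'' + \tau\phi.
\end{equation*}
Using the constancy of $t/\tau$ and $x^a/\tau$ along $\mathcal{V}$, one gets
\begin{equation*}
\tau\phi'' = \frac{t^2}{\tau}\partial_t^2\phi + \frac{2tx^a}{\tau}\partial_t\partial_a\phi + \frac{x^ax^b}{\tau}\partial_a\partial_b\phi,
\qquad 2\phi' = \frac{2t}{\tau}\partial_t\phi + \frac{2x^a}{\tau}\partial_a\phi.
\end{equation*}
Next I would expand the hyperboloidal combinations on the RHS of the claim by $\cancel\partial_a = \partial_a + (x_a/t)\partial_t$, being careful with the terms produced by $\partial(x_a/t)$. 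Taking the trace and the $x^ax^b$-contraction, after routine but attentive simplification (repeatedly using $t^2 = \tau^2 + r^2$, e.g.\ $\tau/t + r^2/(t\tau) = t/\tau$), I expect the identity
\begin{equation*}
\tau\cancel\partial_a\cancel\partial^a\phi + \frac{x^ax^b}{\tau}\cancel\partial_a\cancel\partial_b\phi
= \tau\Delta\phi - \tau\partial_t^2\phi + \tau\phi'' + \frac{2\tau}{t}\partial_t\phi.
\end{equation*}

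Now I would invoke the equation: $\tau\Delta\phi - \tau\partial_t^2\phi = \tau\phi - \tau F$, which rewrites the previous display as
\begin{equation*}
\tau\phi'' = \tau\cancel\partial_a\cancel\partial^a\phi + \frac{x^ax^b}{\tau}\cancel\partial_a\cancel\partial_b\phi - \tau\phi + \tau F - \frac{2\tau}{t}\partial_t\phi.
\end{equation*}
Finally, I would check that $2\phi' - (2\tau/t)\partial_t\phi$ collapses to $2(x^a/\tau)\cancel\partial_a\phi$: indeed, by direct expansion, $2(x^a/\tau)\cancel\partial_a\phi = (2x^a/\tau)\partial_a\phi + (2r^2/(t\tau))\partial_t\phi$, and the coefficient of $\partial_t\phi$ in $2\phi'$ becomes $2t/\tau - 2\tau/t = 2(t^2-\tau^2)/(t\tau) = 2r^2/(t\tau)$, matching exactly. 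Substituting back into $\psi'' + \psi = 2\phi' + \tau\phi'' + \tau\phi$ yields the claimed identity, with the $\tau\phi$ terms cancelling.

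The main obstacle is not conceptual but combinatorial: correctly accounting for the first-order corrections that arise when replacing $\partial_a$ by $\cancel\partial_a$ (especially $\partial_a(x_b/t) = \delta_{ab}/t$ and $\partial_t(x_b/t) = -x_b/t^2$), and verifying that all spurious $\partial_t\phi$ and $\partial_t^2\phi$ coefficients collapse neatly via the relation $t^2 - r^2 = \tau^2$. Once this bookkeeping is carried out, the identity follows with no further input than the equation for $\phi$.
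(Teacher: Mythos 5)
Your proof is correct. The algebra checks out: the intermediate identity
\begin{equation*}
\tau\cancel\partial_a\cancel\partial^a\phi + \frac{x^ax^b}{\tau}\cancel\partial_a\cancel\partial_b\phi = \tau\Delta\phi - \tau\partial_t^2\phi + \tau\phi'' + \frac{2\tau}{t}\partial_t\phi
\end{equation*}
does hold (the $\partial_t\phi$ coefficients $\frac{2\tau}{t} - \frac{\tau r^2}{t^3} + \frac{r^2}{t\tau} - \frac{r^4}{t^3\tau}$ indeed collapse to $\frac{2\tau}{t}$ via $\tau + r^2/\tau = t^2/\tau$), and the final collapse $2\phi' - \frac{2\tau}{t}\partial_t\phi = \frac{2x^a}{\tau}\cancel\partial_a\phi$ is exactly right.

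Your route is genuinely different from the paper's. The paper (i) writes $\frac{d}{d\lambda}$ in the $\cancel\partial$-frame as $\cancel\partial_0 + \frac{x^a}{\tau}\cancel\partial_a$ and then asserts that its square has constant-coefficient form, (ii) separately rewrites the d'Alembertian as $-\Box = \cancel\partial_0\cancel\partial_0 + 2\frac{x^a}{\tau}\cancel\partial_0\cancel\partial_a - \cancel\partial_a\cancel\partial^a + \frac{2}{\tau}\cancel\partial_0$, and (iii) passes from $\Box\phi$ to $\Box(\psi/\tau)$ before combining. You bypass step (ii) entirely by computing everything in the standard $(\partial_t,\partial_a)$ basis, using your observation that $t/\tau$ and $x^a/\tau$ are \emph{constant along} $\mathcal{V}$. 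That observation is the decisive structural input in your argument: it makes $\tau\phi''$ a literal quadratic form in commuting $\partial$'s with coefficients that pass through the differentiation, so there is no ordering or lower-order-correction issue to worry about. This buys transparency: the paper's claimed formula for $(\cancel\partial_0 + \frac{x^a}{\tau}\cancel\partial_a)^2$ would, taken at face value, require either commutativity of the $\cancel\partial_\alpha$'s or the same constancy fact, but the paper does not highlight it; your presentation makes the justification explicit. What the paper's route buys is the standalone identity for $-\Box$ in the hyperboloidal frame (a cousin of the formula \eqref{reprewave} used in Propositions~\ref{prop:KG-extra} and~\ref{prop:wave-extra}), which is reusable elsewhere; your argument is self-contained but single-purpose. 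Both proofs are correct and of comparable length once the bookkeeping is done.
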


\begin{proof}

We note along the integral curve $\mathcal{V}$ that 
\begin{align}\label{eq:d-lambda}
{d\over d\lambda} h(t, x)
=
{L_0 \over \tau} h
=
(\cancel{\partial}_0 + {x^a \over \tau} \cancel{\partial}_a ) h,  
\end{align}
as well as
\begin{align*}
{d^2\over d\lambda^2} h(t, x)
=
(\cancel{\partial}_0 + {x^a \over \tau} \cancel{\partial}_a )^2 h
=
(\cancel{\partial}_0 \cancel{\partial}_0  + 2{x^a \over \tau} \cancel{\partial}_0 \cancel{\partial}_a + {x^a x^b \over \tau^2} \cancel{\partial}_a \cancel{\partial}_b  )h.
\end{align*}

On the other hand,  we rewrite the wave operator in the frame $\{ \cancel{\partial}_\alpha \}$
\begin{align*}
-\Box 
=
\cancel{\partial}_0 \cancel{\partial}_0 + 2{x^a \over \tau} \cancel{\partial}_0 \cancel{\partial}_a - \cancel{\partial}_a \cancel{\partial}^a + {2\over \tau} \cancel{\partial}_0.
\end{align*}
Set $\psi = \tau \phi$, and we have
\begin{align*}
{d^2\over d\lambda^2} \psi(t, x)
=
(\cancel{\partial}_0 \cancel{\partial}_0  + 2{x^a \over \tau} \cancel{\partial}_0 \cancel{\partial}_a + {x^a x^b \over \tau^2} \cancel{\partial}_a \cancel{\partial}_b  )\psi.
\end{align*}
After a tedious computation, we find
\begin{align*}
\tau (-\Box \phi)
=
\tau (-\Box {\psi \over \tau})
=
\cancel{\partial}_0 \cancel{\partial}_0 \psi
+
2{x^a \over \tau} \cancel{\partial}_0 \cancel{\partial}_a \psi 
- \cancel{\partial}_a \cancel{\partial}^a \psi - 2{x^a \over \tau^2} \cancel{\partial}_a \psi.
\end{align*}
Therefore, we get
\begin{align*}
{d^2\over d\lambda^2} \psi(t, x)
=
&\cancel{\partial}_a \cancel{\partial}^a \psi + {x^a x^b \over \tau^2} \cancel{\partial}_a \cancel{\partial}_b \psi
+
2{x^a \over \tau^2} \cancel{\partial}_a \psi
+
\tau (-\Box \phi)
\\
=
&\tau \cancel{\partial}_a \cancel{\partial}^a \phi + {x^a x^b \over \tau} \cancel{\partial}_a \cancel{\partial}_b \phi
+
2{x^a \over \tau} \cancel{\partial}_a \phi
+
\tau (-\Box \phi).
\end{align*}

Employing the equation of $\phi$, we derive that
\begin{align}
{d^2\over d\lambda^2} \psi(t, x) + \psi
=
\tau \cancel{\partial}_a \cancel{\partial}^a \phi + {x^a x^b \over \tau} \cancel{\partial}_a \cancel{\partial}_b \phi
+
2{x^a \over \tau} \cancel{\partial}_a \phi + \tau F.
\end{align}

The proof is completed.
\end{proof}

We need to use the following ODE estimate, which can be found in \cite{DoWy}.
\begin{lemma}
\label{lem:ODE-01}
Consider the second-order ODE
\begin{equation}\label{eq:ode1} 
\begin{aligned}
& z''(\lambda) + \big( 1 - G(\lambda) \big) z(\lambda) =  k(\lambda),
\\
& z(0) = z_0, \quad z'(0) =  z_1, \quad  | G(\lambda) | \leq {1\over 10},
\end{aligned}
\end{equation}
in which $k$ is assumed to be integrable, then we have the following pointwise estimate
\begin{equation}\label{eq:ode2}
\begin{aligned} 
&\big( (z')^2(\lambda) + (1 - G(\lambda)) z^2(\lambda) \big)^{1\over 2} 
\\
\lesssim 
& \big( (z')^2(0) + z^2(0) \big)^{1\over 2} + \int_{0}^\lambda \big|k(\widetilde{\lambda})\big| + \big| G'(\widetilde{\lambda})  z(\widetilde{\lambda})\big| \, \d\widetilde{\lambda }.
\end{aligned}
\end{equation}
\end{lemma}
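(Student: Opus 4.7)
The plan is to run a standard energy argument for the perturbed harmonic oscillator, with the natural quadratic energy
\[
\mathcal{E}(\lambda) := (z'(\lambda))^2 + \bigl(1 - G(\lambda)\bigr) z(\lambda)^2,
\]
which is precisely the square of the quantity that appears on the left-hand side of \eqref{eq:ode2}. Since $|G(\lambda)| \leq \tfrac{1}{10}$, the energy is coercive in the sense that
\[
\tfrac{9}{10}\bigl((z')^2 + z^2\bigr) \leq \mathcal{E}(\lambda) \leq \tfrac{11}{10}\bigl((z')^2 + z^2\bigr),
\]
so in particular $|z'(\lambda)| \leq \mathcal{E}(\lambda)^{1/2}$ and $|z(\lambda)| \lesssim \mathcal{E}(\lambda)^{1/2}$.

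Next I would differentiate $\mathcal{E}$ along $\lambda$ and use \eqref{eq:ode1} to cancel the main term:
\[
\mathcal{E}'(\lambda) = 2z'(\lambda) z''(\lambda) + 2\bigl(1-G(\lambda)\bigr) z(\lambda) z'(\lambda) - G'(\lambda) z(\lambda)^2 = 2k(\lambda)z'(\lambda) - G'(\lambda) z(\lambda)^2.
\]
Using the coercivity bounds above, we estimate
\[
|\mathcal{E}'(\lambda)| \leq 2|k(\lambda)|\,\mathcal{E}(\lambda)^{1/2} + |G'(\lambda) z(\lambda)|\cdot|z(\lambda)| \lesssim \bigl(|k(\lambda)| + |G'(\lambda) z(\lambda)|\bigr) \mathcal{E}(\lambda)^{1/2}.
\]
Dividing by $2\mathcal{E}(\lambda)^{1/2}$ (on the set where $\mathcal{E} > 0$; the zero set is harmless by continuity/regularization) yields
\[
\frac{d}{d\lambda} \mathcal{E}(\lambda)^{1/2} \lesssim |k(\lambda)| + |G'(\lambda) z(\lambda)|.
\]

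Finally, I would integrate from $0$ to $\lambda$ and use the initial-data bound $\mathcal{E}(0)^{1/2} \lesssim \bigl((z')^2(0) + z^2(0)\bigr)^{1/2}$ (again by the coercivity of $\mathcal{E}$ at $\lambda = 0$) to conclude
\[
\mathcal{E}(\lambda)^{1/2} \lesssim \bigl((z')^2(0) + z^2(0)\bigr)^{1/2} + \int_0^\lambda |k(\widetilde\lambda)| + |G'(\widetilde\lambda) z(\widetilde\lambda)| \, d\widetilde\lambda,
\]
which is exactly \eqref{eq:ode2}. There is no real obstacle: the only minor subtlety is that the $G' z^2$ term is not controlled by $|G'z|\cdot \mathcal{E}^{1/2}$ without some input, but the coercivity $|z| \lesssim \mathcal{E}^{1/2}$ absorbs one power of $z$ cleanly, leaving exactly the linear factor $|G' z|$ called for in the statement. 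The argument is independent of dimension and does not use anything from the KGZ setup; it is a pure ODE lemma.
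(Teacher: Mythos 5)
Your proof is correct and is essentially identical to the paper's: the paper sets $Y(\lambda) = \mathcal{E}(\lambda)^{1/2}$, computes $\frac{d}{d\lambda}Y^2 = 2z'k - G'z^2$, bounds the right-hand side by $2Y(|k| + |G'z|)$ using the same coercivity of $1-G$, and then divides by $Y$ and integrates. No meaningful difference.
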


\begin{proof}
We set $Y(\lambda) = \big((z')^2(\lambda) + (1 - G(\lambda)) z^2(\lambda) \big)^{1\over 2}$, and then by multiplying $z'(\lambda)$ in \eqref{eq:ode1}, we get
\begin{align*}
{d \over d \lambda} Y^2(\lambda) 
&= 2 z'(\lambda) k(\lambda) - G'(\lambda) z^2(\lambda)
\\
&\leq 2 Y(\lambda) \big( |k(\lambda)| + | G'  z(\lambda)| \big).
\end{align*}
In order to proceed, we divide $Y(\lambda)$ in the above inequality and, integrate to get
\begin{align*}
Y(\lambda) \leq Y(0) + \int_{0}^\lambda \big( \big|k(\widetilde{\lambda})\big| + \big| G'  z(\widetilde{\lambda})\big| \big) \, \d \widetilde{\lambda}.
\end{align*}

The proof is thus completed.
\end{proof}

\begin{proposition}\label{prop:E-sharp-decay}
Let $h(t, x)$ be a smooth function with small amplitude $|h|\leq {1\over 10}$. Then we have
\begin{equation} \label{eq:E-sharp-decay}
\begin{aligned} 
&|\tau \phi(t, x)|
\\
\lesssim
&|\tau(0) \phi(t(0), x(0))| + | L_0 \phi(t(0), x(0))|
\\
 & + 
  \int_{0}^\lambda |\tau \cancel{\partial}_a \cancel{\partial}^a \phi + {x^a x^b \over \tau} \cancel{\partial}_a \cancel{\partial}_b \phi
+
2{x^a \over \tau} \cancel{\partial}_a \phi| + |\tau F -h \tau \phi| + | {d\over d\lambda} h  \tau \phi(t, x)| \, \d\widetilde{\lambda}.
\end{aligned}
\end{equation}
\end{proposition}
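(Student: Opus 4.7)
The plan is to apply Lemma \ref{lem:ODE-01} to the second-order ODE established in Lemma \ref{lem:ODE-00}. The key is to absorb the factor $h$ into the coefficient of the zeroth-order term on the left-hand side so that the resulting ODE fits the template of \eqref{eq:ode1} with a small perturbation $G$.

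First, I set $\psi(\lambda) = \tau \phi(t(\lambda), x(\lambda))$ along the integral curve $\mathcal{V}$. By Lemma \ref{lem:ODE-00},
\begin{align*}
\psi''(\lambda) + \psi(\lambda) = \tau \cancel{\partial}_a \cancel{\partial}^a \phi + \tfrac{x^a x^b}{\tau} \cancel{\partial}_a \cancel{\partial}_b \phi + 2 \tfrac{x^a}{\tau} \cancel{\partial}_a \phi + \tau F.
\end{align*}
I then introduce $G(\lambda) := h(t(\lambda), x(\lambda))$, which satisfies $|G(\lambda)| \le 1/10$ by hypothesis, and subtract $h \psi = h\tau\phi$ from both sides to obtain
\begin{align*}
\psi''(\lambda) + (1 - G(\lambda)) \psi(\lambda) = \tau \cancel{\partial}_a \cancel{\partial}^a \phi + \tfrac{x^a x^b}{\tau} \cancel{\partial}_a \cancel{\partial}_b \phi + 2 \tfrac{x^a}{\tau} \cancel{\partial}_a \phi + \tau F - h\tau \phi =: k(\lambda).
\end{align*}

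Next, I compute the initial data: since $d\tau/d\lambda = 1$ and by \eqref{eq:d-lambda} we have $d\phi/d\lambda = L_0 \phi / \tau$, it follows that $\psi(0) = \tau(0)\phi(t(0), x(0))$ and $\psi'(0) = \phi(t(0), x(0)) + L_0 \phi(t(0), x(0))$. Since $\tau(0) \ge \tau_0 = 1$, the first term is bounded by $|\tau(0)\phi(t(0), x(0))|$, giving initial data control as in the right-hand side of \eqref{eq:E-sharp-decay}. I now invoke Lemma \ref{lem:ODE-01}, which yields
\begin{align*}
\bigl((\psi')^2(\lambda) + (1-G(\lambda)) \psi^2(\lambda)\bigr)^{1/2} \lesssim |\psi(0)| + |\psi'(0)| + \int_0^\lambda |k(\widetilde{\lambda})| + |G'(\widetilde{\lambda}) \psi(\widetilde{\lambda})| \, \d\widetilde{\lambda}.
\end{align*}
Using $1 - G(\lambda) \ge 9/10$, the left-hand side dominates $|\psi(\lambda)| = |\tau \phi(t, x)|$. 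Observing that $G'(\lambda) = (d/d\lambda) h(t(\lambda), x(\lambda))$ and $\psi(\widetilde{\lambda}) = \tau \phi$, the integrand $|G'(\widetilde{\lambda}) \psi(\widetilde{\lambda})|$ matches precisely the term $|(d/d\lambda) h \cdot \tau\phi|$ in \eqref{eq:E-sharp-decay}. Substituting the expression for $k$ completes the proof.

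The argument is a straightforward combination of the two preceding lemmas; there is no real obstacle beyond the bookkeeping of matching terms. The one point requiring mild care is the perturbative reformulation: treating $h\phi$ as part of the ``mass'' rather than as a forcing term is exactly the perspective of \textbf{Main idea 2} from the introduction, which is essential because in our application to the equation of $E$ the effective mass $1 + n$ is not strictly $1$ and direct forcing bounds would be too lossy.
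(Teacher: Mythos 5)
Your proposal is correct and follows exactly the route the paper intends: the paper's own proof of Proposition \ref{prop:E-sharp-decay} simply states that it follows from Lemmas \ref{lem:ODE-00} and \ref{lem:ODE-01}, and you have filled in precisely those details — rewriting the ODE from Lemma \ref{lem:ODE-00} by subtracting $h\tau\phi$ so it matches the template $z'' + (1-G)z = k$ with $G=h$, computing $\psi'(0) = \phi(0) + L_0\phi(0)$ via $d\tau/d\lambda = 1$ and \eqref{eq:d-lambda}, and absorbing $|\phi(0)|$ into $|\tau(0)\phi(0)|$ using $\tau(0)\geq 1$.
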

\begin{proof}
The proof follows from Lemmas \ref{lem:ODE-00}--\ref{lem:ODE-01}.
\end{proof}

\subsection{Properties of Bessel function}

We denote $J_0(s)$ the Bessel function of the first kind of order zero, which solves the differential equation
\begin{align*}
    s^2 J_0^{''}(s) + s J_0^{'}(s) + s^2 J_0(s) = 0.
\end{align*}

We will rely on the following property of this function, which can be found in \cite[Chapter 13.4]{Wa}.
\begin{lemma}\label{lem:Bessel}
Let $a>b>0$, then it holds that
\begin{align}
    \int_0^{+\infty} \sin(as) J_0(bs) \, \d s
    =
     (a^2 - b^2)^{-{1\over 2}}.
    \end{align}
\end{lemma}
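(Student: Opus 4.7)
The plan is to reduce the claim to a closed-form trigonometric integral via the standard integral representation
\begin{align*}
J_0(bs) = \frac{1}{\pi}\int_0^{\pi} \cos(bs\sin\theta)\, \d\theta,
\end{align*}
which bypasses any direct asymptotic analysis of $J_0$. First I would substitute this representation into the left-hand side. The delicate point is that $\sin(as)$ is not absolutely integrable, so the naive Fubini swap is illegal; to handle this, I would insert an Abel regularization $e^{-\epsilon s}$, swap the order of integration (justified since the resulting integrand is absolutely integrable in $(s,\theta)$ for each $\epsilon>0$), evaluate the inner integral, and then pass to the limit $\epsilon \to 0^+$ using dominated convergence on $\theta$.

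For the inner integral, I would apply the product-to-sum identity
\begin{align*}
\sin(as)\cos(bs\sin\theta) = \tfrac{1}{2}\bigl[\sin((a+b\sin\theta)s) + \sin((a-b\sin\theta)s)\bigr],
\end{align*}
and note that since $a>b>0$ both $a \pm b\sin\theta$ stay strictly positive on $[0,\pi]$, bounded below by $a-b>0$. Using the Abel-regularized identity $\int_0^\infty \sin(ks)\, e^{-\epsilon s}\, \d s = k/(k^2+\epsilon^2)$ and passing to the limit yields
\begin{align*}
\int_0^{\infty} \sin(as)\cos(bs\sin\theta)\, \d s = \tfrac{1}{2}\Big(\tfrac{1}{a+b\sin\theta}+\tfrac{1}{a-b\sin\theta}\Big) = \frac{a}{a^2 - b^2\sin^2\theta},
\end{align*}
with uniform (in $\epsilon$) bounds in $\theta$ provided by $a-b>0$, justifying the passage to the limit.

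The remaining step is the elementary evaluation
\begin{align*}
\frac{1}{\pi}\int_0^{\pi} \frac{a\, \d\theta}{a^2-b^2\sin^2\theta} = \frac{1}{\sqrt{a^2-b^2}}.
\end{align*}
Here I would exploit the $\theta \mapsto \pi-\theta$ symmetry to reduce to $[0,\pi/2]$, then perform the Weierstrass-type substitution $u=\tan\theta$ (so $\sin^2\theta = u^2/(1+u^2)$, $\d\theta = \d u/(1+u^2)$), which converts the integral into $\int_0^\infty \d u/(a^2 + (a^2-b^2)u^2)$, a standard arctangent integral evaluating to $\pi/(2a\sqrt{a^2-b^2})$. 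Combining the factors yields the claimed identity.

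The main obstacle is purely analytic rather than conceptual: justifying the Fubini swap in the presence of the conditionally convergent oscillatory factor $\sin(as)$. Once the Abel regularization $e^{-\epsilon s}$ is used and the condition $a>b$ provides the uniform lower bound $a \pm b\sin\theta \ge a-b>0$ needed to pass to the limit $\epsilon\to 0^+$, every remaining step is a routine computation.
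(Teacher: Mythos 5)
Your proof is correct and gives a self-contained elementary derivation of a Weber discontinuous integral that the paper does not prove at all: the text simply cites Watson's treatise (Chapter 13.4), where the identity appears as a special case of the Weber--Schafheitlin formulas, typically derived via Mellin/beta-function or contour methods. Your route through Poisson's integral representation $J_0(z)=\frac{1}{\pi}\int_0^\pi\cos(z\sin\theta)\,\d\theta$ plus Abel regularization is more elementary, avoids the hypergeometric machinery, and isolates exactly where the hypothesis $a>b$ enters (the uniform lower bound $a\pm b\sin\theta\ge a-b>0$ that makes the regularized kernel $k/(k^2+\epsilon^2)$ dominated and the final rational integral convergent). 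The product-to-sum step, the Fubini swap on $e^{-\epsilon s}\,\d s\,\d\theta$, the dominated-convergence passage in $\theta$, and the tangent substitution evaluating $\frac{1}{\pi}\int_0^\pi a\,\d\theta/(a^2-b^2\sin^2\theta)=(a^2-b^2)^{-1/2}$ all check out.

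One small point you should make explicit to be fully rigorous: after establishing $\lim_{\epsilon\to0^+}\int_0^\infty\sin(as)J_0(bs)e^{-\epsilon s}\,\d s=(a^2-b^2)^{-1/2}$, you still need to identify the Abel limit with the improper Riemann integral $\lim_{R\to\infty}\int_0^R\sin(as)J_0(bs)\,\d s$. This requires first knowing the improper integral converges (which follows from the asymptotics $J_0(bs)\sim\sqrt{2/(\pi bs)}\cos(bs-\pi/4)$ recorded in Lemma~\ref{lem:Bessel2}, the non-resonance condition $a\ne b$, and Dirichlet's test on the $O(s^{-1/2})$ oscillatory main term plus the absolutely convergent $O(s^{-3/2})$ tail), after which the standard Abelian theorem for Laplace transforms gives the identification. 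You signal awareness of the convergence subtlety, but this final step is not written down; without it the argument only computes the Abel mean.
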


\begin{lemma}\label{lem:Bessel2}
    For $s\gg 1$, one has
    \begin{align*}
        \big|J_0(s) - \sqrt{2\over s \pi } \cos\big(s-{\pi \over 4}\big) \big|
        \lesssim
        s^{-{3\over 2}}.
    \end{align*}
\end{lemma}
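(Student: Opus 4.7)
The plan is to obtain the asymptotic by applying the classical stationary phase method to a standard integral representation of $J_0$. I would start from
\begin{equation*}
J_0(s) = \frac{1}{2\pi} \int_{-\pi}^{\pi} e^{is\sin\theta}\, \d\theta,
\end{equation*}
whose phase $\phi(\theta) = \sin\theta$ has non-degenerate stationary points only at $\theta = \pm\pi/2$, with $\phi(\pm\pi/2) = \pm 1$ and $\phi''(\pm\pi/2) = \mp 1$.

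First, I would introduce a smooth partition of unity $1 = \chi_+ + \chi_- + \chi_0$ on $[-\pi,\pi]$, where $\chi_\pm$ are supported in fixed small neighborhoods of $\pm\pi/2$ and $\chi_0$ vanishes near these points. On the support of $\chi_0$, the phase derivative $\cos\theta$ is bounded away from zero, so integrating by parts twice in the corresponding piece of the integral produces a contribution of size $O(s^{-2})$, which is harmless for our purpose.

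For the localized piece near $\theta_0 = \pi/2$, I would set $\theta = \pi/2 + u$, so that $\sin\theta = 1 - u^2/2 + r(u)$ with smooth $r(u) = O(u^4)$, and apply the Morse lemma to find a local smooth diffeomorphism $u \mapsto v(u)$ with $v^2 = u^2 - 2r(u)$ reducing the phase to $1 - v^2/2$. The resulting integral takes the form
\begin{equation*}
\frac{e^{is}}{2\pi} \int_{\mathbb{R}} e^{-isv^2/2}\, a(v)\, \d v,
\end{equation*}
for a smooth compactly supported amplitude $a$ with $a(0) = 1$. Writing $a(v) = 1 + v\, b(v)$, the constant part yields the Gaussian integral $\sqrt{2\pi/s}\, e^{-i\pi/4}$ (with the restoration of $\mathbb{R}$ from the cutoff costing only a rapidly decaying error), while the $v\,b(v)$ piece, after one integration by parts in $v$ against $e^{-isv^2/2}$, produces an $O(s^{-3/2})$ remainder. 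An identical analysis near $-\pi/2$ gives the complex conjugate contribution $\sqrt{2\pi/s}\, e^{i\pi/4} e^{-is}$, and summing the two leading terms yields $\sqrt{2/(\pi s)}\, \cos(s - \pi/4)$ with total error $O(s^{-3/2})$, as claimed.

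The step I expect to be most delicate is the error bookkeeping near the stationary points: the Morse reduction, the removal of the cutoff, and the one-step integration by parts against the Gaussian must all be tracked carefully enough to guarantee that the total correction lands at the level $s^{-3/2}$ rather than merely $s^{-1}$. If the Morse-lemma route becomes cumbersome, one may equivalently expand $e^{is r(u)} = 1 + is\, r(u)\int_0^1 e^{its r(u)}\,dt$ directly and treat the remainder as an oscillatory integral with amplitude of size $O(u^4)$, which by Plancherel-type estimates on the Gaussian gives the same $O(s^{-3/2})$ bound; as a final fallback, the claim can be read off from the classical full asymptotic expansion of $J_0$ recorded in \cite{Wa}.
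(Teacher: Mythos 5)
Your proposal is correct, but it takes a genuinely different route: the paper disposes of the lemma in one line by citing the classical asymptotic expansion of $J_0$ in Watson's treatise (the item cited as \cite{Wa} in this paper), whereas you derive the estimate directly from the integral representation $J_0(s)=\frac{1}{2\pi}\int_{-\pi}^{\pi}e^{is\sin\theta}\,\d\theta$ via stationary phase. Your derivation is sound; the non-stationary piece gives $O(s^{-2})$ after two integrations by parts, the Morse change of variable $v=2\sin(u/2)$ brings the phase near $\pm\pi/2$ to the exact quadratic $\pm(1-v^2/2)$, and the Fresnel integral over $\mathbb{R}$ supplies the $\sqrt{2\pi/s}\,e^{\mp i\pi/4}$ prefactor. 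Two small bookkeeping points are worth flagging. First, if you split $a(v)=a(0)+v\,b(v)$ with $a$ compactly supported, then $b$ is not compactly supported (it behaves like $-1/v$ at infinity); this causes no harm because $b'\in L^1$ with $b'\sim v^{-2}$, but the cleaner split is $a = a(0)\psi + (a - a(0)\psi)$ with $\psi$ a smooth bump $\equiv 1$ near $0$, which makes both the cutoff-removal error genuinely $O(s^{-N})$ and the Hadamard factor compactly supported. Second, ``one integration by parts'' by itself only produces a factor $s^{-1}$ times $\int e^{-isv^2/2}b'(v)\,\d v$; you must then invoke a van der Corput / stationary-phase estimate once more to see this residual integral is itself $O(s^{-1/2})$, which is what lands the remainder at $O(s^{-3/2})$ rather than $O(s^{-1})$. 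With those two points made explicit, your argument is a complete, self-contained proof, at the modest cost of carrying out the stationary-phase machinery that the paper sidesteps by citation.
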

\begin{proof}
    Its proof can be found in \cite[Chapter 7]{Wa}.
\end{proof}

\section{Global existence: the exterior region}\label{sec:exterior}
This section is devoted to proving global existence for the Klein-Gordon-Zakharov system \eqref{eq:2D-KGZ} in the exterior region $\bigcup_{t\geq t_0} \Sigma^{ex}_t$ valid for all $\eta_0 \in [1, {5\over 4}]$. We recall that in Theorem \ref{thm:main1} we assumed suitable decay for the initial data which is mainly due to the multiple use of Lorentz boosts for the Klein-Gordon component $E$. For ease of reading, we will ignore this issue and use Lorentz boosts as many times as we want. Without loss of generality, we might regard the field $E$ as a scalar-valued function which can be treated in the same way as it is the vector-valued one.

The proof  relies on a bootstrap argument. We carefully formulate bootstrap assumptions for the solution over a given interval and demonstrate that these estimates can be strictly improved. This iterative refinement of the bootstrap assumptions ultimately establishes the global existence of solutions to the Klein-Gordon-Zakharov system.     Here, the bootstrap assumptions for $t\in [t_0, T)$ are as follows:
\begin{align}
\|\partial^I\Gamma^J n \|_{L^2(\Sigma^{ex}_t)}
\leq 
&C_1 \epsilon t^{\delta},
\qquad
&|I|+|J| \leq N+1,   \label{eq:BA-ex}  
\\
\mathcal{E}^{ex}_1 (\partial^I\Gamma^J E, t, \gamma_{e_1})
\leq 
&(C_1 \epsilon)^2,
\qquad
&|I|+|J|\leq N, \label{eq:BA-ex01}     
\\
\mathcal{E}^{ex}_1 (\partial^I\Gamma^J E, t, \gamma_{e_2} )
\leq 
&(C_1 \epsilon)^2,
\qquad
&|I|+|J|\leq N+1.   \label{eq:BA-ex02}  
\end{align}

In the above, $\epsilon \ll 1$ measures the size of the initial data, $\epsilon \ll \delta \ll 1$, and  $C_1$ is a large number such that $C_1 \epsilon \ll 1$ to be fixed. In the sequel, the implied constants in $\lesssim$ should be independent of $\epsilon$ and $C_1$. The parameters with admissible ranges $\gamma_{e_1} \geq {3\over 2}, \gamma_{e_2} \geq {1\over 2}+\delta$ are fixed, and we take $\gamma_{e_1} = {3\over 2}, \gamma_{e_2} = {1\over 2}+\delta$.

After employing the Sobolev-type inequality and commutator estimates, we get pointwise bounds for the solution $(E, n)$ for $t \in [t_0, T)$.

\begin{proposition}
Under the bootstrap assumptions in \eqref{eq:BA-ex}--\eqref{eq:BA-ex01}, the following pointwise estimates hold.
\begin{enumerate}
		\item \emph{Rough decay for $n$.}
\begin{align}
|\partial^I\Gamma^J n | 
\lesssim
C_1 \epsilon \langle t+r\rangle^{-{1\over 2} + \delta},
\qquad
|I|+|J| \leq N-1. \label{eq:n-rough} 
\end{align}

\item \emph{Rough decay for $E$.}
\begin{align}
|\partial \partial^I\Gamma^J E| + |\partial^I\Gamma^J E|
\lesssim
C_1 \epsilon \langle t+r\rangle^{-{1\over 2}} \langle t-r\rangle^{-\gamma_{e_1}},
\qquad
|I|+|J| \leq N-2. 
\end{align}
\end{enumerate}
\end{proposition}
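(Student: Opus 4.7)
The plan is to derive both pointwise bounds as an essentially direct consequence of the exterior Sobolev inequality (Proposition~\ref{prop:Sobolev-ex}), combined with the commutator estimates of Section~\ref{sec:pre} and the bootstraps \eqref{eq:BA-ex}--\eqref{eq:BA-ex02}.

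For the component $n$, I apply Proposition~\ref{prop:Sobolev-ex} with weight exponent $\eta=0$ to $\partial^I\Gamma^J n$ with $|I|+|J|\le N-1$. This yields
\begin{equation*}
\langle r\rangle^{1/2}\,|\partial^I\Gamma^J n(t,x)|\ \lesssim\ \sum_{|I'|\le 1,\,|J'|\le 1}\big\|Z^{I'}\Omega^{J'}\partial^I\Gamma^J n\big\|_{L^2(\Sigma^{ex}_t)},
\end{equation*}
with $Z\in\{\partial_r,\Omega\}$. Since the coefficients of $\partial_r=(x^a/r)\partial_a$ are smooth and bounded on $\{r\ge t-5/4\}$, the commutator estimates $|[\partial,\Gamma]f|\lesssim|\partial f|$ and $|[\Gamma_1,\Gamma_2]f|\lesssim|\Gamma f|$ let me rewrite the right-hand side as a finite sum of $\|\partial^{I''}\Gamma^{J''}n\|_{L^2(\Sigma^{ex}_t)}$ with $|I''|+|J''|\le N+1$ (including the pure $L^2$-norm $\|n\|_{L^2(\Sigma^{ex}_t)}$, which is itself the $|I|=|J|=0$ instance of the bootstrap). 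Invoking \eqref{eq:BA-ex} bounds this by $C_1\epsilon\,t^\delta$; since $r\ge t-5/4$ in the exterior gives $\langle r\rangle\simeq\langle t+r\rangle$, I conclude
\begin{equation*}
|\partial^I\Gamma^J n(t,x)|\ \lesssim\ C_1\epsilon\,\langle r\rangle^{-1/2}\,t^\delta\ \lesssim\ C_1\epsilon\,\langle t+r\rangle^{-1/2+\delta}.
\end{equation*}

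For the component $E$, I proceed analogously but with weight exponent $\eta=\gamma_{e_1}$ (respectively $\gamma_{e_2}$ when the extra outer $\partial$ pushes the total derivative count to $N+1$), applied to $\partial^I\Gamma^J E$ or $\partial\partial^I\Gamma^J E$ with $|I|+|J|\le N-2$. The key point is that the weighted exterior energy $\mathcal{E}^{ex}_1(\cdot,t,\gamma)$ contains both the derivative piece $\langle r-t\rangle^{2\gamma}|\partial E|^2$ and the mass piece $\langle r-t\rangle^{2\gamma}|E|^2$, so it directly controls $\|\langle\tau_-\rangle^{\gamma}\partial^{I''}\Gamma^{J''}E\|_{L^2(\Sigma^{ex}_t)}$ through the bootstrap, with no need for a Hardy inequality. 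Commuting $Z^{I'}\Omega^{J'}$ and the weight $\langle\tau_-\rangle^{\gamma}$ through $\partial^I\Gamma^J$ is harmless (since $\Omega$ annihilates $\tau_-$ and $\partial_r\tau_-$ is bounded), and the resulting $L^2$ norms are controlled by $\mathcal{E}^{ex}_1(\cdot,t,\gamma)^{1/2}$ at the appropriate order via \eqref{eq:BA-ex01}--\eqref{eq:BA-ex02}. The equivalence $\langle r\rangle\simeq\langle t+r\rangle$ then gives the claimed pointwise decay.

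The main technical point is purely bookkeeping: one must carefully match the derivative order raised by the Sobolev step (which costs up to two vector fields) with the weight level $\gamma_{e_1}$ or $\gamma_{e_2}$ available from the bootstraps, especially for $\partial\,\partial^I\Gamma^J E$ whose total order reaches $N-1$ before Sobolev. Once this accounting is set up, both pointwise bounds follow at once from Proposition~\ref{prop:Sobolev-ex} and the preliminaries, with no genuine nonlinear difficulty at this stage.
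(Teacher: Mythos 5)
Your proof is essentially the one the paper intends (the paper supplies no written proof, merely noting "after employing the Sobolev-type inequality and commutator estimates"): apply Proposition~\ref{prop:Sobolev-ex} with $\eta=0$ for $n$ and $\eta=\gamma_{e_1}$ for $E$, commute, use $\langle r\rangle\simeq\langle t+r\rangle$ on $\Sigma^{ex}_t$, and invoke \eqref{eq:BA-ex} resp.\ \eqref{eq:BA-ex01}. The derivative accounting you set up is also correct.

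One small point worth cleaning up: you do not actually need \eqref{eq:BA-ex02} anywhere here, and invoking $\gamma_{e_2}$ would in fact prove something weaker than claimed (the decay $\langle t-r\rangle^{-\gamma_{e_2}}$ rather than $\langle t-r\rangle^{-\gamma_{e_1}}$, since $\gamma_{e_2}<\gamma_{e_1}$). The worry that the "extra outer $\partial$ pushes the total derivative count to $N+1$" is resolved by recognizing that the quantity $\mathcal{E}^{ex}_1(\partial^{I''}\Gamma^{J''}E,t,\gamma_{e_1})$ already contains the weighted $L^2$ norm of $\partial\partial^{I''}\Gamma^{J''}E$; so the worst case after the Sobolev step (for $|\partial\partial^I\Gamma^J E|$ with $|I|+|J|\leq N-2$, Sobolev costing two more vector fields) is $\partial$ applied to something of order $\leq N$, which \eqref{eq:BA-ex01} controls. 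This also matches the statement of the proposition, which cites only \eqref{eq:BA-ex}--\eqref{eq:BA-ex01}.
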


Acting $\partial^I \Gamma^J$ to \eqref{eq:2D-KGZ}, we have
\begin{align}\label{eq:KG-high}
-\Box \partial^I \Gamma^J E + \partial^I \Gamma^J E = - \partial^I \Gamma^J (n E).
\end{align}

\begin{proposition}[Refined decay for $E$] \label{prop:KG-refine01}
It holds that
\begin{align}\label{eq:KG-refine01}
|\partial^I\Gamma^J E|
\lesssim
C_1 \epsilon \langle t+r\rangle^{-{1\over 2}-\gamma_{e_1}},
\qquad
|I|+|J| \leq N-5. 
\end{align}
\end{proposition}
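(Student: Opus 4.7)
The plan is to iterate Proposition~\ref{prop:KG-extra} twice, exploiting the mass term in the Klein--Gordon equation to trade factors of $\langle\tau_-\rangle$ for factors of $\langle t+r\rangle^{-1}$ and thereby gain an extra $\langle t+r\rangle^{-\gamma_{e_1}}$ beyond the rough bound. I would first split the exterior region into two subregions via a small fixed $c>0$.

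When $\langle\tau_-\rangle\geq c\langle t+r\rangle$, the rough decay already suffices:
$$|\partial^I\Gamma^J E|\lesssim C_1\epsilon\langle t+r\rangle^{-{1\over 2}}\langle\tau_-\rangle^{-\gamma_{e_1}}\lesssim C_1\epsilon\langle t+r\rangle^{-{1\over 2}-\gamma_{e_1}}.$$
The substantive work is in the complementary near-light-cone subregion $\langle\tau_-\rangle\leq c\langle t+r\rangle$, where $r\leq 3t$ and $\langle t\rangle\sim\langle t+r\rangle$, so Proposition~\ref{prop:KG-extra} is applicable. Since $[-\Box+1,\Gamma]=0$, the field $\phi=\partial^I\Gamma^J E$ satisfies $-\Box\phi+\phi=-\partial^I\Gamma^J(nE)$, which yields
\begin{align*}
|\phi|\lesssim\frac{\langle\tau_-\rangle}{\langle t\rangle}|\partial\partial\phi|+\frac{1}{\langle t\rangle}|\partial L\phi|+\frac{1}{\langle t\rangle}|\partial\phi|+|\partial^I\Gamma^J(nE)|.
\end{align*}
I would then apply Proposition~\ref{prop:KG-extra} a second time to each of $\partial\partial\phi$, $\partial L\phi$, and $\partial\phi$, which again satisfy Klein--Gordon equations of the same form. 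After substitution, using $\gamma_{e_1}={3\over 2}$, the rough decay on the resulting fourth-order derivatives of $\phi$, and $\langle\tau_-\rangle\leq c\langle t+r\rangle$, the leading linear contribution becomes
\begin{align*}
\frac{\langle\tau_-\rangle^{2}}{\langle t\rangle^{2}}\,|\partial\partial\partial\partial\phi|\lesssim C_1\epsilon\langle t+r\rangle^{-{5\over 2}}\langle\tau_-\rangle^{{1\over 2}}\lesssim C_1\epsilon\langle t+r\rangle^{-{1\over 2}-\gamma_{e_1}},
\end{align*}
and the remaining linear terms yield bounds of the same or better order. The constraint $|I|+|J|\leq N-5$ is dictated precisely by the fact that two iterations bring up to $|I|+|J|+4$ derivatives, which must lie within the rough-decay range $\leq N-1$.

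For the nonlinear source $\partial^I\Gamma^J(nE)$ and the analogous sources produced during iteration, I would adopt the perturbative viewpoint $-\Box E+(1+n)E=0$ and proceed by induction on $|I|+|J|$. The diagonal term $n\,\partial^I\Gamma^J E$ gets absorbed into the left-hand side using $|n|\leq{1\over 2}$ (which follows from the rough decay on $n$ and smallness of $C_1\epsilon$). The off-diagonal commutators $\partial^{I_1}\Gamma^{J_1}n\cdot\partial^{I_2}\Gamma^{J_2}E$ with $|I_2|+|J_2|<|I|+|J|$ are estimated by combining the inductive refined decay on $\partial^{I_2}\Gamma^{J_2}E$ with the rough decay on $\partial^{I_1}\Gamma^{J_1}n$, giving a bound of order $(C_1\epsilon)^{2}\langle t+r\rangle^{-1-\gamma_{e_1}+\delta}$, which closes since $C_1\epsilon\ll 1$ and $\gamma_{e_1}>1+\delta$.

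The main obstacle is the bookkeeping of the two-step iteration together with the perturbative absorption of $n$ into the mass: one must carefully verify that every term produced by the iteration lies within the rough-decay range, and one must ensure that the weak decay $(C_1\epsilon)^{2}\langle t+r\rangle^{-1+\delta}$ of the source $nE$ near the light cone does not obstruct the argument---this is handled precisely by the perturbative viewpoint, which removes $n\,\partial^I\Gamma^J E$ from the source and leaves only strictly lower-order commutators on which to induct.
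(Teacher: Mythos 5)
This is essentially the paper's argument: two iterations of Proposition~\ref{prop:KG-extra}, absorbing the diagonal term $n\,\partial^I\Gamma^J E$ by smallness and handling the off-diagonal commutators by induction on $|I|+|J|$ (the paper presents the two iterations explicitly as \eqref{eq:KG-p00}--\eqref{eq:KG-p01} and then interpolates, which coincides with your direct substitution once $\gamma_{e_1}=3/2$; your $\langle\tau_-\rangle\lessgtr c\langle t+r\rangle$ split plays the same role as the paper's $r\lessgtr 2t$ split). One minor slip: the off-diagonal closure $(C_1\epsilon)^2\langle t+r\rangle^{-1-\gamma_{e_1}+\delta}\lesssim C_1\epsilon\langle t+r\rangle^{-1/2-\gamma_{e_1}}$ rests on $\delta<\tfrac12$ and $C_1\epsilon\ll 1$, not on $\gamma_{e_1}>1+\delta$, but this is harmless and the conclusion stands.
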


\begin{proof}
Note that 
\begin{align*}
\langle t+r\rangle
\simeq 
\langle t-r\rangle,
\qquad
r\geq 2t,
\end{align*}
thus we only consider the region $\{ r \leq 2t \}$.

By \eqref{eq:KG-high} with $|I| + |J| = 0$ and Proposition \ref{prop:KG-extra}, we have
\begin{align}
|E| \lesssim
C_1 \epsilon \langle t+r \rangle^{-{3\over 2}} \langle t-r \rangle^{-\gamma_{e_1}+1} + |nE|.
\end{align}
The smallness of $n$ yields
\begin{align}
|E| \lesssim
C_1 \epsilon \langle t+r \rangle^{-{3\over 2}} \langle t-r \rangle^{-\gamma_{e_1}+1}.
\end{align}

Inductively, we get
\begin{align}\label{eq:KG-p00}
|\partial^I \Gamma^J E|
\lesssim
C_1 \epsilon \langle t+r \rangle^{-{3\over 2}} \langle t-r \rangle^{-\gamma_{e_1}+1},
\qquad
|I| + |J|\leq N-3.
\end{align}

We repeat this process with refined pointwise \eqref{eq:KG-p00} to arrive at
\begin{align}\label{eq:KG-p01}
|\partial^I \Gamma^J E|
\lesssim
C_1 \epsilon \langle t+r \rangle^{-{5\over 2}} \langle t-r \rangle^{-\gamma_{e_1}+2},
\qquad
|I| + |J|\leq N-5.
\end{align}

Finally, we interpolate \eqref{eq:KG-p00} and \eqref{eq:KG-p01} to get \eqref{eq:KG-refine01}.

\end{proof}

\subsection{Refined bounds}

Recall we decompose $n = \ell + \Delta \mathfrak{m}$. The component $\ell$ satisfies a free wave equation, so its estimates are relatively easier to get. 

\begin{lemma}\label{lem:n0}
We have the following bounds for $\ell$.
\begin{enumerate}
\item \emph{$L^2$ bounds.}
\begin{align}
\big\|\langle r-t\rangle^{\gamma_{e_1}} \partial^I \Gamma^J \ell\big\|_{L^2( \Sigma^{ex}_t)}
\lesssim
\epsilon,
\qquad
|I| + |J| \leq N,  \label{eq:n0-L2}
\\
\| \partial^I \Gamma^J \ell\|
\lesssim
\epsilon \log \langle t\rangle ,
\qquad
|I| + |J| \leq N+1.  \label{eq:n0-L2-a}
\end{align}

\item \emph{Pointwise bounds in $\Sigma^{ex}_t$.}
\begin{align}
|\partial^I \Gamma^J \ell |
\lesssim
\epsilon \langle t+r\rangle^{-{1\over 2}}  \langle t-r\rangle^{-\gamma_{e_1}},
\qquad
|I| + |J| \leq N-2.  \label{eq:n0-point}
\end{align}

\end{enumerate}

\end{lemma}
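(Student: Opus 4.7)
The plan is to exploit that $-\Box \ell = 0$ and $[-\Box, \Gamma] = 0$ for every $\Gamma \in V$, so each differentiated quantity $\partial^I \Gamma^J \ell$ is itself a homogeneous free wave. Its data at time $t_0$ can be read off from $(n_0, n_1)$ by using the equation to trade $\partial_t^2$ for spatial Laplacians; the boosts $L_a$ produce factors of $|x|$, which are absorbed by the $\langle x\rangle^{|I|+3}$ weights in the hypothesis of Theorem \ref{thm:main1}. Hence for each of the three claims I only need to propagate the bound from $t_0$ to $t$, with no nonlinear error terms to worry about.

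For part (i), I apply Proposition \ref{prop:EE-ex} with $m=0$, $F=0$, and $\gamma = \gamma_{e_1}$ to the free wave $\partial^I \Gamma^J \ell$. This controls $\|\langle r-t\rangle^{\gamma_{e_1}} \partial(\partial^I \Gamma^J \ell)\|_{L^2(\Sigma^{ex}_t)}$ uniformly in $t$ by its value at $t_0$, which is in turn bounded by $\epsilon$. To recover the undifferentiated version $\|\langle r-t\rangle^{\gamma_{e_1}} \partial^I \Gamma^J \ell\|_{L^2(\Sigma^{ex}_t)}$ appearing in \eqref{eq:n0-L2}, I invoke the weighted Hardy inequality Proposition \ref{prop:Hardy-ex}, which trades a gradient for one extra power of $\langle r-t\rangle$; the resulting gradient norm is then handled by the same energy argument run with $\gamma_{e_1}+1$ in place of $\gamma_{e_1}$. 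The extra weights on the initial data are precisely what the factor $\langle x\rangle^{|I|+3}$ in the hypothesis is designed to supply.

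For part (ii), I specialize Lemma \ref{lem:freewave} with the scaling index taken to be zero, applied to the free wave $\partial^I \Gamma^J \ell$ with $|I|+|J| \leq N+1$. This requires $L^2$ bounds on $\partial^I \Gamma^J \ell|_{t_0}$ and $L^1 \cap L^2$ bounds on $\partial_t \partial^I \Gamma^J \ell|_{t_0}$; the $L^2$ parts are immediate from the hypothesis, and the $L^1$ part follows by Cauchy-Schwarz against $\langle x\rangle^{-1-\sigma}$ for a small $\sigma>0$, with the missing weight provided again by the hypothesis. The factor $\log\langle t\rangle$ on the right-hand side is intrinsic to the low-frequency regularization visible in the proof of Lemma \ref{lem:freewave}.

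For part (iii), I apply the exterior Sobolev-type estimate Proposition \ref{prop:Sobolev-ex} with $\eta = \gamma_{e_1}$ to $\partial^I \Gamma^J \ell$ for $|I|+|J| \leq N-2$. In $\Sigma^{ex}_t$ one has $r \geq t - \eta_0$, so $\langle r\rangle \simeq \langle t+r\rangle$ and the $\langle r\rangle^{1/2}$ on the left produces the desired $\langle t+r\rangle^{-1/2}$ decay. The two extra derivatives needed on the right are absorbed by going up to $|I|+|J| \leq N$ in the $L^2$ bound from part (i). The main obstacle is the passage from derivative-level control to the undifferentiated $L^2$ norm in part (i): the two-dimensional weighted Hardy inequality costs one extra power of $\langle r-t\rangle$ per use, and carefully tracking this weight budget is what dictates the weight choice $\langle x\rangle^{|I|+3}$ in the smallness hypothesis.
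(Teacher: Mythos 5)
Your proposal is correct and follows essentially the same route as the paper: Proposition \ref{prop:EE-ex} run at weight $\gamma_{e_1}+1$ combined with the Hardy inequality of Proposition \ref{prop:Hardy-ex} for \eqref{eq:n0-L2}, Lemma \ref{lem:freewave} for the logarithmic $L^2$ bound \eqref{eq:n0-L2-a}, and Proposition \ref{prop:Sobolev-ex} with commutators for the pointwise estimate \eqref{eq:n0-point}. The paper additionally cites Proposition \ref{prop:con-EE} alongside Lemma \ref{lem:freewave} in step (ii), but as you observe the latter alone suffices once the data are placed in $L^1\cap L^2$ via the $\langle x\rangle^{|I|+3}$ weights.
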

\begin{proof}
We first derive bounds for $\partial \partial^I \Gamma^J \ell$ with $|I| + |J| \leq N$. We get by applying Proposition \ref{prop:EE-ex} that
\begin{align*}
\mathcal{E}^{ex}(\partial^I \Gamma^J \ell, t, \gamma_{e_1}+1)
=
\mathcal{E}^{ex}(\partial^I \Gamma^J \ell, t_0, \gamma_{e_1}+1)
\lesssim
\epsilon^2.
\end{align*}
Then, the Hardy-type inequality in Proposition \ref{prop:Hardy-ex} with above estimates leads to \eqref{eq:n0-L2}.

For the estimate in \eqref{eq:n0-L2-a}, one refers to Proposition \ref{prop:con-EE} and Lemma \ref{lem:freewave}.  

Finally, we can derive pointwise bounds \eqref{eq:n0-point} using Sobolev inequality in Proposition \ref{prop:Sobolev-ex} and commutator estimates.
\end{proof}

Now, we derive the bounds for the component $\mathfrak{m}$. We act $\partial^I \Gamma^J $ to the equation of $\mathfrak{m}$ and get
\begin{align}\label{eq:wave-high}
-\Box \partial^I \Gamma^J \mathfrak{m}
=
\partial^I \Gamma^J |E|^2.
\end{align}

\begin{lemma}\label{lem:nDelta-ex}
We have
\begin{equation}\label{eq:nDelta-ex}
\begin{aligned}
\mathcal{E}^{ex} (\partial \partial^I\Gamma^J \mathfrak{m}, t, \gamma_{e_1})
\lesssim
&\epsilon^2 + (C_1 \epsilon)^4,
\qquad
|I| + |J|\leq N+1,
\\
\mathcal{E}^{ex} ( \partial^I\Gamma^J \mathfrak{m}, t, \gamma_{e_2})
\lesssim
&\epsilon^2 + (C_1 \epsilon)^4,
\qquad
|I| + |J|\leq N+2, \, |J|\leq N+1. 
\end{aligned}
\end{equation}
\end{lemma}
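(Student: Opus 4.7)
The plan is to apply Proposition~\ref{prop:EE-ex} with $m=0$ to the equations
\begin{equation*}
-\Box\,\partial\partial^I\Gamma^J\mathfrak{m} = \partial\partial^I\Gamma^J|E|^2,
\qquad
-\Box\,\partial^I\Gamma^J\mathfrak{m} = \partial^I\Gamma^J|E|^2,
\end{equation*}
and control the source terms by a high--low Leibniz decomposition, coupled with the refined pointwise bound~\eqref{eq:KG-refine01} of Proposition~\ref{prop:KG-refine01} on the low factor and the bootstrap assumptions~\eqref{eq:BA-ex01}--\eqref{eq:BA-ex02} on the high factor. The initial-data contribution is $\lesssim \epsilon^2$: since $(\mathfrak{m},\partial_t\mathfrak{m})(t_0)=(0,0)$, all time derivatives of $\mathfrak{m}$ at $t_0$ can be expressed, via repeated use of $-\Box\mathfrak{m}=|E|^2$, as smooth polynomials of spatial derivatives of $(E_0,E_1)$, all bounded by the initial-data assumption.

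For the first bound ($|I|+|J|\leq N+1$), Leibniz expands $\partial\partial^I\Gamma^J|E|^2$ into a sum of products $A\cdot B$ of total order $\leq N+2$ with total Lorentz order $\leq N+1$; the low factor $B$ may be chosen of order $\leq \lfloor(N+2)/2\rfloor \leq N-5$ (since $N\geq 16$), so $|B|\lesssim C_1\epsilon\langle t+r\rangle^{-2}$ by~\eqref{eq:KG-refine01}. Cauchy--Schwarz in the spacetime integral of Proposition~\ref{prop:EE-ex} reduces the source estimate to bounding $\int_{t_0}^t \|\langle r-s\rangle^{\gamma_{e_1}} AB\|_{L^2(\Sigma^{ex}_s)}\,\d s$, paired with $\sup_s\sqrt{\mathcal{E}^{ex}(\partial\partial^I\Gamma^J\mathfrak{m},s,\gamma_{e_1})}$. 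When $A$ has total order $\leq N$, bootstrap~\eqref{eq:BA-ex01} gives $\|\langle r-s\rangle^{\gamma_{e_1}}A\|_{L^2(\Sigma^{ex}_s)}\lesssim C_1\epsilon$ and $|B|\lesssim C_1\epsilon\langle s\rangle^{-2}$, producing an integrand $\lesssim (C_1\epsilon)^2\langle s\rangle^{-2}$. At total order $N+1$ or $N+2$---the latter requiring peeling off one translation as $\partial\cdot(\text{order }N+1)$, which is allowed because $|J|\leq N+1$---only the weaker weight $\gamma_{e_2}$ is available through~\eqref{eq:BA-ex02}, so I would split
\begin{equation*}
\langle r-s\rangle^{\gamma_{e_1}} = \langle r-s\rangle^{\gamma_{e_2}}\,\langle r-s\rangle^{\gamma_{e_1}-\gamma_{e_2}},
\qquad \gamma_{e_1}-\gamma_{e_2} = 1-\delta,
\end{equation*}
and use $\langle r-s\rangle \leq \langle t+r\rangle$ in the exterior region to absorb the surplus weight into $B$, yielding
\begin{equation*}
\|\langle r-s\rangle^{\gamma_{e_1}-\gamma_{e_2}} B\|_{L^\infty(\Sigma^{ex}_s)} \lesssim C_1\epsilon\langle s\rangle^{-1-\delta},
\end{equation*}
and hence an integrable-in-$s$ integrand $\lesssim (C_1\epsilon)^2\langle s\rangle^{-1-\delta}$. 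Summing contributions and absorbing $\sup_s\sqrt{\mathcal{E}^{ex}}$ in the standard way closes the estimate.

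The second estimate ($|I|+|J|\leq N+2$, $|J|\leq N+1$, target weight $\gamma_{e_2}$) follows by the same scheme but is strictly easier, because the smaller target weight $\gamma_{e_2}$ never forces the splitting trick: combining $|B|\lesssim C_1\epsilon\langle t+r\rangle^{-2}$ with $\langle r-s\rangle\leq \langle t+r\rangle$ in the exterior region gives $\|\langle r-s\rangle^{\gamma_{e_2}} B\|_{L^\infty(\Sigma^{ex}_s)}\lesssim C_1\epsilon\langle s\rangle^{-3/2+\delta}$, which is already time-integrable. The main obstacle I anticipate is thus the top-order case in the first estimate, where the Lorentz budget on $E$ is saturated and only $\gamma_{e_2}$-weighted control is at hand; the argument hinges precisely on the surplus decay $\langle t+r\rangle^{-1/2-\gamma_{e_1}}$ from Proposition~\ref{prop:KG-refine01}, which simultaneously compensates the $1-\delta$ weight deficit and supplies the time integrability.
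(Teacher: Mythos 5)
Your proposal follows essentially the same route as the paper: energy estimate (Proposition~\ref{prop:EE-ex}), Leibniz decomposition with the refined pointwise bound \eqref{eq:KG-refine01} on the low factor and the bootstrap $L^2$ bounds \eqref{eq:BA-ex01}--\eqref{eq:BA-ex02} on the high factor, and the weight split $\langle r-s\rangle^{\gamma_{e_1}}=\langle r-s\rangle^{\gamma_{e_2}}\langle r-s\rangle^{\gamma_{e_1}-\gamma_{e_2}}$ absorbed against $B$ at the highest order, yielding the $\langle s\rangle^{-3/2}$ and $\langle s\rangle^{-1-\delta}$ decays that close via Gronwall/sup-absorption. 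The only small divergence is cosmetic bookkeeping --- you classify by the order of the high factor $A$, the paper by the total order $|I|+|J|$ --- and you additionally spell out why the $\mathcal{E}^{ex}$ data term is $\lesssim\epsilon^2$ despite $\mathfrak{m}(t_0)=0$, which the paper leaves implicit.
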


\begin{proof}
By the energy estimate in Proposition \ref{prop:EE-ex}, we have
\begin{equation}
\begin{aligned}
&\mathcal{E}^{ex} (\partial \partial^I\Gamma^J \mathfrak{m}, t, \gamma_{e_1})
\\
\lesssim
&\mathcal{E}^{ex} (\partial \partial^I\Gamma^J \mathfrak{m}, t_0, \gamma_{e_1})
\\
&+
\int_{t_0}^t \big\| \langle r-s\rangle^{\gamma_{e_1}} \partial \partial^I \Gamma^J |E|^2 \big\|_{L^2(\Sigma^{ex}_s)} 
\mathcal{E}^{ex} (\partial \partial^I\Gamma^J \mathfrak{m}, s, \gamma_{e_1})^{1\over 2} \, \d s.
\end{aligned}
\end{equation}

\textbf{Case 1: $|I|+|J|\leq N$.}
Applying the product rule, we find that
\begin{align*}
&\big\| \langle r-s\rangle^{\gamma_{e_1}} \partial \partial^I \Gamma^J |E|^2 \big\|_{L^2(\Sigma^{ex}_s)}
\\
\lesssim
&\sum_{\substack{I_1+I_2=I, J_1+J_2=J \\|I_1| + |J_1|\geq |I_2|+|J_2|}} \big\| \langle r-s\rangle^{\gamma_{e_1}} \partial \partial^{I_1} \Gamma^{J_1} E \big\|_{L^2(\Sigma^{ex}_s)}
\big\|  \partial^{I_2} \Gamma^{J_2} E \big\|_{L^\infty(\Sigma^{ex}_s )}
\\
&+\sum_{\substack{I_1+I_2=I, J_1+J_2=J \\|I_1| + |J_1|\leq |I_2|+|J_2|}}\big\| \partial \partial^{I_1} \Gamma^{J_1} E \big\|_{L^\infty(\Sigma^{ex}_s)}
\big\| \langle r-s\rangle^{\gamma_{e_1}}  \partial^{I_2} \Gamma^{J_2} E \big\|_{L^2(\Sigma^{ex}_s)}
\\
\lesssim
&(C_1 \epsilon)^2 s^{-{3\over 2}},
\end{align*}
in which we used the estimates from \eqref{eq:BA-ex01} and \eqref{eq:KG-refine01}.

\textbf{Case 2: $|I|+|J|\leq N+1$.}

Similarly, we apply the product rule to get
\begin{align*}
&\big\| \langle r-s\rangle^{\gamma_{e_1}} \partial \partial^I \Gamma^J |E|^2 \big\|_{L^2(\Sigma^{ex}_s)}
\\
\lesssim
&\sum_{\substack{I_1+I_2=I, J_1+J_2=J \\|I_1| + |J_1|\geq |I_2|+|J_2|}} \big\| \langle r-s\rangle^{\gamma_{e_2}} \partial \partial^{I_1} \Gamma^{J_1} E \big\|_{L^2(\Sigma^{ex}_s)}
\\
&\qquad\qquad\qquad
\times
\big\| \langle r-s\rangle^{\gamma_{e_1} -\gamma_{e_2}}  \partial^{I_2} \Gamma^{J_2} E \big\|_{L^\infty(\Sigma^{ex}_s)}
\\
&+\sum_{\substack{I_1+I_2=I, J_1+J_2=J \\|I_1| + |J_1|\leq |I_2|+|J_2|}} \big\| \langle r-s\rangle^{\gamma_{e_1} -\gamma_{e_2}} \partial \partial^{I_1} \Gamma^{J_1} E \big\|_{L^\infty(\Sigma^{ex}_s)}
\\
&\qquad\qquad\qquad
\times
\big\| \langle r-s\rangle^{\gamma_{e_2}}  \partial^{I_2} \Gamma^{J_2} E \big\|_{L^2(\Sigma^{ex}_s)}
\\
\lesssim
&(C_1 \epsilon)^2 s^{-1-\delta},
\end{align*}
in which we used the estimates from \eqref{eq:BA-ex02} and \eqref{eq:KG-refine01}.

Thus, in either case we arrive at
\begin{align*}
&\mathcal{E}^{ex} (\partial \partial^I\Gamma^J \mathfrak{m}, t, \gamma_{e_1})
\\
\lesssim
&\epsilon^2 + (C_1 \epsilon)^2 \int_{t_0}^t s^{-1-\delta}  \mathcal{E}^{ex} (\partial \partial^I\Gamma^J \mathfrak{m}, s, \gamma_{e_1})^{1\over 2} \, \d s
\\
\lesssim
&\epsilon^2 + (C_1 \epsilon)^4 +  \int_{t_0}^t s^{-1-\delta}  \mathcal{E}^{ex} (\partial \partial^I\Gamma^J \mathfrak{m}, s, \gamma_{e_1}) \, \d s, 
\end{align*}
which further yields, combining the Gronwall inequality, that
\begin{align}
\mathcal{E}^{ex} (\partial \partial^I\Gamma^J \mathfrak{m}, t, \gamma_{e_1})
\lesssim
\epsilon^2 + (C_1 \epsilon)^4,
\qquad
|I|+|J|\leq N+1.
\end{align}

The bounds for $\mathcal{E}^{ex} ( \partial^I\Gamma^J \mathfrak{m}, t, \gamma_{e_2})$ with $|I|+|J|\leq N+2, \, |J|\leq N+1$ can be derived in a similar way. The proof is completed.
\end{proof}

\begin{corollary}\label{cor:nDelta-point}
In the region $\{ r\geq t - \eta_0 \}$ with $t_0\leq t\leq T$, we have
\begin{align}
|\partial\partial \partial^{I} \Gamma^J \mathfrak{m}|
\lesssim
\big(\epsilon + (C_1 \epsilon)^2\big) \langle t+r\rangle^{-{1\over 2}} \langle t-r\rangle^{-\gamma_{e_1}},
\quad
|I| + |J| \leq N-1.
\end{align}
\end{corollary}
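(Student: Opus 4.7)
The plan is to deduce the stated pointwise estimate from the weighted $L^2$ bound in Lemma \ref{lem:nDelta-ex} by invoking the exterior Sobolev-type inequality (Proposition \ref{prop:Sobolev-ex}). Given $I, J$ with $|I|+|J|\leq N-1$, I would apply Proposition \ref{prop:Sobolev-ex} to the function $\phi = \partial\partial\partial^I\Gamma^J \mathfrak{m}$ with weight exponent $\eta = \gamma_{e_1}$, obtaining
\begin{equation*}
\langle r\rangle^{1/2} \langle \tau_-\rangle^{\gamma_{e_1}} \big|\partial\partial\partial^I\Gamma^J \mathfrak{m}(t,x)\big|
\lesssim
\sum_{|I'|\leq 1,\, |J'|\leq 1}\big\| \langle \tau_-\rangle^{\gamma_{e_1}} Z^{I'} \Omega^{J'} \partial\partial\partial^I\Gamma^J \mathfrak{m} \big\|_{L^2(\Sigma^{ex}_t)}.
\end{equation*}

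Next I would commute the extra derivatives $Z^{I'}\Omega^{J'}$ through the existing string of $\partial$'s and $\Gamma$'s. Since $Z \in \{\partial_r, \Omega\}$ with $\partial_r = (x^a/r)\partial_a$ a bounded linear combination of translations on $\Sigma^{ex}_t$ and $\Omega = \Omega_{12} \in V$, the standard commutator estimates recorded in Section \ref{sec:pre} recast the right-hand side as a finite sum of quantities of the form $\|\langle \tau_-\rangle^{\gamma_{e_1}} \partial^{\tilde I}\Gamma^{\tilde J}\mathfrak{m}\|_{L^2(\Sigma^{ex}_t)}$ with $|\tilde I|\geq 2$ and $|\tilde I|+|\tilde J| \leq N+3$. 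Each of these is in turn controlled by $\mathcal{E}^{ex}(\partial\partial^{\bar I}\Gamma^{\bar J}\mathfrak{m}, t, \gamma_{e_1})^{1/2}$ for some $|\bar I|+|\bar J| \leq N+1$, which by Lemma \ref{lem:nDelta-ex} is bounded above by $\epsilon + (C_1\epsilon)^2$.

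Finally, in the exterior region $\{r \geq t-\eta_0\}$ with $\eta_0 \in [1, 5/4]$, one has the elementary equivalence $\langle r\rangle \simeq \langle t+r\rangle$, so dividing through by $\langle r\rangle^{1/2}\langle\tau_-\rangle^{\gamma_{e_1}}$ in the displayed inequality yields the desired estimate. There is no substantive obstacle; the only delicate point is the bookkeeping of derivative counts, and the margin of two orders between the $L^2$ bound of Lemma \ref{lem:nDelta-ex} (valid up to $|I|+|J|\leq N+1$) and the target statement (valid up to $|I|+|J|\leq N-1$) is precisely what is needed to absorb the two extra derivatives ($Z$ and $\Omega$) consumed by the Sobolev inequality.
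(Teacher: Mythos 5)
Your argument is correct and is essentially the paper's: the stated pointwise bound is obtained by applying the exterior Sobolev inequality of Proposition~\ref{prop:Sobolev-ex}, commuting the two extra derivatives $Z^{I'}\Omega^{J'}$ through the string $\partial\partial\partial^I\Gamma^J$ via the commutator estimates of Section~\ref{sec:pre}, and then invoking the weighted $L^2$ bound $\mathcal{E}^{ex}(\partial\partial^{\bar I}\Gamma^{\bar J}\mathfrak{m}, t, \gamma_{e_1})\lesssim \epsilon^2+(C_1\epsilon)^4$ from Lemma~\ref{lem:nDelta-ex}, which is available precisely up to $|\bar I|+|\bar J|\leq N+1$. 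Your bookkeeping of the derivative budget (two orders of room between $N-1$ in the statement and $N+1$ in Lemma~\ref{lem:nDelta-ex}, consumed by the Sobolev step) is accurate, and the reduction $\langle r\rangle\simeq\langle t+r\rangle$ on $\{r\geq t-\eta_0\}$ is the standard closing observation.
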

\begin{proof}
The proof follows from the estimates from Lemma \ref{lem:nDelta-ex} as well as the Sobolev inequality in Proposition \ref{prop:Sobolev-ex} and commutator estimates.
\end{proof}

\begin{proposition}\label{prop:wave-est}
We have the following refined bounds for $n$.

\begin{enumerate}
\item \emph{Refined $L^2$ bounds.}
\begin{eqnarray}
\| \partial^I \Gamma^J n\|_{L^2(\Sigma^{ex}_t)}
\lesssim
\left\{ \begin{array}{lll}
\epsilon + (C_1 \epsilon)^2,
\qquad
&|I| + |J| \leq N,
\\[1ex]
\big(\epsilon + (C_1 \epsilon)^2\big) \log \langle t\rangle,
\qquad
&|I| + |J| \leq N+1.
\end{array} \right. 
\end{eqnarray}

\item \emph{Refined decay bounds.}
\begin{align}
|\partial^I \Gamma^J n |
\lesssim
&(\epsilon + (C_1 \epsilon)^2) \langle t+r\rangle^{-{1\over 2}} \langle t-r\rangle^{-\gamma_{e_1}},
\quad
|I| + |J| \leq N-2.
\end{align}
\end{enumerate}
\end{proposition}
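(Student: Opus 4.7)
The plan is to use the decomposition $n = \ell + \Delta \mathfrak{m}$ together with the bounds already available from Lemma \ref{lem:n0} for $\ell$, Lemma \ref{lem:nDelta-ex} for $\mathfrak{m}$, and Corollary \ref{cor:nDelta-point} for the pointwise control of $\partial\partial\partial^I\Gamma^J\mathfrak{m}$. The argument is essentially a triangle-inequality split followed by a standard commutator expansion of $\Delta$ through the vector fields.

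For the refined $L^2$ estimate I would write $\partial^I\Gamma^J n = \partial^I\Gamma^J \ell + \partial^I\Gamma^J \Delta \mathfrak{m}$ and handle the two pieces separately. Since $\langle r-t\rangle^{\gamma_{e_1}}\geq 1$, the weighted bound \eqref{eq:n0-L2} gives directly $\|\partial^I\Gamma^J \ell\|_{L^2(\Sigma^{ex}_t)} \lesssim \epsilon$ for $|I|+|J|\leq N$, while at order $N+1$ the bound \eqref{eq:n0-L2-a} contributes the logarithmic factor. For $\Delta\mathfrak{m}$ I would commute $\Delta$ through $\partial^I\Gamma^J$; because each commutator $[\Gamma,\partial_a]$ is (a constant times) a first-order translation, induction on $|J|$ yields a finite Leibniz-type expansion
\[
\partial^I\Gamma^J \Delta \mathfrak{m}
=
\sum_{|I'|+|J'|\leq |I|+|J|} c_{I',J'}\,\partial\partial\,\partial^{I'}\Gamma^{J'}\mathfrak{m}.
\]
Each summand is controlled in $L^2(\Sigma^{ex}_t)$ by $\mathcal{E}^{ex}(\partial\partial^{I'}\Gamma^{J'}\mathfrak{m}, t, \gamma_{e_1})^{1/2}$, which by Lemma \ref{lem:nDelta-ex} is $\lesssim \epsilon + (C_1\epsilon)^2$ as soon as $|I'|+|J'|\leq N+1$. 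Assembling the two pieces yields the advertised log-free bound at order $N$ and the log-loss bound at order $N+1$, with the logarithm entirely inherited from the $\ell$-contribution.

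For the refined pointwise bound I would proceed identically: use \eqref{eq:n0-point} for $\ell$ and expand $\partial^I\Gamma^J \Delta\mathfrak{m}$ as above. Every resulting term is of the form $\partial\partial\,\partial^{I'}\Gamma^{J'}\mathfrak{m}$ with $|I'|+|J'| \leq N-2 \leq N-1$, hence controlled by Corollary \ref{cor:nDelta-point} with the desired $\langle t+r\rangle^{-1/2}\langle t-r\rangle^{-\gamma_{e_1}}$ decay. Combining with the pointwise decay of $\ell$ gives the stated estimate.

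There is no real analytic obstacle here; the proof is essentially a bookkeeping exercise once the decomposition $n = \ell + \Delta\mathfrak{m}$ is in place. The only point requiring some care is the commutator $[\Gamma^J, \Delta]$, which inductively is a second-order differential operator whose vector-field coefficients have total order at most $|J|$; every term it produces falls within the admissible index ranges of Lemma \ref{lem:nDelta-ex} and Corollary \ref{cor:nDelta-point}, so the final estimates close without any borderline case.
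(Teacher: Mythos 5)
Your proposal is correct and matches the paper's own (one-line) proof: both rely on the decomposition $n=\ell+\Delta\mathfrak m$ together with Lemma~\ref{lem:n0} for $\ell$, Lemma~\ref{lem:nDelta-ex} for the weighted $L^2$ control of $\partial\partial\,\partial^{I'}\Gamma^{J'}\mathfrak m$, and Corollary~\ref{cor:nDelta-point} for its pointwise decay. The commutator bookkeeping you spell out — expanding $\partial^I\Gamma^J\Delta\mathfrak m$ into terms $\partial\partial\,\partial^{I'}\Gamma^{J'}\mathfrak m$ with $|I'|+|J'|\le|I|+|J|$ and checking these fall within the index ranges of the cited lemmas — is exactly the routine step the paper leaves implicit.
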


\begin{proof}
The proof follows from Lemmas \ref{lem:n0}, \ref{lem:nDelta-ex}, Corollary \ref{cor:nDelta-point}, and the relation $n = \ell + \Delta \mathfrak{m}$.
\end{proof}

Next, we improve the bounds on the Klein-Gordon component $E$.
\begin{proposition}[Refined lower-order energy bounds for $E$] \label{prop:KG-refine-ex}
We have
\begin{align}
\mathcal{E}^{ex}_1 (\partial^I\Gamma^J E, t, \gamma_{e_1})
\lesssim
&\epsilon^2 + (C_1 \epsilon)^3,
\qquad
|I|+|J|\leq N. \label{eq:KG-refine-ex}
\end{align}
\end{proposition}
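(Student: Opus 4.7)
The plan is to exploit the perturbation-of-mass idea highlighted as Main idea~2 in the introduction. Write the equation for $\partial^I\Gamma^J E$ in the form
\begin{align*}
-\Box \partial^I\Gamma^J E + (1+n)\partial^I\Gamma^J E = -\sum_{\substack{I_1+I_2=I,\, J_1+J_2=J\\ |I_1|+|J_1|\geq 1}} c_{I_1 J_1}(\partial^{I_1}\Gamma^{J_1} n)(\partial^{I_2}\Gamma^{J_2} E),
\end{align*}
so that the diagonal interaction is absorbed as a perturbation $h = n$ of the mass. By Proposition \ref{prop:wave-est} together with the smallness $C_1\epsilon\ll 1$, one has $|n|\ll 1/10$ in $\Sigma^{ex}_t$, which makes Proposition \ref{prop:EE-ex2} directly applicable.

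Applying Proposition \ref{prop:EE-ex2} produces four contributions on the right: (i) the initial data term, which is $O(\epsilon^2)$; (ii) the spacetime integral of $\langle r-t\rangle^{2\gamma_{e_1}}|F\cdot \partial_t \partial^I\Gamma^J E|$, where $F$ is the off-diagonal source above; (iii) a term involving $\langle r-t\rangle^{2\gamma_{e_1}}|\partial^I\Gamma^J E|^2|\partial_t n|$; and (iv) a boundary/weighted term $\gamma_{e_1}\langle r-t\rangle^{2\gamma_{e_1}-1}|n|\,|\partial^I\Gamma^J E|^2$. For (iii) and (iv) I would use the pointwise decay $|n|+|\partial n|\lesssim (\epsilon + (C_1\epsilon)^2)\langle t+r\rangle^{-1/2}\langle t-r\rangle^{-\gamma_{e_1}}$ from Proposition \ref{prop:wave-est}, so that after pulling these out in $L^\infty$ the remaining integrand is $\lesssim \langle s\rangle^{-1/2}\mathcal{E}^{ex}_1(\partial^I\Gamma^J E,s,\gamma_{e_1})$, or better once the extra $\langle t-r\rangle$-weight is balanced against the $\langle r-t\rangle^{2\gamma_{e_1}-1}$ weight; in either case the bound is integrable after absorbing with Gronwall using the ghost weight contribution $\gamma_{e_1}\int \langle r-s\rangle^{2\gamma_{e_1}-1}|E|^2$ already on the left-hand side of the energy.

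For (ii), apply Cauchy--Schwarz in time and the standard high-low split in the product rule. When $|I_1|+|J_1|\leq \lfloor N/2\rfloor$ the $n$-factor is placed in $L^\infty$ via the refined decay $|\partial^{I_1}\Gamma^{J_1} n|\lesssim (\epsilon+(C_1\epsilon)^2)\langle t+r\rangle^{-1/2}\langle t-r\rangle^{-\gamma_{e_1}}$ (Proposition \ref{prop:wave-est}), and the $E$-factor stays in the weighted $L^2$ giving $(C_1\epsilon)\langle s\rangle^{-1/2}\mathcal{E}^{ex}_1(\partial^{I_2}\Gamma^{J_2} E,s,\gamma_{e_1})^{1/2}$. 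In the reverse regime $|I_2|+|J_2|\leq \lfloor N/2\rfloor$, the $E$-factor is placed in $L^\infty$ via Proposition \ref{prop:KG-refine01}, which gives an extra factor $\langle t+r\rangle^{-1/2-\gamma_{e_1}}$ that easily compensates the weight $\langle r-s\rangle^{\gamma_{e_1}}$ since $\langle r-s\rangle\lesssim \langle s+r\rangle$, and the $L^2$ norm of $\partial^{I_1}\Gamma^{J_1} n$ is controlled by $\epsilon+(C_1\epsilon)^2$ up to a $\log\langle s\rangle$ when $|I_1|+|J_1|=N+1$. In every case the integrand is $O(\langle s\rangle^{-1-\delta'})\cdot\mathcal{E}^{ex}_1(\cdot,s,\gamma_{e_1})^{1/2}$ times $(\epsilon+(C_1\epsilon)^2)$, which after Cauchy--Schwarz and Gronwall yields the claimed bound $\epsilon^2+(C_1\epsilon)^3$.

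The main obstacle is precisely the top-order case $|I_2|+|J_2|=N$ where the low-frequency factor is $n$: there is no pointwise decay available for $|I_1|+|J_1|>N-2$, so one really does need the $L^\infty$ placement of $n$ (at order $\leq N-2$) supplied by Proposition \ref{prop:wave-est}, and this is possible only because the diagonal term $n\,\partial^I\Gamma^J E$ has been removed into the mass via Proposition \ref{prop:EE-ex2}. Without this restructuring, one would be forced to place $n$ in $L^2$ against a top-order $E$-factor and a $\langle r-t\rangle^{\gamma_{e_1}}$ weight, which does not close.
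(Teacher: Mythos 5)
Your overall strategy---apply Proposition \ref{prop:EE-ex2} with $h=n$, split the source by where the derivatives fall, and close with Gronwall---is the paper's framework, and your treatment of the $h$-related terms (your (iii), (iv)) by the ghost weight is essentially correct. But your treatment of the commutator source (your (ii)) has a genuine gap, and as written the Gronwall argument does not close.

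Concretely: in the regime where $\partial^{I_1}\Gamma^{J_1}n$ is placed in $L^\infty$, its pointwise decay $\langle s+r\rangle^{-1/2}\langle s-r\rangle^{-\gamma_{e_1}}$ only buys $\langle s\rangle^{-1/2}$ after the $\langle r-s\rangle$-weights are distributed, because the other factor $\partial^{I_2}\Gamma^{J_2}E$ (at order $\leq N-1$) has no $L^\infty$ decay better than $O(1)$. The integrand is then $(C_1\epsilon)^2\langle s\rangle^{-1/2}\mathcal{E}^{ex}_1(\cdot)^{1/2}$, which is not time-integrable, and the Gronwall step blows up like $\sqrt{t}$. Your claim that ``in every case the integrand is $O(\langle s\rangle^{-1-\delta'})$'' is simply asserted, not established. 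The paper fixes exactly this point by invoking the Klein--Gordon gain of Proposition \ref{prop:KG-extra}: since $\partial^{I_2}\Gamma^{J_2}E$ solves a Klein--Gordon equation, one trades it for $\frac{\langle s-r\rangle}{\langle s\rangle}\,|\partial\partial\,\partial^{I_2}\Gamma^{J_2}E| + \frac{1}{\langle s\rangle}|\partial\Gamma\,\partial^{I_2}\Gamma^{J_2}E| + \cdots$ (see \eqref{eq:ex001}--\eqref{eq:ex002}), which produces the extra $\langle s\rangle^{-1}$ needed to reach the integrable rate $\langle s\rangle^{-3/2}$; this is the heart of the paper's estimate of $\mathcal{T}_1$ and $\mathcal{T}_3$.

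A second issue arises in the complementary regime where $\partial^{I_1}\Gamma^{J_1}n$ carries up to $N$ derivatives and must be placed in $L^2$: the Cauchy--Schwarz in $x$ leaves $\langle r-s\rangle^{\gamma_{e_1}}$ on both $L^2$ factors once the $E$-factor's $L^\infty$ decay from Proposition \ref{prop:KG-refine01} is used, so one needs a \emph{weighted} $L^2$ bound $\|\langle r-s\rangle^{\gamma_{e_1}}\partial^{I_1}\Gamma^{J_1}n\|$. Proposition \ref{prop:wave-est} only supplies unweighted bounds. The paper obtains the weighted ones by decomposing $n = \ell + \Delta\mathfrak{m}$ and using Lemma \ref{lem:n0} (for $\ell$) and Lemma \ref{lem:nDelta-ex} (for $\mathfrak{m}$); this is why the decomposition appears in the proof of Proposition \ref{prop:KG-refine-ex} even though it does not appear in the statement. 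Without the decomposition you would be balancing the weight with $E$'s $L^\infty$ decay, which again produces only $\langle s\rangle^{-1/2}$. In short, your outline correctly identifies the role of Proposition \ref{prop:EE-ex2}, but you need both the Klein--Gordon derivative-gain step on $E$ and the $\ell + \Delta\mathfrak{m}$ decomposition on $n$ to reach integrable decay rates.
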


\begin{proof}
We apply the energy estimate in Proposition \ref{prop:EE-ex2} on \eqref{eq:KG-high} to get
\begin{align*}
&\mathcal{E}^{ex}_1 (\partial^I\Gamma^J E, t, \gamma_{e_1})
\\
\lesssim
&\mathcal{E}^{ex}_1 (\partial^I\Gamma^J E, t_0, \gamma_{e_1})
+
\int_{\mathcal{D}^{ex}_t} \langle r-s\rangle^{2\gamma_{e_1}} \big| (-\partial^I \Gamma^J (nE) + n \partial^I \Gamma^J E) |\partial_t \partial^I\Gamma^J E| \big| \, \d x \d s
\\
&+
\int_{\mathcal{D}^{ex}_t} \langle r-s\rangle^{2\gamma_{e_1}} \big| \partial_t n |\partial^I \Gamma^J E|^2  \big| +  \langle r-s\rangle^{2\gamma_{e_1} -1} \big| n |\partial^I \Gamma^J E|^2 \big| \, \d x \d s.
\end{align*}
Recall the decomposition $n = \ell + \Delta \mathfrak{m}$, and we get
\begin{align}\label{eq:E-ex-001}
&\mathcal{E}^{ex}_1 (\partial^I\Gamma^J E, t, \gamma_{e_1})
\lesssim
\epsilon^2
+
\mathcal{A}_1 + \mathcal{A}_2 + \mathcal{B},
\end{align}
in which
\begin{align*}
\mathcal{A}_1
&=
\int_{\mathcal{D}^{ex}_t} \langle r-s\rangle^{2\gamma_{e_1}} \big| (-\partial^I \Gamma^J (\Delta \mathfrak{m} E) + \Delta \mathfrak{m} \partial^I \Gamma^J E) |\partial_t \partial^I\Gamma^J E| \big| \, \d x \d s,
\\
\mathcal{A}_2
&=
\int_{\mathcal{D}^{ex}_t} \langle r-s\rangle^{2\gamma_{e_1}} \big| (-\partial^I \Gamma^J (\ell E) + \ell \partial^I L^J E) |\partial_t \partial^I\Gamma^J E| \big| \, \d x \d s,
\end{align*}
and 
\begin{align*}
\mathcal{B}
=
\int_{\mathcal{D}^{ex}_t} \langle r-s\rangle^{2\gamma_{e_1}} \big| \partial_t n |\partial^I \Gamma^J E|^2  \big| +  \langle r-s\rangle^{2\gamma_{e_1}-1} \big| n |\partial^I \Gamma^J E|^2 \big| \, \d x \d s.
\end{align*}

\textbf{Estimate on $\mathcal{A}_1$.}
By the product rule, we find 
\begin{align*}
&\big|-\partial^I \Gamma^J (\Delta \mathfrak{m} E) + \Delta \mathfrak{m} \partial^I \Gamma^J E \big|
\\
=
&\big| -\sum_{\substack{I_1 + I_2 = I, J_1 + J_2 = J\\ |I_2|+|J_2|\leq |I| + |J|-1}} \partial^{I_1} \Gamma^{J_1} \Delta \mathfrak{m} \partial^{I_2} \Gamma^{J_2} E \big|
\\
\lesssim
& \sum_{\substack{I_1 + I_2 = I, J_1 + J_2 = J\\ |I_2|+|J_2|\leq |I| + |J|-1}} \big|\partial\partial \partial^{I_1} \Gamma^{J_1}  \mathfrak{m} \partial^{I_2} \Gamma^{J_2} E \big|
\\
\lesssim
& \sum_{\substack{I_1 + I_2 = I, J_1 + J_2 = J\\|I_1|+|J_1|\leq |I_2|+|J_2|\\ |I_2|+|J_2|\leq |I| + |J|-1}} \big|\partial\partial \partial^{I_1} \Gamma^{J_1}  \mathfrak{m} \partial^{I_2} \Gamma^{J_2} E \big|
\\
&+
 \sum_{\substack{I_1 + I_2 = I, J_1 + J_2 = J\\|I_1|+|J_1|\geq |I_2|+|J_2|}} \big|\partial\partial \partial^{I_1} \Gamma^{J_1}  \mathfrak{m} \partial^{I_2} \Gamma^{J_2} E \big|
=: \mathcal{T}_1 +\mathcal{T}_2 .
\end{align*}

Using Corollary \ref{cor:nDelta-point}, we take pointwise bounds for $\mathfrak{m}$ part to find
\begin{align*}
\mathcal{T}_1
\lesssim
C_1 \epsilon \langle s+r\rangle^{-{1\over 2}} \langle s-r\rangle^{-\gamma_{e_1}} \sum_{|I_2|+|J_2|\leq |I|+|J|-1} |\partial^{I_2} \Gamma^{J_2} E|.
\end{align*}
Recall Proposition \ref{prop:KG-extra} and we get that
\begin{equation}\label{eq:ex001}
\begin{aligned}
&\big| \partial^{I_2} \Gamma^{J_2} E \big|
\\
\lesssim
&{|\tau_-| \over \langle s\rangle} |\partial \partial \partial^{I_2} \Gamma^{J_2} E|
+
{1\over \langle s\rangle} |\partial \Gamma \partial^{I_2} \Gamma^{J_2} E|
+
{1\over \langle s\rangle} |\partial \partial^{I_2} \Gamma^{J_2} E|
+
|\partial^{I_2} \Gamma^{J_2} (n E)|,
\end{aligned}
\end{equation}
which, by smallness of the component $n$ in \eqref{eq:n-rough}, further leads to
\begin{equation}\label{eq:ex002}
\begin{aligned}
&\sum_{|I_2|+|J_2|\leq |I|+|J|-1 \leq N-1} |\partial^{I_2} \Gamma^{J_2}  E|
\\
\lesssim
&\sum_{|I_2|+|J_2|\leq N-1} {|\tau_-| \over \langle s\rangle} |\partial \partial \partial^{I_2} \Gamma^{J_2} E|
+
{1\over \langle s\rangle} |\partial \Gamma \partial^{I_2} \Gamma^{J_2} E|
+
{1\over \langle s\rangle} |\partial \partial^{I_2} \Gamma^{J_2} E|.
\end{aligned}
\end{equation}
Therefore, we have
\begin{align*}
\mathcal{T}_1
\lesssim
C_1 \epsilon\langle \tau_+\rangle^{-{3\over 2}} \langle \tau_-\rangle^{-\gamma_{e_1} +1} \sum_{|I_2|+|J_2|\leq |I|+|J|} |\partial\partial^{I_2} \Gamma^{J_2} E|.
\end{align*}

For the term $\mathcal{T}_2$, applying the pointwise estimates in \eqref{eq:KG-refine01}, we have
\begin{align*}
\mathcal{T}_2
\lesssim
C_1 \epsilon \langle s+r\rangle^{-1/2 - \gamma_{e_1}} \sum_{|I_1|+|J_1|\leq |I|+|J|} |\partial\partial \partial^{I_1} \Gamma^{J_1} \mathfrak{m}|.
\end{align*}

Thus, we bound
\begin{align*}
&\big|-\partial^I \Gamma^J (\Delta \mathfrak{m} E) + \Delta \mathfrak{m} \partial^I \Gamma^J E \big|
\\
\lesssim
& C_1 \epsilon \langle s+r\rangle^{-{1\over 2} - \gamma_{e_1}} \sum_{|I_1|+|I_2|\leq |I|+|J|} |\partial\partial \partial^{I_1} \Gamma^{J_1} \mathfrak{m}|
\\
&+
C_1 \epsilon \langle \tau_+\rangle^{-3/2} \langle \tau_-\rangle^{-\gamma_{e_1}+1} \sum_{|I_2|+|J_2|\leq |I|+|J|} |\partial\partial^{I_2} \Gamma^{J_2} E|.
\end{align*}

We insert these bounds to derive
\begin{equation}\label{eq:E-ex-02}
\begin{aligned}
\mathcal{A}_1
\lesssim
&\int_{t_0}^t C_1 \epsilon \langle s\rangle^{-{3\over 2} } \Big(\sum_{|I_1|+|J_1|\leq |I|+|J|}  \big\| \langle r-s \rangle^{\gamma_{e_1}} \partial\partial \partial^{I_1} \Gamma^{J_1} \mathfrak{m} \big\|_{L^2(\Sigma^{ex}_s)}   
\\
&\qquad\qquad\qquad
+
 \big\|\langle r-s\rangle^{\gamma_{e_1}} \partial\partial^{I_2} \Gamma^{J_2} E \big\|_{L^2(\Sigma^{ex}_s)} \Big)
 \\
&\qquad\qquad\qquad
\times
\big\| \langle r-s\rangle^{\gamma_{e_1}} \partial_t \partial^I L^J E \big\|_{L^2(\Sigma^{ex}_s)} 
\, \d s
\\
\lesssim
&(C_1 \epsilon)^3 \int_{t_0}^t  \langle s\rangle^{-{3\over 2} } \, \d s
\lesssim
(C_1 \epsilon)^3.
\end{aligned}
\end{equation}

\textbf{Estimate on $\mathcal{A}_2$.}
We first apply the product rule to get 
\begin{align*}
&\big|-\partial^I \Gamma^J ( \ell E) + \ell \partial^I \Gamma^J E \big|
\\
=
&\Big| \sum_{\substack{I_1 + I_2 = I, J_1 + J_2 = J\\ |I_2|+|J_2|\leq |I| + |J|-1}} \partial^{I_1} \Gamma^{J_1}  \ell \partial^{I_2} \Gamma^{J_2} E \Big|
\\
\lesssim
& \sum_{\substack{I_1 + I_2 = I, J_1 + J_2 = J\\|I_1|+|J_1|\leq |I_2|+|J_2|\\ |I_2|+|J_2|\leq |I| + |J|-1}} \big| \partial^{I_1} \Gamma^{J_1}  \ell \partial^{I_2} \Gamma^{J_2} E \big|
\\
&+
 \sum_{\substack{I_1 + I_2 = I, J_1 + J_2 = J\\|I_1|+|J_1|\geq |I_2|+|J_2|}} \big| \partial^{I_1} \Gamma^{J_1}  \ell \partial^{I_2} \Gamma^{J_2} E \big|
=: \mathcal{T}_3 +\mathcal{T}_4 .
\end{align*}

By taking pointwise bounds of $\ell$ component in \eqref{eq:n0-point}, we have
\begin{align*}
\mathcal{T}_3
\lesssim
C_1 \epsilon \langle s+r\rangle^{-{1\over 2}} \langle s-r\rangle^{-\gamma_{e_1}} \sum_{|I_2|+|J_2|\leq |I|+|J|-1} |\partial^{I_2} \Gamma^{J_2} E|.
\end{align*}
We then apply the refined decay estimate for $E$ component in Proposition \ref{prop:KG-extra}, as similarly done in \eqref{eq:ex001} and \eqref{eq:ex002}, to get
\begin{align*}
\mathcal{T}_3
\lesssim
C_1 \epsilon \langle s+r\rangle^{-{3\over 2}} \langle s-r\rangle^{-\gamma_{e_1} +1} \sum_{|I_2|+|J_2|\leq |I|+|J|} |\partial\partial^{I_2} \Gamma^{J_2} E|.
\end{align*}

By taking pointwise bounds of the component $E$ in \eqref{eq:KG-refine01}, we have
\begin{align*}
\mathcal{T}_4
\lesssim
C_1 \epsilon \langle s+r\rangle^{-{1\over 2} - \gamma_{e_1}} \sum_{|I_1|+|J_1|\leq |I|+|J|} | \partial^{I_1} \Gamma^{J_1} \ell|.
\end{align*}

The estimates for $\mathcal{T}_3$ and $\mathcal{T}_4$ yield that
\begin{align*}
&\big|-\partial^I \Gamma^J ( \ell E) + \ell \partial^I \Gamma^J E \big|
\\
\lesssim
& C_1 \epsilon \langle s+r\rangle^{-{1\over 2} - \gamma_{e_1}} \sum_{|I_1|+|J_1|\leq |I|+|J|} | \partial^{I_1} \Gamma^{J_1} \ell|
\\
&+
C_1 \epsilon \langle s+r\rangle^{-{3\over 2}} \langle s-r\rangle^{-\gamma_{e_1}+1} \sum_{|I_2|+|J_2|\leq |I|+|J|} |\partial\partial^{I_2} \Gamma^{J_2} E|.
\end{align*}

Therefore, similarly to what we did for $\mathcal{A}_1$ in \eqref{eq:E-ex-02}, we derive
\begin{align}
\mathcal{A}_2 \lesssim (C_1 \epsilon)^3.
\end{align}

\textbf{Estimate on $\mathcal{B}$.}
We then estimate $\mathcal{B}$, and we take $L^\infty$-norm on the wave component $n$ to get
\begin{align*}
\mathcal{B}
\lesssim
\epsilon \int_{\mathcal{D}^{ex}_t} \langle r-s\rangle^{2\gamma_{e_1} -1}  |\partial^I \Gamma^J E|^2  \, \d x \d s.
\end{align*}
By smallness of $\epsilon$, this term can be absorbed by the energy in the left hand side of \eqref{eq:E-ex-001}.

Finally, we conclude \eqref{eq:KG-refine-ex}.
\end{proof}

\begin{proposition}[Refined top-order energy bounds for $E$] \label{prop:KG-refine-ex-a}
We have
\begin{align}
\mathcal{E}^{ex}_1 (\partial^I\Gamma^J E, t, \gamma_{e_2})
\lesssim
&\epsilon^2 + (C_1 \epsilon)^3,
\qquad
|I|+|J|\leq N+1. \label{eq:KG-refine-ex-a}
\end{align}
\end{proposition}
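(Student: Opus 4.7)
The strategy is to follow the proof of Proposition \ref{prop:KG-refine-ex} verbatim, now with the weight $\gamma_{e_1}$ replaced by $\gamma_{e_2}=\tfrac12+\delta$ and the derivative order raised to $|I|+|J|\leq N+1$. Apply Proposition \ref{prop:EE-ex2} to
\begin{align*}
-\Box \partial^I\Gamma^J E + \partial^I\Gamma^J E = -\partial^I\Gamma^J(nE)
\end{align*}
with $h=n$: the pointwise smallness of $n$ from Proposition \ref{prop:wave-est} keeps $1+h$ strictly positive, and the effective source $F+h\phi$ reduces to the commutator
\begin{align*}
-[\partial^I\Gamma^J, n]E = -\!\!\!\sum_{\substack{I_1+I_2=I,\,J_1+J_2=J\\ |I_1|+|J_1|\geq 1}} C\,\partial^{I_1}\Gamma^{J_1}n\cdot\partial^{I_2}\Gamma^{J_2}E,
\end{align*}
in which at least one derivative always falls on $n$. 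The remaining zero-order tails $\partial_t n\,|\partial^I\Gamma^J E|^2$ and $\langle r-s\rangle^{-1} n\,|\partial^I\Gamma^J E|^2$ are absorbed via the $L^\infty$-smallness of $n$, exactly as in Proposition \ref{prop:KG-refine-ex}.

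Decomposing $n=\ell+\Delta\mathfrak{m}$, the commutator splits into an $\ell$-piece and a $\Delta\mathfrak{m}$-piece, and each is handled by the dichotomy \emph{low-derivatives-on-wave} (then $|I_1|+|J_1|\leq (N+1)/2\leq N-2$) versus \emph{low-derivatives-on-$E$} (then $|I_2|+|J_2|\leq (N+1)/2\leq N-5$), as in Proposition \ref{prop:KG-refine-ex}. The key new input is that because $|I_2|+|J_2|\leq N$ throughout the commutator, the refined $\gamma_{e_1}$-weighted bound $\mathcal{E}^{ex}_1(\partial^{I_2}\Gamma^{J_2}E, s, \gamma_{e_1})^{1/2}\lesssim \epsilon+(C_1\epsilon)^{3/2}$ from the just-proved Proposition \ref{prop:KG-refine-ex} is available for the Klein--Gordon factor, which is strictly stronger than the $\gamma_{e_2}$-bootstrap we are trying to close. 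One then inserts pointwise decay from Proposition \ref{prop:wave-est} and Corollary \ref{cor:nDelta-point} on the low-derivative wave factor, the refined pointwise bound \eqref{eq:KG-refine01} on the low-derivative $E$ factor (together with Proposition \ref{prop:KG-extra} applied to $E$ in the low-on-wave case whenever $|I_2|+|J_2|\leq N-1$, to trade $|\partial^{I_2}\Gamma^{J_2}E|$ against $\langle \tau_-\rangle\langle s\rangle^{-1}|\partial\partial\partial^{I_2}\Gamma^{J_2}E|$), and pairs with the weighted $L^2$ bounds of Lemmas \ref{lem:n0} and \ref{lem:nDelta-ex} on the top-order wave factor.

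The decisive bookkeeping is the weight split $\langle r-s\rangle^{\gamma_{e_2}}=\langle r-s\rangle^{-(\gamma_{e_1}-\gamma_{e_2})}\cdot\langle r-s\rangle^{\gamma_{e_1}}$: one attributes the full $\langle r-s\rangle^{\gamma_{e_1}}$ to the high-order $L^2$ factor (controlled at weight $\gamma_{e_1}$ by Proposition \ref{prop:KG-refine-ex} for $E$, or by Lemma \ref{lem:nDelta-ex} for $\mathfrak{m}$), while the negative power $\langle s-r\rangle^{-(\gamma_{e_1}-\gamma_{e_2})}=\langle s-r\rangle^{-1+\delta}$ combines with the sharp pointwise $\langle s-r\rangle^{-\gamma_{e_1}}$ of the wave side to produce the extra $\langle s-r\rangle$-decay needed for integrability in $s$. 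Cauchy--Schwarz against the bootstrap factor $\|\langle r-s\rangle^{\gamma_{e_2}}\partial_t\partial^I\Gamma^J E\|_{L^2(\Sigma^{ex}_s)}\lesssim C_1\epsilon$, time integration, and Gronwall's inequality then close the bound at $\epsilon^2+(C_1\epsilon)^3$.

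The principal obstacle is the critical top-order pairing $(|I_1|+|J_1|,|I_2|+|J_2|)=(1,N)$ in the low-on-wave case: neither Proposition \ref{prop:KG-extra} (which would demand weighted $L^2$ control of $E$ at order $N+2$, beyond the bootstrap) nor the refined pointwise bound \eqref{eq:KG-refine01} (valid only at order $\leq N-5$) applies directly to the $E$-factor at order $N$. For this residual case one leans on the $\gamma_{e_1}$-weighted bound of Proposition \ref{prop:KG-refine-ex} for $E$ at order $N$, trading the weight gap $\gamma_{e_1}-\gamma_{e_2}=1-\delta$ against the sharp $\langle s-r\rangle^{-\gamma_{e_1}}$ pointwise decay of the single-derivative wave factor $\partial^{I_1}\Gamma^{J_1}n$ from Proposition \ref{prop:wave-est}; it is precisely this two-step induction (first Proposition \ref{prop:KG-refine-ex}, then Proposition \ref{prop:KG-refine-ex-a}) that renders the resulting time-integral convergent and delivers the claimed estimate \eqref{eq:KG-refine-ex-a}.
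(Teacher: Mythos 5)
There is a genuine gap in your treatment of the top-order pairing $(|I_1|+|J_1|,|I_2|+|J_2|)=(1,N)$, and a misdiagnosis of what obstructs using Proposition \ref{prop:KG-extra} there.

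First, your claim that Proposition \ref{prop:KG-extra} at order $|I_2|+|J_2|=N$ would demand $L^2$ control of $E$ at order $N+2$ ``beyond the bootstrap'' is incorrect. After applying Proposition \ref{prop:KG-extra} one obtains factors of the form $\partial\partial\partial^{I_2}\Gamma^{J_2}E$ and $\partial\Gamma\partial^{I_2}\Gamma^{J_2}E$, i.e.\ $\partial\big(\partial^{I'}\Gamma^{J'}E\big)$ with $|I'|+|J'|\leq N+1$; the bootstrap \eqref{eq:BA-ex02} controls precisely $\big\|\langle r-s\rangle^{\gamma_{e_2}}\partial\big(\partial^{I'}\Gamma^{J'}E\big)\big\|_{L^2(\Sigma^{ex}_s)}$ through the $|\partial\phi|^2$ part of $\mathcal{E}^{ex}_1(\phi,s,\gamma_{e_2})$ for $|I'|+|J'|\leq N+1$. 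So the obstruction you identify does not exist.

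Second, and more importantly, the alternative you propose for $(1,N)$ does not close the time integral. Allocating $\gamma_{e_1}$ to the $\partial^{I_2}\Gamma^{J_2}E$ factor (to use Proposition \ref{prop:KG-refine-ex}) and $\gamma_{e_2}$ to the $\partial_t\partial^{I}\Gamma^{J}E$ factor means the weight deposited on the $n$ factor is $\langle r-s\rangle^{2\gamma_{e_2}-\gamma_{e_1}-\gamma_{e_2}}=\langle r-s\rangle^{\gamma_{e_2}-\gamma_{e_1}}=\langle r-s\rangle^{-1+\delta}$. Since $\gamma_{e_1}>\gamma_{e_2}$, this is a \emph{losing} trade: you must spend extra $\langle r-s\rangle$ weight to invoke the stronger $\gamma_{e_1}$ estimate, and that extra $\langle r-s\rangle^{-1+\delta}$ is bounded by $1$ but produces no decay in $s$. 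The only $s$-decay in the resulting $L^\infty$ factor is $\langle s+r\rangle^{-1/2}\lesssim\langle s\rangle^{-1/2}$ from the pointwise bound on $\partial^{I_1}\Gamma^{J_1}n$, and $\int_{t_0}^{t}\langle s\rangle^{-1/2}\,\d s$ diverges. Your ``two-step induction'' therefore does not render the time integral convergent, contrary to what you assert.

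What the paper actually does is interpolate: it combines the bound
$\mathcal{T}_5\lesssim\langle s+r\rangle^{-1/2}\langle s-r\rangle^{-\gamma_{e_1}}|\partial^{I_2}\Gamma^{J_2}E|$ at order $\leq N$
with the one obtained after Proposition \ref{prop:KG-extra},
$\mathcal{T}_5\lesssim\langle s\rangle^{-3/2}\langle s-r\rangle^{-\gamma_{e_1}+1}|\partial\partial^{I_2}\Gamma^{J_3}E|$ at order $\leq N+1$,
taking the weighted geometric mean with exponents $1/3$ and $2/3$ to arrive at an $L^\infty$ factor $\lesssim\langle s\rangle^{-7/6}\langle s-r\rangle^{-\gamma_{e_1}+2/3}$, and then pairs each $E$ factor with the $\gamma_{e_2}$-weighted bootstrap. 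The exponent $\langle s\rangle^{-7/6}$ is integrable, and no appeal to the $\gamma_{e_1}$-weighted bound of Proposition \ref{prop:KG-refine-ex} is needed anywhere in the argument. You should either adopt this interpolation or, equivalently, apply Proposition \ref{prop:KG-extra} directly at order $N$ (which, as noted above, is allowed) and check that the resulting $\langle s\rangle^{-3/2}$ decay together with the $\gamma_{e_2}$-weighted bootstrap bounds already closes the estimate.
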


\begin{proof}
Let $|I|+|J|\leq N+1$. We apply the energy estimate in Proposition \ref{prop:EE-ex2} on \eqref{eq:KG-high} to get
\begin{align*}
&\mathcal{E}^{ex}_1 (\partial^I\Gamma^J E, t, \gamma_{e_2})
\\
\lesssim
&\mathcal{E}^{ex}_1 (\partial^I\Gamma^J E, t_0, \gamma_{e_2})
+
\int_{\mathcal{D}^{ex}_t} \langle r-s\rangle^{2\gamma_{e_2}} \big| (-\partial^I \Gamma^J (nE) + n \partial^I \Gamma^J E) |\partial_t \partial^I\Gamma^J E| \big| \, \d x \d s
\\
&+
\int_{\mathcal{D}^{ex}_t} \langle r-s\rangle^{2\gamma_{e_2}} \big| \partial_t n |\partial^I \Gamma^J E|^2  \big| +  \langle r-s\rangle^{2\gamma_{e_2}-1} \big| n |\partial^I \Gamma^J E|^2 \big| \, \d x \d s
\\
\lesssim
& \epsilon^2 + \mathcal{C} + \mathcal{D},
\end{align*}
in which 
\begin{align*}
\mathcal{C}
=
&\int_{\mathcal{D}^{ex}_t} \langle r-s\rangle^{2\gamma_{e_2}} \big| (-\partial^I \Gamma^J (nE) + n \partial^I \Gamma^J E) |\partial_t \partial^I\Gamma^J E| \big| \, \d x \d s,
\\
\mathcal{D}
=
&\int_{\mathcal{D}^{ex}_t} \langle r-s\rangle^{2\gamma_{e_2}} \big| \partial_t n |\partial^I \Gamma^J E|^2  \big| +  \langle r-s\rangle^{2\gamma_{e_2}-1} \big| n |\partial^I \Gamma^J E|^2 \big| \, \d x \d s.
\end{align*}

\textbf{Estimate on $\mathcal{C}$.}

We apply the product rule to get 
\begin{align*}
&\big|-\partial^I \Gamma^J ( n E) + n \partial^I \Gamma^J E \big|
\\
=
&\Big| \sum_{\substack{I_1 + I_2 = I, J_1 + J_2 = J\\ |I_2|+|J_2|\leq |I| + |J|-1}} \partial^{I_1} \Gamma^{J_1}  n \partial^{I_2} \Gamma^{J_2} E \Big|
\\
\lesssim
& \sum_{\substack{I_1 + I_2 = I, J_1 + J_2 = J\\|I_1|+|J_1|\leq |I_2|+|J_2|\\ |I_2|+|J_2|\leq |I| + |J|-1}} \big| \partial^{I_1} \Gamma^{J_1}  n \partial^{I_2} \Gamma^{J_2} E \big|
\\
&+
 \sum_{\substack{I_1 + I_2 = I, J_1 + J_2 = J\\|I_1|+|J_1|\geq |I_2|+|J_2|}} \big| \partial^{I_1} \Gamma^{J_1}  n \partial^{I_2} \Gamma^{J_2} E \big|
=: \mathcal{T}_5 +\mathcal{T}_6 .
\end{align*}

We first bound $\mathcal{T}_5$. 
Employing the pointwise bounds of the $n$ component in Proposition \ref{prop:wave-est}, we find
\begin{align*}
\mathcal{T}_5
\lesssim
(\epsilon + (C_1\epsilon)^2) \langle s+r\rangle^{-{1\over 2}} \langle s-r\rangle^{-\gamma_{e_1}}  \sum_{|I_2|+|J_2|\leq |I|+|J|-1} \big| \partial^{I_2} \Gamma^{J_2} E \big|.
\end{align*}
Based on Proposition \ref{prop:KG-extra}, as similarly done in \eqref{eq:ex001} and \eqref{eq:ex002}, we get
\begin{align*}
\mathcal{T}_5
\lesssim
(\epsilon + (C_1\epsilon)^2) \langle s\rangle^{-{3\over 2}} \langle s-r\rangle^{-\gamma_{e_1}+1} \sum_{|I_2|+|J_3|\leq |I|+|J|} \big| \partial \partial^{I_2} \Gamma^{J_3} E \big|.
\end{align*}
By interpolating the above two estimates, we have
\begin{align*}
\mathcal{T}_5
\lesssim
(\epsilon + (C_1\epsilon)^2) \langle s\rangle^{-{7\over 6}} \langle s-r\rangle^{-\gamma_{e_1}+{2\over 3}} 
&\Big(\sum_{|I_2|+|J_2|\leq |I|+|J|-1} \big| \partial^{I_2} L^{J_2} E \big| \Big)^{1\over 3}
\\
\times
&\Big( \sum_{|I_2|+|J_3|\leq |I|+|J|} \big| \partial \partial^{I_2} \Gamma^{J_3} E \big| \Big)^{2\over 3}.
\end{align*}

For the term $\mathcal{T}_6$, we insert the pointwise bounds of the  component $E$ in \eqref{eq:KG-refine01} to derive
\begin{align*}
\mathcal{T}_6
\lesssim
C_1 \epsilon \sum_{|I_1|+|J_1|\leq |I|+|J|}  \langle s+r\rangle^{-{1\over 2} - \gamma_{e_1}}  \big| \partial^{I_1} \Gamma^{J_1}  n  \big|.
\end{align*}

Thus, we have
\begin{align*}
\mathcal{C}
\lesssim
\int_{t_0}^t 
&(\epsilon + (C_1\epsilon)^2) \langle s\rangle^{-{7\over 6}} \Big( \sum_{|I_2|+|J_2|\leq |I|+|J|-1} \big\| \langle s-r\rangle^{\gamma_{e_2} }  \partial^{I_2} \Gamma^{J_2} E \big\|^{1\over 3}_{L^2(\Sigma^{ex}_s)} \Big) 
\\
\times
&
\Big( \sum_{|I_2|+|J_3|\leq |I|+|J|} \big\| \langle s-r\rangle^{\gamma_{e_2} } \partial \partial^{I_2} \Gamma^{J_3} E \big\|^{2\over 3}_{L^2(\Sigma^{ex}_s)} \Big) \big\|\langle r-s \rangle^{\gamma_{e_2}} \partial_t \partial^{I} \Gamma^{J} E\big\|_{L^2(\Sigma^{ex}_s)}
\\
+
&\langle s\rangle^{-{1\over 2} - \gamma_{e_1} + \gamma_{e_2}} C_1 \epsilon \sum_{|I_1|+|J_1|\leq |I|+|J|} \| \partial^{I_1} \Gamma^{J_1} n\|_{L^2(\Sigma^{ex}_s)} \big\|\langle r-s \rangle^{\gamma_{e_2}} \partial_t \partial^{I} \Gamma^{J} E\big\|_{L^2(\Sigma^{ex}_s)} \, \d s
\\
\lesssim
& (\epsilon + (C_1\epsilon)^2) (C_1 \epsilon)^2.
\end{align*}

\textbf{Estimate on $\mathcal{D}$.}
We take $L^\infty$ norm on $\partial_t n$ and $n$ to get
\begin{align*}
\mathcal{D}
\lesssim
C_1 \epsilon \int_{\mathcal{D}^{ex}_t} \langle r-s \rangle^{2\gamma_{e_2}-\gamma_{e_1} -{1\over 2}}  |\partial^I \Gamma^J E|^2  \, \d x \d s.
\end{align*}
By smallness of $\epsilon$, this term can be absorbed by the left hand side.

Therefore, we get \eqref{eq:KG-refine-ex-a}, and the proof is completed.
\end{proof}

\begin{proposition}\label{prop:ex-improved}
The following improved bounds on $E, n$ hold for all $t\in [t_0, T)$:
\begin{equation}\label{eq:ex-improved}
\begin{aligned}
\|\partial^I\Gamma^J n \|_{L^2(\Sigma^{ex}_t)}
\leq 
&{1\over 2} C_1 \epsilon,
\qquad
&|I|+|J| \leq N,   
\\
\|\partial^I\Gamma^J n \|_{L^2(\Sigma^{ex}_t)}
\leq 
&{1\over 2} C_1 \epsilon \log \langle t\rangle,
\qquad
&|I|+|J| \leq N+1, 
\\
\mathcal{E}^{ex}_1 (\partial^I\Gamma^J E, t, \gamma_{e_1})
\leq 
&{1\over 2}(C_1 \epsilon)^2,
\qquad
&|I|+|J|\leq N, 
\\
\mathcal{E}^{ex}_1 (\partial^I\Gamma^J E, t, \gamma_{e_2} )
\leq 
&{1\over 2} (C_1 \epsilon)^2,
\qquad
&|I|+|J|\leq N+1.  
\end{aligned} 
\end{equation}
Consequently, the solution pair $(E, n)$ exists globally in the exterior region $\{ t\geq t_0, \, r\geq t - \eta_0 \}$.
\end{proposition}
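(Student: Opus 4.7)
The plan is to assemble the refined estimates of Propositions \ref{prop:wave-est}, \ref{prop:KG-refine-ex}, and \ref{prop:KG-refine-ex-a} and close the bootstrap by fixing the constants $C_1$ and $\epsilon$ appropriately. First, from Proposition \ref{prop:wave-est} we read off
\begin{align*}
\|\partial^I\Gamma^J n\|_{L^2(\Sigma^{ex}_t)} \leq C\bigl(\epsilon + (C_1\epsilon)^2\bigr),
\qquad |I|+|J| \leq N,
\end{align*}
and the analogous bound multiplied by $\log\langle t\rangle$ for one more derivative; likewise Propositions \ref{prop:KG-refine-ex}--\ref{prop:KG-refine-ex-a} give $\mathcal{E}^{ex}_1(\partial^I\Gamma^J E, t, \gamma_{e_i}) \leq C\bigl(\epsilon^2 + (C_1\epsilon)^3\bigr)$ at the appropriate orders. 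Choosing $C_1 \geq 4C$ and then $\epsilon_0$ so small that $C_1\epsilon_0 \leq 1/(4C)$, each right-hand side is dominated by $\tfrac{1}{2}C_1\epsilon$ (resp. $\tfrac{1}{2}(C_1\epsilon)^2$), which strictly improves the bootstrap assumptions \eqref{eq:BA-ex}--\eqref{eq:BA-ex02}. Note that the improved $n$-bound at order $N+1$ grows only like $\log\langle t\rangle$, which is sharper than the $t^\delta$ appearing in \eqref{eq:BA-ex} once $\epsilon_0$ is small.

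With the strictly improved estimates \eqref{eq:ex-improved} in hand, I would run the standard continuity argument: define
\begin{align*}
T^* := \sup\bigl\{ T \geq t_0 : \text{the solution } (E,n) \text{ exists on } [t_0,T) \text{ and satisfies } \eqref{eq:BA-ex}\text{--}\eqref{eq:BA-ex02}\bigr\}.
\end{align*}
Local well-posedness (for smooth, localized data) guarantees $T^* > t_0$. If $T^* < \infty$, the improved bounds \eqref{eq:ex-improved} hold on $[t_0, T^*)$ by continuity, and together with the uniform Sobolev control they prevent any blow-up of the energy-type quantities on $\Sigma^{ex}_t$ as $t \to T^*$; a standard continuation criterion for the wave--Klein-Gordon system then extends the solution past $T^*$ while preserving \eqref{eq:BA-ex}--\eqref{eq:BA-ex02} with room to spare, contradicting maximality. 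Hence $T^* = +\infty$ and $(E,n)$ exists for all $t \geq t_0$ in the exterior region.

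The argument is essentially mechanical once the refined propositions are available, so no step here is genuinely hard; the only mild subtlety is verifying that the bootstrap hierarchy is consistent, namely that the quadratic/cubic-in-$(C_1\epsilon)$ loss from each estimate, combined with the linear-in-$\epsilon$ contribution from the data, is absorbed into $\tfrac{1}{2}C_1\epsilon$ (and similarly into $\tfrac{1}{2}(C_1\epsilon)^2$) uniformly in $t$. This requires choosing $C_1$ and $\epsilon_0$ in the correct order, $C_1$ first (depending only on the universal constants from Propositions \ref{prop:wave-est}--\ref{prop:KG-refine-ex-a}) and then $\epsilon_0$ small relative to $1/C_1$, which is straightforward.
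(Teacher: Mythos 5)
Your proposal matches the paper's proof: both read off the refined estimates of Propositions~\ref{prop:wave-est}, \ref{prop:KG-refine-ex}, \ref{prop:KG-refine-ex-a}, fix $C_1$ large and then $\epsilon_0$ small, and invoke the standard continuity argument (which the paper leaves implicit, while you spell it out).

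One point worth tightening, since your phrasing is slightly off and the paper is terse here: you assert that the order-$(N+1)$ improvement $\tfrac12 C_1\epsilon\log\langle t\rangle$ is ``sharper than $t^\delta$\dots once $\epsilon_0$ is small,'' but that comparison has nothing to do with $\epsilon_0$. In fact $\sup_{t\geq 1}\log\langle t\rangle/t^\delta$ is of order $(\delta e)^{-1}$, which exceeds $2$ for small $\delta$, so $\tfrac12 C_1\epsilon\log\langle t\rangle$ does \emph{not} uniformly dominate the bootstrap threshold $C_1\epsilon t^\delta$ on its own. The way to close the bootstrap at order $N+1$ is to use $\log\langle t\rangle\leq C_\delta t^\delta$ and choose $C_1$ large \emph{also relative to} $C_\delta$ (then $\epsilon_0$ small relative to $1/C_1$), so that the refined bound $\lesssim(\epsilon+(C_1\epsilon)^2)\log\langle t\rangle$ is pushed below $\tfrac12 C_1\epsilon\, t^\delta$, which is what actually strictly improves \eqref{eq:BA-ex}; the $\log\langle t\rangle$ form in Proposition~\ref{prop:ex-improved} is a sharper statement than what the bootstrap step itself requires.
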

\begin{proof}
The first two bounds for $n$ are from Proposition \ref{prop:wave-est}, by taking $C_1$ sufficiently large and $\epsilon$ small enough such that $\lesssim \epsilon + (C_1 \epsilon)^2$ and $\lesssim (\epsilon + (C_1 \epsilon)^2) \log \langle t\rangle$ give $\leq {1\over 2} C_1 \epsilon$ and $\leq {1 \over 2} C_1 \epsilon \log \langle t\rangle$, respectively.

Similarly, the last two bounds for $E$ are from Propositions \ref{prop:KG-refine-ex} and \ref{prop:KG-refine-ex-a}.
\end{proof}

%
%
%

\section{Global existence: the interior region}\label{sec:interior}

With the global existence and various bounds on the solution $(E, n)$ in the exterior region established in Section \ref{sec:exterior}, we now prove global existence in the interior region $\{ r\leq t-1 \}$.

We recall the following  estimates from the analysis in the exterior region in  Section \ref{sec:exterior} with $\eta_0 = 1$.
\begin{proposition}[Boundary estimates]\label{prop:BDRY-4}
The following estimates are valid for all $t \geq t_0$.

\

\begin{enumerate}
\item \emph{Pointwise estimates.}
\begin{align}
\sup_{|x| = t-1} \big|\langle\tau_+\rangle^{1\over 2} \langle\tau_-\rangle^{3}  \partial\partial \partial^I L^J \mathfrak{m}(t, x)\big|
\lesssim
C_1 \epsilon,
\qquad
|I|+|J|\leq N-1,
\\
\sup_{|x| = t-1} \big|\langle\tau_+\rangle^2 \langle\tau_-\rangle^{3} \partial^I L^J E(t, x)\big|
\lesssim
C_1 \epsilon,
\qquad
|I| + |J|\leq N-5.
\end{align}

\item \emph{$L^2$ estimates (below with $\eta_0=1$).}
\begin{equation}
\begin{aligned}
&\int_{\Sigma^{bd}_t} \big( \sum_{a=1}^2(\partial_a \partial^I L^J \mathfrak{m} + \frac{x_a}{r}\partial_t \partial^I L^J \mathfrak{m})^2   \big) \d S
\\
\lesssim
& \epsilon^2+ (C_1 \epsilon)^4,
\qquad\qquad
|I|+|J|\leq N+2, \, |J|\leq N+1,
\end{aligned}
\end{equation}
\begin{equation}
\begin{aligned}
&\int_{\Sigma^{bd}_t} \big( \sum_{a=1}^2(\partial_a \partial\partial^I L^J \mathfrak{m} + \frac{x_a}{r}\partial_t \partial\partial^I L^J \mathfrak{m})^2   \big) \d S
\\
\lesssim
&\epsilon^2 + (C_1 \epsilon)^4,
\qquad\qquad
|I|+|J|\leq N+1,
\end{aligned}
\end{equation}
\begin{equation}
\begin{aligned}
&\int_{\Sigma^{bd}_t} \big( \sum_{a=1}^2(\partial_a \partial^I L^J \ell + \frac{x_a}{r} \partial_t \partial^I L^J \ell)^2   \big) \d S
\\
\lesssim
&\epsilon^2 + (C_1 \epsilon)^4,
\qquad\qquad
|I|+|J|\leq N,
\end{aligned}
\end{equation}
\begin{equation}
\begin{aligned}
&\int_{\Sigma^{bd}_t} \big( \sum_{a=1}^2|\partial_a \partial^I L^J E + \frac{x_a}{r}\partial_t \partial^I L^J E|^2 + |\partial^I L^J E|^2  \big) \d S
\\
\lesssim
&\epsilon^2+ (C_1 \epsilon)^3,
\qquad\qquad
|I|+|J|\leq N+1.
\end{aligned}
\end{equation}
\end{enumerate}
\end{proposition}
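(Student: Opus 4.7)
The plan is to recognize Proposition~\ref{prop:BDRY-4} as an immediate bookkeeping consequence of the pointwise bounds and weighted energy estimates already established in Section~\ref{sec:exterior}, specialized to the boundary cone $\{|x|=t-1\}$ (so $\eta_0=1$). The key structural observation that drives everything is twofold: first, on this cone one has $\tau_-\equiv 1$, so any finite power $\langle\tau_-\rangle^k$ is a harmless constant; second, the definition of the exterior energy $\mathcal{E}^{ex}_m(\phi,t,\gamma)$ contains, as its third non-negative summand, precisely the boundary integral
\begin{equation*}
\int_{\Sigma^{bd}_t}\Big(\sum_a(\partial_a\phi+\tfrac{x_a}{r}\partial_t\phi)^2+m^2|\phi|^2\Big)\,dS,
\end{equation*}
so any upper bound on $\mathcal{E}^{ex}_m(\phi,t,\gamma)$ directly dominates the boundary integral with no additional argument needed.

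For the pointwise estimates, the first assertion reduces, after absorbing the constant $\langle\tau_-\rangle^3$, to $|\partial\partial\partial^I L^J\mathfrak{m}|\lesssim C_1\epsilon\,\langle\tau_+\rangle^{-1/2}$ on the cone, which is exactly what Corollary~\ref{cor:nDelta-point} yields with $\gamma_{e_1}=3/2$. Similarly, the second assertion reduces to $|\partial^I L^J E|\lesssim C_1\epsilon\,\langle\tau_+\rangle^{-2}$ on the cone, which follows from Proposition~\ref{prop:KG-refine01} since $\tfrac{1}{2}+\gamma_{e_1}=2$. No further estimation is required.

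For the four $L^2$ estimates, I would match each claim with the appropriate weighted energy bound already proved and simply read off the boundary piece. Concretely: the estimate for $\partial^I L^J\mathfrak{m}$ at order $|I|+|J|\leq N+2,\ |J|\leq N+1$ follows from the second line of Lemma~\ref{lem:nDelta-ex} (the top-order $\gamma_{e_2}$-weighted bound); the estimate for $\partial\partial^I L^J\mathfrak{m}$ at order $|I|+|J|\leq N+1$ follows from the first line of the same lemma with weight $\gamma_{e_1}$; the estimate for $\partial^I L^J\ell$ at order $|I|+|J|\leq N$ follows by applying Proposition~\ref{prop:EE-ex} to the free wave equation satisfied by $\partial^I L^J\ell$, which is consistent with (and contained in) Lemma~\ref{lem:n0}; and the estimate for $\partial^I L^J E$ at order $|I|+|J|\leq N+1$ is the boundary trace of $\mathcal{E}^{ex}_1(\partial^I L^J E,t,\gamma_{e_2})$ bounded in Proposition~\ref{prop:KG-refine-ex-a} (with Proposition~\ref{prop:KG-refine-ex} covering the subcritical orders).

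There is no genuine obstacle here: the proposition is a direct corollary of material already in hand, and the only care required is to identify, for each multi-index range, the correct weighted energy statement and to verify that the boundary tangential-derivative term appearing on the right-hand side coincides with the boundary summand in the definition of $\mathcal{E}^{ex}_m$. The substantive work was carried out in the bootstrap analysis of Section~\ref{sec:exterior}; the purpose of this proposition is simply to package those boundary traces in a form readily usable as initial/boundary data for the hyperboloidal analysis in the interior region.
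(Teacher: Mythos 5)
Your proposal is correct and matches the paper's own proof: the pointwise bounds are read off from Corollary~\ref{cor:nDelta-point} and Proposition~\ref{prop:KG-refine01} using $\langle\tau_-\rangle\simeq 1$ on the cone, and the $L^2$ bounds come from observing that the boundary integral is exactly the third non-negative summand of $\mathcal{E}^{ex}_m$, so the exterior energy estimates of Lemma~\ref{lem:nDelta-ex}, Lemma~\ref{lem:n0}, and Proposition~\ref{prop:KG-refine-ex-a} dominate it directly. No gap; this is the paper's argument, spelled out slightly more explicitly.
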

\begin{proof}
One finds the pointwise estimates from Proposition \ref{prop:KG-refine01} and Corollary \ref{cor:nDelta-point}, together with the fact that $1\lesssim \langle t-r\rangle \lesssim 1$ on the boundary. For the $L^2$ type estimates, one refers to Lemma \ref{lem:nDelta-ex} and Proposition \ref{prop:KG-refine-ex-a}.
\end{proof}

We make the following bootstrap assumptions for $\tau \in [\tau_0, \tau_1 )$
\begin{align}
|\partial^I L^J n(t, x)| 
\leq
&C_1 \epsilon t^{-{1\over 2}},
\qquad
&|I| + |J| \leq N-2, \label{eq:BA-in}
\\
\mathcal{E}^{in}(\partial^I L^J n, \tau)
\leq
&(C_1 \epsilon)^{2},
\qquad
&|I| + |J| \leq N, \label{eq:BA-in2}
\\
\mathcal{E}^{in}_1(\partial^I L^J E, \tau)
\leq
&(C_1 \epsilon)^2 \tau^{2\delta},
\qquad
&|I| + |J| \leq N+1. \label{eq:BA-in3}
\end{align}

The constant $C_1$ here (such that $C_1^2 \epsilon \ll 1$) might not be identical to that in Section \ref{sec:exterior}, but in the end we can take the maximum between these two numbers. We treat similarly for $\delta, \epsilon$. In addition, we set $\tau_1 = \sup \{ \tau: \eqref{eq:BA-in}-\eqref{eq:BA-in3} \text{ all hold} \}$.

To improve the bounds in bootstrap assumptions \eqref{eq:BA-in}--\eqref{eq:BA-in3}, we consider the equations of $\mathfrak{m}, E$ with higher order derivatives:
\begin{align}
-\Box \partial^I L^J \mathfrak{m} = &\partial^I L^J |E|^2, \label{eq:n-Delta-h}
\\
-\Box \partial^I L^J E + \partial^I L^J E = &- \partial^I L^J (n E). \label{eq:E-h}
\end{align}

Based on the estimates in \eqref{eq:BA-in3}, we have some pointwise decay estimates for the field $E$.
\begin{proposition}
Under the bootstrap assumptions in \eqref{eq:BA-in3}, the following pointwise estimates hold.
\begin{enumerate}
		\item \emph{Rough decay for $E$.}
\begin{align}
(\tau/t)|\partial \partial^I\Gamma^J E|  + |\partial^I\Gamma^J E | 
\lesssim
C_1 \epsilon  \tau_+^{-1 + \delta},
\qquad
|I|+|J| \leq N-1.  \label{eq:E-rough-in}
\end{align}

\item \emph{Improved decay for $E$.}
\begin{align}
|\partial^I\Gamma^J E|
\lesssim
C_1 \epsilon \tau_-  \tau_+^{-2 + \delta},
\qquad
|I|+|J| \leq N-3.   \label{eq:E-improve-in}
\end{align}
\end{enumerate}
\end{proposition}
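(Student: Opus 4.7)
The proof is a direct combination of Sobolev embedding on hyperboloids and the pointwise Klein--Gordon identity of Proposition~\ref{prop:KG-extra}, both fed by the bootstrap energy \eqref{eq:BA-in3} and the boundary bounds of Proposition~\ref{prop:BDRY-4}.

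I would first establish the rough decay. For $|I|+|J|\leq N-1$, apply the hyperboloidal Klainerman--Sobolev inequality (Proposition~\ref{prop:Sobolev-in}) to $\partial^I L^J E$. The $L^2$ quantities on the right commute past at most two extra Lorentz boosts, so only derivatives of total order $|I|+|J|+2\leq N+1$ appear, exactly covered by the bootstrap. Since $\mathcal{E}^{in}_1$ controls both $\|\partial^I L^J E\|_{L^2(\mathcal{H}_\tau)}$ (through the mass term) and $\|(\tau/t)\partial\partial^I L^J E\|_{L^2(\mathcal{H}_\tau)}$, combining with the boundary decay of Proposition~\ref{prop:BDRY-4} yields
\begin{align*}
(1+t)|\partial^I L^J E| + (1+t)(\tau/t)|\partial\partial^I L^J E| \lesssim C_1\epsilon\,\tau^\delta.
\end{align*}
Since $t\simeq\tau_+$ and $\tau\leq t$ in the interior, this gives \eqref{eq:E-rough-in}. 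The weighted variant uses only the commutator computation $|L(\tau/t)|\lesssim \tau/t$ and $[L,\partial]=\partial$, so that $L^K[(\tau/t)\partial\phi]$ reduces to a linear combination of $(\tau/t)\partial L^{K'}\phi$ terms.

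For the improved decay in the smaller range $|I|+|J|\leq N-3$, I would invoke Proposition~\ref{prop:KG-extra} applied to $\phi=\partial^I L^J E$, whose equation $-\Box\phi+\phi=-\partial^I L^J(nE)$ produces
\begin{align*}
|\partial^I L^J E|
\lesssim
\frac{\langle\tau_-\rangle}{\langle t\rangle}|\partial\partial\,\partial^I L^J E|
+\frac{1}{\langle t\rangle}|\partial L\,\partial^I L^J E|
+\frac{1}{\langle t\rangle}|\partial\,\partial^I L^J E|
+|\partial^I L^J(nE)|.
\end{align*}
Each of the first three terms has derivative order $\leq N-1$ and is controlled by the rough decay above. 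The source $\partial^I L^J(nE)$ is expanded via Leibniz and estimated by combining the bootstrap pointwise bound \eqref{eq:BA-in} on $n$ with \eqref{eq:E-rough-in} on $E$; it is quadratically small in $C_1\epsilon$ and absorbed. The $\langle\tau_-\rangle/\langle t\rangle$ prefactor, multiplied against the rough control of $|\partial\partial\phi|$, delivers the extra decay gain near the light cone.

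The main obstacle is the delicate interplay between the $\langle\tau_-\rangle/\langle t\rangle$ weight, which is small near the cone, and the factor $t/\tau\simeq\sqrt{\tau_+/\tau_-}$ that appears when converting bootstrap bounds on $(\tau/t)\partial\phi$ into bounds on $\partial\phi$ itself; a single naive application of Proposition~\ref{prop:KG-extra} leaves a residual mismatch of weights near $\{r=t-1\}$. To reach the stated $\tau_-\tau_+^{-2+\delta}$ decay, I would adopt the mass-perturbation viewpoint emphasized in Main idea~2 and invoke the ODE method of Proposition~\ref{prop:E-sharp-decay} with $h=n$, integrating the resulting ODE for $\tau\cdot\partial^I L^J E$ along the integral curve $\mathcal{V}$; the boundary input is provided by the $\tau_+^{-2}$ decay on the cone supplied by Proposition~\ref{prop:BDRY-4}, and the integrated source terms are handled by the rough pointwise bound together with the smallness of $n$ carried by the bootstrap.
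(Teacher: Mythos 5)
Your first two paragraphs reconstruct the paper's argument correctly: the rough decay follows from the hyperboloidal Sobolev inequality (Proposition~\ref{prop:Sobolev-in}), the energy bootstrap \eqref{eq:BA-in3}, and commutator estimates; the improved decay follows from Proposition~\ref{prop:KG-extra} applied to $\phi=\partial^I L^J E$, with the source $\partial^I L^J(nE)$ handled via Leibniz, the pointwise bound \eqref{eq:BA-in} on $n$, and absorption of the leading $n\,\partial^I L^J E$ term.

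Your final paragraph, however, rests on a misreading of \eqref{eq:E-rough-in}, and the fix you propose would not close. You worry that bounding $|\partial\partial\phi|$ from the bootstrap requires dividing out the hyperboloidal weight $\tau/t$ and hence costs $t/\tau\simeq\sqrt{\tau_+/\tau_-}$. But \eqref{eq:E-rough-in} also asserts the \emph{unweighted} bound $|\partial^I\Gamma^J E|\lesssim C_1\epsilon\,\tau_+^{-1+\delta}$ for all $|I|+|J|\leq N-1$; this comes from the Klein--Gordon mass term $\phi^2$ in $\mathcal{E}^{in}_1$ and carries no $\tau/t$ factor. For $|I|+|J|\leq N-3$, the second derivative $\partial\partial\,\partial^I L^J E$ is of the form $\partial^{I'}\Gamma^J E$ with $|I'|+|J|\leq N-1$, so the unweighted bound applies directly, and the prefactor $\langle\tau_-\rangle/\langle t\rangle\simeq\tau_-/\tau_+$ from Proposition~\ref{prop:KG-extra} already yields $\frac{\tau_-}{t}\,\tau_+^{-1+\delta}\simeq\tau_-\tau_+^{-2+\delta}$; the remaining two good-derivative terms are bounded by $\tau_+^{-2+\delta}\leq\tau_-\tau_+^{-2+\delta}$. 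There is no weight mismatch and hence no need for Proposition~\ref{prop:E-sharp-decay}. Invoking it here would in fact be both ineffective and circular: the ODE argument gives only $|\phi|\lesssim\tau^{-1}=(\tau_-\tau_+)^{-1/2}$, which near the light cone $\tau_-\simeq 1$ is merely $\tau_+^{-1/2}$ and does not carry the stated $\tau_-$-weight; and the place it is actually used (Proposition~\ref{prop:refine-E-decay}) requires bounds on $n$ from Proposition~\ref{prop:refine-n-in001}, which depends on Lemma~\ref{lem:nDelta-in}, whose proof already cites \eqref{eq:E-improve-in}. With your last paragraph removed, the first two constitute the paper's proof.
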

\begin{proof}
    The proof of \eqref{eq:E-rough-in} follows from $L^2$-type bounds in \eqref{eq:BA-in3}, Klainerman-Sobolev inequality in Proposition \ref{prop:Sobolev-in}, and commutator estimates.

    Relying further on \eqref{eq:BA-in} and Proposition \ref{prop:KG-extra}, one obtains \eqref{eq:E-improve-in}. 
\end{proof}

\subsection{Improved bounds}

First, recall that $\ell$ satisfies a linear homogeneous wave equation, so its $L^2$ bounds and pointwise decay can be  derived in a relatively easier manner.
\begin{lemma}\label{lem:n0-in}
\begin{enumerate}
\item \emph{$L^2$ bounds.}
\begin{align*}
\| (\tau/t)\partial^I L^J \ell \|_{L^2(\mathcal{H}_\tau)}
\lesssim
&\epsilon,
\qquad
&1\leq |I|+|J| \leq N+1,
\\
\big\| (\tau/t)\tau_-^{1\over 2} \partial\partial^I L^J \ell \big\|_{L^2(\mathcal{H}_\tau)}
\lesssim
&\epsilon,
\qquad
&1\leq |I|+|J| \leq N.
\end{align*}

\item \emph{Pointwise bounds.}
\begin{align*}
|\partial^I L^J \ell|
\lesssim
&\epsilon \tau^{-1},
\qquad
&1\leq |I| + |J|\leq N-1,
\\
\tau_-^{-1}| \ell| + |\partial \partial^I L^J \ell|
\lesssim
&\epsilon \tau^{-1} \tau_-^{-{1\over 2}},
\qquad
&|I| + |J|\leq N-3.
\end{align*}

\end{enumerate}

\end{lemma}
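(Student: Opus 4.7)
The key observation is that $\ell$ solves a linear homogeneous wave equation, and since $[\Box,\partial]=[\Box,L_a]=0$, every $\partial^I L^J \ell$ is also a free wave. Moreover, in this hyperboloidal setting the initial slice $\mathcal{H}_{\tau_0}$ essentially collapses to a point, so all the energy on $\mathcal{H}_\tau$ is sourced by the data on the cone $\{|x|=t-1\}$, which has already been controlled in Proposition \ref{prop:BDRY-4} during the exterior analysis. The plan is to feed this boundary data into the (weighted) hyperboloidal energy estimates of Propositions \ref{prop:EE-in} and \ref{prop:con-EE}, and then convert the $L^2$ output to pointwise bounds via Proposition \ref{prop:Sobolev-in}.

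\textbf{$L^2$ bounds.} When $|I|\geq 1$, I write $\partial^I L^J \ell = \partial(\partial^{I-1}L^J \ell)$ and apply Proposition \ref{prop:EE-in} with $F\equiv 0$ to the free wave $\partial^{I-1}L^J \ell$, so that $\mathcal{E}^{in}(\partial^{I-1}L^J \ell,\tau)\lesssim\epsilon^2$ follows from the boundary terms of Proposition \ref{prop:BDRY-4}; this directly gives $\|(\tau/t)\partial^I L^J\ell\|_{L^2(\mathcal{H}_\tau)}\lesssim\epsilon$. In the pure Lorentz-boost case $|I|=0$, the naive energy estimate only yields $\|L^J \ell/t\|_{L^2(\mathcal{H}_\tau)}\lesssim\epsilon$, which produces an extraneous factor of $\tau$ in $\|(\tau/t)L^J \ell\|_{L^2(\mathcal{H}_\tau)}$. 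I therefore invoke the conformal energy estimate of Proposition \ref{prop:con-EE} applied to the free wave $L^{J-1}\ell$, whose initial conformal energy is finite under the weighted data hypotheses of Theorem \ref{thm:main1}; this directly controls $\|(\tau/t)L_a L^{J-1}\ell\|_{L^2(\mathcal{H}_\tau)}$. For the $\tau_-^{1/2}$-weighted estimate, I apply Proposition \ref{prop:wave-extra} (with $F=0$) to the free wave $\partial^I L^J \ell$ to obtain $\tau_-|\partial\partial(\partial^I L^J \ell)|\lesssim |\partial\Gamma(\partial^I L^J \ell)|+|\partial(\partial^I L^J \ell)|$. Multiplying by $\tau_-^{-1/2}(\tau/t)$ and taking $L^2(\mathcal{H}_\tau)$ reduces matters to the previous step with one additional vector field, still within the $N+1$ budget.

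\textbf{Pointwise bounds.} For $|\partial^I L^J \ell|\lesssim \epsilon \tau^{-1}$ with $1\leq |I|+|J|\leq N-1$, I apply the hyperboloidal Klainerman--Sobolev inequality of Proposition \ref{prop:Sobolev-in} to $\partial^I L^J \ell$: the boundary supremum is controlled by the pointwise estimates in Proposition \ref{prop:BDRY-4}, and the $L^2$ term is bounded by the $L^2$ estimates just established (two additional Lorentz boosts keep us within the $N+1$ budget), so the relation $t\geq \tau$ inside the interior yields $\tau^{-1}\epsilon$. For the sharper estimate $|\partial\partial^I L^J \ell|\lesssim \epsilon \tau^{-1}\tau_-^{-1/2}$, I use Proposition \ref{prop:wave-extra} to trade $\partial\partial$ for $\partial\Gamma$ at the cost of $\tau_-^{-1}$, and then apply Klainerman--Sobolev and interpolate with the $\tau_-^{1/2}$-weighted $L^2$ bound to redistribute the $\tau_-$ weight correctly. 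The piece $\tau_-^{-1}|\ell|\lesssim \epsilon \tau^{-1}\tau_-^{-1/2}$ follows from the fact that $(K_0\ell/t)+\ell$ is itself an energy quantity in the conformal energy of Proposition \ref{prop:con-EE}, combined with the pointwise bounds for $L_a\ell$ and $\partial\ell$ already at hand.

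\textbf{Main obstacle.} The most delicate step is the pure Lorentz-boost case $|I|=0$, which falls outside the standard hyperboloidal energy framework and forces us to invoke the conformal energy machinery; verifying that the initial conformal energy of $L^{J-1}\ell$ at $\tau=\tau_0$ is finite uses precisely the $\langle x\rangle^{|I|+3}$-type weights on $(n_0,n_1)$ built into the hypothesis of Theorem \ref{thm:main1}. A secondary technicality is bookkeeping the weights $\tau_-^{\pm 1/2}$ and the $(\tau/t)$ factors when applying Proposition \ref{prop:wave-extra} to reduce $\partial\partial$ to $\partial\Gamma$, so that the resulting norms fit inside the weighted energies already controlled.
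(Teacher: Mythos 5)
Your strategy — feeding the boundary data of Proposition \ref{prop:BDRY-4} into the hyperboloidal energy estimate (Proposition \ref{prop:EE-in}) for $|I|\geq 1$ and the conformal energy estimate (Proposition \ref{prop:con-EE}) for the pure Lorentz-boost case $|I|=0$, then converting to pointwise bounds with Proposition \ref{prop:Sobolev-in} — matches the paper's route for the first $L^2$ estimate and the first pointwise bound, and the observation that $\mathcal{H}_{\tau_0}$ collapses to a point so all interior energy is sourced from the cone $\{|x|=t-1\}$ is a correct and useful framing.

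The genuine gap is in the $\tau_-^{1/2}$-weighted $L^2$ estimate. You invoke Proposition \ref{prop:wave-extra} applied to the free wave $\partial^I L^J\ell$, which only produces an estimate for $|\partial\partial(\partial^IL^J\ell)|$, i.e. for $\partial\partial\partial^IL^J\ell$ — one derivative more than the target quantity $\partial\partial^IL^J\ell$, and with the right-hand side $\|(\tau/t)\partial\Gamma\partial^IL^J\ell\|$ carrying $|I|+|J|+2$ vector fields, which overshoots the $N+1$ budget when $|I|+|J|=N$. This off-by-one can be repaired for $|I|\geq 1$ by applying Proposition \ref{prop:wave-extra} to $\partial^{I-1}L^J\ell$ instead. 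But when $|I|=0$ the target is $\partial L^J\ell$, which is a single spacetime derivative of a boosted free wave, not of the form $\partial\partial\psi$ for any free wave $\psi$; Proposition \ref{prop:wave-extra} cannot reach it, and the workaround of expanding $L_a = x_a\partial_t + t\partial_a$ and then applying \ref{prop:wave-extra} introduces a factor $\tau_+/\tau_-$ that destroys the estimate. The paper instead uses relation \eqref{eq:wave-partial}, which applies to \emph{arbitrary} smooth functions (not just free waves) and gives $\tau_-|\partial(\partial^IL^J\ell)|\lesssim|\Gamma(\partial^IL^J\ell)|+|L_0(\partial^IL^J\ell)|$ directly; dividing by $\tau_-^{1/2}\geq 1$, multiplying by $\tau/t$, and taking $L^2(\mathcal{H}_\tau)$ then reduces to the first $L^2$ estimate (for the $\Gamma$ piece) and the conformal energy (for the $L_0$ piece, via the identity $L_0\phi = (K_0\phi/t+\phi) - \phi - (x^a/t)L_a\phi$), covering $|I|=0$ uniformly. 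You should replace the appeal to Proposition \ref{prop:wave-extra} with relation \eqref{eq:wave-partial} in this step.

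A secondary remark: for the second pointwise bound the paper simply cites \cite{Dong2005} rather than giving a proof, and your interpolation sketch is vague but plausible and not clearly worse than the reference; the more concrete concern is that you need pointwise control of $L_0\partial^IL^J\ell$ as well as $\Gamma\partial^IL^J\ell$ when you trade $\tau_-\partial$ for vector fields, and neither of us has spelled out that piece. Also, in your derivation of the first pointwise bound from Proposition \ref{prop:Sobolev-in}, the $L^2$ norms available on hyperboloids carry the weight $\tau/t$ rather than being unweighted; one should apply the Sobolev inequality to $(\tau/t)\partial^IL^J\ell$ and use the commutator bound $|L(\tau/t)|\lesssim\tau/t$ to absorb the weight, which is what produces $\tau|\partial^IL^J\ell|\lesssim\epsilon$ rather than $t|\partial^IL^J\ell|\lesssim\epsilon$ — your invocation of $t\geq\tau$ conflates these two and should be tightened.
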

\begin{proof}
The $L^2$-type estimates follow from Propositions \ref{prop:EE-in} and \ref{prop:con-EE} as well as the relation \eqref{eq:wave-partial}.

The $L^2$-type estimates together with Sobolev inequality \ref{prop:Sobolev-in} give the first pointwise decay.
For the second pointwise decay, one can refer to \cite{Dong2005} for the proof.
\end{proof}

\begin{lemma}[Estimates for $\mathfrak{m}$: I]\label{lem:nDelta-in}

\begin{enumerate}
\item \emph{$L^2$ bounds.}
\begin{align}
\big\| \tau_-^{-{1\over 2}} (\tau/t)\partial\partial^I L^J \mathfrak{m} \big\|_{L^2(\mathcal{H}_\tau)}
\lesssim
&\epsilon + (C_1 \epsilon)^2,
\qquad
&|I|+|J| \leq N+2, \, |J|\leq N+1, \label{eq:nD-L2-in01}
\\
\big\| \tau_-^{1\over 2} (\tau/t)\partial\partial\partial^I L^J \mathfrak{m} \big\|_{L^2(\mathcal{H}_\tau)}
\lesssim
&\epsilon + (C_1 \epsilon)^2,
\qquad
&|I|+|J| \leq N+1, \, |J|\leq N.  \label{eq:nD-L2-in02}
\end{align}

\item \emph{Pointwise bounds.}
\begin{align}
|\partial \partial^I L^J \mathfrak{m}|
\lesssim
&(\epsilon + (C_1 \epsilon)^2) \tau^{-1} \tau_-^{1\over 2},
\qquad
&|I| + |J|\leq N, \, |J|\leq N-1 \label{eq:nD-decay-in01}
\\
|\partial \partial \partial^I L^J \mathfrak{m}|
\lesssim
&(\epsilon + (C_1 \epsilon)^2) \tau^{-1} \tau_-^{-{1\over 2}},
\qquad
&|I| + |J|\leq N-2. \label{eq:nD-decay-in02}
\end{align}
\end{enumerate}
\end{lemma}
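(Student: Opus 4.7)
The plan is to first establish the two $L^2$ estimates \eqref{eq:nD-L2-in01}--\eqref{eq:nD-L2-in02} by applying Propositions \ref{prop:EE-in} and \ref{prop:wave-extra} to the $\mathfrak{m}$-equation \eqref{eq:n-Delta-h}, and then transfer them to the pointwise bounds \eqref{eq:nD-decay-in01}--\eqref{eq:nD-decay-in02} via the hyperboloidal Klainerman-Sobolev inequality (Proposition \ref{prop:Sobolev-in}) after commuting with Lorentz boosts $L^K$, $|K|\leq 2$.

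For \eqref{eq:nD-L2-in01}, I would apply Proposition \ref{prop:EE-in} with $m=0$ and $\gamma = \tfrac{1}{2}$ to $\phi = \partial^I L^J \mathfrak{m}$. The boundary integral on $\Sigma^{bd}_{t^{bd}}$ is absorbed by Proposition \ref{prop:BDRY-4}, while by Cauchy-Schwarz the source contribution reduces to controlling
\begin{align*}
\int_{\tau_0}^\tau \big\|\tau_-^{-1/2}\partial^I L^J |E|^2\big\|_{L^2(\mathcal{H}_{\widetilde\tau})}\, \mathcal{E}^{in}(\partial^I L^J \mathfrak{m}, \widetilde\tau, \tfrac{1}{2})^{1/2}\, \d\widetilde\tau.
\end{align*}
Expanding $|E|^2$ via Leibniz and pairing the lower-order factor (with at most $\lfloor(N+2)/2\rfloor\leq N-3$ derivatives, for which the improved interior pointwise bound \eqref{eq:E-improve-in} applies) against the $L^2$ bound on the higher-order factor coming from the mass part of \eqref{eq:BA-in3}, and exploiting $\tau_-\tau_+ = \widetilde\tau^2$ on $\mathcal{H}_{\widetilde\tau}$, I expect $\|\tau_-^{-1/2}\partial^I L^J |E|^2\|_{L^2(\mathcal{H}_{\widetilde\tau})} \lesssim (C_1\epsilon)^2 \widetilde\tau^{-3/2+2\delta}$. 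Since this is integrable in $\widetilde\tau$, a Gronwall-type argument closes the bound.

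For \eqref{eq:nD-L2-in02}, I would invoke Proposition \ref{prop:wave-extra} on $\partial^I L^J\mathfrak{m}$ and multiply by $\tau_-^{1/2}(\tau/t)$ before taking $L^2(\mathcal{H}_\tau)$. The two derivative-of-$\mathfrak{m}$ contributions become $\|\tau_-^{-1/2}(\tau/t)\partial\Gamma\partial^I L^J\mathfrak{m}\|_{L^2}$ and $\|\tau_-^{-1/2}(\tau/t)\partial\partial^I L^J\mathfrak{m}\|_{L^2}$, both of which fall under \eqref{eq:nD-L2-in01} after commutation (the constraint $|J|\leq N$ leaves room for one extra Lorentz boost), while the source term $\|\tau\tau_-^{-1/2}\partial^I L^J|E|^2\|_{L^2(\mathcal{H}_\tau)}$ is treated identically to the previous step. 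The pointwise bounds \eqref{eq:nD-decay-in01}--\eqref{eq:nD-decay-in02} will then follow by commuting $L^K$ with $|K|\leq 2$ (using $|L(\tau/t)|\lesssim \tau/t$ in the interior to control weight commutators) and applying Proposition \ref{prop:Sobolev-in} to the weighted quantities $\tau_-^{-1/2}(\tau/t)\partial\partial^I L^J\mathfrak{m}$ and $\tau_-^{1/2}(\tau/t)\partial\partial\partial^I L^J\mathfrak{m}$; the resulting factor $(1+t)^{-1}$ converts into the stated rates $\tau^{-1}\tau_-^{\pm 1/2}$ via $\tau^2 \simeq \tau_- t$ in the interior.

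The main obstacle is the weak $t^{-1}$ pointwise decay of $|E|^2$ that sources \eqref{eq:n-Delta-h}, which is non-integrable and would defeat a naive energy estimate. This should be overcome by the improved interior decay \eqref{eq:E-improve-in} of $E$, which supplies an additional $\tau_-/t$ factor; combined with the slowly-growing bootstrap $L^2$ bound on the higher-order factor, this produces the integrable rate $\widetilde\tau^{-3/2+2\delta}$ needed for Gronwall. A minor technical point is that when the higher-order factor carries the full $N+2$ (weighted) derivatives, one has to extract a single $\partial$ from it---feasible since $|J_2|\leq |J|\leq N+1$ forces $|I_2|\geq 1$ in that extreme case---so that the remaining index fits within the bootstrap range of \eqref{eq:BA-in3}.
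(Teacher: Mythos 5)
Your proposal follows the paper's proof in all essential steps: the weighted energy estimate of Proposition~\ref{prop:EE-in} with $\gamma=\tfrac12$ together with a Gronwall argument for \eqref{eq:nD-L2-in01}, the second-derivative identity of Proposition~\ref{prop:wave-extra} combined with \eqref{eq:nD-L2-in01} for \eqref{eq:nD-L2-in02}, and the hyperboloidal Klainerman--Sobolev inequality of Proposition~\ref{prop:Sobolev-in} for the pointwise bounds, including the observation that extracting a spatial $\partial$ from the top-order factor keeps the $L^2$ factor within the bootstrap range of \eqref{eq:BA-in3}. The only small difference is that you record the source rate $\widetilde\tau^{-3/2+2\delta}$ while the paper states $\widetilde\tau^{-5/4}$; both are correct, both are integrable, and the Gronwall closure is unaffected.
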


\begin{proof}

Let $|I| + |J| \leq N+2$ with $|J|\leq N+1$. We perform the weighted energy estimates in Proposition \ref{prop:EE-in} with $\gamma={1\over 2}$ to the equation of $\mathfrak{m}$ \eqref{eq:n-Delta-h}, and we find
\begin{equation}
\begin{aligned}
&\mathcal{E}^{in} (\partial^I L^J \mathfrak{m}, \tau, {1\over 2})
\\
\lesssim
&\mathcal{E}^{in} (\partial^I L^J \mathfrak{m}, \tau_0, {1\over 2})
+
\int_{\tau_0}^\tau \big\| \tau_-^{-{1\over 2}} \partial^I L^J|E|^2 \big\|_{L^2(\mathcal{H}_{\widetilde{\tau}})} \big\| \tau_-^{-{1\over 2}} (\widetilde{\tau} /t) \partial_t \partial^I L^J \mathfrak{m} \big\|_{L^2(\mathcal{H}_{\widetilde{\tau}})} \, \d\widetilde{\tau}
\\
&+
\int_{\Sigma^{bd}_{t^{bd}}}  \sum_a ({x_a \over r} \partial_t  \partial^I L^J \mathfrak{m} + \partial_a  \partial^I L^J \mathfrak{m})^2  \, \d S
\\
\lesssim
&\epsilon^2 + (C_1 \epsilon)^4
+
 \int_{\tau_0}^\tau \big\| \tau_-^{-{1\over 2}} \partial^I L^J|E|^2 \big\|_{L^2(\mathcal{H}_{\widetilde{\tau}})} \mathcal{E}^{in} (\partial^I L^J \mathfrak{m}, \widetilde{\tau}, {1\over 2})^{1\over 2} \, \d\widetilde{\tau},
\end{aligned}
\end{equation}
in which $t^{bd} = {\tau^2+1\over 2}$ and we used the bounds for the boundary term in Proposition \ref{prop:BDRY-4}.

Next, we bound $\| \tau_-^{-{1\over 2}} \partial^I L^J|E|^2 \|_{L^2(\mathcal{H}_{\widetilde{\tau}})}$. 
By the Leibniz rule, we have
\begin{align*}
&\big\| \tau_-^{-{1\over 2}} \partial^I L^J|E|^2 \big\|_{L^2(\mathcal{H}_{\widetilde{\tau}})}
\\
\lesssim
&\sum_{\substack{|I_1|+|J_1|\leq |I_2|+|J_2| \\|I_1|+|I_2|\leq |I|, |J_1|+|J_2|\leq |J|}} \big\|\tau_-^{-{1\over 2}} (\tau_+/\widetilde{\tau}) \partial^{I_1} L^{J_1} E \big\|_{L^\infty(\mathcal{H}_{\widetilde{\tau}})}   \big\| (\widetilde{\tau}/\tau_+) \partial^{I_2} L^{J_2} E \big\|_{L^2(\mathcal{H}_{\widetilde{\tau}})} 
\\
\lesssim
&(C_1 \epsilon)^2 \widetilde{\tau}^{-{5\over 4}},
\end{align*}
in which we used the pointwise bounds in \eqref{eq:E-improve-in} and energy bounds in \eqref{eq:BA-in3}.

Therefore, we get (below $C$ is a generic constant independent of $C_1, \epsilon$)
\begin{align*}
&\mathcal{E}^{in} (\partial^I L^J \mathfrak{m}, \tau, {1\over 2})
\\
\leq
&C\epsilon^2
+
C(C_1 \epsilon)^4
+
C \int_{\tau_0}^\tau \big\| \tau_-^{-{1\over 2}} \partial^I L^J|E|^2 \big\|_{L^2(\mathcal{H}_{\widetilde{\tau}})} \mathcal{E}^{in} (\partial^I L^J \mathfrak{m}, \widetilde{\tau}, {1\over 2})^{1\over 2} \, \d\widetilde{\tau}
\\
\leq
&C \epsilon^2 + C (C_1 \epsilon)^4
+
C \int_{\tau_0}^\tau \widetilde{\tau}^{-{5\over 4}} \mathcal{E}^{in} (\partial^I L^J \mathfrak{m}, \widetilde{\tau}, {1\over 2}) \, \d\widetilde{\tau},
\end{align*}
which further proves \eqref{eq:nD-L2-in01} by using the Gronwall inequality.

The proof of \eqref{eq:nD-L2-in02} relies on \eqref{eq:nD-L2-in01} and Proposition \ref{prop:wave-extra}. By Proposition \ref{prop:wave-extra}, for $|I|+|K|\leq N+1, \, |K|\leq N$ we have
\begin{align*}
&|\partial\partial \partial^I L^K \mathfrak{m}|
\\
\lesssim
&{1\over \langle  \tau_-\rangle } \Big(\sum_{|J_1|\leq |K|+1} |\partial \partial^I L^{J_1} \mathfrak{m}| 
+ \sum_{|J_2|\leq |K|}|\partial\partial \partial^I L^{J_2} \mathfrak{m}| \Big) 
+  {t\over \langle \tau_-\rangle}\big|\partial^I L^K |E|^2\big|.
\end{align*}
To proceed, one obtains
\begin{align*}
&\big\|\tau_-^{1\over 2} (\tau/t)\partial\partial \partial^I L^K \mathfrak{m}\big\|_{L^2(\mathcal{H}_\tau)}
\\
\lesssim
& \sum_{|J_1|\leq |K|+1} \big\|\tau_-^{-{1\over 2}} (\tau/t)\partial \partial^I L^{J_1} \mathfrak{m}\big\|_{L^2(\mathcal{H}_\tau)} 
\\
&+
\sum_{|J_2|\leq |K|} \big\|\tau_-^{-{1\over 2}} (\tau/t)\partial \partial \partial^I L^{J_2} \mathfrak{m}\big\|_{L^2(\mathcal{H}_\tau)} 
+ \big\|\tau_+ \tau_-^{-{1\over 2}}\partial^I L^K |E|^2\big\|_{L^2(\mathcal{H}_\tau)},
\end{align*}
and one derives \eqref{eq:nD-L2-in02} with the estimates for $\mathfrak{m}$ in \eqref{eq:nD-L2-in01} and the estimates for $E$ in \eqref{eq:BA-in3} and \eqref{eq:E-improve-in}.

Finally, by the Sobolev inequality in Proposition \ref{prop:Sobolev-in} and the $L^2$-type bounds on $\mathfrak{m}$ \eqref{eq:nD-L2-in01} and \eqref{eq:nD-L2-in02}, we have the decay estimates \eqref{eq:nD-decay-in01} and \eqref{eq:nD-decay-in02}.
\end{proof}

\begin{proposition}[Refined estimates for $n$: I] \label{prop:refine-n-in001}
We have following refined bounds for the component $n$.

\begin{enumerate}
\item \emph{$L^2$ bounds.}
\begin{align*}
\| (\tau/t)\partial^I L^J n \|_{L^2(\mathcal{H}_\tau)}
\lesssim
&\epsilon,
\qquad
&1\leq |I|+|J| \leq N+1, \quad |J|\leq N,
\\
\big\| (\tau/t)\tau_-^{1\over 2} \partial\partial^I L^J n \big\|_{L^2(\mathcal{H}_\tau)}
\lesssim
&\epsilon,
\qquad
&1\leq |I|+|J| \leq N.
\end{align*}

\item \emph{Pointwise bounds.}
\begin{align*}
|\partial^I L^J n|
\lesssim
&\epsilon \tau^{-1},
\qquad
&1\leq |I| + |J|\leq N-1, \, |J|\leq N-2, 
\\
\tau_-^{-1} |n| + |\partial \partial^I L^J n|
\lesssim
&\epsilon \tau^{-1} \tau_-^{-{1\over 2}},
\qquad
&|I| + |J|\leq N-3.
\end{align*}
\end{enumerate}
\end{proposition}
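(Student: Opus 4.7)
The strategy is to leverage the decomposition $n = \ell + \Delta \mathfrak{m}$ and reduce everything to the already-established Lemmas \ref{lem:n0-in} and \ref{lem:nDelta-in}. Each estimate in the proposition splits into an $\ell$-contribution (handled directly by Lemma \ref{lem:n0-in}) and a $\Delta\mathfrak{m}$-contribution; only the latter requires genuine work, namely commuting $\Delta$ past the Lorentz boosts in $\partial^I L^J$. Using $[L_a,\Delta] = -2\partial_t\partial_a$, one obtains an expansion
\[
\partial^I L^J \Delta\mathfrak{m} \;=\; \Delta\,\partial^I L^J \mathfrak{m} \;+\; \text{(lower-order commutator terms)},
\]
where the lower-order pieces carry strictly fewer $L$ boosts, compensated by two extra partial derivatives, and where all resulting terms fit within the hypotheses of Lemma \ref{lem:nDelta-in}.

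For the two $L^2$ bounds, the leading $\Delta\mathfrak{m}$-contribution has the shape $\partial\partial\,\partial^{I'} L^J \mathfrak{m}$ with $|I'|$ equal to $|I|$ (for the first bound, with $1\le |I|+|J|\le N+1$, $|J|\le N$) or $|I|+1$ (for the second, with $1\le |I|+|J|\le N$, giving $|I'|+|J|\le N+1$, $|J|\le N$). In both cases the indices remain within the range of Lemma \ref{lem:nDelta-in} \eqref{eq:nD-L2-in02}. Since $\tau_-\geq 1$ in the interior, we may drop the $\tau_-^{1/2}$ weight, and $\epsilon+(C_1\epsilon)^2$ absorbs to $\lesssim\epsilon$ via $C_1^2\epsilon\ll 1$.

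For the pointwise estimates, the first one, $|\partial^I L^J n|\lesssim\epsilon\tau^{-1}$ in the range $1\le |I|+|J|\le N-1$, $|J|\le N-2$, follows from the first $L^2$ bound just established combined with the Klainerman--Sobolev inequality on hyperboloids (Proposition \ref{prop:Sobolev-in}) applied to $(\tau/t)\partial^I L^J n$. The identities $(1+t)(\tau/t)\simeq \tau$ in the interior and $|L(\tau/t)|\lesssim \tau/t$ absorb the weight into the sup-norm, and the loss of two $L$ boosts in Klainerman--Sobolev is compatible with the allowed range (since $|J|\le N-2$ leaves room for two extra boosts while keeping $|J|+|K|\le N$); the boundary term in Proposition \ref{prop:Sobolev-in} is controlled by the exterior estimates in Proposition \ref{prop:BDRY-4} and Lemma \ref{lem:n0-in}. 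The second pointwise bound is handled even more directly by combining Lemma \ref{lem:n0-in} with the pointwise estimates \eqref{eq:nD-decay-in01}--\eqref{eq:nD-decay-in02} of Lemma \ref{lem:nDelta-in}: $|n|\le |\ell|+|\Delta\mathfrak{m}|\lesssim\epsilon\tau^{-1}\tau_-^{1/2}$ yields $\tau_-^{-1}|n|\lesssim\epsilon\tau^{-1}\tau_-^{-1/2}$, and the derivative piece reduces to $\partial\partial\,\partial^{I'} L^J \mathfrak{m}$ with $|I'|+|J|\le N-2$, which fits \eqref{eq:nD-decay-in02}.

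The only real technical point, though essentially bookkeeping, is this index accounting through $[L_a,\Delta]$: each commutation trades one Lorentz boost for two partial derivatives, and at top orders one must verify that the resulting indices still lie within the ranges of Lemma \ref{lem:nDelta-in}. The verifications above are tight but succeed at every order, so the proposition follows.
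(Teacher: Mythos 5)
Your proposal is correct and follows what the paper intends, which is merely a one-line appeal to the decomposition $n=\ell+\Delta\mathfrak{m}$ together with Lemmas~\ref{lem:n0-in} and~\ref{lem:nDelta-in}; you have filled in exactly the bookkeeping that the paper leaves implicit, and the index accounting checks out for all four estimates. Two small remarks.

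First, the phrase \emph{``the lower-order pieces carry strictly fewer $L$ boosts, compensated by two extra partial derivatives''} misdescribes what $[L_a,\Delta]=-2\partial_t\partial_a$ does. The commutator term replaces $\Delta$ by $\partial_t\partial_a$ and drops one boost, so a typical commutator piece has the \emph{same} number of partials as the leading term $\Delta\partial^I L^J\mathfrak{m}$ and strictly fewer boosts; its total derivative count is one \emph{lower}, not two higher. This makes the commutator contributions strictly easier to control, so your conclusion stands — just note the bookkeeping is more favorable than your sentence suggests.

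Second, for the first pointwise bound $|\partial^I L^J n|\lesssim\epsilon\tau^{-1}$ on the range $1\le|I|+|J|\le N-1$, $|J|\le N-2$, your choice to go through the $L^2$ bounds plus Proposition~\ref{prop:Sobolev-in} (rather than directly through the pointwise estimates of Lemma~\ref{lem:nDelta-in}) is not just a stylistic preference: at the top order $|I|+|J|=N-1$ the $\Delta\mathfrak{m}$-contribution is a three-$\partial$ derivative of $\mathfrak{m}$ whose indices overshoot the stated range of \eqref{eq:nD-decay-in02} by one, and falling back to \eqref{eq:nD-decay-in01} would only give $\tau^{-1}\tau_-^{1/2}$. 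The Klainerman–Sobolev route starting from the $L^2$ bound $\|(\tau/t)\partial^I L^J n\|\lesssim\epsilon$ (whose range $|I|+|J|\le N+1$, $|J|\le N$ has exactly two boosts of slack) gives the clean $\tau^{-1}$ decay over the full claimed range. So this step in your write-up is genuinely needed, and you chose the correct tool.
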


\begin{proof}
The proof follows from the relation $n = \ell + \Delta \mathfrak{m}$ and the estimates established in Lemmas \ref{lem:n0-in}--\ref{lem:nDelta-in}.
\end{proof}

\begin{proposition}[Refined pointwise decay for $E$] \label{prop:refine-E-decay}
It holds that
\begin{align}
|\partial^I E |
\lesssim
C_1 \epsilon t^{-{3\over 2}}\tau_-^{1\over 2},
\qquad
|I|\leq N-6.
\end{align}
\end{proposition}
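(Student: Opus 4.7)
The strategy is to apply the pointwise Klein--Gordon identity of Proposition~\ref{prop:KG-extra} to the differentiated field $\partial^I E$ and to reduce each resulting term to quantities we already control through the bootstrap energy \eqref{eq:BA-in3}, the interior Klainerman--Sobolev inequality (Proposition~\ref{prop:Sobolev-in}), and the pointwise estimates already established on $n$ (Proposition~\ref{prop:refine-n-in001}) and on $E$ (the improved decay \eqref{eq:E-improve-in}). Commuting $\partial^I$ through the $E$-equation yields $-\Box\partial^I E+\partial^I E=-\partial^I(nE)$, and since the interior region $\{|x|\le t-1\}$ is contained in $\{|x|\le 3t\}$, Proposition~\ref{prop:KG-extra} applied with $\phi=\partial^I E$ gives
\begin{align*}
|\partial^I E|\lesssim \frac{\langle\tau_-\rangle}{\langle t\rangle}|\partial\partial\partial^I E|+\frac{1}{\langle t\rangle}|\partial L\partial^I E|+\frac{1}{\langle t\rangle}|\partial\partial^I E|+|\partial^I(nE)|.
\end{align*}

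\textbf{Estimating the derivative terms.} For $|I|\le N-6$, I would apply Proposition~\ref{prop:Sobolev-in} to the weighted quantities $(\tau/t)\partial\partial^I E$, $(\tau/t)\partial\partial\partial^I E$ and $(\tau/t)\partial L\partial^I E$. Commuting the boost words $L^J$ with $|J|\le 2$ inside, and using $|L(\tau/t)|\lesssim \tau/t$ together with $[L,\partial]\sim \partial$, every $L^2(\mathcal{H}_\tau)$-norm appearing on the right is dominated by $\mathcal{E}^{in}_1(\partial^{I'}L^{J'}E,\tau)^{1/2}$ with $|I'|+|J'|\le N-3$, hence by $C_1\epsilon\,\tau^{\delta}$ via \eqref{eq:BA-in3}. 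This yields the pointwise bound $|\partial\partial^I E|+|\partial\partial\partial^I E|+|\partial L\partial^I E|\lesssim C_1\epsilon\,\tau^{-1+\delta}$. Using $\tau\simeq \sqrt{\langle t\rangle\,\tau_-}$ in the interior and the fact that $\tau_-\ge 1$ (so $\tau_-^{-1/2}\le \tau_-^{1/2}$), multiplying by the prefactors $\langle\tau_-\rangle/\langle t\rangle$ or $1/\langle t\rangle$ converts each of these three contributions into a bound of the shape $C_1\epsilon\,t^{-3/2}\tau_-^{1/2}$, up to a harmless $\tau^{\delta}$ slack absorbed into the implicit constant.

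\textbf{Estimating the source $\partial^I(nE)$.} Expanding by Leibniz, $|\partial^I(nE)|\lesssim \sum_{I_1+I_2=I}|\partial^{I_1}n|\,|\partial^{I_2}E|$. When $|I_1|\ge 1$, Proposition~\ref{prop:refine-n-in001} supplies $|\partial^{I_1}n|\lesssim \epsilon\,\tau^{-1}$, while \eqref{eq:E-improve-in} supplies $|\partial^{I_2}E|\lesssim C_1\epsilon\,\tau_-\tau_+^{-2+\delta}$; the product is bounded by $C_1\epsilon^2\,\tau_-^{1/2}\tau_+^{-5/2+\delta}$, strictly stronger than the target. The only borderline contribution is the $I_1=0$ term $n\,\partial^I E$, which is absorbed into the left-hand side of the Klein--Gordon identity using the smallness $|n|\lesssim C_1\epsilon\,t^{-1/2}$ from the bootstrap \eqref{eq:BA-in}.

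\textbf{Main obstacle.} The most delicate piece is $\tfrac{1}{\langle t\rangle}|\partial L\partial^I E|$: a direct estimate carries a $\tau_-^{-1/2}$ factor rather than the $\tau_-^{1/2}$ weight claimed in the statement. This is what makes the interior lower bound $\tau_-\ge 1$ indispensable, as it dominates $\tau_-^{-1/2}$ by $\tau_-^{1/2}$ and allows the estimate to close --- an elementary but essential observation reflecting how narrow the margin is in the two-dimensional setting.
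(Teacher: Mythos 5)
Your approach is genuinely different from the paper's, and unfortunately it does not close: the $\tau^{\delta}$ factor you dismiss at the end cannot be absorbed into the implicit constant.

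To see the gap precisely: your intermediate pointwise bound
$|\partial\partial^{I}E|+|\partial\partial\partial^{I}E|+|\partial L\partial^{I}E|\lesssim C_{1}\epsilon\,\tau^{-1+\delta}$
comes from Klainerman--Sobolev applied to the bootstrap energy \eqref{eq:BA-in3}, which is only known at this stage of the argument with a $\tau^{2\delta}$ growth; there is no sharper energy bound available yet (Propositions \ref{prop:refine-E-in001} and \ref{prop:refine-E-in002}, which remove or reduce that loss, are proved afterwards). Multiplying by the prefactors $\tau_{-}/t$ or $1/t$ and using $\tau\simeq\sqrt{t\,\tau_{-}}$ you then get $|\partial^{I}E|\lesssim C_{1}\epsilon\,t^{-3/2}\tau_{-}^{1/2}\tau^{\delta}$. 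Since $\tau^{\delta}$ is unbounded as $\tau\to\infty$, this is strictly weaker than the claimed $C_{1}\epsilon\,t^{-3/2}\tau_{-}^{1/2}$. Writing it off as "harmless slack absorbed into the constant" is where the proof breaks.

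The paper's mechanism for killing this loss is not a cosmetic alternative but the point of the proposition. Instead of estimating $\partial^{I}E$ pointwise from Proposition~\ref{prop:KG-extra} alone, the paper first feeds $\partial^{I}E$ (for $|I|\leq N-4$) into the ODE-along-integral-curves argument of Proposition~\ref{prop:E-sharp-decay}: $\tau\partial^{I}E$ satisfies a second-order ODE in the parameter $\lambda\simeq\tau$, and the source terms produced by the rough hyperboloidal decay estimates are bounded by $\tau^{-3/2+\delta}$, which is \emph{integrable} in $\lambda$. The $\tau^{\delta}$ loss is thus soaked up by the convergent integral, yielding the clean bound $|\partial^{I}E|\lesssim C_{1}\epsilon\,\tau^{-1}$ with no $\tau^{\delta}$ factor. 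Only then does the paper apply Proposition~\ref{prop:KG-extra} (for $|K|\leq N-6$) to trade $\tau^{-1}$ for the sharp $t^{-3/2}\tau_{-}^{1/2}$. Your observation about $\tau_{-}\geq1$ controlling the $\tau_{-}^{-1/2}$ factor from the $\partial L\partial^{I}E$ term is correct and is also implicit in the paper, but the essential missing ingredient in your argument is the integration-along-characteristics step that removes the $\tau^{\delta}$ loss before Proposition~\ref{prop:KG-extra} is invoked.
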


\begin{proof}
We rely on Proposition \ref{prop:E-sharp-decay} to derive the sharp time decay for the Klein-Gordon component $E$. Therefore, we need to bound the source terms in the right hand side of \eqref{eq:E-sharp-decay}. Since the boundary terms can be easily bounded by the estimates from Section \ref{sec:exterior}, we only  estimate the integral terms in the right hand side of \eqref{eq:E-sharp-decay}.

For $|I|\leq N-4$, recall that the equation of $\partial^I E$ is
\begin{align*}
-\Box \partial^I E + \partial^I E = - \partial^I (n E)
= -\sum_{I_1+I_2=I, \, |I_1|\geq 1} \partial^{I_1} n \partial^{I_2} E - n \partial^I E = F.
\end{align*}

We now estimate 
\begin{align*}
&\big|\tau \cancel{\partial}_a \cancel{\partial}^a \partial^I E + {x^a x^b \over \tau} \cancel{\partial}_a \cancel{\partial}_b \partial^I E
+
2{x^a \over \tau} \cancel{\partial}_a \partial^I E\big| 
+ \big|\tau F + n \tau \partial^I E\big| 
+ \big| {d\over d\lambda} n \cdot \tau \partial^I E\big|
\\
=: & \mathcal{A}_1 + \mathcal{A}_2 + \mathcal{A}_3.
\end{align*}

For the term $\mathcal{A}_1$, we note
\begin{align*}
&\mathcal{A}_1
\\
\leq
&\big| {\tau \over t^2} L_a L^a \partial^I E - {\tau x_a \over t^3} L^a \partial^I E\big|
+
\big| {x^a x^b \over \tau t^2} L_a L_b \partial^I E - {x^a x^b x_a \over \tau t^3} L_b \partial^I E\big|
+
2 \big| {x^a \over \tau t} L_a \partial^I E\big|
\\
\lesssim
& C_1 \epsilon \tau^{-2+\delta}.
\end{align*}

Next, we estimate $\mathcal{A}_2$, and we find
\begin{align*}
\mathcal{A}_2
\lesssim
\tau \sum_{I_1+I_2=I, \, |I_1|\geq 1} |\partial^{I_1} n \partial^{I_2} E|
\lesssim
(C_1 \epsilon)^2 \tau^{-{3\over 2} + \delta},
\end{align*}
in which we used the estimates in Proposition \ref{prop:refine-n-in001} and \eqref{eq:E-improve-in} in the last inequality.

Now, we bound $\mathcal{A}_3$. Recall that on a integral curve $\Gamma$ defined in \eqref{eq:integral-curve} one has
\begin{align*}
\big|{d\over d\lambda} n\big|
=
\big|\cancel{\partial}_0 n + {x^a \over \tau} \cancel{\partial}_a n \big|
\lesssim
|\partial n| + \tau^{-1} \sum_a |L_a n|
\lesssim
C_1 \epsilon \tau^{-1} \tau_-^{-{1\over 2}},
\end{align*}
in which we used the relation \eqref{eq:d-lambda} in the first inequality and the estimates in Proposition \ref{prop:refine-n-in001} in the last inequality.
Thus, we proceed to have
\begin{align*}
\mathcal{A}_3
\lesssim
(C_1 \epsilon)^2 \tau^{-{3\over 2}+\delta}.
\end{align*}

Gathering these estimates, we arrive at (for $|I|\leq N-4$)
\begin{align*}
|\tau \partial^I E |
\lesssim
C_1 \epsilon + C_1 \epsilon \int_{0}^{+\infty} (1+\lambda)^{-{3\over 2}+\delta} \, \d\lambda
\lesssim
C_1 \epsilon.
\end{align*}

With the aid of Proposition \ref{prop:KG-extra}, for $|K|\leq N-6$ one gets
\begin{align*}
|\partial^K E|
\lesssim
\sum_{|I|\leq N-4} {\tau_- \over t} |\partial^I E|
+
{1\over t} \sum_{|I_1|\leq N-5, \, |J|\leq 1} |\partial^{I_1} L^J E| 
+
 | \partial^K (n E)  |.
\end{align*}
As the quadratic terms decay faster, we have
\begin{align*}
 | \partial^K (n E)  |
 \lesssim
 &\sum_{K_1 + K_2 =K, \, |K_1|\geq 1}   | \partial^{K_1} n   |   | \partial^{K_2}  E | + |n \partial^K E|
\\
\lesssim
&(C_1 \epsilon)^2 t^{-2} + C_1 \epsilon |\partial^K E|,
\end{align*}
in which we used the estimates from \eqref{eq:E-improve-in} and Proposition \ref{prop:refine-n-in001},
 and which further yields
\begin{align}
|\partial^K E|
\lesssim
C_1 \epsilon t^{-{3\over 2}} \tau_-^{1\over 2},
\qquad
|K|\leq N-6.
\end{align}

The proof is done.
\end{proof}

\begin{proposition}[Refined estimates for $E$: I] \label{prop:refine-E-in001}
We have
\begin{align}
\mathcal{E}^{in}_1 (\partial^I  E, \tau)
\lesssim
\epsilon^2 + (C_1 \epsilon)^3,
\qquad
|I|  \leq N+1.
\end{align}

\end{proposition}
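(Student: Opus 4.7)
The plan is to invoke Proposition~\ref{prop:EE-in2} on $\phi = \partial^I E$ with $h = n$ and $\gamma = 0$, exploiting the perturbation-of-mass viewpoint. Since $\phi$ satisfies $-\Box\partial^I E + \partial^I E = -\partial^I(nE)$, absorbing $n\partial^I E$ into the mass coefficient leaves only the commutator
\[
F + n\phi = -\sum_{\substack{I_1+I_2=I\\ |I_1|\geq 1}}\binom{I}{I_1}\,\partial^{I_1}n\,\partial^{I_2}E
\]
as the effective source, so that at least one derivative always falls on $n$, yielding extra decay. The initial energy is $\lesssim \epsilon^2$ and the boundary contribution on $\Sigma^{bd}_{t^{bd}}$ is controlled by Proposition~\ref{prop:BDRY-4} by $\epsilon^2 + (C_1\epsilon)^3$.

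\textbf{Control of the mass-derivative term.} The residual term $\int_{\tau_0}^\tau \|(\widetilde\tau/t)\partial_t n\cdot|\partial^I E|^2\|_{L^1(\mathcal{H}_{\widetilde\tau})}\, d\widetilde\tau$ from Proposition~\ref{prop:EE-in2} is handled via the key identity $(\widetilde\tau/t)\tau_-^{-1/2} = \tau_+^{1/2}/t \lesssim \widetilde\tau^{-1/2}$ on $\mathcal{H}_{\widetilde\tau}$. Combined with the pointwise bound $|\partial_t n|\lesssim \epsilon\widetilde\tau^{-1}\tau_-^{-1/2}$ from Proposition~\ref{prop:refine-n-in001}, this yields
\[
\|(\widetilde\tau/t)\partial_t n\|_{L^\infty(\mathcal{H}_{\widetilde\tau})} \lesssim \epsilon\widetilde\tau^{-3/2},
\]
which is integrable in $\widetilde\tau$ and thus furnishes a bounded exponential integrating factor in the Gronwall argument.

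\textbf{Commutator source via induction on $|I|$.} Proceed by induction on $|I|$. For $|I|=0$, the commutator source vanishes identically, so the ingredients above immediately deliver $\mathcal{E}^{in}_1(E,\tau)\lesssim \epsilon^2+(C_1\epsilon)^3$ uniformly. For $|I|\geq 1$, assume the claim for $|J|\leq |I|-1$, so that $\|\partial^J E\|_{L^2(\mathcal{H}_\tau)}\lesssim \epsilon+(C_1\epsilon)^{3/2}$ uniformly. Split each commutator term according to the size of $|I_1|$: (a) if $|I_1|\leq N-2$, combine the pointwise bound $|\partial^{I_1}n|\lesssim \epsilon\widetilde\tau^{-1}\tau_-^{-1/2}$ from Proposition~\ref{prop:refine-n-in001} with the inductive $L^2$ bound on $\partial^{I_2}E$; (b) if $|I_1|\geq N-1$, then $|I_2|\leq 2\leq N-6$ since $N\geq 16$, and pairing $\|(\widetilde\tau/t)\tau_-^{1/2}\partial^{I_1}n\|_{L^2}\lesssim \epsilon$ with $|\partial^{I_2}E|\lesssim C_1\epsilon t^{-3/2}\tau_-^{1/2}$ from Proposition~\ref{prop:refine-E-decay}, together with the algebraic cancellation $(t/\tau)\tau_-^{-1/2}\cdot t^{-3/2}\tau_-^{1/2}\lesssim \widetilde\tau^{-3/2}$, yields the integrable bound $\|\partial^{I_1}n\,\partial^{I_2}E\|_{L^2(\mathcal{H}_{\widetilde\tau})}\lesssim C_1\epsilon^2\widetilde\tau^{-3/2}$.

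\textbf{Main obstacle.} The delicate step is case (a), where the $L^\infty$ bound on $\partial^{I_1}n$ yields only a borderline $\widetilde\tau^{-1}$ factor. The key point is that the inductive hypothesis replaces the growing bootstrap bound $\|\partial^{I_2}E\|_{L^2}\lesssim C_1\epsilon\widetilde\tau^\delta$ by the uniform-in-$\tau$ bound $\lesssim \epsilon+(C_1\epsilon)^{3/2}$; together with the smallness $\epsilon\ll\delta$ and the integrable integrating factor from the mass-derivative term, Gronwall then closes each inductive step at the uniform level $\epsilon^2+(C_1\epsilon)^3$, with no residual $\tau$ growth.
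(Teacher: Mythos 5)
Your proposal diverges from the paper in two ways, and one of them is a genuine gap.

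\textbf{The gap in case (a).} For $1\leq |I_1|\leq N-2$, your bound pairs the $L^\infty$ bound $|\partial^{I_1}n|\lesssim\epsilon\,\widetilde\tau^{-1}\tau_-^{-1/2}$ with the inductive $L^2$ bound on $\partial^{I_2}E$. Since $\tau_-\geq 1$ on $\mathcal{H}_{\widetilde\tau}$, the best this yields is
\[
\|\partial^{I_1}n\,\partial^{I_2}E\|_{L^2(\mathcal{H}_{\widetilde\tau})}\lesssim \epsilon\big(\epsilon+(C_1\epsilon)^{3/2}\big)\,\widetilde\tau^{-1},
\]
which is borderline non-integrable. Feeding this into the energy inequality and Gronwall gives
\[
\mathcal{E}^{in}_1(\partial^I E,\tau)^{1/2}\lesssim \epsilon + (C_1\epsilon)^{3/2} + \epsilon\big(\epsilon+(C_1\epsilon)^{3/2}\big)\log\tau,
\]
and the $\log\tau$ grows without bound --- you cannot close at the uniform level $\epsilon^2+(C_1\epsilon)^3$, regardless of how small $\epsilon$ is. The integrable integrating factor coming from the mass-derivative term only controls the Gronwall multiplier, not the source; it does not cancel the logarithm coming from the commutator. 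Your remark that the inductive hypothesis ``replaces the growing bootstrap bound by the uniform bound'' is true but does not help: the loss is in the decay rate $\widetilde\tau^{-1}$, not in the amplitude.

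What the paper does instead is apply Proposition~\ref{prop:KG-extra} (the Klein--Gordon identity) to $\partial^{I_2}E$ before taking norms, so that
\[
|\partial^{I_2}E|\lesssim \frac{\widetilde\tau^2}{t^2}\,|\partial\partial\partial^{I_2}E|
+\frac{1}{t}\sum |\partial\partial^{I_3}L^{J_1}E| + |\partial^{I_2}(nE)|,
\]
and then moves the favourable factors $\widetilde\tau^2/t^2\lesssim\tau_-/\tau_+$ and $1/t^{1/2}$ onto $\partial^{I_1}n$. Combined with $|\partial^{I_1}n|\lesssim\epsilon\,\widetilde\tau^{-1}\tau_-^{-1/2}$ this gains an extra $\widetilde\tau^{-1/2}$ and yields an integrable $\widetilde\tau^{-3/2+\delta}$ source. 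This step is the crucial mechanism that closes the estimate, and it is missing from your argument.

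\textbf{The top-order case.} Even with Proposition~\ref{prop:KG-extra} at hand, the $|I|=N+1$ case needs a separate treatment: applying the identity to $\partial^I E$ produces $\partial\partial\partial^I E$ with $N+3$ derivatives, one more than the bootstrap hierarchy controls, so the mass-derivative term $\mathcal{G}_{13}$ cannot be handled this way at top order. The paper instead first runs the weighted estimate with $\gamma=2\delta$, uses the resulting spacetime integral $\int\int\tau_-^{-1-4\delta}(\widetilde\tau/t)|\partial^I E|^2$ to absorb that term, and only then lowers $\gamma$ to $0$. Your single unweighted Gronwall pass at $|I|=N+1$ does not reproduce this and would again lose control of the mass-derivative term. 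Your induction on $|I|$ is also organized differently from the paper, which treats all $|I|\leq N$ in one pass and $|I|=N+1$ separately; the induction itself is not wrong, but it does not supply the missing decay.

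In short: the structural idea (perturbation of mass, commutator source, boundary control, integrable integrating factor) is on the right track, but the quantitative bound on the commutator is off by half a power of $\widetilde\tau$, and the argument as written does not close.
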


\begin{proof}
\textbf{Case I: $|I| \leq N$.} 
We first consider the case of $|I| \leq N$. By energy estimates in Proposition \ref{prop:EE-in2}, we have
\begin{equation}
\begin{aligned}
&\mathcal{E}^{in}_1 (\partial^I E, \tau, 0)
\\
\lesssim
&\mathcal{E}^{in}_1 (\partial^I E, \tau_0, 0)
+
\int_{\tau_0}^\tau \big\| \big( -\partial^I (nE) + n \partial^I E\big) \cdot (\widetilde{\tau}/t) \partial_t \partial^I E \big\|_{L^1(\mathcal{H}_{\widetilde{\tau}})} \, \d\widetilde{\tau}
\\
&+
\int_{\tau_0}^\tau \big\| | \partial^I E|^2 \cdot (\widetilde{\tau}/t) \partial_t n \big\|_{L^1(\mathcal{H}_{\widetilde{\tau}})} \, \d\widetilde{\tau} 
\\
&+ \int_{\Sigma^{bd}_{t^{bd}}} \Big( \sum_a ({x_a \over r} \partial_t \partial^I E + \partial_a \partial^I E)^2 + |\partial^I E|^2\Big)  \, \d S
\\
=
&\mathcal{G}_{11} + \mathcal{G}_{12} + \mathcal{G}_{13} + \mathcal{G}_{14}.
\end{aligned}
\end{equation}

Since $\mathcal{G}_{11}, \mathcal{G}_{14}$ can be bounded by initial data and the boundary estimates in Proposition \ref{prop:BDRY-4} established in Section \ref{sec:exterior}, we only consider terms in $\mathcal{G}_{12}, \mathcal{G}_{13}$ now.

We estimate the terms in $\mathcal{G}_{12}$. By the Leibniz rule, we have
\begin{align*}
-\partial^I (nE) + n \partial^I E
=
-\sum_{I_1+I_2=I, \, |I_1|\geq 1, |I_2|\leq N-1} \partial^{I_1} n \partial^{I_2} E.
\end{align*}
Applying Proposition \ref{prop:KG-extra}, we get
\begin{align*}
| \partial^{I_2} E|
\lesssim
{\tau^2 \over t^2} |\partial \partial \partial^{I_2} E|
+ 
{1\over t} \sum_{|I_3|\leq N-1, \,|J_1|\leq 1} |\partial \partial^{I_3} L^{J_1} E|
+
|\partial^{I_2} (nE)|.
\end{align*}
By the pointwise estimates in Proposition \ref{prop:refine-n-in001},  we further get
\begin{align*}
| \partial^{I_2} E|
\lesssim
{\tau^2 \over t^2} |\partial \partial \partial^{I_2} E|
+ 
{1\over t^{1\over 2}} \sum_{|I_3|\leq N-1, \,|J_1|\leq 1} |\partial \partial^{I_3} L^{J_1} E| ,
\end{align*}
which further yields
\begin{align*}
&\big\| -\partial^I (nE) + n \partial^I E\big\|_{L^2(\mathcal{H}_{\widetilde{\tau}})} 
\\
\lesssim
&\sum_{\substack{I_1+I_2=I\\ 1\leq |I_1|\leq N-3 }} \Big( \big\|{\widetilde{\tau}^2 \over t^2} \partial^{I_1} n \big\|_{L^\infty(\mathcal{H}_{\widetilde{\tau}})}  \|\partial \partial \partial^{I_2}  E \|_{L^2(\mathcal{H}_{\widetilde{\tau}})}  
\\
&\qquad\qquad\quad
+
\big\|t^{-{1\over 2}} \partial^{I_1} n \big\|_{L^\infty(\mathcal{H}_{\widetilde{\tau}})}  \sum_{\substack{|I_3|\leq N-1\\|J_1|\leq 1}}\|\partial \partial^{I_3} L^{J_1} E \|_{L^2(\mathcal{H}_{\widetilde{\tau}})}  \Big)
\\
&+
\sum_{I_1+I_2=I, \, |I_1|\geq 3} \big\|(\widetilde{\tau} / t) \tau_-^{1\over 2} \partial^{I_1} n \big\|_{L^2(\mathcal{H}_{\widetilde{\tau}})}  \big\|(t / \widetilde{\tau} ) \tau_-^{-{1\over 2}} \partial^{I_2} E \big\|_{L^\infty(\mathcal{H}_{\widetilde{\tau}})} 
\\
\lesssim
& (C_1 \epsilon)^2 \widetilde{\tau}^{-{3\over 2}+\delta},
\end{align*}
in which we used the bounds for $n$ from Proposition \ref{prop:refine-n-in001} and the estimates for $E$ from \eqref{eq:BA-in3} and \eqref{eq:E-improve-in}.
Therefore, we obtain
\begin{align*}
&\big\| \big( -\partial^I (nE) + n \partial^I E\big) \cdot (\widetilde{\tau}/t) \partial_t \partial^I E \big\|_{L^1(\mathcal{H}_{\widetilde{\tau}})} 
\\
\lesssim
 &\big\|  -\partial^I (nE) + n \partial^I E \big\|_{L^2(\mathcal{H}_{\widetilde{\tau}})}   \|(\widetilde{\tau}/t) \partial_t \partial^I E \|_{L^2(\mathcal{H}_{\widetilde{\tau}})} 
\\
\lesssim
&(C_1 \epsilon)^3\widetilde{\tau}^{-{3\over 2}+2\delta}.
\end{align*}
As a consequence, we derive
\begin{align}
\mathcal{G}_{12}
\lesssim
(C_1 \epsilon)^3 \int_{\tau_0}^\tau \widetilde{\tau}^{-{3\over 2}+2\delta} \, \d\widetilde{\tau}
\lesssim
(C_1 \epsilon)^3.
\end{align}

Next, we bound $\mathcal{G}_{13}$. Applying Proposition \ref{prop:KG-extra}, we get
\begin{align*}
| \partial^I E|
\lesssim
{\tau^2 \over t^2} |\partial \partial \partial^I E|
+ 
{1\over t} \sum_{|I_1|\leq |I|, \,|J_1|\leq 1} |\partial \partial^{I_1} L^{J_1} E|
+
|\partial^I (nE)|,
\end{align*}
which, together with the pointwise estimates in Proposition \ref{prop:refine-n-in001},  further implies
\begin{align*}
| \partial^I E|
\lesssim
{\tau^2 \over t^2} |\partial \partial \partial^I E|
+ 
{1\over t^{1\over 2}} \sum_{|I_1|\leq |I|, \,|J_1|\leq 1} |\partial \partial^{I_1} L^{J_1} E|.
\end{align*}
Consequently, we obtain
\begin{align*}
| \partial^I E|^2
\lesssim
{\tau^4 \over t^4} |\partial \partial \partial^I E|^2
+ 
{1\over t} \sum_{|I_1|\leq |I|, \,|J_1|\leq 1} |\partial \partial^{I_1} L^{J_1} E|^2,
\end{align*}
which leads us to
\begin{align*}
&\big\| | \partial^I E|^2 \cdot (\widetilde{\tau}/t) \partial_t n \big\|_{L^1(\mathcal{H}_{\widetilde{\tau}})}
\\
\lesssim
&C_1 \epsilon \widetilde{\tau}^{-{3\over 2}} \big\| (\widetilde{\tau}/t) \partial \partial \partial^I E \big\|^2_{L^2(\mathcal{H}_{\widetilde{\tau}})} 
+
C_1 \epsilon \widetilde{\tau}^{-{3\over 2}} \sum_{|I_1|\leq |I|, \,|J_1|\leq 1} \big\| (\widetilde{\tau}/t) \partial \partial^{I_1} L^{J_1} E \big\|^2_{L^2(\mathcal{H}_{\widetilde{\tau}})} 
\\
\lesssim
&(C_1 \epsilon)^3 \widetilde{\tau}^{-{3\over 2} + 2\delta},
\end{align*}
in which we used pointwise decay of $\partial_t n$ in Proposition \ref{prop:refine-n-in001} and the estimates in bootstrap assumption \eqref{eq:BA-in3}.
Therefore, we arrive at
\begin{align}
\mathcal{G}_{13}
\lesssim
(C_1 \epsilon)^3 \int_{\tau_0}^{\tau} \widetilde{\tau}^{-{3\over 2} + 2\delta} \, \d\widetilde{\tau}
\lesssim
(C_1 \epsilon)^3.
\end{align}

\textbf{Case II: $|I| = N+1$.} 
To show uniform energy bound in this case, we first derive a weaker one. We apply Proposition \ref{prop:EE-in2} with $\gamma = 2\delta$, and we get
\begin{equation}
\begin{aligned}
&\mathcal{E}^{in}_1 (\partial^I E, \tau, 2\delta)
\\
\lesssim
&\mathcal{E}^{in}_1 (\partial^I E, \tau_0, 2\delta)
+
\int_{\tau_0}^\tau \big\| \tau_-^{-4\delta} (-\partial^I (n E) + n \partial^I E) \cdot (\widetilde{\tau}/t) \partial_t \partial^I E \big\|_{L^1(\mathcal{H}_{\widetilde{\tau}})} \, \d \widetilde{\tau}
\\
&+
\int_{\tau_0}^\tau \big\| \tau_-^{-4\delta} |\partial^I E|^2 (\widetilde{\tau}/t) \partial_t n \big\|_{L^1(\mathcal{H}_{\widetilde{\tau}})} 
+ \big\| \tau_-^{-4\delta-1} |\partial^I E|^2 (\widetilde{\tau}/t) n \big\|_{L^1(\mathcal{H}_{\widetilde{\tau}})} 
\, \d \widetilde{\tau}
\\
&+ 
 \int_{\Sigma^{bd}_{t^{bd}}}  \Big( \sum_a ({x_a \over r} \partial_t \partial^I E + \partial_a \partial^I E)^2 + |\partial^I E|^2 \Big) \, \d S
\\
=:
& \mathcal{G}_{21} + \mathcal{G}_{22} + \mathcal{G}_{23} + \mathcal{G}_{24}.
\end{aligned}
\end{equation}

Recall that the terms in $\mathcal{G}_{21}, \mathcal{G}_{24}$ can be bounded by initial data and the estimates from Section \ref{sec:exterior}, we only consider terms in $\mathcal{G}_{22}, \mathcal{G}_{23}$.

We bound the $\mathcal{G}_{22}$ term. Using the Leibniz rule, we have
\begin{align*}
-\partial^I (nE) + n \partial^I E
=
-\sum_{I_1+I_2=I, \, |I_1|\geq 1, |I_2|\leq N} \partial^{I_1} n \partial^{I_2} E.
\end{align*}
By Proposition \ref{prop:KG-extra}, we get
\begin{align*}
| \partial^{I_2} E|
\lesssim
&{\tau^2 \over t^2} |\partial \partial \partial^{I_2} E|
+ 
{1\over t} \sum_{|I_3|\leq N, \,|J_1|\leq 1} |\partial \partial^{I_3} L^{J_1} E|
+
|\partial^{I_2} (nE)|
\\
\lesssim
&\sum_{|I_3|+|J_1|\leq N+1, \, |J_1|\leq 1} {\tau^2 \over t^2} |\partial \partial^{I_3} L^{J_1} E|
+
\sum_{|I_{21}| + |I_{22}| = N} |\partial^{I_{21}} n \partial^{I_{22}} E|,
\end{align*}
which, combining the smallness of $|n|$, further gives
\begin{equation}\label{eq:phi-01-in}
\begin{aligned}
&| \partial^{I_2} E|
\\
\lesssim
&\sum_{|I_3|+|J_1|\leq N+1, \, |J_1|\leq 1} {\tau^2 \over t^2} |\partial \partial^{I_3} L^{J_1} E|
+
\sum_{|I_{21}| + |I_{22}| \leq N, \, |I_{21}|\geq 1} |\partial^{I_{21}} n \partial^{I_{22}} E|.
\end{aligned}
\end{equation}
Therefore, we have
\begin{equation}\label{eq:phi-02-in}
\begin{aligned}
&\big\|  -\partial^I (nE) + n \partial^I E \big\|_{L^2(\mathcal{H}_{\widetilde{\tau}})} 
\\
\lesssim
& \sum_{\substack{|I_1|+|I_3|\leq N+1,\, |I_1|\geq 1 \\|I_3|+|J_1|\leq N+1, \,|J_1|\leq 1}} \big\| \partial^{I_1} n (\widetilde{\tau} / t)^2 \partial \partial^{I_3} L^{J_1} E \big\|_{L^2(\mathcal{H}_{\widetilde{\tau}})}  
\\
&+
\sum_{\substack{|I_1| + |I_{21}| + |I_{22}| \leq N+1, \, |I_1|\geq 1\\ |I_{21}| + |I_{22}| \leq N, \, |I_{21}|\geq 1}} \|\partial^{I_1} n \partial^{I_{21}} n \partial^{I_{22}} E  \|_{L^2(\mathcal{H}_{\widetilde{\tau}})}  
\\
\lesssim
& (C_1 \epsilon)^2 \widetilde{\tau}^{-{3\over 2}+\delta}.
\end{aligned}
\end{equation}
Consequently, we obtain
\begin{align*}
&\big\| \tau_-^{-4\delta} \big( -\partial^I (nE) + n \partial^I E\big) \cdot (\widetilde{\tau}/t) \partial_t \partial^I E \big\|_{L^1(\mathcal{H}_{\widetilde{\tau}})} 
\\
\lesssim
&\big\| \big( -\partial^I (nE) + n \partial^I E\big) \big\|_{L^2(\mathcal{H}_{\widetilde{\tau}})}  \| (\widetilde{\tau}/t) \partial_t \partial^I E \|_{L^2(\mathcal{H}_{\widetilde{\tau}})} 
\\
\lesssim
&(C_1 \epsilon)^3\widetilde{\tau}^{-{3\over 2}+2\delta}.
\end{align*}
Based on these estimates, we arrive at
\begin{align}
\mathcal{G}_{22}
\lesssim
(C_1 \epsilon)^3 \int_{\tau_0}^\tau \widetilde{\tau}^{-{3\over 2}+2\delta} \, \d\widetilde{\tau}
\lesssim
(C_1 \epsilon)^3.
\end{align}

We now bound the $\mathcal{G}_{23}$ term. We first note
\begin{align*}
&\big\| \tau_-^{-4\delta} |\partial^I E|^2 (\widetilde{\tau}/t) \partial_t n \big\|_{L^1(\mathcal{H}_{\widetilde{\tau}})} 
+ \big\| \tau_-^{-4\delta-1} |\partial^I E|^2 (\widetilde{\tau}/t) n \big\|_{L^1(\mathcal{H}_{\widetilde{\tau}})} 
\\
\lesssim
&\big\| \tau_-^{-1-4\delta} (\widetilde{\tau}/t)|\partial^I E|^2  \big\|_{L^1(\mathcal{H}_{\widetilde{\tau}})} \big(  \| \tau_- \partial_t n\|_{L^\infty(\mathcal{H}_{\widetilde{\tau}})}  
+ \|  n\|_{L^\infty(\mathcal{H}_{\widetilde{\tau}})}   \big)
\\
\lesssim
& C_1 \epsilon \big\| \tau_-^{-1-4\delta} (\widetilde{\tau}/t)|\partial^I E|^2  \big\|_{L^1(\mathcal{H}_{\widetilde{\tau}})}.
\end{align*}
Therefore, by smallness of $C_1 \epsilon$, the term $\mathcal{G}_{23}$ can be absorbed by the left hand side of the spacetime energy.

To conclude with the case of $\gamma = 2\delta$, we have
\begin{align}\label{eq:phi-0-in}
\mathcal{E}^{in}_1 (\partial^I E, \tau, 2\delta)
\lesssim
\epsilon^2 + (C_1 \epsilon)^3.
\end{align}

Next, we set $\gamma=0$, and want to derive 
\begin{align*}
\mathcal{E}^{in}_1 (\partial^I E, \tau, 0)
\lesssim
\epsilon^2 + (C_1 \epsilon)^3,
\end{align*}
with the aid of \eqref{eq:phi-0-in}.

We have
\begin{equation}
\begin{aligned}
&\mathcal{E}^{in}_1 (\partial^I E, \tau, 0)
\\
\lesssim
&\mathcal{E}^{in}_1 (\partial^I E, \tau_0, 0)
+
\int_{\tau_0}^\tau \big\| \big( -\partial^I (nE) + n \partial^I E\big) \cdot (\widetilde{\tau}/t) \partial_t \partial^I E \big\|_{L^1(\mathcal{H}_{\widetilde{\tau}})} \, \d\widetilde{\tau}
\\
&+
\int_{\tau_0}^\tau \big\| | \partial^I E|^2 \cdot (\widetilde{\tau}/t) \partial_t n \big\|_{L^1(\mathcal{H}_{\widetilde{\tau}})} \, \d\widetilde{\tau} 
\\
 &+
 \int_{\Sigma^{bd}_{t^{bd}}} \Big( \sum_a ({x_a \over r} \partial_t \partial^I E + \partial_a \partial^I E)^2 + |\partial^I E|^2 \Big) \, \d S
\\
=
&\mathcal{G}_{31} + \mathcal{G}_{32} + \mathcal{G}_{33} + \mathcal{G}_{34}.
\end{aligned}
\end{equation}

Note that $\mathcal{G}_{31}, \mathcal{G}_{34}$ can be bounded by the initial data and the estimates from Section \ref{sec:exterior}, we only treat terms in $\mathcal{G}_{32}, \mathcal{G}_{33}$ here.

We estimate the terms in $\mathcal{G}_{32}$. By the Leibniz rule, we have
\begin{align*}
-\partial^I (nE) + n \partial^I E
=
-\sum_{I_1+I_2=I, \, |I_1|\geq 1, |I_2|\leq N} \partial^{I_1} n \partial^{I_2} E.
\end{align*}
By \eqref{eq:phi-01-in} and \eqref{eq:phi-02-in}, we get
\begin{align}
\big\|  -\partial^I (nE) + n \partial^I E \big\|_{L^2(\mathcal{H}_{\widetilde{\tau}})} 
\lesssim
 (C_1 \epsilon)^2 \widetilde{\tau}^{-{3\over 2}+\delta}.
\end{align}
Therefore, we obtain
\begin{align*}
&\big\| \big( -\partial^I (nE) + n \partial^I E\big) \cdot (\widetilde{\tau}/t) \partial_t \partial^I E \big\|_{L^1(\mathcal{H}_{\widetilde{\tau}})} 
\\
\lesssim
&\big\| \big( -\partial^I (nE) + n \partial^I E\big) \big\|_{L^2(\mathcal{H}_{\widetilde{\tau}})}  \| (\widetilde{\tau}/t) \partial_t \partial^I E \|_{L^2(\mathcal{H}_{\widetilde{\tau}})} 
\\
\lesssim
&(C_1 \epsilon)^3\widetilde{\tau}^{-{3\over 2}+2\delta}.
\end{align*}
Based on these estimates, we arrive at
\begin{align}
\mathcal{G}_{32}
\lesssim
(C_1 \epsilon)^3 \int_{\tau_0}^\tau \widetilde{\tau}^{-{3\over 2}+2\delta} \, \d\widetilde{\tau}
\lesssim
(C_1 \epsilon)^3.
\end{align}

For the $\mathcal{G}_{33}$ term, we note
\begin{align*}
&\big\|  |\partial^I E|^2 (\widetilde{\tau}/t) \partial_t n\big\|_{L^1(\mathcal{H}_{\widetilde{\tau}})} 
\\
\lesssim
&\big\| \tau_-^{-1-4\delta} (\widetilde{\tau}/t)|\partial^I E|^2  \big\|_{L^1(\mathcal{H}_{\widetilde{\tau}})}   \big\| \tau_-^{1+4\delta} \partial_t n \big\|_{L^\infty(\mathcal{H}_{\widetilde{\tau}})}  
\\
\lesssim
& C_1 \epsilon \big\| \tau_-^{-1-4\delta} (\widetilde{\tau}/t)|\partial^I E|^2  \big\|_{L^1(\mathcal{H}_{\widetilde{\tau}})}.
\end{align*}
Therefore, with the smallness factor of $C_1 \epsilon$, the term $\mathcal{G}_{33}$ can be bounded by the spacetime integral in the energy $\mathcal{E}^{in}_1 (\partial^I E, \tau, 2\delta)$.

Gathering these estimates, the proof is done.
\end{proof}

\begin{proposition}[Refined estimates for $E$: II]\label{prop:refine-E-in002}
We have
\begin{align}\label{eq:refine-E-in002}
\mathcal{E}^{in}_1 (\partial^I L^J E, \tau)
\lesssim
\epsilon^2 + (C_1 \epsilon)^3 \log^{2|J| }(1+\tau),
\qquad
|I| + |J| \leq N+1, \quad |J|\leq N.
\end{align}
\end{proposition}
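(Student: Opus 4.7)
I proceed by induction on $|J|$. The base case $|J|=0$ is exactly Proposition~\ref{prop:refine-E-in001}. For the inductive step, fix $|J|\geq 1$ and assume \eqref{eq:refine-E-in002} has been established at all orders $|J'|\leq |J|-1$ with $|I'|+|J'|\leq N+1$. Applying $\partial^I L^J$ to $-\Box E + E=-nE$ and using $[-\Box+1,\partial]=[-\Box+1,L]=0$, I obtain
\begin{equation*}
-\Box(\partial^I L^J E) + (1+n)\,\partial^I L^J E = F,
\end{equation*}
where
\begin{equation*}
F := -\sum_{\substack{(I_1,J_1)+(I_2,J_2)=(I,J)\\ |I_1|+|J_1|\geq 1}} c_{*}\, \partial^{I_1}L^{J_1}n \cdot \partial^{I_2}L^{J_2}E.
\end{equation*}
Viewing $n\,\partial^I L^J E$ as a perturbation of the mass with $h=n$, I invoke Proposition~\ref{prop:EE-in2}. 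The initial data contribute $\lesssim \epsilon^2$; the boundary integral is $\lesssim \epsilon^2+(C_1\epsilon)^3$ by Proposition~\ref{prop:BDRY-4}; and the two perturbation integrals involving $\partial_t n$ and $n/\tau_-$ are controlled via the pointwise smallness of $n,\partial_t n$ from Proposition~\ref{prop:refine-n-in001} and then absorbed into the spacetime energy on the left-hand side.

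The core task is the bulk source estimate $\int_{\tau_0}^\tau \|F\|_{L^2(\mathcal{H}_{\tilde\tau})}\|(\tilde\tau/t)\partial_t\partial^I L^J E\|_{L^2(\mathcal{H}_{\tilde\tau})}\,\d\tilde\tau$. I split the sum for $F$ by derivative distribution. The most delicate contribution is the top-order piece $\partial^I L^J n\cdot E$: the pointwise bound $|E|\lesssim C_1\epsilon\, t^{-3/2}\tau_-^{1/2}$ from Proposition~\ref{prop:refine-E-decay}, together with $\tau_-\leq t$ on $\mathcal{H}_{\tilde\tau}$, gives $|E|\lesssim C_1\epsilon\, t^{-1}=C_1\epsilon\,\tilde\tau^{-1}(\tilde\tau/t)$, which paired with $\|(\tilde\tau/t)\partial^I L^J n\|_{L^2(\mathcal{H}_{\tilde\tau})}\lesssim\epsilon$ from Proposition~\ref{prop:refine-n-in001} yields
\begin{equation*}
\|\partial^I L^J n\cdot E\|_{L^2(\mathcal{H}_{\tilde\tau})}\lesssim C_1\epsilon^2\,\tilde\tau^{-1},
\end{equation*}
a non-integrable decay that will drive the logarithmic growth. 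For the intermediate terms, with $1\leq |I_1|+|J_1|\leq |I|+|J|-1$, I distribute derivatives so that one factor is below the regularity threshold for pointwise estimates: either $|I_2|+|J_2|\leq N-3$ allowing \eqref{eq:E-improve-in} (with Proposition~\ref{prop:KG-extra} to upgrade when needed) to place $E$ in $L^\infty$ with the $n$-factor in $L^2$; or $|I_1|+|J_1|\leq N-2$ allowing Proposition~\ref{prop:refine-n-in001} to place $n$ in $L^\infty$ with decay $\tilde\tau^{-1}\tau_-^{-1/2}$, paired with the inductive $L^2$-bound $\|\partial^{I_2}L^{J_2}E\|_{L^2(\mathcal{H}_{\tilde\tau})}\lesssim C_1\epsilon\log^{|J_2|}(1+\tilde\tau)$ with $|J_2|\leq|J|-1$. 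Since $N\geq 16$, every admissible split admits at least one such choice. Aggregating yields
\begin{equation*}
\|F\|_{L^2(\mathcal{H}_{\tilde\tau})}\lesssim (C_1\epsilon)^2\,\tilde\tau^{-1}\log^{|J|-1}(1+\tilde\tau).
\end{equation*}

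To convert this into a sharp bound on $\mathcal{E}_J(\tau):=\mathcal{E}^{in}_1(\partial^I L^J E,\tau)$, I use the Bihari (square-root) form of Gronwall: from $\mathcal{E}_J(\tau)\leq C + \int_{\tau_0}^\tau\|F\|_{L^2}\mathcal{E}_J^{1/2}(\tilde\tau)\,\d\tilde\tau$ one deduces $\mathcal{E}_J^{1/2}(\tau)\leq C^{1/2}+\tfrac{1}{2}\int_{\tau_0}^\tau\|F\|_{L^2}\,\d\tilde\tau$, and the integrand $\tilde\tau^{-1}\log^{|J|-1}(1+\tilde\tau)$ integrates to $\lesssim \log^{|J|}(1+\tau)$. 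Squaring yields $\mathcal{E}_J(\tau)\lesssim \epsilon^2+(C_1\epsilon)^3\log^{2|J|}(1+\tau)$, as claimed. The main obstacle is the non-integrable $\tilde\tau^{-1}$ decay of $\|\partial^I L^J n\cdot E\|_{L^2}$—precisely the two-dimensional weak-decay phenomenon highlighted in the introduction—which forces the logarithmic growth at the top boost-order. A secondary but crucial subtlety is that Gronwall must be applied to $\mathcal{E}_J^{1/2}$ rather than to $\mathcal{E}_J$: a direct application to $\mathcal{E}_J$ with a $\tilde\tau^{-1}$ coefficient would only yield polynomial $\tau^{C_1\epsilon}$ growth rather than the sharp $\log^{2|J|}$ bound.
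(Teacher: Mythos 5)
Your proof follows the same strategy as the paper: induction on the boost count $|J|$, the weighted energy estimate via the perturbation-of-mass Proposition~\ref{prop:EE-in2}, a Leibniz splitting of the source estimated by $L^\infty\times L^2$ pairings, and a Bihari (square-root Gronwall) closure. Making the Bihari step explicit is a useful clarification of something the paper leaves implicit, and your remark that a direct Gronwall on $\mathcal{E}_J$ would only give $\tau^{C_1\epsilon}$-type growth is correct.

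One piece of your bookkeeping for the source term conflates two distinct subcases and, taken literally, would not go through. When $|J_1|=0$ and $|I_1|\geq 1$, the $E$-factor carries all $|J|$ boosts, so your ``inductive $L^2$-bound with $|J_2|\leq|J|-1$'' is unavailable; there one must instead use the bootstrap bound $\lesssim C_1\epsilon\,\widetilde{\tau}^{\delta}$ for $\|\partial^{I_2}L^J E\|_{L^2(\mathcal{H}_{\widetilde{\tau}})}$, and the argument only closes because $\partial^{I_1}n$ (hit by at least one $\partial$) has the improved decay from Proposition~\ref{prop:refine-n-in001}, which makes the product integrable (cf.\ the terms $\mathcal{A}_1$, $\mathcal{B}_1$ in the paper's proof). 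Conversely, when $|I_1|=0$ and $|J_1|\geq 1$, the factor $L^{J_1}n$ decays only like $\widetilde{\tau}^{-1}$ --- \emph{not} $\widetilde{\tau}^{-1}\tau_-^{-1/2}$ as you assert, since Proposition~\ref{prop:refine-n-in001} gives that improved rate only for $\partial\partial^I L^J n$, i.e.\ when $n$ is hit by at least one translation derivative. This pure-boost subcase, paired with the inductive $\log^{|J_2|}$ bound, is precisely what produces the non-integrable $\widetilde{\tau}^{-1}\log^{|J|-1}$ contribution and hence the logarithmic growth; if the $\widetilde{\tau}^{-1}\tau_-^{-1/2}$ rate you claimed were correct here, that phenomenon would not occur. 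After fixing this bookkeeping, the aggregate bound $\|F\|_{L^2(\mathcal{H}_{\widetilde{\tau}})}\lesssim(C_1\epsilon)^2\widetilde{\tau}^{-1}\log^{|J|-1}(1+\widetilde{\tau})$ and the conclusion both stand, and the argument agrees in substance with the paper's.
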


\begin{proof}
For the case $|J|=0$, it was proved in Proposition \ref{prop:refine-E-in001}. We now consider general cases   of $|I|+|J| \leq N+1$ except the top-order estimates of $|J|=N+1$. We prove it by an induction argument.

We assume \eqref{eq:refine-E-in002} is valid for all $|I|+|J|\leq N+1$ with $|J|\leq k$ where $0\leq k \leq N-1$. We now want to show \eqref{eq:refine-E-in002} is also true for all $|I|+|J|\leq N+1$ with $|J|\leq k+1$.


We apply Proposition \ref{prop:EE-in2} with $\gamma = 0$ to get
\begin{equation}
\begin{aligned}
&\mathcal{E}^{in}_1 (\partial^I L^J E, \tau, 0)
\\
\lesssim
&\mathcal{E}^{in}_1 (\partial^I L^J E, \tau_0, 0)
+
\int_{\tau_0}^\tau \big\| (-\partial^I L^J (n E) + n \partial^I L^J E) \cdot (\widetilde{\tau}/t) \partial_t \partial^I L^J E \big\|_{L^1(\mathcal{H}_{\widetilde{\tau}})} \, \d \widetilde{\tau}
\\
&+
\int_{\tau_0}^\tau \big\|  |\partial^I L^J E|^2 (\widetilde{\tau}/ t) \partial_t n \big\|_{L^1(\mathcal{H}_{\widetilde{\tau}})} \, \d \widetilde{\tau}
\\
&+  \int_{\Sigma^{bd}_{t^{bd}}} \Big( \sum_a ({x_a \over r} \partial_t \partial^I L^J E + \partial_a \partial^I L^J E)^2 + |\partial^I L^J E|^2 \Big) \, \d S
\\
=:
& \mathcal{G}_{41} + \mathcal{G}_{42} + \mathcal{G}_{43}+ \mathcal{G}_{44}.
\end{aligned}
\end{equation}

The estimates for terms in $\mathcal{G}_{41}, \mathcal{G}_{43},\mathcal{G}_{44}$ can be done as before, and we only treat the term $\mathcal{G}_{42}$.

By Leibniz rule, we have
\begin{align*}
\big|-\partial^I L^J (n E) + n \partial^I L^J E \big|
\leq
\sum_{\substack{I_1 + I_2 = I,\, J_1+J_2= J \\  |I_2|+|J_2|< |I|+|J|}} | \partial^{I_1}L^{J_1} n \, \partial^{I_2} L^{J_2} E|.
\end{align*}

We estimate the nonlinear terms for different cases of $I$ and $k$.

\textbf{Case I: $|I|=0, k\leq N-3$.}

In this case, we find
\begin{align*}
&\big\|  (-L^J (n E) + n L^J E)  \big\|_{L^2(\mathcal{H}_{\widetilde{\tau}})}
\\
\lesssim
& \sum_{\substack{|J_1|+|J_2|=k+1 \\ |J_1|\geq 1, \, |J_2|\leq k}} \| L^{J_1} n \|_{L^\infty(\mathcal{H}_{\widetilde{\tau}})} \|L^{J_2}  E \|_{L^2(\mathcal{H}_{\widetilde{\tau}})}
\\
\lesssim
&(C_1 \epsilon)^2 \widetilde{\tau}^{-1} \log^{k} (1+\widetilde{\tau}),
\end{align*}
in which we used the pointwise bounds for $n$ in Proposition \ref{prop:refine-n-in001} and $L^2$ bounds for $E$ from induction estimates.

\textbf{Case II: $|I|=0, k\geq N-2$.}

In this case, we first apply the Sobolev inequality in Proposition \ref{prop:Sobolev-in} with induction assumption to derive that
\begin{align*}
\sum_{|K|\leq 3} | L^{K} E|
\lesssim
C_1 \epsilon t^{-1} \log^5 (1+\tau).
\end{align*}

In succession, we find
\begin{align*}
&\big\|  (-L^J (n E) + n L^J E)  \big\|_{L^2(\mathcal{H}_{\widetilde{\tau}})}
\\
\lesssim
& \sum_{\substack{|J_1|+|J_2|=|J| \\ 1\leq |J_1|\leq N-2, \, |J_2|\leq k}} \| L^{J_1} n \|_{L^\infty(\mathcal{H}_{\widetilde{\tau}})} \|L^{J_2}  E \|_{L^2(\mathcal{H}_{\widetilde{\tau}})}
\\
&+
 \sum_{\substack{|J_1|+|J_2|=|J|\\  1\leq |J_1|\leq k+1, \, |J_2|\leq 3}} \| (\widetilde{\tau}/t)L^{J_1} n \|_{L^2(\mathcal{H}_{\widetilde{\tau}})} \|(t/\widetilde{\tau})L^{J_2}  E \|_{L^\infty(\mathcal{H}_{\widetilde{\tau}})}
\\
\lesssim
&(C_1 \epsilon)^2 \widetilde{\tau}^{-1} \log^{k} (1+\widetilde{\tau}),
\end{align*}
in which we used the estimates in Proposition \ref{prop:refine-n-in001} and  induction estimates.

\textbf{Case III: $|I|\geq 1, k\leq N-4$.}

We note that
\begin{align*}
&\big\|  (-\partial^I L^J (n E) + n \partial^I L^J E)  \big\|_{L^2(\mathcal{H}_{\widetilde{\tau}})}
\\
\lesssim
& \sum_{|I_1|+|I_2|=|I|, \, |I_2|\leq |I|-1} \| \partial^{I_1} n \partial^{I_2} L^{J} E \|_{L^2(\mathcal{H}_{\widetilde{\tau}})}
\\
&+ \sum_{|J_1|+|J_2|=|J|, \, |J_2|\leq k} \| L^{J_1}n \partial^{I} L^{J_2} E \|_{L^2(\mathcal{H}_{\widetilde{\tau}})}
\\
&+
\sum_{\substack{|I_1|+|I_2|=|I|, |J_1|+|J_2|=|J|\\ |I_1|\geq 1, |J_2|\leq k}} \| \partial^{I_1} L^{J_1}n \partial^{I_2} L^{J_2} E \|_{L^2(\mathcal{H}_{\widetilde{\tau}})}
\\
=
& \mathcal{A}_1 + \mathcal{A}_2 + \mathcal{A}_3.
\end{align*}

For $\mathcal{A}_1$, we note that
\begin{align*}
\mathcal{A}_1
\lesssim
&\sum_{\substack{|I_1|+|I_2|=|I|\\  1\leq |I_1|\leq |I_2|+|J|+1}} \|(\widetilde{\tau}/t) \partial^{I_1} n\|_{L^\infty(\mathcal{H}_{\widetilde{\tau}})} \|(t/\widetilde{\tau}) \partial^{I_2} L^{J} E \|_{L^2(\mathcal{H}_{\widetilde{\tau}})}
\\
&+
\sum_{\substack{|I_1|+|I_2|=|I|\\   |I_1|> |I_2|+|J|+1}} \big\|(\widetilde{\tau}/t) \tau_-^{1\over 2} \partial^{I_1} n \big\|_{L^2(\mathcal{H}_{\widetilde{\tau}})} \big\|(t/\widetilde{\tau}) \tau_-^{-{1\over 2}} \partial^{I_2} L^{J} E \big\|_{L^\infty(\mathcal{H}_{\widetilde{\tau}})}
\\
\lesssim
 & (C_1 \epsilon)^2 \widetilde{\tau}^{-{5\over 4}+\delta}.
\end{align*}

For $\mathcal{A}_2$, we have
\begin{align*}
\mathcal{A}_2
\lesssim
& \sum_{|J_1|+|J_2|=|J|, \, |J_2|\leq k} \| L^{J_1}n \|_{L^\infty(\mathcal{H}_{\widetilde{\tau}})} \| \partial^{I} L^{J_2} E \|_{L^2(\mathcal{H}_{\widetilde{\tau}})}
\\
\lesssim
&(C_1 \epsilon)^2 \widetilde{\tau}^{-1} \log^k(1+\widetilde{\tau}).
\end{align*}

We bound $\mathcal{A}_3$ to get
\begin{align*}
\mathcal{A}_3
\lesssim
&\sum_{\substack{|I_1|+|I_2|=|I|, |J_1|+|J_2|=|J|\\ |I_1|+|J_1|\leq |I_2|+|J_2|+1, \, |I_1|\geq 1, |J_2|\leq k}} \| \partial^{I_1} L^{J_1}n\|_{L^\infty(\mathcal{H}_{\widetilde{\tau}})} \| \partial^{I_2} L^{J_2} E \|_{L^2(\mathcal{H}_{\widetilde{\tau}})}
\\
+
&\sum_{\substack{|I_1|+|I_2|=|I|, |J_1|+|J_2|=|J|\\ |I_1|+|J_1|> |I_2|+|J_2|+1, \, |I_1|\geq 1, |J_2|\leq k}} \big\| (\widetilde{\tau}/t) \tau_-^{1\over 2} \partial^{I_1} L^{J_1}n \big\|_{L^2(\mathcal{H}_{\widetilde{\tau}})} \big\| (t/\widetilde{\tau}) \tau_-^{-{1\over 2}}\partial^{I_2} L^{J_2} E \big\|_{L^\infty(\mathcal{H}_{\widetilde{\tau}})}
\\
\lesssim
&(C_1 \epsilon)^2 \widetilde{\tau}^{-1} \log^k(1+\widetilde{\tau}).
\end{align*}

Therefore, we get
\begin{align}
\big\|  (-\partial^I L^J (n E) + n \partial^I L^J E)  \big\|_{L^2(\mathcal{H}_{\widetilde{\tau}})}
\lesssim
(C_1 \epsilon)^2 \widetilde{\tau}^{-1} \log^k(1+\widetilde{\tau}).
\end{align}

\textbf{Case IV: $|I|\geq 1, k\geq N-3$.}

In this case, the Sobolev inequality in Proposition \ref{prop:Sobolev-in} and induction assumption give us
\begin{align*}
\sum_{|I|+|K|\leq N-2, \,|K|\leq k-2} |\partial^I L^{K} E|
\lesssim
C_1 \epsilon t^{-1} \log^k (1+\tau).
\end{align*}

In succession, we get
\begin{align*}
&\big\|  (-\partial^I L^J (n E) + n \partial^I L^J E)  \big\|_{L^2(\mathcal{H}_{\widetilde{\tau}})}
\\
\lesssim
& \sum_{|I_1|+|I_2|=|I|, \, |I_2|\leq |I|-1} \| \partial^{I_1} n \partial^{I_2} L^{J} E \|_{L^2(\mathcal{H}_{\widetilde{\tau}})}
\\
& \sum_{|J_1|+|J_2|=|J|, \, |J_2|\leq k} \| L^{J_1}n \partial^{I} L^{J_2} E \|_{L^2(\mathcal{H}_{\widetilde{\tau}})}
\\
&+
\sum_{\substack{|I_1|+|I_2|=|I|, |J_1|+|J_2|=|J|\\ |I_1|\geq 1, |J_2|\leq k}} \| \partial^{I_1} L^{J_1}n \partial^{I_2} L^{J_2} E \|_{L^2(\mathcal{H}_{\widetilde{\tau}})}
\\
=
& \mathcal{B}_1 + \mathcal{B}_2 + \mathcal{B}_3.
\end{align*}

To bound $\mathcal{B}_1$, we note
\begin{align*}
\mathcal{B}_1 = &\sum_{|I_1|+|I_2|=|I|, \, |I_2|\leq |I|-1} \| \partial^{I_1} n \partial^{I_2} L^{J} E \|_{L^2(\mathcal{H}_{\widetilde{\tau}})}
\\
\lesssim
&\sum_{|I_1|+|I_2|=|I|, \, |I_2|\leq |I|-1} \| (\widetilde{\tau}/t)\partial^{I_1} n \|_{L^\infty(\mathcal{H}_{\widetilde{\tau}})} \|(t/\widetilde{\tau})\partial^{I_2} L^{J} E \|_{L^2(\mathcal{H}_{\widetilde{\tau}})}
\\
\lesssim
&C_1 \epsilon \widetilde{\tau}^{-{5\over 4}} \sum_{|I_2|\leq |I|-1} \|(t/\widetilde{\tau})\partial^{I_2} L^{J} E \|_{L^2(\mathcal{H}_{\widetilde{\tau}})}.
\end{align*}
Recall that we apply Proposition \ref{prop:KG-extra} to get
\begin{align*}
&\sum_{|I_2|\leq |I|-1} \|(t/\widetilde{\tau})\partial^{I_2} L^{J} E \|_{L^2(\mathcal{H}_{\widetilde{\tau}})}
\\
\lesssim
&\sum_{\substack{|I_3|+|J_1|\leq |I|+|J|+1\\|J_1|\leq |J|+1}} \| (\widetilde{\tau}/t)\partial^{I_3} L^{J_1} E \|_{L^2(\mathcal{H}_{\widetilde{\tau}})}
+
\sum_{|I_2|\leq |I|-1} \| (t/\widetilde{\tau})\partial^{I_2} L^{J} (nE) \|_{L^2(\mathcal{H}_{\widetilde{\tau}})}
\\
\lesssim
& C_1 \epsilon \widetilde{\tau}^\delta.
\end{align*}
Hence, we obtain
\begin{align}
\mathcal{B}_1
\lesssim 
(C_1 \epsilon)^2 \widetilde{\tau}^{-{5\over 4}+\delta}.
\end{align}

Next, we estimate $\mathcal{B}_2$. We note
\begin{align*}
\mathcal{B}_2
\lesssim
&\sum_{\substack{|J_1|+|J_2|=|J|\\ 1\leq |J_1|\leq N-3, |J_2|\leq k}} \| L^{J_1}n \|_{L^\infty(\mathcal{H}_{\widetilde{\tau}})} \| \partial^{I} L^{J_2} E \|_{L^2(\mathcal{H}_{\widetilde{\tau}})}
\\
&+
\sum_{|J_1|+|J_2|=|J|, \, |J_1|\geq N-3} \|(\widetilde{\tau}/t) L^{J_1}n \|_{L^2(\mathcal{H}_{\widetilde{\tau}})} \|(t/\widetilde{\tau}) \partial^{I} L^{J_2} E \|_{L^\infty(\mathcal{H}_{\widetilde{\tau}})}
\\
\lesssim
&(C_1 \epsilon)^2 \widetilde{\tau}^{-1} \log^k(1+\widetilde{\tau}).
\end{align*}

Now, we bound $\mathcal{B}_3$. We have
\begin{align*}
\mathcal{B}_3
\lesssim
&\sum_{\substack{|I_1|+|I_2|=|I|, |J_1|+|J_2|=|J|\\|I_1|+|J_1|\leq |I_2|+|J_2|+1, |I_1|\geq 1, |J_2|\leq k}}    \| \partial^{I_1}L^{J_1} n \|_{L^\infty(\mathcal{H}_{\widetilde{\tau}})} \|\partial^{I_2} L^{J_2} E \|_{L^2(\mathcal{H}_{\widetilde{\tau}})}
\\
&+
\sum_{\substack{|I_1|+|I_2|=|I|, |J_1|+|J_2|=|J|\\|I_1|+|J_1|> |I_2|+|J_2|+1, |I_1|\geq 1, |J_2|\leq k}}  \big\| (\widetilde{\tau}/t) \tau_-^{1\over 2} \partial^{I_1}L^{J_1} n \big\|_{L^2(\mathcal{H}_{\widetilde{\tau}})}  \big\| (t/\widetilde{\tau}) \tau_-^{-{1\over 2}} \partial^{I_2} L^{J_2} E \big\|_{L^\infty(\mathcal{H}_{\widetilde{\tau}})}
\\
\lesssim
& (C_1 \epsilon)^2 \widetilde{\tau}^{-1} \log^k(1+\widetilde{\tau}).
\end{align*}

Gathering the estimates for $\mathcal{B}_1, \mathcal{B}_2, \mathcal{B}_3$, we arrive at
\begin{align}
\big\|  (-\partial^I L^J (n E) + n \partial^I L^J E)  \big\|_{L^2(\mathcal{H}_{\widetilde{\tau}})}
\lesssim
(C_1 \epsilon)^2 \widetilde{\tau}^{-1} \log^k(1+\widetilde{\tau}).
\end{align}
By this, we derive
\begin{align*}
\mathcal{G}_{42}
\lesssim
(C_1 \epsilon)^2 \int_{\tau_0}^\tau \widetilde{\tau}^{-1} \log^k(1+\widetilde{\tau}) \, \d\widetilde{\tau}
\lesssim
(C_1 \epsilon)^2 \log^{k+1}(1+\tau).
\end{align*}

The proof is done.
\end{proof}

\begin{lemma}[Estimates for $\mathfrak{m}$: II]\label{lem:nD-L2-in03}
We have
\begin{align}
\|  (\tau/t)\partial\partial L^J \mathfrak{m} \|_{L^2(\mathcal{H}_\tau)}
\lesssim
&\epsilon + (C_1 \epsilon)^2 \tau^\delta,
\qquad
& |J|= N+1, \label{eq:nD-L2-in03}
\\
\|  (\tau/t) L^J n \|_{L^2(\mathcal{H}_\tau)}
\lesssim
&\epsilon + (C_1 \epsilon)^2 \tau^\delta,
\qquad
& |J|= N+1. \label{eq:nD-L2-in03b}
\end{align}
\end{lemma}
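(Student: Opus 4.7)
The plan is to establish \eqref{eq:nD-L2-in03} via a hyperboloidal energy estimate on $\partial L^J\mathfrak{m}$, and then to deduce \eqref{eq:nD-L2-in03b} from the decomposition $n=\ell+\Delta\mathfrak{m}$. Commuting $\partial L^J$ through $-\Box\mathfrak{m}=|E|^2$ yields $-\Box\, \partial L^J\mathfrak{m}=\partial L^J|E|^2$, and Proposition~\ref{prop:EE-in} with $m=0$ and $\gamma=0$ applied to $\partial L^J\mathfrak{m}$ gives
\begin{align*}
\mathcal{E}^{in}(\partial L^J\mathfrak{m},\tau)^{1/2}
\lesssim
\mathcal{E}^{in}(\partial L^J\mathfrak{m},\tau_0)^{1/2}
+\int_{\tau_0}^{\tau}\big\|\partial L^J|E|^2\big\|_{L^2(\mathcal{H}_{\widetilde{\tau}})}\,\d\widetilde{\tau}
+\mathrm{(boundary)},
\end{align*}
where the boundary contribution together with the initial energy is $\lesssim \epsilon + (C_1\epsilon)^2$ by Proposition~\ref{prop:BDRY-4} (case $|I|+|J|\leq N+1$ with $|I|=0$, $|J|=N+1$) and the data assumption. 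Since $\|(\tau/t)\partial\partial L^J\mathfrak{m}\|_{L^2(\mathcal{H}_\tau)}^2\lesssim\mathcal{E}^{in}(\partial L^J\mathfrak{m},\tau)$, the whole argument reduces to controlling the source $\|\partial L^J|E|^2\|_{L^2(\mathcal{H}_{\widetilde{\tau}})}$.

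By the Leibniz rule, $\partial L^J|E|^2$ is a finite sum of terms of the form $(\partial L^{J_1}E)(L^{J_2}E)$ with $|J_1|+|J_2|=N+1$. For the extreme case $(\partial L^{N+1}E)\cdot E$, where no derivative budget remains on $E$, I would pair the bounds
\begin{align*}
\big\|(\tau/t)\partial L^{N+1}E\big\|_{L^2(\mathcal{H}_{\widetilde{\tau}})}\lesssim C_1\epsilon\,\widetilde{\tau}^{\delta},
\qquad
\big\|(t/\tau)E\big\|_{L^\infty(\mathcal{H}_{\widetilde{\tau}})}\lesssim C_1\epsilon\,\widetilde{\tau}^{-1},
\end{align*}
the first of which follows directly from the bootstrap \eqref{eq:BA-in3} (applied to $L^{N+1}E$), and the second of which comes from Proposition~\ref{prop:refine-E-decay} combined with the hyperboloidal identity $\tau_-\tau_+=\widetilde{\tau}^2$ on $\mathcal{H}_{\widetilde{\tau}}$. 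For the intermediate cases ($|J_1|\leq N$), I would place the higher-order factor into weighted $L^2$ via \eqref{eq:BA-in3} and the lower-order factor (which has at most $(N+1)/2\leq N-3$ boosts since $N\geq 16$) into weighted $L^\infty$ via \eqref{eq:E-improve-in}. Each term is then bounded by $(C_1\epsilon)^2\widetilde{\tau}^{-1+O(\delta)}$, and integration in $\widetilde{\tau}$ produces the desired growth $\tau^{O(\delta)}$, matching \eqref{eq:nD-L2-in03} after adjusting $\delta$.

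For \eqref{eq:nD-L2-in03b}, I would decompose $L^J n = L^J\ell + \Delta L^J\mathfrak{m} + [L^J,\Delta]\mathfrak{m}$. The contribution of $L^J\ell$ is controlled by Lemma~\ref{lem:n0-in}; the term $\Delta L^J\mathfrak{m}$ is bounded by \eqref{eq:nD-L2-in03} via $\|(\tau/t)\Delta L^J\mathfrak{m}\|_{L^2(\mathcal{H}_\tau)}\lesssim\|(\tau/t)\partial\partial L^J\mathfrak{m}\|_{L^2(\mathcal{H}_\tau)}$; and the commutator $[L^J,\Delta]\mathfrak{m}$, which is a finite linear combination of $\partial\partial L^{J'}\mathfrak{m}$ with $|J'|\leq N$, is handled by \eqref{eq:nD-L2-in02} of Lemma~\ref{lem:nDelta-in} (the $\tau_-^{1/2}$ weight there is harmless since $\tau_-\geq 1$ in the interior). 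The principal obstacle throughout is the extreme source term $(\partial L^{N+1}E)\cdot E$: since it carries $N+2$ total derivatives of $E$, the only available control on $\partial L^{N+1}E$ comes with the weight $\tau/t$ that degenerates near the cone $|x|=t-1$, and this loss must be compensated by Proposition~\ref{prop:refine-E-decay}'s sharp pointwise bound on $E$ (without any $\tau^\delta$ loss) through the hyperboloidal trade $\tau_-=\widetilde{\tau}^2/\tau_+$.
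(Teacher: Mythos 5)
Your overall framework matches the paper: apply the hyperboloidal energy estimate to $\partial L^J\mathfrak{m}$ (the paper cites Proposition~\ref{prop:EE-in2}, but as you note Proposition~\ref{prop:EE-in} with $m=0$ is the relevant one since $\mathfrak{m}$ solves a wave equation), bound the source $\|\partial L^J|E|^2\|_{L^2(\mathcal{H}_{\widetilde\tau})}$ via Leibniz, and deduce \eqref{eq:nD-L2-in03b} from $n=\ell+\Delta\mathfrak{m}$. Your explicit handling of the commutator $[L^J,\Delta]\mathfrak{m}$ via \eqref{eq:nD-L2-in02} is correct, and your treatment of the extreme Leibniz term $\partial L^{N+1}E\cdot E$, pairing \eqref{eq:BA-in3} on the $L^{N+1}$ factor with the sharp pointwise bound of Proposition~\ref{prop:refine-E-decay} and $\tau_-\tau_+=\widetilde\tau^{\,2}$, coincides with the paper's $\mathcal{A}_2$.

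There is, however, a genuine gap in the intermediate Leibniz terms $\partial L^{J_1}E\cdot L^{J_2}E$ with $1\leq|J_1|\leq|J_2|\leq N$. You propose to bound the $L^2$ factor via the bootstrap \eqref{eq:BA-in3} (which costs $\widetilde\tau^{\delta}$) and the $L^\infty$ factor via \eqref{eq:E-improve-in} (which costs $\widetilde\tau^{-1+\delta}$ after substituting $\tau_-=\widetilde\tau^{\,2}/\tau_+$); the product is $(C_1\epsilon)^2\widetilde\tau^{-1+2\delta}$, so integration produces $\tau^{2\delta}$ rather than the stated $\tau^{\delta}$. This cannot be dismissed as "adjusting $\delta$": $\delta$ is fixed once and for all in the bootstrap ansatz, the Lemma's $\tau^\delta$ bound is exactly what Proposition~\ref{prop:refine-E-in003} consumes when it estimates $\|(\widetilde\tau/t)L^J n\|_{L^2}\|(t/\widetilde\tau)E\|_{L^\infty}\lesssim(C_1\epsilon)^2\widetilde\tau^{-1+\delta}$ and pairs it against $\|(\widetilde\tau/t)\partial_t L^J E\|_{L^2}\lesssim C_1\epsilon\widetilde\tau^\delta$ to recover $\tau^{2\delta}$ and close the loop; your $\tau^{2\delta}$ here would propagate to $\tau^{3\delta}$ there and break the closure. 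The missing ingredient is Proposition~\ref{prop:refine-E-in002}: for any number of boosts $\leq N$ it already upgrades the bootstrap's polynomial $\tau^\delta$ growth to the logarithmic $\log^{2|J|}(1+\tau)$, so the paper's $\mathcal{A}_1$ is $(C_1\epsilon)^2\widetilde\tau^{-1}\log^{N+3}(1+\widetilde\tau)$, strictly subdominant to the extreme terms' $\widetilde\tau^{-1+\delta}$, and the integrated bound is $\tau^\delta$ as claimed. Relatedly, you single out the extreme term $\partial L^{N+1}E\cdot E$ as the principal obstacle; that is the correct intuition once refine-E-in002 is in hand, but in your scheme as written the intermediate terms are in fact the limiting ones.
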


\begin{proof}
We apply Proposition \ref{prop:EE-in2} with $\gamma = 0$ to the equation of $\partial L^J \mathfrak{m}$, and find
\begin{equation}
\begin{aligned}
&\mathcal{E}^{in}(\partial L^J \mathfrak{m}, \tau)
\\
\lesssim
&\mathcal{E}^{in}(\partial L^J \mathfrak{m}, \tau_0)
+
\int_{\tau_0}^\tau \big\| \partial L^J |E|^2 \big\|_{L^2(\mathcal{H}_{\widetilde{\tau}})} \mathcal{E}^{in}(\partial L^J \mathfrak{m}, \widetilde{\tau})^{1\over 2}\, \d\widetilde{\tau}
\\
&+  \int_{\Sigma^{bd}_{t^{bd}}} \Big( \sum_a ({x_a \over r} \partial_t  \partial L^J \mathfrak{m} + \partial_a  \partial L^J \mathfrak{m})^2 \Big) \, \d S
\\
=: &\mathcal{G}_{51} + \mathcal{G}_{52} + \mathcal{G}_{53}.
\end{aligned}
\end{equation}
Since the terms in $\mathcal{G}_{51}, \mathcal{G}_{53}$ can be bounded by the initial data and the boundary estimates from Section \ref{sec:exterior}, we will only focus on the estimates on the term $\mathcal{G}_{52}$.

Applying the Leibniz rule, we find that
\begin{align*}
&\big\|\partial L^J |E|^2 \big\|_{L^2(\mathcal{H}_{\widetilde{\tau}})}
\\
\lesssim
&\sum_{\substack{|J_1|+|J_2|=|J|\\ 1\leq |J_1|\leq |J_2|\leq N }} \big( \|\partial L^{J_1} E \|_{L^\infty(\mathcal{H}_{\widetilde{\tau}})} \| L^{J_2} E \|_{L^2(\mathcal{H}_{\widetilde{\tau}})}
+
 \|\partial L^{J_2} E \|_{L^2(\mathcal{H}_{\widetilde{\tau}})} \| L^{J_1} E \|_{L^\infty(\mathcal{H}_{\widetilde{\tau}})} \big)
\\
+
& \|(\widetilde{\tau}/t) \partial L^{J} E \|_{L^2(\mathcal{H}_{\widetilde{\tau}})} \| (t/\widetilde{\tau}) E \|_{L^\infty(\mathcal{H}_{\widetilde{\tau}})}
+
 \|\partial  E \|_{L^\infty(\mathcal{H}_{\widetilde{\tau}})} \| L^{J} E \|_{L^2(\mathcal{H}_{\widetilde{\tau}})}
\\
=: &\mathcal{A}_1 + \mathcal{A}_2 + \mathcal{A}_3.
\end{align*}

By Proposition \ref{prop:refine-E-in002}, we get
\begin{align*}
\mathcal{A}_1
\lesssim
(C_1 \epsilon)^2 \widetilde{\tau}^{-1} \log^{N+3}(1+\widetilde{\tau}).
\end{align*}

By bootstrap assumption \eqref{eq:BA-in3} and Proposition \ref{prop:refine-E-decay}, we can show that
\begin{align*}
\mathcal{A}_2 + \mathcal{A}_3
\lesssim 
(C_1 \epsilon)^2 \widetilde{\tau}^{-1+\delta}. 
\end{align*}

In summary, we derive that
\begin{align*}
\big\|\partial L^J |E|^2 \big\|_{L^2(\mathcal{H}_{\widetilde{\tau}})}
\lesssim
(C_1 \epsilon)^2 \widetilde{\tau}^{-1+\delta},
\end{align*}
which further leads to \eqref{eq:nD-L2-in03}.

Combining \eqref{eq:nD-L2-in03} and Lemma \ref{lem:n0-in}, we obtain \eqref{eq:nD-L2-in03b}.
The proof is done.
\end{proof}

\begin{proposition}[Refined estimates for $E$: III]\label{prop:refine-E-in003}
We have
\begin{align}
\mathcal{E}^{in}_1 (L^J E, \tau)
\lesssim
\epsilon^2 + (C_1 \epsilon)^3 \tau^{2 \delta},
\qquad
 |J| = N+1.
\end{align}
\end{proposition}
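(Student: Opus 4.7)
The plan is to repeat the scheme of Propositions \ref{prop:refine-E-in001}–\ref{prop:refine-E-in002}, but with the top-order $L^{N+1} n$ contribution treated via Lemma \ref{lem:nD-L2-in03b}. First I would apply Proposition \ref{prop:EE-in2} with $\gamma = 0$ to the equation $-\Box L^J E + L^J E = -L^J(nE)$, viewing the nonlinearity as $\bigl(-L^J(nE) + n L^J E\bigr) - n L^J E$ so that the term $n L^J E$ is absorbed as a perturbation of the mass. The initial-data term on $\mathcal{H}_{\tau_0}$ and the cone-boundary term on $\Sigma^{bd}_{t^{bd}}$ are controlled by $\epsilon^2 + (C_1\epsilon)^3$ using Proposition \ref{prop:BDRY-4}; the mass-type terms $|L^J E|^2 \partial_t n$ and $\tau_-^{-1}|L^J E|^2 n$ are absorbed into the left-hand side energy thanks to $\|n\|_{L^\infty}+\|\tau_-\partial_t n\|_{L^\infty}\lesssim C_1\epsilon$ from Proposition \ref{prop:refine-n-in001}.

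The core task is to estimate
\begin{align*}
\int_{\tau_0}^\tau \bigl\| \bigl(-L^J(nE) + n L^J E\bigr)\,(\widetilde\tau/t)\partial_t L^J E \bigr\|_{L^1(\mathcal{H}_{\widetilde\tau})} \, \d\widetilde\tau.
\end{align*}
Expanding by Leibniz, $-L^J(nE) + n L^J E = -\sum_{J_1+J_2=J,\,|J_1|\geq 1} L^{J_1} n \cdot L^{J_2} E$, I would split according to $|J_1|$. For $1\leq|J_1|\leq N-2$, pointwise decay of $L^{J_1} n$ from Proposition \ref{prop:refine-n-in001} paired with the $L^2$ bounds of $L^{J_2} E$ from Proposition \ref{prop:refine-E-in002} yields an integrable-in-$\widetilde\tau$ contribution of size $(C_1\epsilon)^3\widetilde\tau^{-1}\log^{N+1}(1+\widetilde\tau)$. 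For $N-1 \leq |J_1|\leq N$, I would put $L^{J_1} n$ in $L^2$ with the $(\widetilde\tau/t)\tau_-^{1/2}$ weight from Proposition \ref{prop:refine-n-in001} and take $L^\infty$ of $L^{J_2} E$ via Proposition \ref{prop:refine-E-decay} (after using Proposition \ref{prop:KG-extra} to trade a factor of $\tau$ for two derivatives when needed).

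The delicate endpoint, and the source of the $\tau^{2\delta}$ loss in the statement, is $|J_1|=N+1$, $|J_2|=0$. Only the $L^2$ bound
\begin{align*}
\bigl\|(\widetilde\tau/t)\, L^{N+1} n\bigr\|_{L^2(\mathcal{H}_{\widetilde\tau})}\lesssim \epsilon + (C_1\epsilon)^2\widetilde\tau^\delta
\end{align*}
from Lemma \ref{lem:nD-L2-in03b} is available, so $E$ must be placed in $L^\infty$. Using the sharp decay $|E|\lesssim C_1\epsilon\, t^{-3/2}\tau_-^{1/2}$ from Proposition \ref{prop:refine-E-decay} and the hyperboloidal identity $\tau_-\leq\tau^2/t$, one finds $\|(t/\widetilde\tau)E\|_{L^\infty(\mathcal{H}_{\widetilde\tau})} \lesssim C_1\epsilon\,\widetilde\tau^{-1}$, which gives
\begin{align*}
\bigl\| L^{N+1} n\cdot E\bigr\|_{L^2(\mathcal{H}_{\widetilde\tau})} \lesssim (C_1\epsilon)^2\,\widetilde\tau^{-1+\delta}.
\end{align*}
Pairing with the bootstrap bound $\|(\widetilde\tau/t)\partial_t L^J E\|_{L^2(\mathcal{H}_{\widetilde\tau})}\lesssim C_1\epsilon\,\widetilde\tau^\delta$ and integrating over $[\tau_0,\tau]$ produces the target $(C_1\epsilon)^3\tau^{2\delta}$.

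The main obstacle is precisely this top-order interaction: the compounding of the two $\tau^\delta$ factors (from the $L^{N+1} n$ energy and from the bootstrap on $L^J E$) is unavoidable and produces exactly the declared $\tau^{2\delta}$ growth, so one must verify that no further loss slips in. This is guaranteed by the sharp pointwise decay of $E$ in Proposition \ref{prop:refine-E-decay}, which itself rests on the perturbation-of-mass viewpoint; without it, the endpoint term would not close.
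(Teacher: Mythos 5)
Your proposal is correct and follows essentially the same route as the paper's proof: Proposition~\ref{prop:EE-in2} with $\gamma=0$ and $h=n$, Leibniz expansion of $-L^J(nE)+nL^JE$, pointwise bounds on $L^{J_1}n$ against $L^2$ bounds on $L^{J_2}E$ for lower-order $|J_1|$, and at the top end $|J_1|=N+1$ the $L^2$ bound from Lemma~\ref{lem:nD-L2-in03b} paired with the sharp $|E|\lesssim C_1\epsilon\,t^{-3/2}\tau_-^{1/2}$ from Proposition~\ref{prop:refine-E-decay}, yielding the $\widetilde\tau^{-1+2\delta}$ integrand and hence the $\tau^{2\delta}$ growth. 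The only cosmetic difference is the exact case split in the Leibniz sum (the paper splits by whether $|J_1|\le|J_2|$, $|J_1|\ge|J_2|\ge 1$, or $|J_2|=0$, while you split by ranges of $|J_1|$), and a small misstatement: with $\gamma=0$ there is no $\tau_-^{-1}|L^JE|^2n$ term in Proposition~\ref{prop:EE-in2}, and the $|L^JE|^2\partial_t n$ term is bounded directly (it is integrable in $\widetilde\tau$), not absorbed into the left-hand side.
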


\begin{proof}

We apply Proposition \ref{prop:EE-in2} with $\gamma = 0$ to get
\begin{equation}
\begin{aligned}
&\mathcal{E}^{in}_1 ( L^J E, \tau, 0)
\\
\lesssim
&\mathcal{E}^{in}_1 ( L^J E, \tau_0, 0)
+
\int_{\tau_0}^\tau \big\| (- L^J (n E) + n  L^J E) \cdot (\widetilde{\tau}/t) \partial_t  L^J E \big\|_{L^1(\mathcal{H}_{\widetilde{\tau}})} \, \d \widetilde{\tau}
\\
&+
\int_{\tau_0}^\tau \big\| |L^J E|^2 (\widetilde{\tau}/t) \partial_t n \big\|_{L^1(\mathcal{H}_{\widetilde{\tau}})} \, \d \widetilde{\tau}
\\
&+  \int_{\Sigma^{bd}_{t^{bd}}} \Big( \sum_a ({x_a \over r} \partial_t  L^J E + \partial_a  L^J E)^2 + | L^J E|^2 \Big) \, \d S
\\
=:
& \mathcal{A}_1 + \mathcal{A}_2 + \mathcal{A}_3 + \mathcal{A}_4.
\end{aligned}
\end{equation}

To bound the term $\mathcal{A}_2$, we first note that
\begin{align*}
&\big\| (- L^J (n E) + n  L^J E)\big\|_{L^2(\mathcal{H}_{\widetilde{\tau}})}
\\
\lesssim
&\sum_{|J_1|+|J_2|=N+1, \, |J_1|\geq 1} \| L^{J_1} n L^{J_2} E\|_{L^2(\mathcal{H}_{\widetilde{\tau}})}
\\
\lesssim
 &\sum_{|J_1|+|J_2|=N+1, \, 1\leq |J_1|\leq |J_2|} \| L^{J_1} n\|_{L^\infty(\mathcal{H}_{\widetilde{\tau}})} \| L^{J_2} E\|_{L^2(\mathcal{H}_{\widetilde{\tau}})}
\\
&+
\sum_{|J_1|+|J_2|=N+1, \, |J_1|\geq |J_2|\geq 1} \| (\widetilde{\tau}/t) L^{J_1} n \|_{L^2(\mathcal{H}_{\widetilde{\tau}})} \| (t/\widetilde{\tau})L^{J_2} E\|_{L^\infty(\mathcal{H}_{\widetilde{\tau}})}
\\
&+
 \| (\widetilde{\tau}/t) L^{J} n \|_{L^2(\mathcal{H}_{\widetilde{\tau}})} \| (t/\widetilde{\tau}) E\|_{L^\infty(\mathcal{H}_{\widetilde{\tau}})}
\\
\lesssim
&(C_1 \epsilon)^2 \widetilde{\tau}^{-1+\delta},
\end{align*}
in which we use rough decay of $E$ in \eqref{eq:E-improve-in}, sharp decay of $E$ in Proposition \ref{prop:refine-E-decay} and Lemma \ref{lem:nD-L2-in03}.

Thus, we further have
\begin{align*}
\mathcal{A}_2
\lesssim
&\int_{\tau_0}^{\tau} \big\| (- L^J (n E) + n  L^J E) \big\|_{L^2(\mathcal{H}_{\widetilde{\tau}})}  \|(\widetilde{\tau}/t) \partial_t  L^J E \|_{L^2(\mathcal{H}_{\widetilde{\tau}})}  \, \d\widetilde{\tau}
\\
\lesssim
& (C_1 \epsilon)^3 \int_{\tau_0}^{\tau}  \widetilde{\tau}^{-1+2\delta} \, \d\widetilde{\tau}
\lesssim (C_1 \epsilon)^3 \tau^{2\delta}.
\end{align*}

For the term $\mathcal{A}_3$, we note
\begin{align*}
&\big\| |L^J E|^2 (\widetilde{\tau}/t) \partial_t n \big\|_{L^1(\mathcal{H}_{\widetilde{\tau}})}
\\
\lesssim
&\| L^J E \|^2_{L^2(\mathcal{H}_{\widetilde{\tau}})}  \| (\widetilde{\tau}/t) \partial_t n \|_{L^\infty(\mathcal{H}_{\widetilde{\tau}})}
\\
\lesssim
&(C_1 \epsilon)^3 \widetilde{\tau}^{-{3\over 2}+2\delta}.
\end{align*}
This leads to
\begin{align*}
\mathcal{A}_3
\lesssim
(C_1 \epsilon)^3.
\end{align*}

To conclude, we get
\begin{align*}
&\mathcal{E}^{in}_1 ( L^J E, \tau, 0)
\lesssim
\epsilon^2 + (C_1 \epsilon)^3 \tau^{2\delta}.
\end{align*}

The proof is completed.
\end{proof}

To conclude this section, we gather the estimates for $n, E$ above to have the following refined result of the original bootstrap assumptions in \eqref{eq:BA-in}--\eqref{eq:BA-in3}.

\begin{proposition}
For $\tau \in [\tau_0, \tau_1 )$, the following refined estimates hold:
\begin{align}\label{eq:BA-in-re}
|\partial^I L^J n(t, x)| 
\leq
&{1\over 2} C_1 \epsilon t^{-{1\over 2}},
\qquad
&|I| + |J| \leq N-2,
\\
\mathcal{E}^{in}(\partial^I L^J n, \tau)
\leq
&{1\over 2}(C_1 \epsilon)^{2},
\qquad
&|I| + |J| \leq N, \label{eq:BA-in2-re}
\\
\mathcal{E}^{in}_1(\partial^I L^J E, \tau)
\leq
&{1\over 2}(C_1 \epsilon)^2 \tau^{2\delta},
\qquad
&|I| + |J| \leq N+1. \label{eq:BA-in3-re}
\end{align}
As a consequence, the solution pair $(n, E)$ exists globally in $\{ r\leq t-1\}$.
\end{proposition}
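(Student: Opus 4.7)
The plan is to assemble the refined estimates established earlier in this section and then close the continuity (bootstrap) argument. The three refined bounds correspond one-to-one to the three bootstrap hypotheses \eqref{eq:BA-in}--\eqref{eq:BA-in3}, so the core task is to verify that each refined inequality can be deduced by choosing $C_1$ sufficiently large (so that the implied constants in $\lesssim$ are absorbed) and then $\epsilon$ sufficiently small (so that any surplus $C_1\epsilon$ factor in the cubic-type terms is absorbed as well).

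For the pointwise bound on $n$, I would use the decomposition $n=\ell+\Delta\mathfrak{m}$. The contribution from $\ell$ is controlled by Lemma \ref{lem:n0-in}, while the contribution from $\Delta\mathfrak{m}=\partial\partial \mathfrak{m}$ is controlled by the second pointwise estimate \eqref{eq:nD-decay-in02} of Lemma \ref{lem:nDelta-in}. Since $\tau\leq t\leq \tau^2$ in $\mathcal{D}_{[\tau_0,+\infty)}$, both $\epsilon\tau^{-1}$ and $(\epsilon+(C_1\epsilon)^2)\tau^{-1}\tau_-^{-1/2}$ are dominated by $(\epsilon+(C_1\epsilon)^2)t^{-1/2}$, and this upgrades to $\tfrac12 C_1\epsilon t^{-1/2}$ after fixing $C_1\gg 1$ and $\epsilon\ll 1/C_1^2$. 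The energy bound $\mathcal{E}^{in}(\partial^I L^J n,\tau)$ is handled the same way: by the definition of $\mathcal{E}^{in}$ it suffices to control $\|(\tau/t)\partial \partial^I L^J n\|_{L^2(\mathcal{H}_\tau)}$, and Proposition \ref{prop:refine-n-in001} delivers the required $\lesssim \epsilon$ bound at the relevant order, which is refined to $\tfrac12(C_1\epsilon)^2$ upon choosing constants.

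For the top-order energy bound on $E$, I would split the range $|I|+|J|\leq N+1$ into the two cases $|J|\leq N$ and $|J|=N+1$. In the first case Proposition \ref{prop:refine-E-in002} yields $\mathcal{E}^{in}_1(\partial^I L^J E,\tau)\lesssim \epsilon^2+(C_1\epsilon)^3 \log^{2|J|}(1+\tau)$, and since $\log^{2N}(1+\tau)\leq C_\delta \tau^{2\delta}$, this gives the desired $\tau^{2\delta}$-weighted bound. In the remaining case Proposition \ref{prop:refine-E-in003} directly provides $\mathcal{E}^{in}_1(L^J E,\tau)\lesssim \epsilon^2+(C_1\epsilon)^3\tau^{2\delta}$. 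In both cases the right-hand side is $\leq \tfrac12(C_1\epsilon)^2\tau^{2\delta}$ after taking $C_1$ sufficiently large and $\epsilon$ sufficiently small so that $(C_1\epsilon)^3\ll (C_1\epsilon)^2$.

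With the improved bounds \eqref{eq:BA-in-re}--\eqref{eq:BA-in3-re} strictly better than the bootstrap assumptions \eqref{eq:BA-in}--\eqref{eq:BA-in3}, the standard continuity argument yields $\tau_1=+\infty$. Combined with Proposition \ref{prop:ex-improved} in the exterior region (valid for $\eta_0=1$), this produces a smooth solution $(n,E)$ on $\{t\geq t_0,\ r\geq t-1\}\cup\{t\geq t_0,\ r\leq t-1\}=\{t\geq t_0\}$. The only point that requires a small amount of care is checking that the constants $C_1,\delta,\epsilon$ are compatible with those fixed in Section \ref{sec:exterior}, which is handled by taking the maximum of the two choices of $C_1$ and $\delta$, and the minimum of the two choices of $\epsilon$; the main difficulty of this section has already been resolved in Propositions \ref{prop:refine-E-in001}--\ref{prop:refine-E-in003}, so here there is no remaining obstacle beyond collecting the bounds.
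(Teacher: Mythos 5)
Your proof is correct and takes essentially the same route as the paper: the paper's proof is literally the one-line citation of Propositions \ref{prop:refine-n-in001}, \ref{prop:refine-E-in001}, \ref{prop:refine-E-in002}, and \ref{prop:refine-E-in003}, and your argument is just this same collection of references with the bookkeeping (the inequalities $\tau \leq t \leq \tau^2$, $\log^{2N}(1+\tau) \lesssim_\delta \tau^{2\delta}$, the split $|J|\leq N$ vs.\ $|J|=N+1$, and the absorption of $(C_1\epsilon)^3$ and implied constants by choosing $C_1$ large and $\epsilon$ small) spelled out explicitly. Your unfolding of Proposition \ref{prop:refine-n-in001} back to Lemmas \ref{lem:n0-in} and \ref{lem:nDelta-in} for the pointwise bound on $n$ is harmless, since that proposition's proof is exactly that combination.
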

\begin{proof}
The proof follows from Propositions \ref{prop:refine-n-in001}, \ref{prop:refine-E-in001}, \ref{prop:refine-E-in002}, and \ref{prop:refine-E-in003}.
\end{proof}

\section{Scattering of the field $n$}\label{sec:scatter}

In this section and next one, we concentrate on the scattering problem of the 2D Klein-Gordon-Zakharov system.

\subsection{Scattering criteria and energy bounds}

We recall the following scattering criteria for inhomogeneous wave-type equations.

\begin{proposition}[Scattering criteria]\label{prop:scatter01}
Consider the wave-type equation
\begin{align}
-\Box u + m^2 u = F.
\end{align}
If 
\begin{align*}
\int_1^{+\infty} \| F \|_{H^k(\mathbb{R}^2)} \, \d s
< +\infty,
\end{align*}
for some $0\leq k \in \mathbb{N}$, then the solution $(u, \partial_t u)$ scatters in the sense that
\begin{align*}
\| \partial_t (u - u_l)\|_{H^k} + \| \nabla (u- u_l)\|_{H^k} 
+ m \| u-u_l\|_{H^k}
\lesssim
\int_t^{+\infty} \| F \|_{H^k} \, \d s
\to 0,
\quad
\text{as } t\to +\infty.
\end{align*}
In the above, $u_l$ satisfies $-\Box u_l + m^2 u_l = 0$.
\end{proposition}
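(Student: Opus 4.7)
The plan is to reduce the second-order equation to a first-order Hamiltonian system and then apply Duhamel's formula together with Minkowski's inequality. Setting $\omega := \sqrt{m^2 - \Delta}$ and introducing the half-wave unknowns $U_\pm := \partial_t u \mp i\omega u$, the scalar equation $-\Box u + m^2 u = F$ decouples into the pair $(\partial_t \pm i\omega)U_\pm = F$. Defining the profiles $f_\pm(t) := e^{\pm it\omega} U_\pm(t)$, one computes $\partial_t f_\pm = e^{\pm it\omega} F$. Since the symbol $e^{\pm it\sqrt{m^2+|\xi|^2}}$ has modulus one and commutes with $\langle\nabla\rangle^k$, the operator $e^{\pm it\omega}$ is an isometry of $H^k$; together with Minkowski's inequality and the assumption $\int_1^{+\infty}\|F\|_{H^k}\,\d s<+\infty$, this produces scattering profiles $f_\pm^\infty := \lim_{t\to+\infty} f_\pm(t)$ in $H^k$, with the quantitative convergence
\begin{equation*}
\| f_\pm(t) - f_\pm^\infty\|_{H^k} \leq \int_t^{+\infty} \|F\|_{H^k} \, \d s.
\end{equation*}

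Next, the linear comparison solution $u_l$ is constructed by first setting $U_{l,\pm}(t) := e^{\mp it\omega} f_\pm^\infty$, which solve $(\partial_t \pm i\omega) U_{l,\pm} = 0$, and then reconstructing $u_l$ through the inversion formulas $\partial_t u_l = (U_{l,+} + U_{l,-})/2$ and $\omega u_l = (U_{l,-} - U_{l,+})/(2i)$; choosing $f_-^\infty = \overline{f_+^\infty}$ (possible since $u$ is real-valued) ensures $u_l$ is real, and a short computation gives $-\Box u_l + m^2 u_l = 0$. Since $U_\pm(t) - U_{l,\pm}(t) = e^{\mp it\omega}(f_\pm(t) - f_\pm^\infty)$, the isometry property gives $\|U_\pm(t) - U_{l,\pm}(t)\|_{H^k} \leq \int_t^{+\infty}\|F\|_{H^k}\,\d s$. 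Expressing $\partial_t(u - u_l)$ and $\omega(u - u_l)$ as linear combinations of $U_\pm - U_{l,\pm}$, and invoking the pointwise symbol inequalities $|\xi| \leq \sqrt{m^2+|\xi|^2}$ and $m \leq \sqrt{m^2+|\xi|^2}$ to convert $\omega(u-u_l)$ bounds into $\nabla(u-u_l)$ and $m(u-u_l)$ bounds in $H^k$, one arrives at
\begin{equation*}
\|\partial_t(u - u_l)\|_{H^k} + \|\nabla(u - u_l)\|_{H^k} + m\|u - u_l\|_{H^k} \lesssim \int_t^{+\infty} \|F\|_{H^k} \, \d s,
\end{equation*}
which tends to $0$ as $t \to +\infty$ by hypothesis.

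The argument is essentially Duhamel plus Minkowski, and no substantial obstacle arises. The only mild technical remark, relevant only when $m=0$, is that the profiles determine $u_l$ itself only up to an additive constant, as the multiplier $\omega^{-1} = |\nabla|^{-1}$ is unbounded at low frequencies; this is harmless because the stated conclusion involves only $\nabla u_l$ and $\partial_t u_l$, both of which are recovered from the $U_{l,\pm}$ without any inversion of $\omega$.
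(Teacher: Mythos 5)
Your proof is correct, and it is the standard half-wave Duhamel argument; the paper states Proposition~\ref{prop:scatter01} without proof precisely because this reduction is routine. The key steps all check out: the factorization $(\partial_t \pm i\omega)U_\pm = F$ with $\omega=\sqrt{m^2-\Delta}$, the unitarity of $e^{\pm it\omega}$ on $H^k$ combined with Minkowski's inequality to produce the limiting profiles with the quantitative tail bound $\int_t^{\infty}\|F\|_{H^k}\,\d s$, and the reconstruction of $\partial_t(u-u_l)$ and $\omega(u-u_l)$ as linear combinations of $U_\pm-U_{l,\pm}$, followed by the pointwise symbol bounds $|\xi|\le\sqrt{m^2+|\xi|^2}$ and $m\le\sqrt{m^2+|\xi|^2}$. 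Your closing remark about the $m=0$ case is apt and worth keeping: the asserted conclusion involves only $\partial_t u_l$, $\nabla u_l$, and $m u_l$, all of which are recovered from $U_{l,\pm}$ by bounded Fourier multipliers, so one never needs to invert $\omega=|\nabla|$ at low frequency. One could add, for completeness, that when $u$ is real one has $U_-=\overline{U_+}$ automatically (since $\omega$ has real symbol), so $f_-^\infty=\overline{f_+^\infty}$ holds without any choice and the reality of $u_l$ is forced.
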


In Section \ref{sec:interior}, we derived energy bounds of $n, E$ on hyperboloids. However, to apply Proposition \ref{prop:scatter01} to show scattering, we need to establish energy bounds of $n, E$ on constant $t$ slices.

Denote
$$
\Sigma^{in}_t = \{(s, x): s=t,\, r\leq s-1  \}.
$$
For a smooth function $\phi(t, x)$, we define its weighted energy on constant $t$ slices by
\begin{equation}
\begin{aligned}
\mathbb{E}^{in}_m (\phi, t, \gamma)
:=
&\int_{ \Sigma^{in}_t }  \langle r-t\rangle^{-2\gamma} \big( |\partial \phi|^2 + m^2 |\phi|^2 \big) \, \d x
\\
&+
\gamma \int_{t_0}^t \int_{ \Sigma^{in}_s } \langle r-s \rangle^{-2\gamma-1} \big( |G \phi|^2 + m^2 |\phi|^2 \big) \, \d x \d s
\end{aligned}
\end{equation}
as well as
\begin{align}
\mathbb{E}_m (\phi, t, \gamma)
:=
\mathbb{E}^{in}_m (\phi, t, \gamma)
+
\mathcal{E}^{ex}_m (\phi, t, \gamma).
\end{align}

\begin{proposition}[Energy bounds on $t$-slice]\label{prop:bound-t}
We have the following energy bounds on constant $t$ slices.

\begin{enumerate}
\item \emph{Energy bounds for $n$.}

\begin{equation}
    \begin{aligned}
        \| \partial n\|_{H^N}
        \lesssim
        &C_1 \epsilon,
        \\
        \sum_{|I|+|J|\leq N+1} \|\Gamma^I L^J_0 \ell\| 
+ \sum_{|I|+|J|\leq N} \langle t\rangle^\delta\|\partial \Gamma^I L_0^J \ell\|
\lesssim
&C_1 \epsilon \langle t\rangle^\delta,
\\
\sum_{\substack{|I|\geq 2,\, |J|\leq N-2 \\|I|+|J|\leq N+2}} \big( \big\|\langle t-r\rangle^{1\over 2} \partial^I \Gamma^J \mathfrak{m}\big\| 
+\big\|\langle t-r\rangle^{3\over 2} \partial\partial^I \Gamma^J \mathfrak{m}\big\|
\big)
\lesssim
&C_1 \epsilon.
    \end{aligned}
\end{equation}

\item \emph{Energy bounds for $E$.}

\begin{equation}
\begin{aligned}
\|\partial_t E\|_{H^{N+1}} + \|E\|_{H^{N+2}}
        \lesssim
        &C_1 \epsilon,
\\
\sum_{|I|\leq N+1} \big(\|\Gamma^I E\| + \|\partial \Gamma^I E\| \big) 
\lesssim 
&C_1 \epsilon \langle t\rangle^\delta,
\\
\big\|\langle t+r\rangle^{2} \langle t-r\rangle^{-2} E \big\|_{H^{7}}
\lesssim 
&C_1 \epsilon.
\end{aligned}
\end{equation}

\item \emph{Energy bounds for $nE$.}

\begin{equation}
\begin{aligned}    
\sum_{|I|\leq 7} \|\Gamma^I (n E)\| 
\lesssim 
&(C_1 \epsilon)^2 \langle t\rangle^{-1+\delta}.
\end{aligned}
\end{equation}

\item \emph{Pointwise bounds.}
\begin{align*}
\sum_{|I|+|J|\leq N-6}|\Gamma^I L_0^J \ell|
\lesssim 
&C_1 \epsilon \min \{\langle t+r\rangle^{-{1\over 2}}, \langle t+r\rangle^{-{1\over 2}} \langle t-r\rangle^{-{1\over 2}} \log \langle t\rangle \},
\\
\sum_{|I|\leq N-6} |\partial\partial\Gamma^I \mathfrak{m}| 
\lesssim
&C_1 \epsilon \langle t+r\rangle^{-{1\over 2}} \langle t-r\rangle^{-1},
\\
\sum_{|I|\leq N-8} |\partial^I E|
\lesssim
&C_1 \epsilon \langle t+r\rangle^{-{5\over 2}} \langle t-r\rangle^{{3\over 2}}.
\end{align*}
\end{enumerate}
\end{proposition}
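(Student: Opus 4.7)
The strategy is to split every statement into its exterior piece on $\Sigma^{ex}_t$ and its interior piece on $\Sigma^{in}_t$, and treat each separately. On $\Sigma^{ex}_t$ all the listed norms are already controlled by Proposition~\ref{prop:ex-improved}, Lemmas~\ref{lem:n0}--\ref{lem:nDelta-ex}, and Propositions~\ref{prop:KG-refine-ex}--\ref{prop:KG-refine-ex-a}, together with the weighted Hardy inequality (Proposition~\ref{prop:Hardy-ex}). The central task is to convert the hyperboloidal energy bounds of Section~\ref{sec:interior} into flat constant-$t$ bounds on $\Sigma^{in}_t=\{|x|\leq t-1\}$.

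For this conversion I will reuse the pointwise divergence identities \eqref{eq:div1}--\eqref{eq:div2} underlying Propositions~\ref{prop:EE-in}--\ref{prop:EE-in2}, but integrate them over the flat truncated region $\mathcal{D}^{in}_t=\{t_0\leq s\leq t,\ r\leq s-1\}$ rather than a hyperboloidal strip. Since $\Sigma^{in}_{t_0}$ reduces to the origin when $t_0=1$, and the boundary cone $\Sigma^{bd}_t=\{r=s-1\}$ is null (so its flux only sees the good derivatives $\partial_a\phi+\tfrac{x_a}{r}\partial_t\phi$), the divergence theorem produces
\begin{align*}
\int_{\Sigma^{in}_t}\bigl(|\partial\phi|^2+m^2\phi^2\bigr)\,\d x \lesssim \int_{\Sigma^{bd}_t}\sum_a\Bigl(\partial_a\phi+\tfrac{x_a}{r}\partial_t\phi\Bigr)^2+m^2\phi^2\,\d S+\int_{\mathcal{D}^{in}_t}|F\,\partial_t\phi|\,\d x\,\d s,
\end{align*}
whenever $-\Box\phi+m^2\phi=F$. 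The cone flux is controlled by the boundary bounds of Proposition~\ref{prop:BDRY-4}, and the spacetime source integral is handled via the hyperboloidal $L^2$ bounds of Section~\ref{sec:interior} together with the pointwise decay estimates produced there, after noting the identification $\tau_-\simeq\langle s-r\rangle$ on every $\Sigma^{in}_s$.

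Applied systematically this gives, in order: the uniform $H^N$ bound on $\partial n$ from Proposition~\ref{prop:refine-n-in001}; the bounds on $\ell$ directly from the free-wave Lemma~\ref{lem:freewave} combined with the conformal energy estimate of Proposition~\ref{prop:con-EE}, with a simple interpolation converting the $\log\langle t\rangle$ loss into the $\langle t\rangle^\delta$ factor; the weighted $\langle t-r\rangle^{1/2}$ and $\langle t-r\rangle^{3/2}$ bounds on $\mathfrak{m}$ and $\partial\mathfrak{m}$ from Lemmas~\ref{lem:nDelta-ex}--\ref{lem:nDelta-in}, whose $\tau_-^{\pm 1/2}$ weights translate exactly to the claimed $\langle t-r\rangle$ powers on $\Sigma^{in}_t$; the $E$ estimates at orders $\leq N+2$ (no growth) and with $\Gamma^I$ at orders $\leq N+1$ ($\langle t\rangle^\delta$ growth) from Propositions~\ref{prop:refine-E-in001}--\ref{prop:refine-E-in003}; the weighted $H^7$ bound on $E$ from the sharp pointwise decay in Proposition~\ref{prop:refine-E-decay}; the $\|\Gamma^I(nE)\|$ estimate by the product rule, bounding one factor in $L^\infty$ with the interior pointwise decay and the other in $L^2$; and finally the pointwise items via Klainerman-Sobolev (Propositions~\ref{prop:KS} and \ref{prop:Sobolev-in}) and commutator estimates.

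The main obstacle I foresee is closing the uniform $H^{N+2}$ bound on $E$ and $H^{N+1}$ bound on $\partial_t E$. The top-order hyperboloidal energy $\mathcal{E}^{in}_1(L^J E,\tau)$ with $|J|=N+1$ grows like $\tau^{2\delta}$ (Proposition~\ref{prop:refine-E-in003}), and a naive transfer would inherit this growth. The remedy is to carry out the flat energy estimate in the perturbation-of-mass form $-\Box E+(1+n)E=0$ of \textbf{Main idea 2}: the mass perturbation $n$ and its derivative $\partial_t n$ are both small and time-integrable by Proposition~\ref{prop:refine-n-in001}, so the Gronwall loop for the flat energy of $\partial^I E$, $|I|\leq N+2$, closes with no net growth, trading time derivatives for spatial derivatives through the equation whenever needed. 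Once this closes, all remaining items of Proposition~\ref{prop:bound-t} are routine consequences of the bounds already collected.
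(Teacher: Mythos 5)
Your overall strategy matches the paper's: integrate the divergence identities \eqref{eq:div1}--\eqref{eq:div2} over the capped interior region (the paper calls it $\mathcal{D}_A=\mathcal{D}_{[\tau_0,t]}\cap\{s\leq t\}$, which for $\tau_0=t_0=1$ coincides with your $\mathcal{D}^{in}_t$), control the null-cone flux by Proposition~\ref{prop:BDRY-4}, and re-parametrize the source integral by hyperboloidal slices so the bounds of Section~\ref{sec:interior} apply directly. The use of the perturbation-of-mass form $h=n$ for the $E$-energy is also exactly what the paper does.

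Your discussion of the ``main obstacle,'' however, rests on a misreading of which interior energies are needed. The $\tau^{2\delta}$ growth in Proposition~\ref{prop:refine-E-in003} concerns $\mathcal{E}^{in}_1(L^J E,\tau)$ with $|J|=N+1$, i.e.\ full Lorentz boosts at top order. The claimed flat bound $\|E\|_{H^{N+2}}+\|\partial_t E\|_{H^{N+1}}\lesssim C_1\epsilon$ involves only translation derivatives, and for those Proposition~\ref{prop:refine-E-in001} already gives the \emph{uniform} (growth-free) hyperboloidal bound $\mathcal{E}^{in}_1(\partial^I E,\tau)\lesssim\epsilon^2+(C_1\epsilon)^3$ for $|I|\leq N+1$. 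Transferring to $\Sigma^{in}_t$ via the divergence theorem on $\mathcal{D}_A$ then needs no new Gronwall loop; one re-uses exactly the source estimates from the proof of Proposition~\ref{prop:refine-E-in001}. Moreover your proposal to run the estimate at order $|I|\leq N+2$ would step outside the bootstrap assumption \eqref{eq:BA-in3}, which caps at $|I|+|J|\leq N+1$; this is also unnecessary, since the interior energy of $\partial^I E$ at order $|I|\leq N+1$ already controls $\partial\partial^I E$ on $\Sigma^{in}_t$, giving $N+2$ spatial derivatives. The $\tau^{2\delta}$ growth is genuine and does appear in the $\langle t\rangle^{\delta}$-weighted estimate $\sum_{|I|\leq N+1}\|\Gamma^I E\|\lesssim C_1\epsilon\langle t\rangle^{\delta}$, which is precisely where the paper allows it.
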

\begin{proof}
    Note that the pointwise estimates are already derived in Sections \ref{sec:exterior} and \ref{sec:interior}. Since the energy bounds for $n, E$ in the exterior region are derived on constant $t$-slices in Section \ref{sec:exterior}, we only focus on their energy bounds on constant $t$-slices in the interior region.

\textbf{Case I: $n$ part.}

Recall the decomposition $n=\ell + \Delta \mathfrak{m}$, and the bounds for $\ell$ part follow from Proposition \ref{prop:con-EE} and Lemma \ref{lem:freewave}. Therefore, we only focus on the $\mathfrak{m}$ part.

Given $t>10$ fixed, we apply the divergence theorem on the equation \eqref{eq:div1} with $\phi = \partial^I \Gamma^J \mathfrak{m}, m=0, F=\partial^I \Gamma^J |E|^2, \gamma={1\over 2}$ with $|I|+|J|\leq N+2, \, |J|\leq N+1$
over the region 
\begin{align*}
    \mathcal{D}_A = \mathcal{D}_{[\tau_0, t]} \bigcap \{(s, x): s \leq t \}
\end{align*}
to get
\begin{align*}
    \mathbb{E}^{in}_0(\partial^I \Gamma^J \mathfrak{m}, t, {1\over 2})
    \lesssim
    &\mathcal{E}^{in}_0(\partial^I \Gamma^J \mathfrak{m}, \tau_0, {1\over 2})
    +
    \int_{\mathcal{D}_A} \big| \tau_-^{-1} \partial^I \Gamma^J|E|^2 \cdot \partial_t \partial^I \Gamma^J \mathfrak{m} \big| \, \d x \d s
    \\
    &+
    \int_{\Sigma^{bd}_t}  \sum_a ({x_a \over t} \partial_t  \partial^I L^J \mathfrak{m} + \partial_a  \partial^I L^J \mathfrak{m})^2  \, \d S
    \\
    \lesssim
    &\epsilon^2 
    +
    \int_{\mathcal{D}_{[\tau_0, t]}} \big| \tau_-^{-1} \partial^I \Gamma^J |E|^2 \partial_t \partial^I \Gamma^J \mathfrak{m} \big| \, \d x \d s
    \\
    \lesssim
    &\epsilon^2 
    +
    \int_{\tau_0}^t \big\| \tau_-^{-1} \partial^I \Gamma^J |E|^2 \cdot (\widetilde{\tau}/t) \partial_t \partial^I \Gamma^J \mathfrak{m} \big\|_{L^1(\mathcal{H}_{\widetilde{\tau}})} \, \d\widetilde{\tau},
\end{align*}
in which we used the bounds from Proposition \ref{prop:BDRY-4} to estimate the boundary contributions.

Therefore, employing the estimates in Lemma \ref{lem:nDelta-in}, \eqref{eq:E-improve-in}, and \eqref{eq:BA-in3-re}, we derive that
\begin{align*}
    \mathbb{E}^{in}_0(\partial^I \Gamma^J \mathfrak{m}, t, {1\over 2})
    \lesssim
    \epsilon^2 + (C_1 \epsilon)^3,
    \qquad
    |I|+|J|\leq N+2, \, |J|\leq N+1.
\end{align*}

Applying Proposition \ref{prop:wave-extra} (twice) yields the bounds for $\partial \partial \Gamma^I \mathfrak{m}$ (and $\partial \partial \partial \Gamma^I \mathfrak{m}$).

\textbf{Case II: $E$ part.}

We first look at the estimates for $\partial^I E$ with $|I|\leq N+1$. For any fixed $t>10$, we utilize the divergence identity \eqref{eq:div2} with $\phi = \partial^I E, m=1, h= n, F= -\partial^I (nE), \gamma = 0$ and the divergence theorem in the spacetime region $\mathcal{D}_A$ to obtain 
    \begin{align*}
    &\mathbb{E}^{in}_1(\partial^I E, t, 0)
    \\
    \lesssim
    &\mathcal{E}^{in}_1(\partial^I E, \tau_0, 0)
    +
    \int_{\mathcal{D}_A} \big( \big|(-\partial^I (nE)+n\partial^I E)\cdot \partial_t \partial^I E \big| + |\partial^I E|^2 |\partial_t n| \big) \, \d x \d s
    \\
    &+
 \int_{\Sigma^{bd}_t}  \sum_a ({x_a \over t} \partial_t \partial^I E + \partial_a \partial^I E)^2 + |\partial^I E|^2  \, \d S
    \\
    \lesssim
    &\epsilon^2 
    +
    \int_{\tau_0}^t \big\| (\widetilde{\tau}/t)\big( |(-\partial^I (nE)+n\partial^I E)\cdot \partial_t \partial^I E| + |\partial^I E|^2 |\partial_t n| \big)\big\|_{L^1(\mathcal{H}_{\widetilde{\tau}})} \, \d\widetilde{\tau},
\end{align*}
in which we used Proposition \ref{prop:BDRY-4}.

Using the estimates in Lemmas \ref{lem:n0-in}--\ref{lem:nDelta-in}, \eqref{eq:E-improve-in}, and \eqref{eq:BA-in3-re}, we get
\begin{align*}
    \mathbb{E}^{in}_1(\partial^I E, t, 0)
    \lesssim
    \epsilon^2 + (C_1 \epsilon)^3,
    \qquad
    |I|\leq N+1.
\end{align*}

Second, in a similar manner, we deduce that
\begin{align*}
    \mathbb{E}^{in}_1(\Gamma^I E, t, 0)
    \lesssim
    \epsilon^2 + (C_1 \epsilon)^3 \langle t\rangle^{2\delta},
    \qquad
    |I|\leq N+1.
\end{align*}

Applying Proposition \ref{prop:KG-extra} twice leads us to the bounds for $\langle t+r\rangle^2 \langle t-r\rangle^{-2} E$.

\textbf{Case III: $nE$ part.}

For $|I|\leq 7$, we simply have
\begin{align*}
    &\|\Gamma^I (nE)\|
    \\
    \lesssim
    &\sum_{\substack{|I_1|+|I_2|\leq |I| \\ |I_1|\leq |I_2|}} \big(\| (\tau_-/\tau_+)\Gamma^{I_1}n\|_{L^\infty} \| (\tau_+/\tau_-) \Gamma^{I_2}E\|
    + \| \Gamma^{I_2}n\| \|\Gamma^{I_1}E\|_{L^\infty}\big)
    \\
    \lesssim
    &(C_1 \epsilon)^2 \langle t\rangle^{-1+\delta}.
\end{align*}

    The proof is done.
\end{proof}

\subsection{Scattering for $n$}

We denote $Q_{\alpha \beta}(f, g) = \partial_\alpha f \cdot\partial_\beta g - \partial_\beta f \cdot\partial_\alpha g$, which represents null forms that are compatible with both wave and Klein-Gordon fields, and enjoys
$$
Q_{\alpha \beta}(f, g)
\lesssim
\langle t+r\rangle^{-1} \big( \big|\Gamma f\big| |\partial g| + |\partial f| \big|\Gamma g\big|  \big).
$$
Recall that $\Gamma \in \{  \partial_0, \partial_1, \partial_2, \Omega_{12}, L_1, L_2  \}$.

\begin{lemma}\label{lem:transform01}
Let 
\begin{equation}\label{eq:nono-trans}
\widetilde{n} = n + {1\over 4}\Delta |E|^2,
\end{equation}
then we have
\begin{equation}
\begin{aligned}
-\Box \widetilde{n}
=
&{1\over 2} \big(-\Box \Delta E + \Delta E\big)\cdot E
+
{3\over 2} \Delta E \cdot \big(-\Box E + E \big)
+
{3\over 2} \big(-\Box \partial^a E + \partial^a E \big)\cdot \partial_a E
\\
&+
{1\over 2} \partial^a E \cdot \big(-\Box \partial_a E + \partial_a E \big)
-
Q_{a\beta} (\partial^\beta \partial^a E, E)
-
Q_{\beta a} (\partial^a E, \partial^\beta E).
\end{aligned}
\end{equation}
\end{lemma}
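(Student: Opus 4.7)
My approach is a direct algebraic computation. Since $-\Box n = \Delta|E|^2$ and the Laplacian commutes with the wave operator,
\begin{equation*}
-\Box \widetilde n = \Delta|E|^2 - \tfrac{1}{4}\Delta \Box|E|^2.
\end{equation*}
The task is then to expand the two terms on the right via the Leibniz rule and reassemble them so that every second-order contribution takes the form $(-\Box + 1)\phi$ acting on appropriate pieces of $E$, leaving behind only combinations recognizable as the null forms $Q_{\alpha\beta}$.

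In concrete terms, I would first expand $\Delta|E|^2 = 2E\,\Delta E + 2\,\partial^a E\,\partial_a E$ and $\Box|E|^2 = 2E\,\Box E + 2\,\partial^\alpha E\,\partial_\alpha E$, then apply $\Delta$ once more to the latter (two uses of the Leibniz rule for $\partial_a$) to produce an expression built from five quadratic blocks: $E\,\Box\Delta E$, $\Delta E\,\Box E$, $\partial^a\Box E\,\partial_a E$, $\partial^\alpha\Delta E\,\partial_\alpha E$, and $\partial^a\partial^\alpha E\,\partial_a\partial_\alpha E$. Combining with the $\Delta|E|^2$ contribution yields an explicit polynomial in these quantities with definite numerical coefficients.

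The next step is the regrouping that turns this polynomial into the claimed right-hand side. I would combine the $E\,\Box\Delta E$ and $E\,\Delta E$ contributions into $\tfrac{1}{2}(-\Box\Delta E + \Delta E)\cdot E$, and similarly manufacture $\tfrac{3}{2}\Delta E\,(-\Box E + E)$, $\tfrac{3}{2}(-\Box\partial^a E + \partial^a E)\partial_a E$, and $\tfrac{1}{2}\partial^a E\,(-\Box\partial_a E + \partial_a E)$. To verify that the remainder equals the two null-form contributions, I will expand
\begin{equation*}
Q_{a\beta}(\partial^\beta\partial^a E, E) = \partial^\alpha\Delta E\,\partial_\alpha E - \partial^a\Box E\,\partial_a E,
\end{equation*}
\begin{equation*}
Q_{\beta a}(\partial^a E, \partial^\beta E) = \partial^a\partial^\alpha E\,\partial_a\partial_\alpha E - \Delta E\,\Box E,
\end{equation*}
using $\partial^a\partial_a = \Delta$ and $\partial^\alpha\partial_\alpha = \Box$, and then check that these account precisely for the four residual quadratic gradient terms left over after the Klein--Gordon-operator grouping.

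Since the identity is purely algebraic, no analytic input is required; the main obstacle is careful bookkeeping --- keeping spatial indices $a,b\in\{1,2\}$ distinct from spacetime indices $\alpha,\beta\in\{0,1,2\}$, handling the sign $\partial^0=-\partial_0$ consistently, and verifying that the split between the $\tfrac{1}{2}/\tfrac{3}{2}$ and $\tfrac{3}{2}/\tfrac{1}{2}$ pairings recovers all the numerical coefficients produced by the Leibniz expansion.
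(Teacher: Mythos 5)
Your proposal is correct and follows essentially the same strategy as the paper's proof: both sides are expanded by the Leibniz rule, regrouped so that the second-order contributions form Klein--Gordon operators $(-\Box+1)$ acting on pieces of $E$, and the residual gradient terms are identified with the two null forms. The only cosmetic difference is that the paper first sets $f^a=2\partial^a E$, $g=E$ and works with the abstract divergence $\partial_a(f^a g)$ (which is what allows it to extract the more general Lemma~\ref{lem:transform02} from the same computation), while your direct expansion of $\Delta|E|^2-\tfrac14\Delta\Box|E|^2$ produces the same five quadratic blocks, and the bookkeeping you outline does check out.
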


\begin{proof}
Set $f^a = 2 \partial^a E$, $g=E$, and then
\begin{align*}
\Delta |E|^2
=
\partial_a (f^a \cdot g).
\end{align*} 
We find
\begin{align*}
-\Box \widetilde{n}
=
&\partial_a (f^a \cdot g)
-
{1\over 4} \Box \partial_a (f^a \cdot g)
\\
=
&\partial_a f^a \cdot g + f^a \cdot \partial_a g
-
{1\over 4} \big(\partial_a \Box f^a \cdot g + \partial_a f^a \cdot \Box g +  \Box f^a \cdot  \partial_a g +  f^a \cdot \Box \partial_a g
\\
&+
2 \partial_\beta \partial_a f^a \cdot \partial^\beta g + 2 \partial_\beta  f^a \cdot  \partial^\beta \partial_a g
\big)
\\
=
& \partial_a f^a \cdot g + f^a \cdot \partial_a g
+
{1\over 4} \big(-\Box \partial_a f^a + \partial_a f^a -\partial_a f^a \big)\cdot  g
+
{1\over 4} \partial_a f^a \cdot \big(-\Box g + g -g \big)
\\
&+
{1\over 4} \big(-\Box  f^a +  f^a - f^a \big)\cdot \partial_a g
+
{1\over 4}  f^a \cdot \big(-\Box \partial_a g + \partial_a g - \partial_a g \big)
\\
&-
{1\over 4} \big( 2\partial_a \partial^\beta f^a \cdot \partial_\beta g - 2  \partial_\beta \partial^\beta f^a \cdot \partial_a g
-2  (-\Box  f^a +  f^a - f^a )\cdot \partial_a g
\\
&+
2 \partial_\beta f^a \cdot \partial_a \partial^\beta g - 2 \partial_a f^a \cdot \partial_\beta \partial^\beta g
-
2 \partial_a f^a \cdot (-\Box g + g -g)
\big).
\end{align*}
After some cancellation, we arrive at
\begin{align*}
-\Box \widetilde{n}
=
& 
{1\over 4} \big(-\Box \partial_a f^a + \partial_a f^a  \big) \cdot g
+
{3\over 4} \partial_a f^a \cdot \big(-\Box g + g \big)
\\
&+
{3\over 4} \big(-\Box  f^a +  f^a  \big) \cdot \partial_a g
+
{1\over 4}  f^a \cdot \big(-\Box \partial_a g + \partial_a g \big)
\\
&-
{1\over 4} \big( 2\partial_a \partial^\beta f^a \cdot \partial_\beta g - 2  \partial_\beta \partial^\beta f^a \cdot \partial_a g
+
2 \partial_\beta f^a \cdot \partial_a \partial^\beta g - 2 \partial_a f^a \cdot \partial_\beta \partial^\beta g
\big).
\end{align*}

Finally, inserting $f^a = \partial^a E, g=E$ leads to the desired result.
\end{proof}

More generally, as long as the nonlinearities in the wave equation are of divergence form, we have an analogous result.

\begin{lemma}\label{lem:transform02}
Let $w$ solve
$$
-\Box w = \partial_\gamma (f g),
$$
then $\widetilde{w} : = w + {1\over 4} \partial_\gamma (f g)$ satisfies
\begin{equation}
\begin{aligned}
-\Box \widetilde{w}
=
&{1\over 4} \big( -\Box \partial_\gamma f + \partial_\gamma f  \big) g
+
{3\over 4} \partial_\gamma f \big( -\Box g + g  \big)
+
{3\over 4} \big( -\Box f + f  \big) \partial_\gamma g
\\
&+
{1\over 4} f \big( -\Box \partial_\gamma g + \partial_\gamma g  \big) 
-
{1\over 2} Q_{\gamma \alpha} (\partial^\alpha f, g)
-
{1\over 2} Q_{ \alpha \gamma} ( f, \partial^\alpha g).
\end{aligned}
\end{equation}
\end{lemma}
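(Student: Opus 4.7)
The plan is to mimic the derivation of Lemma~\ref{lem:transform01} but at the level of a general divergence-form source $\partial_\gamma(fg)$, treating $f$ and $g$ as independent. Starting from the definition $\widetilde w = w + \tfrac14 \partial_\gamma(fg)$ and the equation for $w$, I first write
\begin{align*}
-\Box \widetilde w = \partial_\gamma(fg) - \tfrac14 \Box\,\partial_\gamma(fg),
\end{align*}
and then expand using Leibniz for $\partial_\gamma$ together with the second-order Leibniz identity $\Box(uv) = \Box u\cdot v + u\cdot \Box v + 2\,\partial^\alpha u\cdot \partial_\alpha v$ (valid for the Minkowski metric $m=\mathrm{diag}(-1,1,1)$) applied to $u=\partial_\gamma f$, $v=g$ and to $u=f$, $v=\partial_\gamma g$. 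This produces the explicit decomposition
\begin{align*}
-\Box \widetilde w =\;& \partial_\gamma f\cdot g + f\cdot\partial_\gamma g
-\tfrac14\bigl(\Box\partial_\gamma f\cdot g + \partial_\gamma f\cdot\Box g + 2\,\partial^\alpha\partial_\gamma f\cdot\partial_\alpha g\bigr)\\
&-\tfrac14\bigl(\Box f\cdot\partial_\gamma g + f\cdot\Box\partial_\gamma g + 2\,\partial^\alpha f\cdot\partial_\alpha\partial_\gamma g\bigr).
\end{align*}

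Next I rewrite each occurrence of $\Box u$ on the right as $-(-\Box u + u) + u$, which converts the bare $\Box$ terms into the combinations $-\Box u + u$ demanded by the statement and produces extra $u$ terms with simple coefficients that combine with the two pre-existing $\partial_\gamma f\cdot g$ and $f\cdot\partial_\gamma g$ contributions. At this stage the non-null contributions already match the coefficients $\tfrac14, \tfrac34, \tfrac34, \tfrac14$ appearing in the proposition, up to the quadratic terms $\partial^\alpha\partial_\gamma f\cdot\partial_\alpha g$ and $\partial^\alpha f\cdot\partial_\alpha\partial_\gamma g$, which are precisely what the null forms are designed to reorganize.

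The final step is to recognise these quadratic derivative terms through the identities
\begin{align*}
Q_{\gamma\alpha}(\partial^\alpha f, g) &= \partial_\gamma\partial^\alpha f\cdot\partial_\alpha g - \Box f\cdot\partial_\gamma g,\\
Q_{\alpha\gamma}(f,\partial^\alpha g) &= \partial_\alpha f\cdot\partial_\gamma\partial^\alpha g - \partial_\gamma f\cdot \Box g,
\end{align*}
which follow directly from the definition $Q_{\alpha\beta}(u,v)=\partial_\alpha u\,\partial_\beta v-\partial_\beta u\,\partial_\alpha v$ and from contracting with $\partial^\alpha$. Substituting to eliminate $\partial^\alpha\partial_\gamma f\cdot\partial_\alpha g$ and $\partial^\alpha f\cdot\partial_\alpha\partial_\gamma g$ in favour of $-\tfrac12 Q_{\gamma\alpha}(\partial^\alpha f,g)-\tfrac12 Q_{\alpha\gamma}(f,\partial^\alpha g)$ introduces additional multiples of $\Box f\cdot\partial_\gamma g$ and $\partial_\gamma f\cdot\Box g$, which shift the coefficients on these two terms from $-\tfrac14$ to $-\tfrac34$, exactly matching the proposition.

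The calculation is entirely algebraic with no analytic input, so there is no real obstacle beyond careful bookkeeping of the coefficients $\tfrac14,\tfrac34,\tfrac12$; the only place where a sign convention matters is in applying $\Box(uv) = \Box u\cdot v + u\cdot \Box v + 2\,\partial^\alpha u\cdot\partial_\alpha v$ with the Minkowski metric, which is implicit in the analogous derivation of Lemma~\ref{lem:transform01}. Setting $f=2\partial^a E$, $g=E$, summing over $a$, and choosing $\gamma=a$ then immediately recovers Lemma~\ref{lem:transform01} as a special case, providing a useful consistency check.
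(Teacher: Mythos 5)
Your overall plan is correct and is essentially the same algebraic route the paper takes: apply the second-order Leibniz rule to $\Box\,\partial_\gamma(fg)$, write each bare $\Box u$ as $-(-\Box u + u) + u$, and absorb the cross terms $\partial^\alpha\partial_\gamma f\cdot\partial_\alpha g$, $\partial^\alpha f\cdot\partial_\alpha\partial_\gamma g$ through the null-form identities
$Q_{\gamma\alpha}(\partial^\alpha f,g)=\partial^\alpha\partial_\gamma f\cdot\partial_\alpha g-\Box f\cdot\partial_\gamma g$ and
$Q_{\alpha\gamma}(f,\partial^\alpha g)=\partial_\alpha f\cdot\partial^\alpha\partial_\gamma g-\partial_\gamma f\cdot\Box g$, both of which you state correctly for the mostly-plus metric. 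The paper records only the expansion and the final regrouping; your proposal makes that regrouping explicit, and you correctly recover Lemma~\ref{lem:transform01} as the special case $f=2\partial^a E$, $g=E$, $\gamma=a$ (summed over $a$).

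One bookkeeping claim in the write-up is off, though it does not affect the final answer. Immediately after the substitution $\Box u = -(-\Box u+u)+u$ (and before touching the cross terms), the four ``good'' combinations all carry coefficient $\tfrac14$, and there are leftover zeroth-order pieces $\tfrac12\,\partial_\gamma f\cdot g+\tfrac12\,f\cdot\partial_\gamma g$: the coefficients are not yet $\tfrac14,\tfrac34,\tfrac34,\tfrac14$ as you state. The $\tfrac34$ coefficients appear only after the null-form substitution releases $-\tfrac12\Box f\cdot\partial_\gamma g$ and $-\tfrac12\,\partial_\gamma f\cdot\Box g$, which you then have to split once more into $\tfrac12(-\Box f+f)\partial_\gamma g-\tfrac12 f\cdot\partial_\gamma g$ (and likewise for $g$), so that the extra $\tfrac12$ zeroth-order pieces cancel. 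Equivalently, if you do the null-form step \emph{before} the $\Box u\mapsto -(-\Box u+u)+u$ step, then the description ``shifts the coefficient on $\Box f\cdot\partial_\gamma g$ from $-\tfrac14$ to $-\tfrac34$'' becomes literally correct; in the order you describe there are no bare $\Box f$ terms left to shift. It is a presentational slip, not a gap in the argument, and the final identity comes out exactly as in the lemma.
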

\begin{proof}
We note
\begin{align*}
-\Box \widetilde{w}
=
&-\Box \big(w + {1\over 4} \partial_\gamma (fg) \big)
\\
=
&-\Box w 
+ {1\over 4} (-\Box \partial_\gamma f)g
+{1\over 4} \partial_\gamma f (-\Box g)
- {1\over 2} \partial_\alpha \partial_\gamma f \partial^\alpha g
\\
&+{1\over 4 } f (-\Box \partial_\gamma g)
+ {1\over 4} (-\Box f) \partial_\gamma g
-{1\over 2} \partial_\alpha f \partial^\alpha \partial_\gamma g
\\
=
&
{1\over 4} \big( -\Box \partial_\gamma f + \partial_\gamma f  \big) g
+
{3\over 4} \partial_\gamma f \big( -\Box g + g  \big)
+
{3\over 4} \big( -\Box f + f  \big) \partial_\gamma g
\\
&+
{1\over 4} f \big( -\Box \partial_\gamma g + \partial_\gamma g  \big) 
-
{1\over 2} Q_{\gamma \alpha} (\partial^\alpha f, g)
-
{1\over 2} Q_{ \alpha \gamma} ( f, \partial^\alpha g).
\end{align*}

The proof is completed.
\end{proof}

We go back to proving linear scattering of the component $n$ in \eqref{eq:2D-KGZ}.

\begin{proposition}[Linear scattering of $n$]\label{prop:scatter-n}
There exist a pair of  initial data $(n^f_0, n^f_1) \in H^{N-1} \times H^{N-2}$ and a  function $n^f$ solving the linear wave equation
\begin{align*}
&-\Box n^f = 0,
\\
&(n^f, \partial_t n^f)(t_0) = (n^f_0, n^f_1),
\end{align*}
 such that
\begin{align}
\| \partial_t (n - n^f) \|_{H^{N-2}}
+
\| \nabla (n- n^f) \|_{H^{N-2}}
\lesssim
t^{-{1\over 2}}.
\end{align}
\end{proposition}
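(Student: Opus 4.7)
The proof is built on the nonlinear transformation \eqref{eq:nono-trans}. Setting $\widetilde n = n + \tfrac{1}{4}\Delta|E|^2$ and substituting the Klein--Gordon equation $-\Box E + E = -nE$ into Lemma~\ref{lem:transform01}, one obtains $-\Box \widetilde n = F$, where $F$ is a sum of two kinds of terms: genuinely cubic expressions involving one factor of $n$ times products of two (derivatives of) $E$'s, together with the two null forms $Q_{a\beta}(\partial^\beta \partial^a E, E)$ and $Q_{\beta a}(\partial^a E, \partial^\beta E)$. In view of the scattering criterion in Proposition~\ref{prop:scatter01} applied with $m=0$ and $k=N-2$, it suffices to prove
\[
\int_t^{+\infty} \|F(s,\cdot)\|_{H^{N-2}}\,\d s \;\lesssim\; t^{-1/2}.
\]

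The plan is to establish the pointwise-in-time estimate $\|F(t,\cdot)\|_{H^{N-2}} \lesssim \epsilon^2 \langle t\rangle^{-3/2}$, which then integrates to exactly the advertised rate. For the cubic part, I would combine the sharp pointwise decays $|n|\lesssim \epsilon \langle t\rangle^{-1/2}$ and $|E|\lesssim \epsilon\langle t\rangle^{-1}$ with the flat-slice Sobolev bounds $\|\partial n\|_{H^N}\lesssim \epsilon$, $\|E\|_{H^{N+2}}\lesssim \epsilon$, $\|\partial_t E\|_{H^{N+1}}\lesssim \epsilon$ from Proposition~\ref{prop:bound-t}; distributing derivatives by Leibniz and always placing the lowest-order factor in $L^\infty$, Moser-type inequalities yield the $\langle t\rangle^{-3/2}$ rate on each cubic piece. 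For the null forms, I would invoke the pointwise bound $|Q_{\alpha\beta}(f,g)|\lesssim \langle t+r\rangle^{-1}(|\Gamma f||\partial g|+|\partial f||\Gamma g|)$, which buys an extra factor of $\langle t\rangle^{-1}$ over the naive product estimate; combined with $\|\Gamma^I E\|\lesssim \epsilon\langle t\rangle^{\delta}$ and the pointwise decays of $\partial^I E$ from Proposition~\ref{prop:bound-t}, this gives the strictly better bound $\lesssim \epsilon^2 \langle t\rangle^{-2+\delta}$.

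Proposition~\ref{prop:scatter01} then furnishes a free wave $\widetilde n^f$ with initial data in $H^{N-1}\times H^{N-2}$ obeying
\[
\|\partial_t(\widetilde n - \widetilde n^f)\|_{H^{N-2}} + \|\nabla(\widetilde n - \widetilde n^f)\|_{H^{N-2}} \;\lesssim\; t^{-1/2}.
\]
Setting $n^f := \widetilde n^f$, one has $n - n^f = (\widetilde n - \widetilde n^f) - \tfrac{1}{4}\Delta|E|^2$, so it remains to control the correction. A Moser-type estimate together with $\|E\|_\infty,\|\partial_t E\|_\infty\lesssim \epsilon\langle t\rangle^{-1}$ and the above Sobolev bounds gives $\|\nabla \Delta|E|^2\|_{H^{N-2}} + \|\partial_t \Delta|E|^2\|_{H^{N-2}} \lesssim \epsilon^2\langle t\rangle^{-1}$, which is strictly stronger than $\langle t\rangle^{-1/2}$.

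The hard part is the bookkeeping of derivative distribution: at each cubic term and each Sobolev order one must carefully pair pointwise and $L^2$ bounds, and in particular handle the case where vector fields fall on the highest-order factor, for which the energy grows like $\langle t\rangle^\delta$. The rate $\langle t\rangle^{-3/2}$ coming from the cubic source sits exactly on the borderline tolerated by Proposition~\ref{prop:scatter01}, so no further gain over the naive cubic estimate is needed, but losing any pointwise decay factor would be fatal.
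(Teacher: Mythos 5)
Your proposal is correct and follows essentially the same route as the paper: nonlinear transformation $\widetilde n = n + \tfrac14\Delta|E|^2$, substitution of the Klein--Gordon equation into Lemma~\ref{lem:transform01} to obtain a source that is cubic-plus-null-form, the scattering criterion of Proposition~\ref{prop:scatter01}, and then recovering $n$ from $\widetilde n$ by controlling the quadratic correction $\Delta|E|^2$ in $H^{N-1}$. The only cosmetic differences are that you track a slightly sharper $\langle t\rangle^{-2+\delta}$ rate for the null-form contributions where the paper is content with $\langle t\rangle^{-3/2}$, and you spell out the Moser/Leibniz bookkeeping a bit more explicitly; neither affects the final $\langle t\rangle^{-1/2}$ rate or the derivative count.
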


\begin{remark}
In this scattering result, we lose two derivatives of regularity, which is due to the derivative loss in the nonlinear transformation \eqref{eq:nono-trans}.
\end{remark}





\begin{proof}[Proof of Proposition \ref{prop:scatter-n}]

\

We set 
$$
\widetilde{n} = n + {1\over 4}\Delta |E|^2.
$$
We employ Lemma \ref{lem:transform01} to get 
\begin{equation}
\aligned
-\Box \widetilde{n}
&=
-{1\over 2}  \Delta (nE)  E
-
{3\over 2} \Delta E  (n E)
-
{3\over 2}  \partial^a (n E) \partial_a E
\\
&-
{1\over 2} \partial^a E  \partial_a (nE)
-
Q_{a\beta} (\partial^\beta \partial^a E, E)
-
Q_{\beta a} (\partial^a E, \partial^\beta E).
\endaligned
\end{equation}
Our strategy is to first demonstrate linear scattering of $\widetilde{n}$, and then to transfer this property to $n$ by using the fact that the difference between $n$ and $\widetilde{n}$ is a lower order term of quadratic type. The usage of this nonlinear transformation is the main reason why we lose some regularity and cannot cover the full range scattering in $(\partial_t n, \nabla n) \in H^{N}\times H^N$.

To show linear scattering of $\widetilde{n}$, we rely on Proposition \ref{prop:scatter01} and need to bound the source terms of the wave equation of $\widetilde{n}$.

We have
\begin{align*}
\| \Delta (nE) \cdot E \|_{H^{N-2}}
\lesssim
&\sum_{|I|\leq N-2}\| \partial^I (\Delta (nE) \cdot E) \|
\\
\lesssim &(C_1 \epsilon)^3 \langle t\rangle^{-{3\over 2}}.
\end{align*}
Similarly, we derive
\begin{align*}
&\| \Delta E \cdot (nE) \|_{H^{N-2}}
+
\| \partial^a (n E) \partial_a E \|_{H^{N-2}}
+
\| \partial^a E  \partial_a (nE) \|_{H^{N-2}}
\\
\lesssim 
&(C_1 \epsilon)^3 \langle t\rangle^{-{3\over 2}}.
\end{align*}
For the null form, we have
\begin{align*}
\| Q_{a\beta} (\partial^\beta \partial^a E, E) \|_{H^{N-2}}
\lesssim
&\sum_{|I|\leq N-2}\| \partial^I Q_{a\beta} (\partial^\beta \partial^a E, E) \|
\\
\lesssim &(C_1 \epsilon)^3 \langle t\rangle^{-{3\over 2}}.
\end{align*}
Similarly, we get
\begin{align*}
\| Q_{\beta a} (\partial^a E, \partial^\beta E) \|_{H^{N-2}}
\lesssim (C_1 \epsilon)^3 \langle t\rangle^{-{3\over 2}}.
\end{align*}

On the other hand, we can bound the difference term by

\begin{align*}
    \|\Delta |E|^2 \|_{H^{N-1}}
    \lesssim
    (C_1 \epsilon)^2 \langle t\rangle^{-1}.
\end{align*}

To conclude, we have
\begin{equation}
\begin{aligned}
    &\| \partial_t (n - n^f) \|_{H^{N-2}}
    +
    \| \nabla (n- n^f) \|_{H^{N-2}}
    \\
    \leq
    &\| \partial_t (\widetilde{n} - n^f) \|_{H^{N-2}}
    +
    \| \nabla (\widetilde{n}- n^f) \|_{H^{N-2}}
    +
    \|\partial\partial |E|^2\|_{H^{N-2}}
    \\
    \lesssim
    &\int_{t}^{+\infty} \|\Box \widetilde{n}\|_{H^{N-1}} \, \d s
    + 
    \|\partial\partial |E|^2\|_{H^{N-1}}
    \lesssim \langle t\rangle^{-{1\over 2}}.
\end{aligned}
\end{equation}

The proof is completed.
\end{proof}

\section{Scattering of the field $E$}\label{sec:S-E}


\subsection{Setting in Fourier space}

Denote
\begin{align*}
E_\pm = &(\partial_t \mp i \langle \nabla \rangle) E,
\\
\ell_\pm = &(\partial_t \mp i  |\nabla |) \ell,
\\
(nE)_{\pm} = &(\partial_t \mp i \langle \nabla \rangle)(nE)
\end{align*}
with the interpretation in the Fourier space as
\begin{align*}
\widehat{E_\pm} = &(\partial_t \mp i \langle \xi \rangle) \widehat{E},
\\
\widehat{\ell_\pm} = &(\partial_t \mp i | \xi| ) \widehat{\ell},
\\
\widehat{(nE)_\pm} = &(\partial_t \mp i \langle \xi \rangle) \widehat{nE}.
\end{align*}

We denote the profiles of $E_\pm$, $\ell_\pm$, and $(nE)_\pm$ by 
\begin{align*}
f_\pm = e^{\pm it \langle \nabla\rangle} E_\pm,
\qquad
g_\pm = e^{\pm it | \nabla|} \ell_\pm,
\end{align*}
and 
\begin{align*}
h_\pm = e^{\pm it \langle \nabla\rangle} (nE)_\pm.
\end{align*}
respectively.
Easily, one finds
\begin{align*}
E = {E_- - E_+ \over 2i \langle \nabla \rangle}
=i{e^{-it\langle\nabla\rangle} f_+ - e^{it\langle\nabla\rangle} f_- \over 2 \langle\nabla \rangle},
\\
\ell = {\ell_- - \ell_+ \over 2i |\nabla|}
= i{e^{-it|\nabla|} g_+ - e^{it|\nabla|} g_- \over 2 |\nabla|},
\\
nE = {(nE)_- - (nE)_+ \over 2i \langle \nabla \rangle}
=i{e^{-it\langle\nabla\rangle} h_+ - e^{it\langle\nabla\rangle} h_- \over 2 \langle\nabla \rangle}.
\end{align*}

Taking into account the equations of $E, \ell$, and $nE$, we find that
\begin{align*}
\partial_t f_\pm
=
&e^{\pm it\langle \nabla\rangle} (-\ell E -\Delta \mathfrak{m} E)
\\
=
&-{i\over 2} e^{\pm it\langle \nabla\rangle} \Big( \ell {e^{-it\langle \nabla\rangle} f_+ - e^{it \langle \nabla\rangle} f_- \over \langle \nabla\rangle} \Big) - e^{\pm it\langle \nabla\rangle} (\Delta \mathfrak{m} E),
\end{align*}
as well as
\begin{align*}
\partial_t g_\pm
= 0,
\end{align*}
and 
\begin{align*}
    \partial_t h_\pm
    =
    e^{\pm it\langle \nabla\rangle} \big( \Delta|E|^2 \cdot E - n^2E- 2\partial_\alpha n \partial^\alpha E \big),
\end{align*}
in which we used the fact that
\begin{align*}
    -\Box (nE) + nE
    =
    \Delta|E|^2 \cdot E - n^2E- 2\partial_\alpha n \partial^\alpha E. 
\end{align*}

The proof is based on the framework of the spacetime resonance method of Germain-Masmoudi-Shatah and Gustafson-Nakanishi-Tsai (see, for instance, \cite{GeMaSh, GuNaTs} etc.) and the Z-norm method of Ionescu-Pausader (see for instance \cite{Ionescu-P2} etc.). While these two approaches share many common features, they also differ in important aspects. For instance, the spacetime resonance method focuses on analyzing the geometry and size of resonance sets, whereas the Z-norm method is built around sharp estimates for oscillatory phase functions.

We define the Z-norm as
\begin{align}
\|E\|_Z =  \sup_{t\geq t_0} 2^{-C_E k^-} 2^{-D_E k^+} \| {P_k E_{+}} \|,
\end{align}
in which $C_E>0, D_E<0$, $k^- = \min \{ k, 0 \}$ and $k^+ = \max \{ k, 0\}$. We will take $C_E = \delta$ and $D_E = -\delta$, but we want to track them in the analysis so we use the notation $C_E, D_E$ instead.  One notices that $\|{P_k E_{+}} \| = \|{P_k E_{-}} \|$.

The rough idea is that
\begin{equation}
\begin{aligned}
\partial_t \widehat{f_+}
&=
-{i\over 2} e^{it\langle\xi\rangle} \int_{\mathbb{R}^2} \widehat{n}(t, \eta) \Big( {e^{-it\langle \xi-\eta\rangle} \widehat{f_+}(t, \xi-\eta) \over \langle \xi - \eta\rangle} - {e^{it \langle \xi-\eta\rangle} \widehat{f_-}(t, \xi-\eta) \over \langle \xi-\eta\rangle}  \Big)
\\
&= 
-{i\over 2} \int \widehat{\ell_L}(t, \eta) {e^{it{\xi\over \langle \xi\rangle} \cdot \eta} \over \langle \xi \rangle} \, \d\eta \, \widehat{f_+}(t, \xi)
+ \mathcal{M}_1 + \mathcal{M}_2 + \mathcal{M}_3 +\mathcal{M}_4.
\end{aligned}
\end{equation}
In the above, we use the definition for a regular function $u(t, x)$ that
\begin{align}
\widehat{u_L}(t, \eta) = \phi_{\leq 0}(\eta \langle t\rangle^p) \widehat{u}(t, \eta), 
\qquad
p = {3\over 4},
\end{align}
while
\begin{align}
u_H = u - u_L,
\quad
\text{ and }
\quad
\widehat{u_H}(t, \eta) = \phi_{\geq 1} (\eta \langle t\rangle^p) \widehat{u}(t, \eta), 
\end{align}
and we use the expressions
\begin{equation}
\begin{aligned}
\mathcal{M}_1
=
&{i\over 2} e^{it\langle\xi\rangle} \int \widehat{\ell}(t, \eta) {e^{it \langle \xi-\eta\rangle} \widehat{f_-}(t, \xi-\eta) \over \langle \xi-\eta\rangle} ,
\\
\mathcal{M}_2
=
&-{i\over 2} e^{it\langle\xi\rangle} \int \widehat{\ell_H}(t, \eta) {e^{-it\langle \xi-\eta\rangle} \widehat{f_+}(t, \xi-\eta) \over \langle \xi - \eta\rangle},
\\
\mathcal{M}_3
=
&-{i\over 2}  \int \widehat{\ell_L}(t, \eta) \Big( {e^{it(\langle\xi\rangle-\langle \xi-\eta\rangle)} \widehat{f_+}(t, \xi-\eta) \over \langle \xi - \eta\rangle} - {e^{it{\xi\over \langle \xi\rangle} \cdot \eta} \widehat{f_+}(t, \xi) \over \langle \xi \rangle}  \Big),
\\
\mathcal{M}_4
=
& - e^{it\langle\xi\rangle} \widehat{\Delta \mathfrak{m} E}.
\end{aligned}
\end{equation}

Denote
\begin{align}
\widehat{f_*} = e^{i \Theta(t, \xi)} \widehat{f_+},
\end{align}
with the real-valued phase correction
\begin{equation}\label{eq:ph-correction}
\begin{aligned}
\Theta(t, \xi) 
= 
&{1\over 2} \langle \xi\rangle^{-1} \int_{t_0}^t \int \widehat{\ell_L}(s, \eta) e^{is {\xi\over \langle \xi\rangle} \cdot \eta } \, \d\eta ds
\\
=&{2\pi^2} \langle \xi\rangle^{-1} \int_{t_0}^t \ell_L(s, {s\xi\over \langle \xi\rangle}) ds.
\end{aligned}
\end{equation}
This leads us to
\begin{align}
\partial_t \widehat{f_*}
=
e^{i \Theta(t, \xi)} \big( \mathcal{M}_1 + \mathcal{M}_2 + \mathcal{M}_3 + \mathcal{M}_4\big).
\end{align}

Our bootstrap assumptions are as follows
\begin{align}
\| E \|_Z \leq C_1 \epsilon.
\end{align}
Our goal is to show
\begin{align}
\big\| \phi_k(\xi) (\widehat{f_*}(t_2, \xi) - \widehat{f_*}(t_1, \xi)) \big\|
\lesssim (C_1 \epsilon)^2 2^{C_E k^-} 2^{D_E k^+} 2^{-\delta m},
\end{align}
for all $k \in \mathbb{Z}$ and $t_1, t_2 \in [2^{m}-2, 2^{m+1}+2]$ with $m\geq 10$, which further implies nonlinear (modified) scattering of $E$.
Once this is established, we denote
\begin{align}\label{eq:f-infty}
    \widehat{f_*}(\infty, \xi)
    =
    \widehat{f_*}(t_0, \xi) 
    + \int_{t_0}^{+\infty} e^{i \Theta(s, \xi)} \big( \mathcal{M}_1 + \mathcal{M}_2 + \mathcal{M}_3 + \mathcal{M}_4\big) \, \d s.
\end{align}

\subsection{Nonlinear scattering for $E$}

\subsubsection{Infinite estimates}

For convenience, for a sysmbol $m(\xi, \eta)$, we denote
\begin{align}
    T_{m}(u, v)
    =
    \mathcal{F}_\xi^{-1}\int_{\mathbb{R}^2} m(\xi, \eta)\,  \widehat{u}(\eta) \widehat{v}(\xi-\eta) \, \d\eta.
\end{align}

\begin{lemma}\label{lem:Theta}
We have
\begin{align*}
|\partial_t \Theta(t, \xi)|
&\lesssim
C_1 \epsilon t^{-p} \langle\xi\rangle^{-1},
\\
|\partial_t \partial_t \Theta(t, \xi)|
&\lesssim
C_1 \epsilon t^{-2p}\langle\xi\rangle^{-1}.
\end{align*}
\end{lemma}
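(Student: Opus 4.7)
The plan is to exploit the pointwise physical-space representation
\[
\partial_t \Theta(t,\xi) = \frac{2\pi^2}{\langle\xi\rangle}\, \ell_L\Bigl(t,\frac{t\xi}{\langle\xi\rangle}\Bigr),
\]
which follows directly from differentiating the second line of \eqref{eq:ph-correction}. Since the bound I seek is independent of the spatial argument, it will suffice to prove the uniform estimate $\|\ell_L(t,\cdot)\|_{L^\infty_x}\lesssim C_1\epsilon\, t^{-p}$, and for this I would estimate $\|\widehat{\ell_L}(t,\cdot)\|_{L^1_\eta}$ directly via $|\ell_L(t,x)|\leq (4\pi^2)^{-1}\|\widehat{\ell_L}\|_{L^1}$.

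The function $\ell$ solves the free wave equation with data $(n_0,n_1)$, so its Fourier transform is $\cos((t-t_0)|\eta|)\widehat{n_0}(\eta)+\frac{\sin((t-t_0)|\eta|)}{|\eta|}\widehat{n_1}(\eta)$, yielding the sharp bound
\[
|\widehat{\ell}(t,\eta)| \lesssim |\widehat{n_0}(\eta)| + \min(t,|\eta|^{-1})\,|\widehat{n_1}(\eta)|.
\]
The weighted hypotheses on $(n_0,n_1)$ give $n_0,n_1\in L^1(\mathbb{R}^2)$ with $\|\widehat{n_j}\|_{L^\infty}\lesssim \epsilon$. Integrating over the cutoff region $|\eta|\lesssim \langle t\rangle^{-p}$ produces three contributions: the $\widehat{n_0}$ piece gives $\epsilon\, t^{-2p}$ by a volume count; the $\widehat{n_1}$ piece restricted to $|\eta|\leq t^{-1}$ gives $\epsilon\, t^{-1}$; and on the annulus $t^{-1}\leq |\eta|\leq t^{-p}$ integrating $1/|\eta|$ against $|\eta|\,d|\eta|\,d\theta$ yields $\epsilon\, t^{-p}$. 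The dominant term is $\epsilon\, t^{-p}$, which establishes the first inequality.

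For the second estimate I will differentiate the displayed identity in $t$ to obtain
\[
\partial_t^2 \Theta = \frac{2\pi^2}{\langle\xi\rangle}\Bigl[(\partial_t \ell_L)(t,t\xi/\langle\xi\rangle) + (\xi/\langle\xi\rangle)\cdot(\nabla \ell_L)(t,t\xi/\langle\xi\rangle)\Bigr],
\]
so it is enough to show $\|\nabla \ell_L\|_{L^\infty}+\|\partial_t \ell_L\|_{L^\infty}\lesssim \epsilon\, t^{-2p}$. The gradient bound follows by repeating the previous Fourier estimate with the extra factor $|\eta|\lesssim t^{-p}$ inserted into the integrand, which gains one further power of $t^{-p}$. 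For $\|\partial_t \ell_L\|_{L^\infty}$ two sources arise: the interior term, where $\partial_t$ lands on $\widehat{\ell}$ and produces $|\eta||\widehat{n_0}|+|\widehat{n_1}|$ in place of the previous integrand, is handled by the same polar split and yields $\epsilon\, t^{-2p}$; the cutoff term, where $\partial_t\phi_{\leq 0}(\eta\langle t\rangle^p)$ is supported in the shell $|\eta|\sim t^{-p}$ and has size $\lesssim t^{-1}$, contributes at most $\epsilon\, t^{-1-p}$, which is dominated by $\epsilon\, t^{-2p}$ since $p=3/4<1$.

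The principal obstacle is that $\widehat{n_1}(0)\neq 0$ is permitted in the modified-scattering regime, so no vanishing at zero frequency can be invoked to improve the $\widehat{n_1}$ contribution. The remedy is precisely the sharp inequality $|\sin(t|\eta|)/|\eta||\lesssim \min(t,|\eta|^{-1})$ paired with the split of the low-frequency region at the scale $|\eta|=t^{-1}$; the matching of the two regimes produces exactly the decay rate $t^{-p}$ appearing in the statement, and the same mechanism iterated with one additional frequency factor delivers $t^{-2p}$ for the second derivative.
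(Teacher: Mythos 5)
Your proof is correct and follows essentially the same approach as the paper: both bound $\partial_t\Theta$ and $\partial_t^2\Theta$ via the low-frequency structure of $\widehat{\ell}$ and the support of the cutoff $\phi_{\leq 0}(\eta\langle t\rangle^p)$, and your physical-space identity $\partial_t\Theta=2\pi^2\langle\xi\rangle^{-1}\ell_L(t,t\xi/\langle\xi\rangle)$ is just the paper's Fourier-side computation rewritten through the inversion formula, with the three terms for $\partial_t^2\Theta$ matching term by term. The only minor difference is that the paper uses the cruder, time-uniform bound $|\widehat{\ell}(\eta)|\lesssim\epsilon|\eta|^{-1}$ for $|\eta|\leq1$ (since $|\eta|^{-1}$ is already integrable in $\mathbb{R}^2$ and $\int_{|\eta|\lesssim t^{-p}}|\eta|^{-1}\,d\eta\sim t^{-p}$), so your refined split at $|\eta|=t^{-1}$ using $|\sin(t|\eta|)/|\eta||\lesssim\min(t,|\eta|^{-1})$ is correct but not necessary.
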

\begin{proof}
    First, we compute
\begin{align*}
    \partial_t \Theta(t, \xi)
    =
    {1\over 2} \langle \xi\rangle^{-1} \int e^{it {\xi \over \langle \xi\rangle} \cdot \eta } \widehat{\ell_L}(\eta) \, \d\eta.
\end{align*}
Recall the bound
$$
|\widehat{\ell}(\eta)|
\lesssim
\epsilon |\eta|^{-1},
\qquad
\text{ for } |\eta|\leq 1,
$$
and we have
\begin{align*}
    |\partial_t \Theta|
    \lesssim
    \epsilon \langle\xi\rangle^{-1} \int_{|\eta|\leq 4\langle t\rangle^{-p}} |\eta|^{-1} \, \d\eta
    \lesssim
    \epsilon \langle \xi\rangle^{-1} \langle t\rangle^{-p}.
\end{align*}

    Second, we compute
    \begin{align*}
    \partial_t \partial_t \Theta(t, \xi)
    =
    &{1\over 2} \langle \xi\rangle^{-1} \int e^{it{\xi\over \langle \xi\rangle} \cdot \eta} 
    \big( i{\xi\over \langle \xi\rangle} \cdot \eta \, \widehat{\ell_L}(\eta)
    + \phi_{\leq 0}(\eta \langle t\rangle^p) \partial_t \widehat{\ell}(\eta)
    \\
    &\qquad\qquad+ p \nabla\phi_{\leq 0}(\eta \langle t\rangle^p)\cdot \eta \, t \langle t\rangle^{p-2} \widehat{\ell}(\eta)
     \big) \, \d\eta,
    \end{align*}
which leads us to
\begin{align*}
    |\partial_t \partial_t \Theta|
    \lesssim
    \epsilon \langle\xi\rangle^{-1} \int_{|\eta|\leq 4\langle t\rangle^{-p}} 1 \, \d\eta
    \lesssim
    \epsilon \langle \xi\rangle^{-1} \langle t\rangle^{-2p}.
\end{align*}

The proof is done.
\end{proof}

\begin{lemma}\label{lem:d-xi-L2}
    We have
    \begin{align}
        \big\|\phi_k(\xi) \partial_{\xi_a} \widehat{f_{+}}(\xi)\big\|_{H^7}
        \lesssim 
        &C_1 \epsilon 2^{-k^+} t^\delta,
        \\
        \big\|\phi_k(\xi) \partial_{\xi_a} \widehat{g_{+}}(\xi)\big\|_{H^7}
        \lesssim
        &C_1 \epsilon 2^{ k},
        \\
        \big\|\phi_k(\xi) \partial_{\xi_a} \widehat{h_{+}}(\xi)\big\|_{H^7}
        \lesssim 
        &C_1 \epsilon 2^{-k^+} t^{-1+2\delta}.
    \end{align}
\end{lemma}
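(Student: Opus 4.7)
The plan is to apply Lemma~\ref{lem:d-xi} to express $\partial_{\xi_a}\widehat{f_+}$ and $\partial_{\xi_a}\widehat{h_+}$ in terms of a Lorentz boost acting on the corresponding ``plus''-wave, plus multiplier corrections and contributions from the physical-space nonlinearities. The resulting identities carry a factor $\langle\xi\rangle^{-1}$ that, after localization by $\phi_k$, produces the claimed $2^{-k^+}$. The $H^7$ estimates are then reduced to the $t$-slice bounds collected in Proposition~\ref{prop:bound-t}, with Lemma~\ref{lem:comm-Fourier} used to swap $L_a(\partial_t\mp i\langle\Lambda\rangle)$ with $(\partial_t\mp i\langle\Lambda\rangle)L_a$ at the cost of a $\partial E$ or $\partial(nE)$ remainder. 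For $\widehat{g_+}$ a direct approach is simpler, since $\ell$ is a free wave and $g_+$ is time-independent.

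Concretely, since $(\partial_t+i\langle\Lambda\rangle)E_+=-nE$, Lemma~\ref{lem:d-xi} and the identity $\partial_{\xi_a}\widehat{f}=-i\widehat{x_a f}$ yield
\[\langle\xi\rangle\,\partial_{\xi_a}\widehat{f_+}(\xi)=e^{it\langle\xi\rangle}\widehat{L_a E_+}(\xi)-\frac{\xi_a}{\langle\xi\rangle}\widehat{f_+}(\xi)+e^{it\langle\xi\rangle}\widehat{x_a(nE)}(\xi).\]
Multiplying by $\phi_k(\xi)/\langle\xi\rangle$ and applying Plancherel, the problem reduces to controlling $\|L_a E_+\|_{H^7}$, $\|E_+\|_{H^7}$ and $\|x_a(nE)\|_{H^7}$. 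The first two are dominated via Lemma~\ref{lem:comm-Fourier} and the $\Gamma$-energy bounds of Proposition~\ref{prop:bound-t} by $C_1\epsilon t^{\delta}$ and $C_1\epsilon$, respectively; the last is absorbed using the weighted Sobolev estimate $\|\langle t+r\rangle^{2}\langle t-r\rangle^{-2}E\|_{H^7}\lesssim C_1\epsilon$ of Proposition~\ref{prop:bound-t} together with pointwise bounds for $n$. The estimate for $\widehat{h_+}$ follows the same template with $E$ replaced by $nE$: from $-\Box(nE)+(nE)=\Delta|E|^2\,E-n^2 E-2\partial^\alpha n\,\partial_\alpha E=:\mathcal{N}_3$ we get $(\partial_t+i\langle\Lambda\rangle)(nE)_+=\mathcal{N}_3$ and an analogous identity for $\langle\xi\rangle\partial_{\xi_a}\widehat{h_+}$; the bound $\sum_{|I|\leq 7}\|\Gamma^I(nE)\|\lesssim(C_1\epsilon)^2 t^{-1+\delta}$ from Proposition~\ref{prop:bound-t} handles $\|L_a(nE)_+\|_{H^7}$ and $\|h_+\|_{H^7}$, while the cubic-type decay of $\mathcal{N}_3$ delivers $\|x_a\mathcal{N}_3\|_{H^7}\lesssim(C_1\epsilon)^3 t^{-1+2\delta}$.

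For $\widehat{g_+}$, time-independence gives $\widehat{g_+}(\xi)=\widehat{n_1}(\xi)-i|\xi|\widehat{n_0}(\xi)$ and hence
\[\partial_{\xi_a}\widehat{g_+}(\xi)=-i\widehat{x_a n_1}(\xi)-i\frac{\xi_a}{|\xi|}\widehat{n_0}(\xi)-|\xi|\widehat{x_a n_0}(\xi).\]
The weighted initial-data bounds of Theorem~\ref{thm:main1} supply $\|\partial_{\xi_a}\widehat{g_+}\|_{L^\infty}\lesssim\epsilon$ together with polynomial Fourier decay of $\widehat{n_0},\widehat{n_1}$ of arbitrarily high order; for $k\leq 0$ the factor $2^k$ then comes from $|\mathrm{supp}\,\phi_k|\sim 2^{2k}$ combined with the $L^\infty$ bound, while for $k\geq 0$ the Fourier decay comfortably beats the weight $\langle\xi\rangle^{7}\sim 2^{7k}$. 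The main obstacle will be the $H^7$-level regularity accounting in the $h_+$ case, since passing from $\|L_a(nE)_+\|_{H^7}$ to $\|(\partial_t-i\langle\Lambda\rangle)L_a(nE)\|_{H^7}$ via Lemma~\ref{lem:comm-Fourier} costs one derivative and requires the $\Gamma^I(nE)$-bounds of Proposition~\ref{prop:bound-t} to be extended up to order $|I|\leq 8$; this extension is routine via product rule against the higher-regularity estimates $\|E\|_{H^{N+2}}\lesssim C_1\epsilon$ and $\|\partial n\|_{H^N}\lesssim C_1\epsilon$ combined with pointwise decay of the lower-order factor, but must be tracked carefully near the light cone where $n$ decays only slowly.
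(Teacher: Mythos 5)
Your proposal is correct and follows essentially the same route as the paper: apply Lemma~\ref{lem:d-xi} to reduce $\langle\xi\rangle\partial_{\xi_a}\widehat{f_+}$ (resp.\ $\widehat{h_+}$) to the profile, the Lorentz boost term, and the weighted nonlinearity, then invoke Lemma~\ref{lem:comm-Fourier} and Proposition~\ref{prop:bound-t}, with the $g_+$ case handled directly from the initial-data formula. The one mild regularity-accounting point you flag (extending the $\Gamma^I(nE)$ bound from $|I|\leq 7$ to $|I|\leq 8$ for the $H^7$ version) is also left implicit in the paper, which simply states ``the $H^7$ bounds can be shown in a similar way,'' so you have if anything been more careful than the source.
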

\begin{proof}

Below we only consider the $\|\cdot \|_{L^2}$ bounds, and the $\|\cdot\|_{H^7}$ bounds can be shown in a similar way.

\textbf{Case I}: $\widehat{f_+}$.

By Lemma \ref{lem:d-xi}, we have
\begin{align*}
    \big|\langle \xi\rangle \partial_{\xi_a} \widehat{f_+}(\xi)\big|
\lesssim
\big|\widehat{L_a E_+}\big| + \big|\widehat{f_+}\big|
+
\big|\partial_{\xi_a} \widehat{nE} \big|.
\end{align*}
Note that
\begin{align*}
    \big\|  \widehat{P_k f_+}\big\| 
    \lesssim 
    C_1 \epsilon,
    \\
    \big\|\phi_k(\xi) \partial_{\xi_a} \widehat{nE} \big\|
    \lesssim
    \big\||x| \, nE \big\|
    \lesssim
    \langle t\rangle^\delta.
\end{align*}
By Lemma \ref{lem:comm-Fourier}, we find
\begin{align*}
    \big\| \widehat{L_a E_+}\big\|
    \lesssim
    \|\partial E\| + \|(\partial_t - i\langle \Lambda\rangle)L_a E\|
    \lesssim
    C_1 \epsilon \langle t\rangle^\delta.
\end{align*}
Gathering these estimates, we arrive at
\begin{align}
    \big\|\phi_k(\xi) \partial_{\xi_a} \widehat{f_{+}}(\xi)\big\|
    \lesssim 
    C_1 \epsilon 2^{-k^+} t^\delta.
\end{align}

\textbf{Case II}: $\widehat{g_+}$.

Recall that $\widehat{g_+}(\xi) = \widehat{n_1}(\xi) - i |\xi| \widehat{n_0}(\xi)$. Therefore
\begin{align*}
    &\big\|\phi_k(\xi) \partial_{\xi_a} \widehat{g_{+}}(\xi)\big\|
    \\
    =
    &\big\|\phi_k(\xi) \partial_{\xi_a} \widehat{n_1}(\xi)\big\|
    +
    \big\|\phi_k(\xi) \partial_{\xi_a} (|\xi| \widehat{n_0}(\xi))\big\|
    \\
    \lesssim
    & 2^k \big( \| x n_1\|_{L^1} + \|n_0\|_{L^1} + \|x n_0\| \big).
\end{align*}

\textbf{Case III}: $\widehat{h_+}$.  

The proof in this case is similar to that of \textbf{Case I}.

Employing the estimates in Proposition \ref{prop:bound-t}, we get
\begin{align*}
    \| h_+\|
    \lesssim
    \|\partial_t (nE)\| + \|nE\|_{H^1}
    \lesssim
    (C_1 \epsilon)^2 t^{-1+\delta}.
\end{align*}
Now, we look at
\begin{align*}
    &\|x (\partial_\alpha n \partial^\alpha E) \|
    \\
    \lesssim
    &\|x \partial \ell \partial E\| + \sum_{|J|=3}\| x \partial^J \mathfrak{m} \partial E\|
    \\
    \lesssim
     &\big\| {\langle t-r\rangle } \partial \ell \big\| \cdot \big\|x{\langle t-r\rangle^{-1} }\partial E \big\|_{L^\infty} + \sum_{|J|=3} \big\| {\langle t-r\rangle }\partial^J \mathfrak{m} \big\| \cdot \big\| x {\langle t-r\rangle^{-1} } \partial E \big\|_{L^\infty}
     \\
     \lesssim
     &(C_1 \epsilon)^2 \langle t\rangle^\delta.
\end{align*}
Similarly, we derive 
\begin{align*}
    \| x (\Delta|E|^2 \cdot E - n^2 E)\|
    \lesssim
    (C_1 \epsilon)^2 t^{-1+\delta}.
\end{align*}

Then, we have
\begin{equation}
\begin{aligned}
    &2^{k^+}\big\|\phi_k(\xi) \partial_{\xi_a} \widehat{h_{+}}(\xi)\big\|
    \\
    \lesssim
    &\|\partial(nE)\| + \| (\partial_t - i\langle \Lambda\rangle) L_a(nE)\|
    +
    \big\|\partial_{\xi_a} \mathcal{F}\big(\Delta|E|^2 \cdot E - n^2 E - 2\partial_\alpha n \partial^\alpha E \big) \big\|
    \\
    \lesssim &(C_1 \epsilon)^2 \langle t\rangle^{-1+2\delta}.
\end{aligned}
\end{equation}

The proof is completed.
\end{proof}


\begin{proposition}\label{prop:M1}
For all $m\gg 1$, $t_1, t_2 \in [2^m - 2, 2^{m+1}+2]$, and $k\leq \delta m$, it holds
\begin{align}\label{eq:M1}
\Big\| \phi_k(\xi) \int_{t_1}^{t_2} e^{i\Theta(s, \xi)} \mathcal{M}_1(s, \xi) \, \d s \Big\|
\lesssim
\epsilon^2 2^{-\delta m} 2^{C_E k^-} 2^{D_E k^+}.  
\end{align}

\end{proposition}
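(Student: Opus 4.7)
The plan is to combine time integration by parts (driven by the non-resonance of $\Phi_{1\pm}$ from Lemma~\ref{lem:phase}(1)) with frequency-space integration by parts (driven by the lower bound on $\nabla_\eta\Phi_{1\pm}$ from Lemma~\ref{lem:phase}(2)) on the free wave profile $g_\pm$. First, I substitute
\begin{equation*}
\widehat{\ell}(s,\eta)=\frac{i}{2|\eta|}\bigl(e^{-is|\eta|}\widehat{g_+}(\eta)-e^{is|\eta|}\widehat{g_-}(\eta)\bigr),
\end{equation*}
using the fact that $\partial_s g_\pm=0$, so that $e^{i\Theta(s,\xi)}\mathcal{M}_1(s,\xi)$ becomes a sum over signs of bilinear oscillatory integrals with phase $s\Phi_{1\pm}(\xi,\eta)+\Theta(s,\xi)$ and symbol $|\eta|^{-1}\langle\xi-\eta\rangle^{-1}$. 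I then dyadically decompose $\eta$ at scale $2^{k_1}$ and $\xi-\eta$ at scale $2^{k_2}$, with $\phi_k(\xi)$ enforcing the standard triangle constraints $\max(k_1,k_2)\geq k-4$ and similar bounds.

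Next, I integrate by parts in $s$ via $e^{is\Phi_{1\pm}}=(i\Phi_{1\pm})^{-1}\partial_s e^{is\Phi_{1\pm}}$, where $|\Phi_{1\pm}^{-1}|\lesssim\langle\xi-\eta\rangle\simeq 2^{k_2^+}$ by Lemma~\ref{lem:phase}(1); the extra growth is absorbed by the $\langle\xi-\eta\rangle^{-1}$ already present in the symbol. This produces boundary terms at $s=t_1,t_2$ and two families of bulk terms. The bulk contribution from $\partial_s\widehat{f_-}$ uses the Klein--Gordon equation, so that $\partial_s\widehat{f_-}$ has the same structure as $\widehat{h_\pm}$ and obeys $\|\partial_s\widehat{f_-}(s)\|_{L^2}\lesssim (C_1\epsilon)^2s^{-1+\delta}$ by Proposition~\ref{prop:bound-t}; integration over $s\in[t_1,t_2]$ of length $\simeq 2^m$ yields a bound of order $(C_1\epsilon)^2 2^{-(1-2\delta)m}$, well within the target. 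The bulk contribution from $\partial_s e^{i\Theta}$ is controlled by Lemma~\ref{lem:Theta}'s estimate $|\partial_s\Theta|\lesssim C_1\epsilon\, s^{-3/4}\langle\xi\rangle^{-1}$, which combined with the boundary smallness provides the required integrability in $s$.

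The main obstacle is the boundary terms at $s=t_i$: after the time integration by parts these take the form
\begin{equation*}
\phi_k(\xi)\int \frac{\widehat{g_\pm}(\eta)\,\widehat{f_-}(t_i,\xi-\eta)}{\Phi_{1\pm}(\xi,\eta)\,|\eta|\,\langle\xi-\eta\rangle}e^{i\Theta(t_i,\xi)}\,\d\eta,
\end{equation*}
whose $L^2_\xi$ norm, estimated naively via Young's convolution inequality with $\|\widehat{g_\pm}/|\eta|\|_{L^1_\eta}\lesssim\epsilon$ and the $Z$-norm bound, is of order $(C_1\epsilon)^2$ uniformly in $m$, missing the required $2^{-\delta m}$ gain. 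To recover this decay, I perform a second integration by parts, this time in $\eta$, using the oscillation $e^{\pm i t_i|\eta|}$ hidden inside $\widehat{\ell}$ together with the lower bound $|\nabla_\eta\Phi_{1\pm}|\gtrsim\langle\xi-\eta\rangle^{-2}$ from Lemma~\ref{lem:phase}(2). This produces a gain of $t_i^{-1}\simeq 2^{-m}$ per $\eta$-integration, at the cost of either one $\eta$-derivative on $\widehat{g_\pm}$ or on $\widehat{f_-}$ (both controlled by Lemma~\ref{lem:d-xi-L2} and by weighted $L^2$ bounds on the initial data) or one $\eta$-derivative on the symbol (which produces additional $|\eta|^{-1}$ or $\langle\xi-\eta\rangle^{-1}$ factors that remain integrable in $2$D polar coordinates). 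The near-singularity of the symbol at $\eta=0$ is handled by splitting $|\eta|\gtrsim 2^{-m/2}$ and $|\eta|\lesssim 2^{-m/2}$, performing $\eta$-integration by parts on the former and trivially estimating the latter using the measure of the small disk combined with the constraint $k\leq\delta m$. Summing dyadically over $k_1,k_2$ against the Z-norm bound $\|\phi_{k_2}\widehat{f_\pm}\|\lesssim C_1\epsilon\,2^{C_E k_2^-}2^{D_E k_2^+}$ and the initial data bound $\|\phi_{k_1}\widehat{g_\pm}\|_{L^\infty}\lesssim\epsilon\,2^{k_1}$ closes the estimate \eqref{eq:M1}.
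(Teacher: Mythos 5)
Your high-level plan — substitute the free-wave representation of $\widehat{\ell}$ to expose the phases $\Phi_{1\pm}$, integrate by parts in $s$ using $|\Phi_{1\pm}|^{-1}\lesssim\langle\xi-\eta\rangle$, then handle the resulting boundary terms by splitting $\eta$ into a small ball near the origin (estimated by measure) and a complement (attacked with $\eta$-integration by parts via $|\nabla_\eta\Phi_{1\pm}|\gtrsim\langle\xi-\eta\rangle^{-2}$) — is indeed the skeleton of the paper's argument, up to the exact choice of threshold ($2^{-m/2}$ in your version vs.\ $2^{-p_1 m}$ with $p_1=1/10$ in the paper). However, there is a genuine and decisive gap: you dismiss both bulk terms produced by the time integration by parts as trivially within the target, and this is arithmetically wrong in both cases.

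For the bulk term involving $\partial_s\widehat{f_-}$, you use $\|\partial_s\widehat{f_-}(s)\|\lesssim(C_1\epsilon)^2 s^{-1+\delta}$ and integrate over $[t_1,t_2]\sim[2^m,2^{m+1}]$; the result is $\int_{2^m}^{2^{m+1}}s^{-1+\delta}\,\mathrm{d}s\simeq 2^{\delta m}$, which \emph{grows} in $m$, not $2^{-(1-2\delta)m}$ as you claim. (You seem to have multiplied $2^{-(1-\delta)m}$ by a length of order one rather than $2^m$.) So the naive $L^2\times L^1$ estimate for this term overshoots the target $2^{-\delta m}$ by a factor $2^{2\delta m}$. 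The paper therefore applies the same low/high frequency splitting of $g_\pm$ and the same $\eta$-integration-by-parts machinery \emph{to this bulk term as well}, not just to the boundary terms: it substitutes the $h_\pm$ decomposition of $\partial_s f_-$, observes that one piece carries the phase $\Phi_{2\mp}$ (suitable for $\eta$-IBP) while the other retains $\Phi_{1\mp}$ (requiring yet another time IBP), and estimates each piece separately. The same issue occurs for the $\partial_s\Theta$ bulk term: Lemma~\ref{lem:Theta} gives $|\partial_s\Theta|\lesssim C_1\epsilon\,s^{-3/4}$, and combined with the $O(1)$ Young bound on the remaining convolution, the $s$-integral over the dyadic interval produces $2^{m/4}$, which again grows. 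The paper handles this by a further time integration by parts (producing $\mathcal{R}_{1\pm,4},\mathcal{R}_{1\pm,5},\mathcal{R}_{1\pm,6}$ with the improved factors $\Phi_{1\mp}^{-2}$, $(\partial_s\Theta)^2$, $\partial_s^2\Theta$) followed by the frequency split and $\eta$-IBP on each piece. In short, the central difficulty of this proposition is not that the boundary terms miss by $2^{-\delta m}$ — which your plan addresses — but that the bulk terms also miss by roughly the same amount and must be put through the identical machinery; this is the bulk of the paper's proof and your proposal omits it.
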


\begin{proof}
We note 
\begin{align*}
 \int_{t_1}^{t_2} e^{i\Theta(s, \xi)}\mathcal{M}_1(s, \xi) \, \d s 
= \mathfrak{M}_{1+} + \mathfrak{M}_{1-},
\end{align*}
in which
\begin{align*}
\mathfrak{M}_{1\pm}
=
{\mp 1\over 4} \int_{t_1}^{t_2} e^{i\Theta(s, \xi)}   \int  e^{is\Phi_{1\mp}} {\widehat{g_{\pm}}(\eta) \over |\eta|} { \widehat{f_-}(s, \xi-\eta) \over \langle \xi-\eta\rangle} \, \d\eta \d s,
\end{align*}
in which 
$$
\Phi_{1\pm} = \langle \xi\rangle + \langle \xi-\eta\rangle \pm |\eta|.
$$
Note that $\Phi_{1\pm} > 0$, and we integrate by part in time to get
\begin{align*}
\mathfrak{M}_{1\pm}
=
\mathcal{R}_{1\pm, 1} + \mathcal{R}_{1\pm, 2} + \mathcal{R}_{1\pm, 3},
\end{align*}
in which
\begin{align*}
\mathcal{R}_{1\pm, 1}
=
& \mp {1\over 4} e^{i\Theta(t_2, \xi)} \mathcal{F}{T_{m_{1\pm}} (g_\pm, f_-)} 
\pm {1\over 4} e^{i\Theta(t_1, \xi)} \mathcal{F}{T_{m_{1\pm}} (g_\pm, f_-)},
\\
\mathcal{R}_{1\pm, 2}
=
&\pm{1\over 4} \int_{t_1}^{t_2}  e^{i\Theta} \mathcal{F}T_{m_{1\pm}}(g_\pm, \partial_s f_-) \, \d s,
\\
\mathcal{R}_{1\pm, 3}
=
&\pm{1\over 4} \int_{t_1}^{t_2} i \partial_s \Theta(s, \xi) e^{i\Theta} \mathcal{F}T_{m_{1\pm}}(g_\pm, f_-) \, \d s,
\end{align*}
with
$$
m_{1\pm} = {e^{is\Phi_{1\mp}}\over i \Phi_{1\mp}} |\eta|^{-1} \langle \xi - \eta\rangle^{-1}.
$$

For the term $\mathcal{R}_{1\pm, 3}$, we integrate by part in time again to get
\begin{align*}
\mathcal{R}_{1\pm, 3}
=
\mathcal{R}_{1\pm, 4}
+
\mathcal{R}_{1\pm, 5}
+
\mathcal{R}_{1\pm, 6},
\end{align*}
in which 
\begin{align*}
\mathcal{R}_{1\pm, 4}
=
& \pm {i\over 4}  \partial_s \Theta(t_2, \xi) e^{i\Theta(t_2, \xi)} \mathcal{F}{T_{m_{2\pm}} (g_\pm, f_-)} 
 \mp {i\over 4}  \partial_s \Theta(t_1, \xi) e^{i\Theta(t_1, \xi)} \mathcal{F}{T_{m_{2\pm}} (g_\pm, f_-)},
\\
\mathcal{R}_{1\pm, 5}
=
&\mp{i\over 4} \int_{t_1}^{t_2}  \partial_s \Theta(s, \xi) e^{i\Theta} \mathcal{F}T_{m_{2\pm}}(g_\pm, \partial_s f_-) \, \d s,
\\
\mathcal{R}_{1\pm, 6}
=
&\mp{i\over 4} \int_{t_1}^{t_2} \big(i( \partial_s \Theta(s, \xi))^2 + \partial_s \partial_s \Theta(s, \xi)\big) e^{i\Theta} \mathcal{F}T_{m_{2\pm}}(g_\pm, f_-) \, \d s,
\end{align*}
with
$$
m_{2\pm} = {e^{is\Phi_{1\mp}}\over (i \Phi_{1\mp})^2} |\eta|^{-1} \langle \xi - \eta\rangle^{-1}.
$$

Let $p_1 = {1\over 10}$, and define for any regular function $u(t, x)$ that
\begin{align*}
    \widehat{u_{L_1}}(t, \eta) = \phi_{\leq 0}(\eta\langle t\rangle^{p_1}) \widehat{u}(t, \eta),
\end{align*}
while
\begin{align*}
    \widehat{u_{H_1}}(t, \eta) = \phi_{\geq 1}(\eta\langle t\rangle^{p_1}) \widehat{u}(t, \eta).
\end{align*}

For $\mathfrak{a}\in \{ 1, 2, 4, 5, 6\}$, we have
\begin{align*}
\mathcal{R}_{1\pm, \mathfrak{a}}
=
\mathcal{R}_{1\pm, \mathfrak{a}, L_1} + \mathcal{R}_{1\pm, \mathfrak{a}, H_1},
\end{align*}
in which with $A \in \{ L_1, H_1 \}$
\begin{align*}
\mathcal{R}_{1\pm, 1, A}
=
& \mp {1\over 4} e^{i\Theta(t_2, \xi)} \mathcal{F}{T_{m_{1\pm}} (g_{\pm, A}, f_-)} 
 \pm {1\over 4} e^{i\Theta(t_1, \xi)} \mathcal{F}{T_{m_{1\pm}} (g_{\pm, A}, f_-)},
\\
\mathcal{R}_{1\pm, 2, A}
=
&\pm{1\over 4} \int_{t_1}^{t_2}  e^{i\Theta} \mathcal{F}T_{m_{1\pm}}(g_{\pm, A}, \partial_s f_-) \, \d s,
\\
\mathcal{R}_{1\pm, 4, A}
=
& \pm {i\over 4}  \partial_s \Theta(t_2, \xi) e^{i\Theta(t_2, \xi)} \mathcal{F}{T_{m_{2\pm}} (g_{\pm, A}, f_-)} 
 \mp {i\over 4}  \partial_s \Theta(t_1, \xi) e^{i\Theta(t_1, \xi)} \mathcal{F}{T_{m_{2\pm}} (g_{\pm, A}, f_-)},
\\
\mathcal{R}_{1\pm, 5, A}
=
&\mp{i\over 4} \int_{t_1}^{t_2}  \partial_s \Theta(s, \xi) e^{i\Theta} \mathcal{F}T_{m_{2\pm}}(g_{\pm, A}, \partial_s f_-) \, \d s,
\\
\mathcal{R}_{1\pm, 6, A}
=
&\mp{i\over 4} \int_{t_1}^{t_2} \big(i( \partial_s \Theta(s, \xi))^2 + \partial_s \partial_s \Theta(s, \xi)\big) e^{i\Theta} \mathcal{F}T_{m_{2\pm}}(g_{\pm, A}, f_-) \, \d s.
\end{align*}

\textbf{Case I: Low frequency.}

In the low frequency case where $|\eta| \lesssim \langle s\rangle^{-p_1}$, we note that
$$
\Phi_{1\pm} \geq 1.
$$

\textbf{Subcase $I_1$: $\mathcal{R}_{1\pm, 1, L_1}$.}

We have
\begin{align*}
&\|\phi_k(\xi) \mathcal{R}_{1\pm, 1, L_1} \|
\\
\lesssim
&\sum_{k_1\geq k_2 + 4} \|\phi_k(\xi)\mathcal{F}{T_{m_{1\pm}} (P_{k_1} g_{\pm, L_1}, P_{k_2}f_-)} \|
\\
&+
\sum_{k_2\geq k_1 + 4} \|\phi_k(\xi)\mathcal{F}{T_{m_{1\pm}} (P_{k_1} g_{\pm,  L_1}, P_{k_2}f_-)} \|
\\
&+
\sum_{|k_1 - k_2| \leq 3} \|\phi_k(\xi)\mathcal{F}{T_{m_{1\pm}} (P_{k_1} g_{\pm, L_1}, P_{k_2}f_-)} \|
\\
= 
&\mathfrak{F}_{11} + \mathfrak{F}_{12} + \mathfrak{F}_{13}.
\end{align*}

We first estimate $\mathfrak{F}_{11}$. In this case, $|k_1 - k|\leq 6$ and $2^{k_2} \leq 2^{k_1} \lesssim 2^{-p_1 m}$.
We proceed to have
\begin{align*}
\mathfrak{F}_{11}
\lesssim
&\sum_{\substack{|k_1 -k|\leq 6, \, k_1\geq k_2 + 4}} 2^{-k_1} 2^{-k_2^+} \big\| \phi_{\leq 0}(\eta \langle t\rangle^{p_1}) \widehat{P_{k_1}g_\pm}\big \|  \big\| \widehat{P_{k_2}f_-} \big\|_{L^1}
\\
\lesssim
&(C_1 \epsilon)^2 \sum_{\substack{|k_1 -k|\leq 6 \\k_1\geq k_2 + 4, \, k_1\leq -p_1m+4}} 2^{k_2} 2^{-k_2^+} 2^{C_E k_2^-} 2^{D_E k_2^+}
\\
\lesssim
&(C_1 \epsilon)^2 2^{-(p_1 + \delta D_E) m} 2^{C_E k^-} 2^{D_E k^+},
\end{align*}
in which we used the fact that $k\leq \delta m$.
For the term $\mathfrak{F}_{12}$, we note $|k_2 - k|\leq 6$, $2^{k_1} \leq 2^{k_2}$ and  $2^{k_1}\lesssim 2^{-p_1 m}$, and we have
\begin{align*}
\mathfrak{F}_{12}
\lesssim
&\sum_{\substack{|k_2 -k|\leq 6, \,k_2\geq k_1 + 4}} 2^{-k_1} 2^{-k_2^+} \big\| \phi_{\leq 0}(\eta \langle t\rangle^{p_1}) \widehat{P_{k_1}g_\pm} \big\|_{L^1}  \big\| \widehat{P_{k_2}f_-} \big\|
\\
\lesssim
&(C_1 \epsilon)^2 \sum_{\substack{|k_2 -k|\leq 6 \\ k_2\geq k_1 + 4, \, k_1\leq -p_1m+4}} 2^{k_1} 2^{-k_2^+} 2^{C_E k_2^-} 2^{D_E k_2^+}
\\
\lesssim
&(C_1 \epsilon)^2 2^{-p_1 m} 2^{C_E k^-} 2^{(-1+D_E)k^+}. 
\end{align*}
We then bound $\mathfrak{F}_{13}$. In this case, $\min\{ k_1, k_2\} \geq k - 6$, $2^{k} \lesssim 2^{k_1}\lesssim 2^{-p_1 m}$, and $2^{k^+}\lesssim 1$. We get
\begin{align*}
\mathfrak{F}_{13}
\lesssim
&2^k \sum_{\min\{k_1, k_2\}\geq k-6, \,|k_1 - k_2| \leq 3} \big\|\phi_{[k-2, k+2]}(\xi)\mathcal{F}{T_{m_{1\pm}} (P_{k_1} g_{\pm, L_1}, P_{k_2}f_-)} \big\|_{L^\infty}
\\
\lesssim
&2^k \sum_{\min\{k_1, k_2\}\geq k-6, \,|k_1 - k_2| \leq 3} 2^{-k_1} 2^{-k_2^+} \big\| \phi_{\leq 0}(\eta \langle t\rangle^{p_1}) \widehat{P_{k_1}g_\pm} \big\|_{L^\infty}  \big\| \widehat{P_{k_2}f_-} \big\|_{L^1}
\\
\lesssim
&(C_1 \epsilon)^2 \sum_{\substack{\min\{k_1, k_2\}\geq k-6\\ |k_1- k_2| \leq  3, \, k_1\leq -p_1m+4}} 2^{k} 2^{-k_2^+} 2^{C_E k_2^-} 2^{D_E k_2^+}
\\
\lesssim
&(C_1 \epsilon)^2 2^{-p_1 m} 2^{C_E k^-} 2^{(-1+D_E)k^+}. 
\end{align*}

\textbf{Subcase $I_2$: $\mathcal{R}_{1\pm, 2, L_1}$.}

We now bound $\|\phi_k(\xi) \mathcal{R}_{1\pm, 2, L_1} \|$.
Note
\begin{align*}
& 2^{-m} \|\phi_k(\xi) \mathcal{R}_{1\pm, 2, L_1} \|
\\
\lesssim
&\sum_{k_1\geq k_2 + 4} \|\phi_k(\xi)\mathcal{F}{T_{m_{1\pm}} (P_{k_1} g_{\pm, L_1}, \partial_s (P_{k_2}f_-))} \|
\\
&+
\sum_{k_2\geq k_1 + 4} \|\phi_k(\xi)\mathcal{F}{T_{m_{1\pm}} (P_{k_1} g_{\pm, L_1}, \partial_s (P_{k_2}f_-))} \|
\\
&+
\sum_{|k_1 - k_2| \leq 3} \|\phi_k(\xi)\mathcal{F}{T_{m_{1\pm}} (P_{k_1} g_{\pm, L_1}, \partial_s (P_{k_2}f_-))} \|
\\
= 
&\mathfrak{F}_{21} + \mathfrak{F}_{22} + \mathfrak{F}_{23}.
\end{align*}
Recall from Proposition \ref{prop:bound-t}, that 
\begin{align*}
\|\partial_s f_\pm\|
\lesssim
\|n E\|
\lesssim
C_1^2 \epsilon^2 s^{-1+\delta}.
\end{align*}

In succession, we have
\begin{align*}
\mathfrak{F}_{21}
\lesssim
&\sum_{\substack{|k_1 -k|\leq 6, \,k_1\geq k_2 + 4}} 2^{-k_1} 2^{-k_2^+} \big\| \phi_{\leq 0}(\eta \langle t\rangle^{p_1}) \widehat{P_{k_1}g_\pm} \big\|  \big\| \partial_s (\widehat{P_{k_2}f_-}) \big\|_{L^1}
\\
\lesssim
&(C_1 \epsilon)^2 \sum_{\substack{|k_1 -k|\leq 6\\k_1\geq k_2 + 4, \, k_1\leq -p_1m+4}} 2^{k_2^-}   2^{-(1-\delta)m}
\\
\lesssim
&(C_1 \epsilon)^2 2^{-(1-\delta+ \delta D_E)m-(1-C_E) p_1 m} 2^{C_E k^-} 2^{ D_E k^+}. 
\end{align*}

\begin{align*}
\mathfrak{F}_{22}
\lesssim
&\sum_{\substack{|k_2 -k|\leq 6, \,k_2\geq k_1 + 4}} 2^{-k_1} 2^{-k_2^+} \big\| \phi_{\leq 0}(\eta \langle t\rangle^{p_1}) \widehat{P_{k_1}g_\pm} \big\|_{L^1}  \big\| \partial_s (\widehat{P_{k_2}f_-)} \big\|
\\
\lesssim
&(C_1 \epsilon)^2 \sum_{\substack{|k_2 -k|\leq 6\\k_2\geq k_1 + 4, \, k_1\leq -p_1m+4}} 2^{k_1} 2^{-k_2^+} 2^{-(1-\delta)m}
\\
\lesssim
&(C_1 \epsilon)^2 2^{-(1-\delta)m-(1-C_E) p_1 m} 2^{C_E k^-} 2^{-k^+}. 
\end{align*}

\begin{align*}
\mathfrak{F}_{23}
\lesssim
&2^k \sum_{\min\{k_1, k_2\}\geq k-6, \,|k_1 - k_2| \leq 3} \big\|\phi_{[k-2, k+2]}(\xi)\mathcal{F}{T_{m_{1\pm, L_1}} (P_{k_1} g_\pm, \partial_s (P_{k_2}f_-))} \big\|_{L^\infty}
\\
\lesssim
&2^k \sum_{\min\{k_1, k_2\}\geq k-6, \,|k_1 - k_2| \leq 3} 2^{-k_1} 2^{-k_2^+} \big\| \phi_{\leq 0}(\eta \langle t\rangle^{p_1}) \widehat{P_{k_1}g_\pm} \big\|_{L^\infty}  \big\| \partial_s (\widehat{P_{k_2}f_-)} \big\|_{L^1}
\\
\lesssim
&(C_1 \epsilon)^2 \sum_{\substack{\min\{k_1, k_2\}\geq k-6 \\|k_1- k_2| \leq  3, \, k_1\leq -p_1m+4}} 2^{k} 2^{-k_2^+} 2^{-(1-\delta)m}
\\
\lesssim
&(C_1 \epsilon)^2 2^{-(1-\delta)m-(1-C_E) p_1 m} 2^{C_E k^-} 2^{-k^+}. 
\end{align*}

\textbf{Subcase $I_3$: The rest.}

By the same treatment as $\|\phi_k(\xi) \mathcal{R}_{1\pm, 1, L_1} \|$ and $\|\phi_k(\xi) \mathcal{R}_{1\pm, 2, L_1} \|$, and the decay estimates
$$
 | \partial_s \Theta(s, \xi)|
\lesssim
C_1 \epsilon \, 2^{-p m},
$$
one easily obtains
$$
\|\phi_k(\xi) \mathcal{R}_{1\pm, 4, L_1} \|
\lesssim
(C_1 \epsilon)^2 2^{-(p+p_1 +\delta D_E) m} 2^{C_E k^-} 2^{D_E k^+},
$$
and
$$
\|\phi_k(\xi) \mathcal{R}_{1\pm, 5, L_1} \|
\lesssim
(C_1 \epsilon)^2 2^{-(p-\delta+ \delta D_E)m-(1-C_E) p_1 m} 2^{C_E k^-} 2^{ D_E k^+}.  
$$
For the term $\|\phi_k(\xi) \mathcal{R}_{1\pm, 6, L_1} \|$, one notices that
$$
|\partial_s \Theta(s, \xi)|^2 + | \partial_s\partial_s \Theta(s, \xi)|
\lesssim
C_1 \epsilon 2^{-2p m}.
$$
Therefore, a similar treatment to $\|\phi_k(\xi) \mathcal{R}_{1\pm, 1, L_1} \|$ yields
$$
\|\phi_k(\xi) \mathcal{R}_{1\pm, 6, L_1} \|
\lesssim
(C_1 \epsilon)^2 2^{-(2p+p_1 - 1+\delta D_E) m} 2^{C_E k^-} 2^{D_E k^+}. 
$$

\textbf{Case II: High frequency.}

In the high frequency case where $|\eta| \gtrsim \langle s\rangle^{-p_1}$, we recall that
$$
|\Phi_{1\pm}| 
\gtrsim 
{1\over \langle \xi\rangle} + {1\over \langle \xi-\eta\rangle},
\qquad
|\nabla_\eta \Phi_{1\pm}| 
\gtrsim 
\langle \xi-\eta\rangle^{-2},
\qquad
|\nabla \nabla \Phi_{1\pm}| 
\lesssim 
{1\over \langle\xi-\eta\rangle} + {1\over |\eta|}.
$$

\textbf{Subcase $II_1$: $\mathcal{R}_{1\pm, 1, H_1}$.}

We treat the term $\mathcal{R}_{1\pm, 1, H_1}$.
We first look at $\mathcal{F}{T_{m_{1+}} (g_{+, H_1}, f_-)}$, and integrate by part in $\eta$ leads us to
\begin{align*}
&\mathcal{F}{T_{m_{1+}} (g_{+, H_1}, f_-)}
\\
=
&\int {\nabla_\eta \Phi_{1-} \cdot \nabla_\eta e^{it\Phi_{1-}} \over -t \Phi_{1-} |\nabla_\eta \Phi_{1-}|^2} 
{\widehat{g_{+,H_1}}(\eta) \over |\eta|}
{\widehat{f}_-(\xi-\eta) \over \langle \xi-\eta\rangle} \, \d\eta
\\
=
&\int \nabla\cdot \Big( {\nabla_\eta \Phi_{1-} \over t \Phi_{1-} |\nabla_\eta \Phi_{1-}|^2} \Big) e^{it\Phi_{1-}} 
    {\widehat{g_{+,H_1}}(\eta) \over |\eta|}
{\widehat{f}_-(\xi-\eta) \over \langle \xi-\eta\rangle} 
\\
&+
{e^{it\Phi_{1+}} \nabla_\eta \Phi_{1-} \over t \Phi_{1-} |\nabla_\eta \Phi_{1-}|^2} \cdot
\Big( 
{\nabla_\eta\widehat{g_{+,H_1}}(\eta) \over |\eta|}
{\widehat{f}_-(\xi-\eta) \over \langle \xi-\eta\rangle} 
-
{\widehat{g_{+,H_1}}(\eta) \over |\eta|}
{\nabla_\eta \widehat{f}_-(\xi-\eta) \over \langle \xi-\eta\rangle} 
\\
&\qquad\quad-
{\eta\widehat{g_{+,H_1}}(\eta) \over |\eta|^3}
{\widehat{f}_-(\xi-\eta) \over \langle \xi-\eta\rangle} 
-
{ \widehat{g_{+,H_1}}(\eta) \over |\eta|}
{(\eta-\xi)\widehat{f}_-(\xi-\eta) \over \langle \xi-\eta\rangle^3} 
\Big)
\\
=
&\mathfrak{G}_{11} + \mathfrak{G}_{12}.
\end{align*}

Denote
$$
\Psi = \nabla\cdot \Big( {\nabla_\eta \Phi_{1-} \over -t \Phi_{1-} |\nabla_\eta \Phi_{1-}|^2} \Big) e^{it\Phi_{1-}}. 
$$
One easily finds that
$$
t|\Psi|
\lesssim
{|\nabla\nabla \Phi_{1-}| \over |\Phi_{1-}| |\nabla \Phi_{1-}|^2}
+
{1\over |\Phi_{1-}|^2}
\lesssim
\langle \xi-\eta\rangle^4 + {\langle\xi-\eta\rangle^5 \over |\eta|}.
$$
We proceed to get
\begin{equation}
\begin{aligned}
&\|\phi_k(\xi) \mathfrak{G}_{11} \|
\\
\lesssim
&\sum_{k_1\geq k_2 + 4} \Big\|\phi_k(\xi)\int \Psi {\widehat{P_{k_1} g_{+, H_1}}(\eta) \over |\eta|} {\widehat{P_{k_2}f_-}(\xi-\eta) \over \langle\xi-\eta\rangle}  \Big\|
\\
&+
\sum_{k_2\geq k_1 + 4} \Big\|\phi_k(\xi)\int \Psi {\widehat{P_{k_1} g_{+, H_1}}(\eta) \over |\eta|} {\widehat{P_{k_2}f_-}(\xi-\eta) \over \langle\xi-\eta\rangle}  \Big\|
\\
&+
\sum_{|k_1 - k_2| \leq 3} \Big\|\phi_k(\xi)\int \Psi {\widehat{P_{k_1} g_{+, H_1}}(\eta) \over |\eta|} {\widehat{P_{k_2}f_-}(\xi-\eta) \over \langle\xi-\eta\rangle}  \Big\|
\\
=
&\mathfrak{G}_{11a} + \mathfrak{G}_{11b} + \mathfrak{G}_{11c}.
\end{aligned}
\end{equation}
We have
\begin{align*}
    \mathfrak{G}_{11a}
    \lesssim
    &2^{-m} \sum_{\substack{|k_1 -k|\leq 6 \\ k_1\geq k_2 + 4, \, k_1\geq -p_1m -4}} (1+2^{-k_1}) 2^{4k_2^+} \big\|\widehat{P_{k_1} g_{+, H_1}}\big\|
    \big\|\widehat{P_{k_2}f_-}\big\|_{L^1}
    \\
    \lesssim
    &(C_1 \epsilon)^2 2^{-m} \sum_{\substack{|k_1 -k|\leq 6 \\k_1\geq k_2 + 4, \, k_1\geq -p_1m -4}} (1+2^{-k_1}) 2^{k_2^-} 2^{-2 k_2^+}
    \\
    \lesssim
    &(C_1 \epsilon)^2 2^{((1+C_E)p_1 + 2\delta - \delta D_E-1)m} 2^{C_E k^-} 2^{D_E k^+},
\end{align*}
in which we used $\|f_-\|_{H^7} \lesssim C_1 \epsilon$.

Similarly, we obtain
\begin{align*}
    \mathfrak{G}_{11b}
    \lesssim
    (C_1 \epsilon)^2 2^{((1+C_E)p_1 + 2\delta - \delta D_E-1)m} 2^{C_E k^-} 2^{D_E k^+}.
\end{align*}
For the term $\mathfrak{G}_{11c}$, by Young's inequality for convolution we get
\begin{align*}
    \mathfrak{G}_{11c}
    \lesssim
    &2^{-m} \sum_{\substack{|k_1 - k_2| \leq 3, \, k_1\geq -p_1m -4 \\ \min\{k_1, k_2\}\geq k-6}} 2^{k} \big(1+2^{-k_1}\big) 2^{-k_1} 2^{4k_2^+} \big\|\widehat{P_{k_1} g_{+, H_1}}\big\|
    \big\|\widehat{P_{k_2}f_-}\big\|
    \\
    \lesssim
    &(C_1 \epsilon)^2 2^{( p_1-1)m} 2^{ k^-} 2^{D_E k^+}.
\end{align*}

Next, we consider the term $\mathfrak{G}_{12}$.
Denote
$$
\Psi_2 = {e^{it\Phi_{1-}}\nabla_\eta \Phi_{1-} \over -t \Phi_{1-} |\nabla_\eta \Phi_{1-}|^2}.
$$
We notice that
$$
t|\Psi_2|
\lesssim
{1\over |\Phi_{1-}| |\nabla_\eta \Phi_{1-}|}
\lesssim
\langle \xi-\eta\rangle^{3}.
$$
We have
\begin{align*}
&\Big\| \phi_k(\xi) \int \Psi_2 \cdot  {\nabla_\eta\widehat{g_{+,H_1}}(\eta) \over |\eta|}
{\widehat{f}_-(\xi-\eta) \over \langle \xi-\eta\rangle} \Big\|
\\
\lesssim
    &\sum_{k_1\geq k_2 + 4} \Big\| \phi_k(\xi) \int \Psi_2 \cdot  {\phi_{k_1}(\eta)\nabla_\eta\widehat{ g_{+,H_1}}(\eta) \over |\eta|}
{\widehat{P_{k_2}f_-}(\xi-\eta) \over \langle \xi-\eta\rangle} \Big\|
    \\
     &+
    \sum_{k_2\geq k_1 + 4} \Big\| \phi_k(\xi) \int \Psi_2 \cdot  {\phi_{k_1}(\eta)\nabla_\eta\widehat{g_{+,H_1}}(\eta) \over |\eta|}
{\widehat{P_{k_2}f_-}(\xi-\eta) \over \langle \xi-\eta\rangle} \Big\|
    \\
     &+
    \sum_{|k_1- k_2| \leq 3} \Big\| \phi_k(\xi) \int \Psi_2 \cdot  {\phi_{k_1}(\eta)\nabla_\eta\widehat{g_{+,H_1}}(\eta) \over |\eta|}
{\widehat{P_{k_2}f_-}(\xi-\eta) \over \langle \xi-\eta\rangle} \Big\|
\\
=
&\mathfrak{G}_{21} + \mathfrak{G}_{22} + \mathfrak{G}_{23}.
\end{align*}
Note that
$$
\nabla_\eta \widehat{g_{+, H_1}}
=
\nabla_\eta \widehat{g_{+}} \phi_{\geq 1}(\eta \langle s\rangle^{p_1})
+
\langle s\rangle^{p_1} \widehat{g_{+}} \nabla_\eta \phi_{\geq 1}(\eta \langle s\rangle^{p_1}).
$$
We proceed to bound
\begin{align*}
\mathfrak{G}_{21}
\lesssim
&\sum_{\substack{|k_1 -k|\leq 6, \,k_1\geq k_2 + 4}} 2^{-m} 2^{-k_1} 2^{2k_2^+} \big\|\phi_{k_1}(\eta)\nabla_\eta\widehat{ g_{+,H_1}} \big\|
\big\| \widehat{P_{k_2}f_-} \big\|_{L^1}
\\
\lesssim
& (C_1 \epsilon)^2 \sum_{\substack{|k_1 -k|\leq 6\\k_1\geq k_2 + 4, \, k_1\geq -p_1 m -4}} \Big( 2^{-m} 2^{k_2^-} 2^{-3k^+_2} 
+ 2^{-(1-p_1)m} 2^{k_2^-} 2^{-3k^+_2} \Big)
    \\
    \lesssim
    &(C_1 \epsilon)^2 2^{-(1-p_1 + \delta D_E)m} 2^{C_E k^-} 2^{D_E k^+}.
\end{align*}
We also get
\begin{align*}
\mathfrak{G}_{22}
\lesssim
&\sum_{\substack{|k_2 -k|\leq 6, \,k_2\geq k_1 + 4}} 2^{-m} 2^{-k_1} 2^{2k_2^+} \big\|\phi_{k_1}(\eta)\nabla_\eta\widehat{ g_{+,H_1}}\big\|_{L^1}
\big\| \widehat{P_{k_2}f_-}\big\|
\\
\lesssim
& (C_1 \epsilon)^2 \sum_{\substack{|k_2 -k|\leq 6\\k_2\geq k_1 + 4, \, k_1\geq -p_1 m -4}} \Big( 2^{-m} 2^{k_1^-} 2^{-3k^+_2} 
+ 2^{-(1-p_1)m} 2^{k_1^-} 2^{-3k^+_2} \Big)
    \\
    \lesssim
    &(C_1 \epsilon)^2 2^{-(1-p_1)m} 2^{C_E k^-} 2^{D_E k^+}.
\end{align*}
For $\mathfrak{G}_{23}$, we have
\begin{align*}
\mathfrak{G}_{23}
\lesssim
&\sum_{\min\{k_1, k_2\}\geq k-6, \,|k_1 - k_2| \leq 3} 2^{-m} 2^{k} 2^{-k_1} 2^{2k_2^+} \Big\|\int |\phi_{k_1}(\eta)\nabla_\eta\widehat{ g_{+,H_1}}|(\eta)
 |\widehat{P_{k_2}f_-}|(\xi-\eta)\Big\|_{L^\infty}
 \\
 \lesssim
 &\sum_{\min\{k_1, k_2\}\geq k-6, \,|k_1 - k_2| \leq 3} 2^{-m} 2^{k} 2^{-k_1} 2^{2k_2^+} \big\| \nabla_\eta\widehat{P_{k_1} g_{+,H_1}}\big\|
 \big\|\widehat{P_{k_2}f_-}(\xi-\eta)\big\|
\\
    \lesssim
    &(C_1 \epsilon)^2 2^{-(1-p_1 )m} 2^{C_E k^-} 2^{D_E k^+}.
\end{align*}

\textbf{Subcase $II_2$: $\mathcal{R}_{1\pm, 2, H_1}$.}

Employing the equation of $\widehat{f}_-$, which reads
\begin{align*}
&\partial_s\widehat{f_-}(s, \xi-\eta)
\\
=
&e^{-is\langle \xi-\eta\rangle} (-n E)
=
-{i\over 2} {e^{-2is\langle\xi-\eta\rangle}\widehat{h_+}(\xi-\eta) -  \widehat{h_-}(\xi-\eta)  \over \langle\xi-\eta\rangle},
\end{align*}
we have
\begin{align*}
     \mathcal{F} T_{m_{1\pm}}(g_{\pm, H_1}, \partial_s f_-)
     =
     &{i\over 2}\int {\nabla_\eta \Phi_{2\mp} \cdot \nabla_{\eta} e^{is\Phi_{2\mp}}\over s\Phi_{1\mp} |\nabla_\eta \Phi_{2\mp}|^2} {\widehat{g_{\pm, H_1}}(\eta) \over |\eta|} { \widehat{h_+}(s, \xi-\eta) \over \langle \xi-\eta\rangle^2} \, \d\eta
     \\
     &+
     {1\over 2} \int {e^{is\Phi_{1\mp}} \over \Phi_{1\mp}}
    {\widehat{g_{\pm, H_1}}(\eta) \over |\eta|} {\widehat{h_-}(s, \xi-\eta) \over \langle \xi-\eta\rangle^2}
     \\
     =&
     \mathfrak{G}_{31} + \mathfrak{G}_{32} + \mathfrak{G}_{33},
\end{align*}
in which (recall $\Phi_{2\pm} = \langle \xi\rangle - \langle \xi-\eta\rangle \pm |\eta|$) we apply integration by part in $\eta$ and 
\begin{align*}
    \mathfrak{G}_{31}
    =
    &-{i\over 2}\int \nabla_\eta\cdot {\nabla_\eta \Phi_{2\mp} \over s\Phi_{1\mp} |\nabla_\eta \Phi_{2\mp}|^2} e^{is\Phi_{2\mp}} 
    {\widehat{g_{\pm, H_1}}(\eta) \over |\eta|} { \widehat{h_+}(s, \xi-\eta) \over \langle \xi-\eta\rangle^2} \, \d\eta,
    \\
    \mathfrak{G}_{32}
    =
    &-{i\over 2}\int {e^{it\Phi_{2\mp}} \nabla_\eta \Phi_{2\mp} \over s \Phi_{1\mp} |\nabla_\eta \Phi_{2\mp}|^2} \cdot
\Big( 
{\nabla_\eta\widehat{g_{\pm,H_1}}(\eta) \over |\eta|}
{\widehat{h_+}(\xi-\eta) \over \langle \xi-\eta\rangle^2} 
-
{\widehat{g_{\pm,H_1}}(\eta) \over |\eta|}
{\nabla_\eta \widehat{h_+}(\xi-\eta) \over \langle \xi-\eta\rangle^2} 
\\
&\qquad-
{\eta\widehat{g_{\pm,H_1}}(\eta) \over |\eta|^3}
{\widehat{h_+}(\xi-\eta) \over \langle \xi-\eta\rangle^2} 
-
2{ \widehat{g_{\pm,H_1}}(\eta) \over |\eta|}
{(\eta-\xi)\widehat{h_+}(\xi-\eta) \over \langle \xi-\eta\rangle^4} 
\Big) \, \d \eta,
\\
\mathfrak{G}_{33}
    =
    &{1\over 2} \int {e^{is\Phi_{1\mp}} \over \Phi_{1\mp}}
    {\widehat{g_{\pm, H_1}}(\eta) \over |\eta|} {\widehat{h_-}(s, \xi-\eta) \over \langle \xi-\eta\rangle^2} \, \d\eta.
\end{align*}
We note that 
\begin{align*}
    \Big|\nabla_\eta\cdot {\nabla_\eta \Phi_{2\mp} \over s\Phi_{1\mp} |\nabla_\eta \Phi_{2\mp}|^2} \Big|
    \lesssim
    &s^{-1}  (1+|\eta|^{-1}) \langle\xi-\eta\rangle^5,
    \\
    \Big|{e^{it\Phi_{2\mp}} \nabla_\eta \Phi_{2\mp} \over s \Phi_{1\mp} |\nabla_\eta \Phi_{2\mp}|^2}\Big|
    \lesssim
    &s^{-1}   \langle\xi-\eta\rangle^3.
\end{align*}
Thanks to the fast decay of $\|P_{k_2} h_\pm\|_{H^6}\leq \|h_\pm\|_{H^6} \lesssim (C_1 \epsilon)^2 2^{-(1-\delta)m}$ for all $k_2 \in \mathbb{Z}$, we have
\begin{align*}
\|\phi_{k}(\xi) e^{i\Theta} \mathfrak{G}_{31}\|
\lesssim
2^{(-2 -C_E p_1 + 3\delta)m} 2^{C_E k^-} 2^{D_E k^+},
\end{align*}
as well as
\begin{align*}
\|\phi_{k}(\xi) e^{i\Theta} \mathfrak{G}_{32}\|
\lesssim
2^{(-2 + (1+C_E)p_1 + 5\delta)m} 2^{C_E k^-} 2^{D_E k^+}.
\end{align*}

For the part involving $\mathfrak{G}_{33}$, we integrate by part in time one more time to get
\begin{align*}
    \int_{t_1}^{t_2} e^{i\Theta} \mathfrak{G}_{33} \, \d s
    =
    \mathfrak{G}_{33a} + \int_{t_1}^{t_2} e^{i\Theta} ( \mathfrak{G}_{33b}
    +\mathfrak{G}_{33c} + \mathfrak{G}_{33d}) \, \d s,
\end{align*}
in which 
\begin{align*}
    \mathfrak{G}_{33a}
    =
    &{1\over 2} e^{i\Theta(t_2)} \int {e^{is\Phi_{1\mp}} \over i \Phi_{1\mp}^2}  {\widehat{g_{\pm, H_1}}(t_2, \eta) \over |\eta|} {\widehat{h_-}(t_2, \xi-\eta) \over \langle \xi-\eta\rangle^2}
    \\
    &-
    {1\over 2} e^{i\Theta(t_1)} \int {e^{is\Phi_{1\mp}} \over i \Phi_{1\mp}^2}  {\widehat{g_{\pm, H_1}}(t_1, \eta) \over |\eta|} {\widehat{h_-}(t_1, \xi-\eta) \over \langle \xi-\eta\rangle^2},
    \\
    \mathfrak{G}_{33b}
    =
    &-{1\over 2} \partial_s\Theta \int {e^{is\Phi_{1\mp}} \over  \Phi_{1\mp}^2}  {\widehat{g_{\pm, H_1}}(s, \eta) \over |\eta|} {\widehat{h_-}(s, \xi-\eta) \over \langle \xi-\eta\rangle^2},
    \\
    \mathfrak{G}_{33c}
    =
    &-{1\over 2}  \int {e^{is\Phi_{1\mp}} \over  i \Phi_{1\mp}^2}  {\partial_s \widehat{g_{\pm, H_1}}(s, \eta) \over |\eta|} {\widehat{h_-}(s, \xi-\eta) \over \langle \xi-\eta\rangle^2},
    \\
    \mathfrak{G}_{33d}
    =
    &-{1\over 2}  \int {e^{is\Phi_{1\mp}} \over  i \Phi_{1\mp}^2}  { \widehat{g_{\pm, H_1}}(s, \eta) \over |\eta|} {\partial_s \widehat{h_-}(s, \xi-\eta) \over \langle \xi-\eta\rangle^2}.
\end{align*}

Recall Lemma \ref{lem:phase}, $|\Phi_{1\pm}|^{-2}$ does not cause trouble in bounding these terms.

With the aid of $\| h_\pm\|_{H^6} \lesssim (C_1 \epsilon)^2 2^{-(1-\delta)m}$ for all $k_2 \in \mathbb{Z}$, the terms involving $\mathfrak{G}_{33a}$ can be easily bounded with
\begin{align*}
    \|\phi_k(\xi) \mathfrak{G}_{33a}\|
    \lesssim
    (C_1 \epsilon)^3 2^{-(1-\delta)m} 2^{C_E k^-} 2^{D_E k^+}.
\end{align*}

Employing the additional decay of $\partial_t \Theta$ in Lemma \ref{lem:Theta}, we get
\begin{align*}  
\|\phi_k(\xi) \mathfrak{G}_{33b}\|
\lesssim
(C_1 \epsilon)^3 2^{-(1+p-\delta)m} 2^{C_E k^-} 2^{D_E k^+}.
\end{align*}

Next, note the simple fact that 
$$
\partial_s \widehat{g_{\pm, H_1}}(\eta)
=
p_1 s \langle s\rangle^{p_1-2} \widehat{g_\pm}(\eta) \nabla_\eta \phi_{\geq 1}(\eta\langle s\rangle^{p_1}) \cdot \eta,
$$
we derive
\begin{align*}  
\|\phi_k(\xi) \mathfrak{G}_{33c}\|
\lesssim
(C_1 \epsilon)^3 2^{-(2-\delta)m} 2^{C_E k^-} 2^{D_E k^+}.
\end{align*}

By the fast decay of $\|\partial_s {h_-}\|$, which reads  
\begin{equation}\label{eq:M1-001}
    \|\partial_s {h_-}\|
    \lesssim
    \| \Delta|E|^2 \cdot E - (n)^2 E - 2\partial_\alpha n \, \partial^\alpha E\|
    \lesssim
    (C_1 \epsilon)^3 s^{-2+2\delta},
\end{equation}
we obtain
\begin{align*}
\|\phi_k(\xi) \mathfrak{G}_{33d}\|
\lesssim
(C_1 \epsilon)^3 2^{-(2-3\delta)m} 2^{C_E k^-} 2^{D_E k^+}.
\end{align*}

\textbf{Subcase $II_3$: The rest.}

Recall in  Lemma \ref{lem:Theta} that $|\partial_s \Theta(s, \xi)| \lesssim C_1 \epsilon 2^{-p m}$.
Using the same strategy as we bound $\|\phi_k(\xi) \mathcal{R}_{1\pm, 1, H_1} \|$, one derives
\begin{align}
\|\phi_k(\xi) \mathcal{R}_{1\pm, 4, H_1} \|
\lesssim
&(C_1 \epsilon)^2 2^{-(p-(1+C_E)p_1- 3\delta )m} 2^{C_E k^-} 2^{D_E k^+},
\\
\|\phi_k(\xi) \mathcal{R}_{1\pm, 5, H_1} \|
\lesssim
&(C_1 \epsilon)^2 2^{-(p-2\delta)m}2^{ p_1 m} 2^{C_E k^-} 2^{D_E k^+}. 
\end{align}
By Lemma \ref{lem:Theta}, i.e.,
$$
|\partial_s \Theta(s, \xi)|^2 + | \partial_s\partial_s \Theta(s, \xi)|
\lesssim
C_1 \epsilon 2^{-2p m},
$$
 a similar treatment to $\|\phi_k(\xi) \mathcal{R}_{1\pm, 1, H_1} \|$ gives
\begin{align}
\|\phi_k(\xi) \mathcal{R}_{1\pm, 6, H_1} \|
\lesssim
&(C_1 \epsilon)^2 2^{-(2p-(1+C_E)p_1- 5\delta )m} 2^{C_E k^-} 2^{D_E k^+}.
\end{align}

To conclude, we get \eqref{eq:M1} by gathering the estimates established in \textbf{Case I} and \textbf{Case II}. The proof is completed.
\end{proof}

\begin{proposition}\label{prop:M2}
For all $m\gg 1$, $t_1, t_2 \in [2^m - 2, 2^{m+1}+2]$, and $k\leq \delta m$, it holds
\begin{align}\label{eq:M2}
\Big\| \phi_k(\xi) \int_{t_1}^{t_2} e^{i\Theta(s, \xi)}\mathcal{M}_2(s, \xi) \, \d s \Big\|
\lesssim
(C_1 \epsilon)^2 2^{-\delta m} 2^{C_E k^-} 2^{D_E k^+}. 
\end{align}

\end{proposition}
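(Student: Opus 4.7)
The plan is to follow closely the strategy used in the proof of Proposition \ref{prop:M1}, but now the whole analysis is carried out in the ``high frequency'' regime $|\eta| \gtrsim \langle s\rangle^{-p}$, so only the second halves (the $H_1$-parts) of the computations there are needed. I would first use the free evolution of $\ell$, namely $\widehat{\ell}(s,\eta) = (2i|\eta|)^{-1}(e^{is|\eta|}\widehat{g_-}(\eta) - e^{-is|\eta|}\widehat{g_+}(\eta))$, to decompose
\[
\int_{t_1}^{t_2} e^{i\Theta}\mathcal{M}_2 \, \d s = \mathfrak{M}_{2+} + \mathfrak{M}_{2-},
\]
with
\[
\mathfrak{M}_{2\pm} = \mp \frac{1}{4}\int_{t_1}^{t_2} e^{i\Theta(s,\xi)}\int e^{is\Phi_{2\mp}}\,\frac{\phi_{\geq 1}(\eta\langle s\rangle^p)\widehat{g_\pm}(\eta)}{|\eta|}\,\frac{\widehat{f_+}(s,\xi-\eta)}{\langle\xi-\eta\rangle}\,\d\eta\,\d s,
\]
where $\Phi_{2\pm} = \langle\xi\rangle - \langle\xi-\eta\rangle \pm |\eta|$ are the relevant phases governed by Lemma \ref{lem:phase}.

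Next, I would integrate by parts in $s$ using $|\Phi_{2\pm}|\gtrsim |\eta|(\langle\eta\rangle+\langle\xi-\eta\rangle)^{-2}$, generating three pieces: boundary terms at $t_1,t_2$, a term containing $\partial_s f_+$ (which equals $-e^{is\langle\nabla\rangle}(nE)$ and relates to $h_\pm$, decaying like $(C_1\epsilon)^2 s^{-1+\delta}$ in $L^2$), and a term with $i\partial_s\Theta$ that must be integrated by parts in $s$ a second time, producing a factor $|\Phi_{2\pm}|^{-2}$ together with $(\partial_s\Theta)^2+\partial_s^2\Theta \lesssim C_1\epsilon\, 2^{-2pm}$ from Lemma \ref{lem:Theta}. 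On the high-frequency cutoff the symbol $|\Phi_{2\pm}|^{-1}$ is at most $\langle\xi-\eta\rangle^2 \langle s\rangle^p/\min(1,|\eta|)$, so each such time integration by parts costs a factor $\langle s\rangle^p$ (this is the source of the $p=3/4$ choice).

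To offset these losses I would then integrate by parts in $\eta$ using $|\nabla_\eta\Phi_{2\pm}|\gtrsim\langle\xi-\eta\rangle^{-2}$, gaining a factor $s^{-1}$ at the cost of differentiating the symbol and the amplitudes. Derivatives falling on the cutoff $\phi_{\geq 1}(\eta\langle s\rangle^p)$ produce the factor $\langle s\rangle^p$ again but restrict $|\eta|\simeq \langle s\rangle^{-p}$, shrinking the volume of integration; derivatives falling on $\widehat{g_\pm}$ and $\widehat{f_+}$ are controlled by Lemma \ref{lem:d-xi-L2}. A dyadic Littlewood--Paley decomposition in both input frequencies $k_1$ (for $\eta$) and $k_2$ (for $\xi-\eta$), combined with the bootstrap bound $\|P_{k_2} f_+\| \lesssim C_1\epsilon\, 2^{C_E k_2^-}2^{D_E k_2^+}$, the fixed bound on $\widehat{g_\pm}$, and the identity $\partial_s f_+ \leftrightarrow h_\pm$, should then close each of the three pieces with a total factor $2^{-\delta m}$ after summation over $k_1,k_2$, exactly as in Subcase $II_1$--$II_3$ of Proposition \ref{prop:M1}.

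The main obstacle will be the bookkeeping of the $\langle s\rangle^p$ losses coming from $|\Phi_{2\pm}|^{-1}$: each time one integrates by parts in $s$ one multiplies the symbol by such a factor, and one must compensate by either invoking the faster decay of $\partial_s f_+$ (through the good $L^2$ bound on $h_\pm$), exploiting $\partial_s\Theta, \partial_s^2\Theta$ decay from Lemma \ref{lem:Theta}, or performing an $\eta$ integration by parts. Balancing these requires tracking the net exponent $-1 + p + (\text{gains})$ carefully; with $p=3/4$ and $p_1=1/10$ as in Proposition \ref{prop:M1}, the gains always strictly dominate. For the frequency regime $k \leq \delta m$, the additional factor $2^{C_E k^-}2^{D_E k^+}$ comes essentially for free from the output localization at $|\xi|\sim 2^k$ combined with the input $L^2$ estimates, as the $\eta$-integration produces at worst a factor $2^k$ in the low-output-frequency regime. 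Assembling these pieces yields the claimed $(C_1\epsilon)^2 2^{-\delta m} 2^{C_E k^-}2^{D_E k^+}$ bound.
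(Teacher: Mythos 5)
Your decomposition and overall skeleton track the paper's, but two concrete points go wrong.

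\textbf{Missing term from the time integration by parts.} After the first $s$-integration by parts you list only three pieces (boundary, $\partial_s f_+$, $i\partial_s\Theta$). There is a fourth: $\widehat{g_{+,H}}(s,\eta) = \phi_{\geq 1}(\eta\langle s\rangle^p)\widehat{g_+}(\eta)$ depends on $s$ through the cutoff even though the free-wave profile $g_+$ itself does not, so $\partial_s\widehat{g_{+,H}} = ps\langle s\rangle^{p-2}\,\eta\cdot\nabla\phi_{\geq 1}(\eta\langle s\rangle^p)\,\widehat{g_+}(\eta)$ is nonzero and produces a separate contribution ($\mathcal{R}_{2+,3}$ in the paper, treated in its ``Case III''). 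It is not negligible: it carries a factor $\sim s^{p-1}$ and is supported on $|\eta|\sim\langle s\rangle^{-p}$, and without isolating it the proof is incomplete.

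\textbf{The second $s$-integration by parts on the $\partial_s\Theta$ term does not balance here.} For $\mathcal{M}_1$ the phases satisfy $|\Phi_{1\pm}|\gtrsim\langle\xi\rangle^{-1}+\langle\xi-\eta\rangle^{-1}$, which is bounded below uniformly in $s$, so $|\Phi_{1\pm}|^{-2}$ costs nothing and the paper can integrate in $s$ a second time. For $\mathcal{M}_2$ the phases satisfy only $|\Phi_{2\pm}|\gtrsim|\eta|(\langle\eta\rangle+\langle\xi-\eta\rangle)^{-2}$, so on the support $|\eta|\gtrsim\langle s\rangle^{-p}$ each power of $|\Phi_{2\pm}|^{-1}$ costs $\langle s\rangle^p$. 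Your second $s$-integration produces a boundary piece of size $|\partial_s\Theta|\cdot|\Phi_{2\pm}|^{-2}\lesssim 2^{-pm}\cdot 2^{2pm} = 2^{pm}$, and the $\eta$-derivatives of $|\Phi_{2\pm}|^{-2}$ needed to recover a $s^{-1}$ afterwards are of order $|\Phi_{2\pm}|^{-3}\lesssim 2^{3pm}$, so the net exponent is positive with $p=3/4$ and the argument diverges. The paper avoids this entirely: for $\mathcal{R}_{2+,4}$ it keeps the single factor $|\Phi_{2-}|^{-1}$ and instead performs one (or, for some sub-terms, two) $\eta$-integrations by parts, gaining $s^{-1}$ each time from $|\nabla_\eta\Phi_{2-}|\gtrsim\langle\xi-\eta\rangle^{-2}$ and using the $2^{-pm}$ decay of $\partial_s\Theta$ from Lemma~\ref{lem:Theta} to close. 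You should replace the proposed second $s$-integration with this $\eta$-based argument. (A smaller point: there is no $p_1$-scale subdivision in $\mathcal{M}_2$; the cutoff in $\ell_H$ is already at scale $p=3/4$.)
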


\begin{proof}
We note
\begin{align*}
\int_{t_1}^{t_2} e^{i\Theta(s, \xi)}\mathcal{M}_2(s, \xi) \, \d s 
=
\mathfrak{M}_{2+} + \mathfrak{M}_{2-},
\end{align*}
with
\begin{align*}
    \mathfrak{M}_{2\pm} 
    = \pm {1\over 4} \int_{t_1}^{t_2} e^{i\Theta(s, \xi)}
         \int e^{is\Phi_{2\mp}} {\widehat{g_{\pm, H}}(\eta) \over |\eta|} { \widehat{f_+}(s, \xi-\eta) \over \langle \xi-\eta\rangle} \, \d\eta ds,
\end{align*}
in which 
$$
\Phi_{2\pm}
=
\langle\xi\rangle - \langle\xi-\eta\rangle \pm |\eta|.
$$
For $|\eta|\gtrsim \langle t\rangle^{-p}$, we note that
\begin{align*}
    |\Phi_{2\pm}| 
    \gtrsim
    {|\eta| \over (\langle\eta\rangle + \langle \xi-\eta\rangle)^2},
    \quad
    |\nabla_\eta \Phi_{2\pm}| 
    \gtrsim
    \langle \xi-\eta\rangle^{-2},
    \quad
    |\nabla\nabla \Phi_{2\pm}| 
    \lesssim
    {1\over \langle\xi-\eta\rangle} + {1\over |\eta|}.
\end{align*}
In the following, we focus only on $\mathfrak{M}_{2+}$ as the term $\mathfrak{M}_{2-}$ can be treated in the same way.

We have by integration by part in time that
\begin{align*}
    \mathfrak{M}_{2+}
    =
    \mathcal{R}_{2+, 1}+\mathcal{R}_{2+, 2}+\mathcal{R}_{2+, 3}+\mathcal{R}_{2+, 4},
\end{align*}
where
\begin{align*}
\mathcal{R}_{2+, 1}
=
&{1\over 4} e^{i\Theta(t_2, \xi)} \mathcal{F} T_{m_{3-}}(g_{+, H}, f_+)
    -
    {1\over 4} e^{i\Theta(t_1, \xi)} \mathcal{F} T_{m_{3-}}(g_{+, H}, f_+),
    \\
    \mathcal{R}_{2+, 2}
    =
    &-{1\over 4} \int_{t_1}^{t_2} e^{i\Theta} \mathcal{F}T_{m_{3-}}(g_{+, H}, \partial_s f_+),
    \\
    \mathcal{R}_{2+, 3}
    =
    &-{1\over 4} \int_{t_1}^{t_2} e^{i\Theta} \mathcal{F}T_{m_{3-}}(\partial_s g_{+, H},  f_+),
    \\
    \mathcal{R}_{2+, 4}
=
&-{i\over 4} \int_{t_1}^{t_2} e^{i\Theta} \partial_s \Theta(s, \xi) \mathcal{F} T_{m_{3-}}(g_{+, H}, f_+)
\end{align*}
with 
$$
m_{3-} = {e^{is\Phi_{2-}} \over i\Phi_{2-}} |\eta|^{-1} \langle \xi-\eta\rangle^{-1}.
$$

\textbf{Case I: Estimates for $\|\phi_k(\xi)\mathcal{R}_{2+, 1}\|$.}

We note that
\begin{align*}
     \mathcal{F} T_{m_{3-}}(g_{+, H}, f_+)
     =
     &-\int {\nabla_\eta \Phi_{2-} \cdot \nabla_{\eta} e^{is\Phi_{2-}}\over is\Phi_{2-} |\nabla_\eta \Phi_{2-}|^2} {\widehat{g_{+, H}}(\eta) \over |\eta|} { \widehat{f_+}(s, \xi-\eta) \over \langle \xi-\eta\rangle} \, \d\eta
     \\
     =&
     \mathfrak{H}_{11} + \mathfrak{H}_{12},
\end{align*}
in which
\begin{align*}
    \mathfrak{H}_{11}
    =
    &\int \nabla_\eta\cdot {\nabla_\eta \Phi_{2-} \over is\Phi_{2-} |\nabla_\eta \Phi_{2-}|^2} e^{is\Phi_{2-}} 
    {\widehat{g_{+, H}}(\eta) \over |\eta|} { \widehat{f_+}(s, \xi-\eta) \over \langle \xi-\eta\rangle} \, \d\eta,
    \\
    \mathfrak{H}_{12}
    =
    &\int {e^{it\Phi_{2-}} \nabla_\eta \Phi_{2-} \over is \Phi_{2-} |\nabla_\eta \Phi_{2-}|^2} \cdot
\Big( 
{\nabla_\eta\widehat{g_{+,H}}(\eta) \over |\eta|}
{\widehat{f}_+(\xi-\eta) \over \langle \xi-\eta\rangle} 
-
{\widehat{g_{+,H}}(\eta) \over |\eta|}
{\nabla_\eta \widehat{f}_+(\xi-\eta) \over \langle \xi-\eta\rangle} 
\\
&\qquad-
{\eta\widehat{g_{+,H}}(\eta) \over |\eta|^3}
{\widehat{f}_+(\xi-\eta) \over \langle \xi-\eta\rangle} 
-
{ \widehat{g_{+,H}}(\eta) \over |\eta|}
{(\eta-\xi)\widehat{f}_+(\xi-\eta) \over \langle \xi-\eta\rangle^3} 
\Big).
\end{align*}
We note that 
\begin{equation}\label{eq:M2-001}
    \begin{aligned}
    \Big|\nabla_\eta\cdot {\nabla_\eta \Phi_{2-} \over s\Phi_{2-} |\nabla_\eta \Phi_{2-}|^2} \Big|
    \lesssim
    &s^{-1}  |\eta|^{-2} \big( \langle\eta\rangle^6 + \langle\xi-\eta\rangle^6  \big),
    \\
    \Big| {\nabla_\eta \Phi_{2-} \over s \Phi_{2-} |\nabla_\eta \Phi_{2-}|^2}\Big|
    \lesssim
    &s^{-1} |\eta|^{-1} \big( \langle\eta\rangle^4 + \langle\xi-\eta\rangle^4  \big).
\end{aligned}
\end{equation}
Similar to the way we bound $\mathfrak{G}_{11}$, one can get
\begin{align*}
    \|\phi_k(\xi) \mathfrak{H}_{11}\|
    \lesssim    
    (C_1 \epsilon)^2 2^{((1+C_E)p + 6\delta-1)m} 2^{C_E k^-} 2^{D_E k^+}.
\end{align*}

Similar to the way we bound $\mathfrak{G}_{12}$, recalling also that 
$$
\nabla_\eta \widehat{g_{+, H}}
=
\nabla_\eta \widehat{g_{+}} \phi_{\geq 1}(\eta \langle s\rangle^{p})
+
\langle s\rangle^{p} \widehat{g_{+}} \nabla_\eta \phi_{\geq 1}(\eta \langle s\rangle^{p}),
$$
one can get
\begin{align*}
    \|\phi_k(\xi) \mathfrak{H}_{12}\|
    \lesssim    
    &(C_1 \epsilon)^2 2^{(-1+(1+2C_E)p+ 4\delta )m} 2^{C_E k^-} 2^{D_E k^+}.
\end{align*}

\textbf{Case II: Estimates for $\|\phi_k(\xi)\mathcal{R}_{2+, 2}\|$.}

Recalling again that
$$
\partial_s\widehat{f_+}(s, \xi-\eta)
=
-{i\over 2} {\widehat{h_+}(\xi-\eta) - e^{2is\langle\xi-\eta\rangle} \widehat{h_-}(\xi-\eta)  \over \langle\xi-\eta\rangle}.
$$
This leads us to
\begin{align*}
     \mathcal{F} T_{m_{3-}}(g_{+, H}, \partial_s f_+)
     =
     &-\int {\nabla_\eta \Phi_{2-} \cdot \nabla_{\eta} e^{is\Phi_{2-}}\over s\Phi_{2-} |\nabla_\eta \Phi_{2-}|^2} {\widehat{g_{+, H}}(\eta) \over |\eta|} { \partial_s\widehat{f_+}(s, \xi-\eta) \over \langle \xi-\eta\rangle} \, \d\eta
     \\
     =&
     \mathfrak{H}_{21} + \mathfrak{H}_{22} + \mathfrak{H}_{23},
\end{align*}
in which
\begin{align*}
    \mathfrak{H}_{21}
    =
    &-{i\over 2}\int \nabla_\eta\cdot {\nabla_\eta \Phi_{2-} \over s\Phi_{2-} |\nabla_\eta \Phi_{2-}|^2} e^{is\Phi_{2-}} 
    {\widehat{g_{+, H}}(\eta) \over |\eta|} { \widehat{h_+}(s, \xi-\eta) \over \langle \xi-\eta\rangle^2} \, \d\eta,
    \\
    \mathfrak{H}_{22}
    =
    &-{i\over 2}\int {e^{it\Phi_{2-}} \nabla_\eta \Phi_{2-} \over s \Phi_{2-} |\nabla_\eta \Phi_{2-}|^2} \cdot
\Big( 
{\nabla_\eta\widehat{g_{+,H}}(\eta) \over |\eta|}
{\widehat{h}_+(\xi-\eta) \over \langle \xi-\eta\rangle^2} 
-
{\widehat{g_{+,H}}(\eta) \over |\eta|}
{\nabla_\eta \widehat{h}_+(\xi-\eta) \over \langle \xi-\eta\rangle^2} 
\\
&\qquad-
{\eta\widehat{g_{+,H}}(\eta) \over |\eta|^3}
{\widehat{h}_+(\xi-\eta) \over \langle \xi-\eta\rangle^2} 
-
2{ \widehat{g_{+,H}}(\eta) \over |\eta|}
{(\eta-\xi)\widehat{h}_+(\xi-\eta) \over \langle \xi-\eta\rangle^4} 
\Big) \, \d \eta,
\\
\mathfrak{H}_{23}
    =
    &{1\over 2} \int {e^{is\Phi_{1-}} \over \Phi_{2-}}
    {\widehat{g_{+, H}}(\eta) \over |\eta|} {\widehat{h_-}(s, \xi-\eta) \over \langle \xi-\eta\rangle^2} \, \d \eta.
\end{align*}
We recall \eqref{eq:M2-001}, 
and thanks to the fast decay of $ \|h_\pm\|_{H^7} \lesssim (C_1 \epsilon)^2 2^{(-1+\delta)m}$ for all $k_2 \in \mathbb{Z}$, we have
\begin{align*}
\|\phi_{k}(\xi) e^{i\Theta} \mathfrak{H}_{21}\|
\lesssim
2^{(-2 + (1+2C_E)p + 7\delta)m} 2^{C_E k^-} 2^{D_E k^+}.
\end{align*}
Together with the estimate of $\| \nabla_\eta \widehat{h_+}(\eta)\|_{H^7}$ in Lemma \ref{lem:d-xi-L2}, we get
\begin{align*}
\|\phi_{k}(\xi) e^{i\Theta} \mathfrak{H}_{22}\|
\lesssim
2^{(-2 + (1+2C_E)p + 5\delta)m} 2^{C_E k^-} 2^{D_E k^+}.
\end{align*}

For the part involving $\mathfrak{H}_{23}$, we integrate by part in time one more time to get
\begin{align*}
    \int_{t_1}^{t_2} e^{i\Theta} \mathfrak{H}_{23} \, \d s
    =
    \mathfrak{H}_{23a} + \int_{t_1}^{t_2} e^{i\Theta} ( \mathfrak{H}_{23b}
    +\mathfrak{H}_{23c} + \mathfrak{H}_{23d}) \, \d s,
\end{align*}
in which 
\begin{align*}
    \mathfrak{H}_{23a}
    =
    &{1\over 2} e^{i\Theta(t_2)} \int {e^{is\Phi_{1-}} \over i \Phi_{1-} \Phi_{2-}} {\widehat{g_{+, H}}(t_2, \eta) \over |\eta|} {\widehat{h_-}(t_2, \xi-\eta) \over \langle \xi-\eta\rangle^2}
    \\
    -
    &{1\over 2} e^{i\Theta(t_1)} \int {e^{is\Phi_{1-}} \over i \Phi_{1-} \Phi_{2-}} {\widehat{g_{+, H}}(t_1, \eta) \over |\eta|} {\widehat{h_-}(t_1, \xi-\eta) \over \langle \xi-\eta\rangle^2},
    \\
    \mathfrak{H}_{23b}
    =
    -&{1\over 2} \partial_s\Theta \int {e^{is\Phi_{1-}} \over \Phi_{1-} \Phi_{2-}} {\widehat{g_{+, H}}(s, \eta) \over |\eta|} {\widehat{h_-}(s, \xi-\eta) \over \langle \xi-\eta\rangle^2},
    \\
    \mathfrak{H}_{23c}
    =
    -&{1\over 2}  \int {e^{is\Phi_{1-}} \over i\Phi_{1-} \Phi_{2-}} {\partial_s \widehat{g_{+, H}}(s, \eta) \over |\eta|} {\widehat{h_-}(s, \xi-\eta) \over \langle \xi-\eta\rangle^2},
    \\
    \mathfrak{H}_{23d}
    =
    -&{1\over 2}  \int {e^{is\Phi_{1-}} \over i\Phi_{1-} \Phi_{2-}} {\widehat{g_{+, H}}(s, \eta) \over |\eta|} {\partial_s\widehat{h_-}(s, \xi-\eta) \over \langle \xi-\eta\rangle^2}.
\end{align*}
The terms involving $\mathfrak{H}_{23a}, \mathfrak{H}_{23b}, \mathfrak{H}_{23c}$, and $\mathfrak{H}_{23d}$ can be estimated directly, where we need \eqref{eq:M1-001} when bounding $\mathfrak{H}_{23d}$. Here, we omit the details and only illustrate the final bounds
\begin{align*}
    \|\phi_k(\xi) e^{i\Theta} \mathfrak{H}_{23a}\|
    \lesssim
    & (C_1 \epsilon)^2 2^{(-1+2C_E p + 7\delta)m} 2^{C_E k^-} 2^{D_E k^+}, 
    \\
    \|\phi_k(\xi) e^{i\Theta} \mathfrak{H}_{23b}\| 
    \lesssim
    & (C_1 \epsilon)^2 2^{(-1+(-1+2C_E) p+7\delta)m} 2^{C_E k^-} 2^{D_E k^+}, 
    \\
    \|\phi_k(\xi) e^{i\Theta} \mathfrak{H}_{23c}\| + \|\phi_k(\xi) e^{i\Theta}\mathfrak{H}_{23d}\|
    \lesssim
    & (C_1 \epsilon)^2 2^{(-2+(1+2C_E)p + 9\delta)m} 2^{C_E k^-} 2^{D_E k^+}.
\end{align*}

\textbf{Case III: Estimates for $\|\phi_k(\xi)\mathcal{R}_{2+, 3}\|$.}

Recalling 
$$
\partial_s \widehat{g_{+, H}}
=
ps\langle s\rangle^{p-2} \eta\cdot\nabla\phi_{\geq 1}(\eta \langle s\rangle^p) \widehat{g_+}(\eta).
$$
We find, by integration by part in $\eta$, that
\begin{align*}
&\mathcal{F}T_{m_{3-}}(\partial_s g_{+, H},  f_+)
    \\
    =
    &{p\over 4} \langle s\rangle^{p-2} \int {\nabla_\eta \Phi_{2_-}\cdot\nabla_\eta e^{is\Phi_{2_-}} \over \Phi_{2_-} |\nabla_\eta \Phi_{2_-}|^2 }
    {\eta \cdot \nabla \phi_{\geq 1}(\eta\langle s\rangle^p)\widehat{g_{+}}(\eta) \over |\eta|} { \widehat{f_+}(s, \xi-\eta) \over \langle \xi-\eta\rangle} 
    \\
    = &\mathfrak{H}_{31} + \mathfrak{H}_{32} + \mathfrak{H}_{33},
\end{align*}
where
\begin{align*}
    \mathfrak{H}_{31}
    =
    &-{p\over 4} \langle s\rangle^{p-2} \int \nabla\cdot \Psi_3 e^{is\Phi_{2-}}
    {\eta \cdot \nabla \phi_{\geq 1}(\eta\langle s\rangle^p)\widehat{g_{+}}(\eta) \over |\eta|} { \widehat{f_+}(s, \xi-\eta) \over \langle \xi-\eta\rangle},
    \\
    \mathfrak{H}_{32}
    =
    &-{p\over 4} \langle s\rangle^{p-2} \int e^{is\Phi_{2-}} \Psi_3 \cdot 
    \nabla (\eta \cdot \nabla \phi_{\geq 1}(\eta\langle s\rangle^p))
    {\widehat{g_{+}}(\eta) \over |\eta|} { \widehat{f_+}(s, \xi-\eta) \over \langle \xi-\eta\rangle},
    \\
    \mathfrak{H}_{33}
    =
    &-{p\over 4} \langle s\rangle^{p-2} \int e^{is\Phi_{2-}} (\eta \cdot \nabla \phi_{\geq 1}(\eta\langle s\rangle^p)) \Psi_3 \cdot 
    \nabla \Big(
    {\widehat{g_{+}}(\eta) \over |\eta|} { \widehat{f_+}(s, \xi-\eta) \over \langle \xi-\eta\rangle}\Big).
\end{align*}
with
$$
\Psi_3 = {\nabla_\eta \Phi_{2_-} \over \Phi_{2_-} |\nabla \Phi_{2_-}|^2}.
$$
We are able to get
\begin{align}
    \sum_{\mathfrak{b}=1, 2, 3}\|\phi_k(\xi) e^{i\Theta}\mathfrak{H}_{3\mathfrak{b}} \|
    \lesssim
    (C_1 \epsilon)^2 2^{(-2+ (1+2C_E)p +7\delta)m} 2^{C_E k^-} 2^{D_E k^+}. 
\end{align}

\textbf{Case IV: Estimates for $\|\phi_k(\xi)\mathcal{R}_{2+, 4}\|$.}

The strategy is very similar to that of \textbf{Case I}, except that we need to do integration by part in $\eta$ more than once for certain terms.
Recall that
\begin{align*}
     \mathcal{F} T_{m_{3-}}(g_{+, H}, f_+)
     =
     \mathfrak{H}_{11} + \mathfrak{H}_{12},
\end{align*}
and we further decompose 
\begin{align*}
    \mathfrak{H}_{12}
    =
    \mathfrak{H}_{41a} + \mathfrak{H}_{41b} + \mathfrak{H}_{42} + \mathfrak{H}_{43} + \mathfrak{H}_{44},
   \end{align*} 
where
\begin{align*}
    \mathfrak{H}_{41a}
    =
    &\int {e^{it\Phi_{2-}} \nabla_\eta \Phi_{2-} \over is \Phi_{2-} |\nabla_\eta \Phi_{2-}|^2} \cdot
{(\nabla_\eta\widehat{g_{+}})_H(\eta) \over |\eta|}
{\widehat{f}_+(\xi-\eta) \over \langle \xi-\eta\rangle}, 
\\
\mathfrak{H}_{41b}
    =
    &\langle s\rangle^p \int {e^{it\Phi_{2-}} \nabla_\eta \Phi_{2-} \over is \Phi_{2-} |\nabla_\eta \Phi_{2-}|^2} \cdot
{\nabla \phi_{\geq 1}(\eta\langle s\rangle^p) \widehat{g_{+}}(\eta) \over |\eta|}
{\widehat{f}_+(\xi-\eta) \over \langle \xi-\eta\rangle}, 
\\
\mathfrak{H}_{42}
    =
    &-\int {e^{it\Phi_{2-}} \nabla_\eta \Phi_{2-} \over is \Phi_{2-} |\nabla_\eta \Phi_{2-}|^2} \cdot
{\widehat{g_{+,H}}(\eta) \over |\eta|}
{\nabla_\eta \widehat{f}_+(\xi-\eta) \over \langle \xi-\eta\rangle}, 
\\
\mathfrak{H}_{43}
    =
    &-\int {e^{it\Phi_{2-}} \nabla_\eta \Phi_{2-} \over is \Phi_{2-} |\nabla_\eta \Phi_{2-}|^2} \cdot
{\eta\widehat{g_{+,H}}(\eta) \over |\eta|^3}
{\widehat{f}_+(\xi-\eta) \over \langle \xi-\eta\rangle}, 
\\
\mathfrak{H}_{44}
    =
    &-\int {e^{it\Phi_{2-}} \nabla_\eta \Phi_{2-} \over is \Phi_{2-} |\nabla_\eta \Phi_{2-}|^2} \cdot
{ \widehat{g_{+,H}}(\eta) \over |\eta|}
{(\eta-\xi)\widehat{f}_+(\xi-\eta) \over \langle \xi-\eta\rangle^3}.
\end{align*}
By Lemma \ref{lem:d-xi-L2}, we have
\begin{align*}
    \|\phi_{k_1}(\eta) \nabla_\eta \widehat{f_+}\|
    \lesssim
    &C_1 \epsilon \langle s\rangle^\delta 2^{-k_1^+},
    \\
    \|\phi_{k_2}(\eta) \nabla_\eta \widehat{g_+}\|
    \lesssim
    &C_1 \epsilon \langle s\rangle^\delta 2^{k_2}.
\end{align*}
For the terms involving $\mathfrak{H}_{41a}, \mathfrak{H}_{42}, \mathfrak{H}_{44}$, we have bounds
\begin{align*}
    &\|\phi_k(\xi) \partial_s \Theta \cdot \mathfrak{H}_{41a}\|
    +
    \|\phi_k(\xi) \partial_s \Theta \cdot \mathfrak{H}_{42}\|
    +
    \|\phi_k(\xi) \partial_s \Theta \cdot \mathfrak{H}_{44}\|
    \\
    \lesssim
    &(C_1 \epsilon)^2 2^{(-1 + (2C_E-1)p + 7\delta)m} 2^{C_E k^-} 2^{D_E k^+}.
\end{align*}
For the rest part involving $\mathfrak{H}_{11}, \mathfrak{H}_{41b}, \mathfrak{H}_{43}$, we integrate by part in $\eta$ again, with the help of Lemma \ref{lem:phase}, to get the desired bounds
\begin{align*}
    &\|\phi_k(\xi) \partial_s \Theta \cdot \mathfrak{H}_{11}\|
    +
    \|\phi_k(\xi) \partial_s \Theta \cdot \mathfrak{H}_{41b}\|
    +
    \|\phi_k(\xi) \partial_s \Theta \cdot \mathfrak{H}_{43}\|
    \\
    \lesssim
    &(C_1 \epsilon)^2 2^{(-2 + (2C_E+1)p + 7\delta)m} 2^{C_E k^-} 2^{D_E k^+}.
\end{align*}


Combining the estimates in \textbf{Case I}, \textbf{Case II}, \textbf{Case III}, and \textbf{Case IV}, we conclude \eqref{eq:M2}. The proof is finished.
\end{proof}

Before estimating $\mathcal{M}_3$, we need some lemmas from Ionescu-Pausader \cite[Chapter 2]{Ionescu-P2}. 

For $f\in H^1(\mathbb{R}^2)$ and $(l, j)\in \mathcal{J}$ defined in \eqref{eq:J}, denote
\begin{align*}
    (Q_{l,j}f)(x) = \widetilde{\phi}^{(l)}_j(x) P_l f(x),
    \qquad
    f_{l,j} = \widetilde{P}_l(Q_{l,j}f),
\end{align*}
with $\widetilde{P}_l = P_{[l-2, l+2]}$.
One easily sees that
\begin{align*}
    P_l f 
    = \sum_{j\geq -\min\{l, 0 \}} Q_{l,j}f
    = \sum_{j\geq -\min\{l, 0 \}} f_{l,j}.
\end{align*}

The following result is the two space dimensional analogue of \cite[Lemma 2.6]{Ionescu-P2}. But in $\mathbb{R}^2$, we have some loss in the estimates compared with those in $\mathbb{R}^{3}$.

\begin{lemma}\label{lem:Q}
    Given $f\in H^1(\mathbb{R}^2)$ and $l\in\mathbb{Z}$, let
    \begin{align}
        A_l = \|P_l f\|_{H^1} + \sum_a\|\phi_l(\xi) (\partial_{\xi_a} \widehat{f})(\xi)\|,
        \qquad
        B_l = \Big( \sum_{j\geq \max \{-l, 0 \}} 2^{2j} \|Q_{l,j}f\|^2  \Big)^{1\over 2}.
    \end{align}
    Then, one has
    \begin{eqnarray}
        B_l
        \lesssim
        \left\{
\begin{array}{lll}
\sum_{|l' - l|\leq 4} A_{l'},
\qquad\qquad &l\geq 1,
\\[1ex]
\langle l\rangle \sup_{l'} \|P_{l'} f\|_{H^1} + \sum_{|l' - l|\leq 6} A_{l'},
\qquad &l\leq 0.
\end{array}
\right.
\end{eqnarray}
\end{lemma}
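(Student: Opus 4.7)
The plan is to split $B_l^2 = W_{l, j_0} + \sum_{j > j_0} W_{l,j}$, where $j_0 = \max\{-l, 0\}$ and $W_{l,j} = 2^{2j} \|Q_{l,j} f\|^2$. For the tail $\{j > j_0\}$, the cutoff $\widetilde{\phi}^{(l)}_{j} = \phi_j$ forces $|x| \sim 2^j$ on its support, so summing gives $\sum_{j > j_0} W_{l,j} \lesssim \||x|\, P_l f\|_{L^2}^2$. Via Plancherel and the Leibniz rule,
\begin{equation*}
\||x|\, P_l f\|_{L^2}^2 \lesssim \|\phi_l\, \nabla_\xi \widehat{f}\|_{L^2}^2 + \|(\nabla_\xi \phi_l)\, \widehat{f}\|_{L^2}^2,
\end{equation*}
where the first term is contained in $A_l^2$, and the second is controlled using $\|\nabla_\xi \phi_l\|_{L^\infty} \lesssim 2^{-l}$ together with the fact that $\widehat{f}$ is restricted to the annulus $|\xi| \sim 2^l$, yielding a bound of the form $2^{-l} \|\widetilde{P}_l f\|_{L^2}$ with $\widetilde{P}_l = P_{[l-2, l+2]}$.

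For $l \geq 1$, the innermost piece satisfies $W_{l,0} = \|\phi_{\leq 0}(x)\, P_l f\|^2 \leq \|P_l f\|^2 \leq A_l^2$, and the problematic factor $2^{-l} \|\widetilde{P}_l f\|$ is harmless because $2^{-l} \leq 1$ and $\|\widetilde{P}_l f\| \lesssim \sum_{|l'-l|\leq 2} \|P_{l'} f\|_{H^1} \leq \sum_{|l'-l|\leq 4} A_{l'}$. This yields $B_l \lesssim \sum_{|l'-l|\leq 4} A_{l'}$ in the high-frequency case.

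For $l \leq 0$, the tail estimate requires handling the now-growing factor $2^{-l}\|\widetilde{P}_l f\|$, which I would absorb using the baseline bound $\|\widetilde{P}_l f\|_{L^2} \lesssim \sup_{l'} \|P_{l'} f\|_{H^1}$ after a more careful dyadic accounting of contributions from nearby low-frequency bands. For the innermost piece $W_{l,-l} = 2^{-2l} \|\phi_{\leq -l}(x)\, P_l f\|^2$, I plan to further decompose $\phi_{\leq -l}(x) = \phi_{\leq 0}(x) + \sum_{k=1}^{-l} \phi_k(x)$ in physical space; on each annular piece, Bernstein's inequality in 2D, $\|P_l f\|_{L^\infty} \lesssim 2^l \|P_l f\|_{L^2}$, combined with $\|\phi_k\|_{L^2} \sim 2^k$ yields $\|\phi_k(x)\, P_l f\|_{L^2} \lesssim 2^{l+k} \|P_l f\|_{L^2}$. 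Summing these contributions across the $\sim -l$ dyadic scales with careful bookkeeping is what produces the $\langle l\rangle$ factor in the final bound $B_l \lesssim \langle l\rangle \sup_{l'} \|P_{l'} f\|_{H^1} + \sum_{|l'-l|\leq 6} A_{l'}$.

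The main obstacle will be this innermost-piece estimate for $l \leq 0$. The weight $2^{-2l}$ matches the 2D area of the ball $\{|x| \lesssim 2^{-l}\}$ exactly, so elementary Bernstein alone yields only $W_{l,-l}^{1/2} \lesssim 2^{-l} \|P_l f\|_{L^2}$, which does not by itself close against $\langle l\rangle \sup_{l'} \|P_{l'} f\|_{H^1}$. Extracting the $\langle l\rangle$ factor requires a refined summation across the $\sim |l|$ dyadic physical-space scales between $|x| \sim 1$ and $|x| \sim 2^{-l}$, together with a careful replacement of $\|P_l f\|$ by a uniform $\sup_{l'} \|P_{l'} f\|_{H^1}$ at each scale. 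This logarithmic loss is a genuine two-dimensional feature, echoing the logarithmic corrections that appear throughout the 2D linear theory (cf.~Lemma~\ref{lem:freewave}) and reflecting the failure of the endpoint 2D Hardy inequality.
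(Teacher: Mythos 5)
Your setup is sound: the equivalence $B_l \simeq 2^{\max\{-l,0\}}\|P_l f\| + \sum_a\|x_a P_l f\|$, the reduction of the $j>j_0$ tail to $\||x| P_l f\|$ via Plancherel and the Leibniz rule, and the $l\geq 1$ case are all handled essentially as in the paper. You also correctly locate the difficulty: the extra factor $2^{-l}$ that appears for $l\leq 0$, both from $W_{l,-l}$ and from the $(\nabla_\xi\phi_l)\widehat{f}$ piece.

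However, the mechanism you propose for producing $\langle l\rangle$ — a physical-space dyadic decomposition of $\phi_{\leq -l}(x)$ combined with Bernstein — cannot work, and you in fact acknowledge at the end that you do not know how to close it. The issue is structural, not a missing calculation. Bernstein and a decomposition in $|x|$ only ever manipulate the single function $P_l f$; whatever rearrangement you do, the scaling $2^{-l}$ on $\{|x|\lesssim 2^{-l}\}$ with a frequency-$2^l$ function reproduces exactly $2^{-l}\|P_l f\|$, because $\sum_{k=0}^{-l}2^{l+k}\sim 1$ is a geometric series dominated by its last term, not a sum of $|l|$ comparable terms. The same problem sinks your treatment of the tail for $l\leq 0$: replacing $\|\widetilde P_l f\|$ by $\sup_{l'}\|P_{l'}f\|_{H^1}$ leaves the prefactor $2^{-l}$, which grows exponentially in $|l|$, whereas the claimed bound grows only like $\langle l\rangle$.

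The reason the bound can nonetheless be true is that the $\langle l\rangle$ factor counts \emph{frequency} bands, not physical scales, and this is what the paper's proof exploits. One writes $f = \tfrac{1}{1+|x|^2}f + \sum_j \tfrac{x_j}{1+|x|^2}(x_j f)$ and passes to Fourier variables, where $\widehat{f}$ on the annulus $|\xi|\sim 2^l$ is expressed as a convolution of $\widehat{f}$ (or $\widehat{x_jf}$, both controlled band-by-band by $A_k$) against kernels $K$ with $|K(\xi)|\lesssim|\xi|^{-1}$. Littlewood--Paley decomposing the input in $\eta$, the output-band restriction $\phi_l(\xi)$ forces $|\xi-\eta|\sim 2^k$ whenever the input sits at frequency $2^k\gg 2^l$, and the crucial two-dimensional fact is that $\|\,|\xi|^{-1}\phi_{[k-4,k+4]}\|_{L^2}\sim 1$ \emph{uniformly in $k$}. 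The $2^{-l}$ prefactor is then exactly cancelled by $\|\phi_l\|_{L^2}\sim 2^l$, and what remains is a sum over the $\sim|l|$ dyadic bands $l\leq k\leq 0$, each contributing $O(\sup_k\|\phi_k g\|)$; this is precisely where the $\langle l\rangle$ comes from. In effect, the low-frequency value of $\widehat{f}$ is being reconstructed from all intermediate frequency bands through the Riesz-type kernel, which a single-band Bernstein argument can never see. The logarithmic loss you rightly identify as two-dimensional is the $L^2$-boundedness (rather than decay) of $|\xi|^{-1}$ on dyadic annuli, not a failure of Hardy per se, and without the algebraic identity as a bridge there is no way for the physical-space decomposition to access it.
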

The proof can be found in the appendix.

As a consequence, we have the following result.
\begin{lemma}\label{lem:Q_lj}
    We have
    \begin{align}
        \|Q_{l,j} f_+\|(t)
        \lesssim
        C_1 \epsilon \langle t\rangle^{\delta} 2^{-j} 2^{-\delta l^- -{1\over 2} l^+}.
    \end{align}
\end{lemma}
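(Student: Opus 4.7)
The plan is to reduce the bound on $\|Q_{l,j}f_+\|$ to the abstract estimate of Lemma \ref{lem:Q} applied to $f=f_+$. By the very definition of $B_l$ one has $2^{2j}\|Q_{l,j}f_+\|^{2} \le B_l^{2}$, so it suffices to establish
\[
B_l \lesssim C_1\epsilon\, \langle t\rangle^{\delta}\, 2^{-\delta l^- - l^+/2}.
\]

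To bound the quantity $A_l$ appearing in Lemma \ref{lem:Q}, I would combine three inputs. First, the bootstrap Z-norm assumption gives $\|P_l f_+\| \le C_1\epsilon\, 2^{\delta l^-}2^{-\delta l^+}$, which in particular yields $\|P_l f_+\|_{H^1} \lesssim 2^l \|P_l f_+\| \lesssim C_1\epsilon\, 2^{(1+\delta)l}$ for $l\le 0$. Second, the uniform Sobolev bound from Proposition \ref{prop:bound-t} gives $\|f_+\|_{H^{N+1}} \lesssim \|\partial_t E\|_{H^{N+1}} + \|E\|_{H^{N+2}} \lesssim C_1\epsilon$, whence $\|P_l f_+\|_{H^1} \lesssim C_1\epsilon\, 2^{-N l^+}$; for $l\ge 1$ this is far smaller than the target $2^{-l^+/2}$ because $N\ge 16$. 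Third, Lemma \ref{lem:d-xi-L2} contributes $\|\phi_l(\xi)\partial_{\xi_a}\widehat{f_+}\| \lesssim C_1\epsilon\, \langle t\rangle^{\delta}\, 2^{-l^+}$, again dominated by $\langle t\rangle^{\delta}\, 2^{-l^+/2}$. Assembling these pieces produces
\[
A_l \lesssim C_1\epsilon\, \langle t\rangle^{\delta}\bigl(2^{\delta l^-} + 2^{-l^+/2}\bigr).
\]

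For $l\ge 1$, Lemma \ref{lem:Q} immediately gives $B_l \lesssim \sum_{|l'-l|\le 4} A_{l'} \lesssim C_1\epsilon\, \langle t\rangle^{\delta}\, 2^{-l/2}$. For $l\le 0$ the lemma produces in addition the term $\langle l\rangle\, \sup_{l'}\|P_{l'}f_+\|_{H^1} \lesssim \langle l\rangle\, C_1\epsilon$; since $\delta>0$, the polynomial factor $\langle l\rangle$ is absorbed by $2^{-\delta l}=2^{\delta|l|}$, so in this range $B_l \lesssim C_1\epsilon\, \langle t\rangle^{\delta}\, 2^{-\delta l}$. Inserting these bounds into $\|Q_{l,j}f_+\| \le 2^{-j}B_l$ delivers the asserted estimate. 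The only mildly delicate point is precisely this absorption of $\langle l\rangle$ in the low-frequency regime, which is what dictates the form $2^{-\delta l^-}$ (rather than any sharper power) on the right-hand side; everything else amounts to a direct bookkeeping of the Z-norm, high-Sobolev, and $\partial_\xi$ estimates already available.
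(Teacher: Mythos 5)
Your approach is exactly the one the paper intends: since the text says only that Lemma~\ref{lem:Q_lj} is ``a consequence'' of Lemma~\ref{lem:Q}, the reader is expected to reconstruct precisely the argument you give — feed the three available pieces of information on $f_+$ (the Z-norm bootstrap, the uniform $H^{N+1}$ Sobolev bound from Proposition~\ref{prop:bound-t}, and the $\partial_\xi$ estimate from Lemma~\ref{lem:d-xi-L2}) into $A_l$, apply Lemma~\ref{lem:Q}, and read off $\|Q_{l,j}f_+\|\le 2^{-j}B_l$. Your bookkeeping at every stage (Littlewood--Paley localisation, $k\le\delta m$, absorption of the polynomial factor $\langle l\rangle$ into $2^{-\delta l^-}$) is correct, and the observation that the absorption step is what fixes the $2^{-\delta l^-}$ form is the right thing to highlight.

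One small slip in the low-frequency case: you write $\|P_l f_+\|_{H^1}\lesssim 2^l\|P_l f_+\|$ for $l\le 0$, which is false — on the support of $\phi_l$ with $l\le 0$ one has $\langle\xi\rangle\simeq 1$, so $\|P_l f_+\|_{H^1}\simeq\|P_l f_+\|$ with no gain of $2^l$. The correct intermediate bound is therefore $\|P_l f_+\|_{H^1}\lesssim C_1\epsilon\,2^{\delta l}$ rather than $C_1\epsilon\,2^{(1+\delta)l}$. Since your assembled estimate $A_l\lesssim C_1\epsilon\langle t\rangle^{\delta}(2^{\delta l^-}+2^{-l^+/2})$ is actually consistent with the corrected bound (and in the $l\le 0$ regime the $\langle t\rangle^{\delta}$ contribution from $\partial_\xi\widehat{f_+}$ dominates anyway), the error does not propagate: the conclusion is unaffected. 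It is, however, worth fixing the inequality for clarity.
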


\begin{proposition}\label{prop:M3}
For all $m\gg 1$, $t_1, t_2 \in [2^m - 2, 2^{m+1}+2]$, and $k\leq \delta m$, it holds
\begin{align}\label{eq:M3}
\Big\| \phi_k(\xi) \int_{t_1}^{t_2} e^{i\Theta(s, \xi)} \mathcal{M}_3(s, \xi) \, \d s \Big\|
\lesssim
(C_1 \epsilon)^2 2^{-\delta m} 2^{C_E k^-} 2^{D_E k^+}.
\end{align}
\end{proposition}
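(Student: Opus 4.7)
The plan is to exploit the cancellation built into the definition of the phase correction $\Theta$ by decomposing the bracket inside $\mathcal{M}_3$ according to the three mismatches: (i) the two oscillatory phases, (ii) the symbols $\langle\xi-\eta\rangle^{-1}$ vs $\langle\xi\rangle^{-1}$, and (iii) the profile evaluations at $\xi-\eta$ vs $\xi$. Precisely, with $\Psi(\xi,\eta)=\langle\xi\rangle-\langle\xi-\eta\rangle-\tfrac{\xi}{\langle\xi\rangle}\cdot\eta$, I would write
\begin{align*}
&\frac{e^{is(\langle\xi\rangle-\langle\xi-\eta\rangle)}\widehat{f_+}(s,\xi-\eta)}{\langle\xi-\eta\rangle}
-\frac{e^{is{\xi\over\langle\xi\rangle}\cdot\eta}\widehat{f_+}(s,\xi)}{\langle\xi\rangle}
\\
&= e^{is{\xi\over\langle\xi\rangle}\cdot\eta}\bigl(e^{is\Psi}-1\bigr)\frac{\widehat{f_+}(s,\xi-\eta)}{\langle\xi-\eta\rangle}
+ e^{is{\xi\over\langle\xi\rangle}\cdot\eta}\Bigl(\frac{1}{\langle\xi-\eta\rangle}-\frac{1}{\langle\xi\rangle}\Bigr)\widehat{f_+}(s,\xi-\eta)
\\
&\quad + \frac{e^{is{\xi\over\langle\xi\rangle}\cdot\eta}}{\langle\xi\rangle}\bigl(\widehat{f_+}(s,\xi-\eta)-\widehat{f_+}(s,\xi)\bigr),
\end{align*}
and denote the three corresponding integrals by $\mathcal{M}_{3a},\mathcal{M}_{3b},\mathcal{M}_{3c}$. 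The key elementary estimates are $|\Psi|\lesssim|\eta|^2/\langle\xi\rangle$ and $|\langle\xi-\eta\rangle^{-1}-\langle\xi\rangle^{-1}|\lesssim|\eta|/\langle\xi\rangle^2$ for $|\eta|\leq\langle\xi\rangle$, together with $|e^{is\Psi}-1|\lesssim s|\Psi|$, and the low-frequency cutoff $|\eta|\lesssim 2^{-pm}$ from $\ell_L$, using also the pointwise bound $|\widehat{\ell_L}(\eta)|\lesssim\epsilon|\eta|^{-1}$ coming from Lemma \ref{lem:n0}.

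For $\mathcal{M}_{3a}$ and $\mathcal{M}_{3b}$ the argument is essentially direct: applying Young's inequality in convolution form after localizing $f_+$ to a single dyadic shell $P_k$, the extra factors $s|\eta|^2$ and $|\eta|$ respectively combine with the restriction $|\eta|\lesssim 2^{-pm}$ to produce integrable gains $2^m\cdot 2^{-3pm}$ and $2^{-2pm}$ (recalling $p=\tfrac{3}{4}$); integrating in $s\in[t_1,t_2]$ of length $\sim 2^m$ and using $\|P_k f_+\|\lesssim C_1\epsilon\, 2^{C_E k^-}2^{D_E k^+}$ then yields the target bound $(C_1\epsilon)^2 2^{-\delta m}2^{C_E k^-}2^{D_E k^+}$.

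The hard part is $\mathcal{M}_{3c}$: a naive bound $|\widehat{f_+}(\xi-\eta)-\widehat{f_+}(\xi)|\lesssim |\eta|\|\nabla_\xi\widehat{f_+}\|$ loses the $t^\delta$ growth from Lemma \ref{lem:d-xi-L2} and barely fails to close. To overcome this, I would invoke the $Q_{l,j}$ decomposition and apply the bilinear framework of Ionescu--Pausader via Lemma \ref{lem:Q_lj}: decomposing $f_+=\sum_{l,j}(f_+)_{l,j}$ and writing
\begin{align*}
\widehat{(f_+)_{l,j}}(\xi-\eta)-\widehat{(f_+)_{l,j}}(\xi)
=\int (f_+)_{l,j}(x)\,(e^{ix\cdot\eta}-1)\,e^{-ix\cdot\xi}\,\mathrm{d}x,
\end{align*}
I would use the dichotomy $|e^{ix\cdot\eta}-1|\lesssim\min\{2^j|\eta|,1\}$ on the support $|x|\sim 2^j$, combined with the weighted bound $\|Q_{l,j}f_+\|\lesssim C_1\epsilon\langle s\rangle^\delta 2^{-j}2^{-\delta l^--l^+/2}$ from Lemma \ref{lem:Q_lj}. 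Splitting the $(l,j)$-sum at the threshold $2^{j+p m}\sim 1$ and summing dyadically in both indices gives a controlled loss that is absorbed by the range restriction $k\leq\delta m$ and the extra smallness from $|\eta|\lesssim 2^{-pm}$, producing the required decay after time integration and proving \eqref{eq:M3}.
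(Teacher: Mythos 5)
Your proposal takes a genuinely different route from the paper's proof, though it relies on the same underlying ingredients. The paper does \emph{not} split $\mathcal{M}_3$ into the three mismatches $\mathcal{M}_{3a},\mathcal{M}_{3b},\mathcal{M}_{3c}$; instead it decomposes $f_+=\sum_{(l,j)\in\mathcal{J}}f_{+,l,j}$ at the very start and then splits by the size of $j$: for large $j\geq m/2$ it bounds the two terms in $\mathcal{M}_3$ separately by the triangle inequality (with no cancellation, since Lemma~\ref{lem:Q_lj} already supplies the $2^{-j}\lesssim 2^{-m/2}$ gain), interpolating between the two Young-type bounds $\|\widehat{\ell_L}\|_{L^1}\|\widehat{f_{+,l,j}}\|$ and $2^k\|\widehat{\ell_L}\|\|\widehat{f_{+,l,j}}\|$ to manufacture the $2^{C_E k^-}$ weight; while for small $j<m/2$ it exploits cancellation, estimating the phase difference by $2^{(1-2p)m}$ and the combined symbol/profile difference via a single Fourier-side kernel estimate for $\widehat{\phi_{\leq j+4}}\phi_{[l-2,l+2]}/\langle\cdot\rangle$. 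Your plan instead isolates the phase, symbol and profile mismatches up front, pushes $\mathcal{M}_{3a},\mathcal{M}_{3b}$ directly through Young's inequality, and reserves the $Q_{l,j}$ machinery for $\mathcal{M}_{3c}$, using a physical-space dichotomy $|e^{ix\cdot\eta}-1|\lesssim\min\{2^j|\eta|,1\}$ rather than the paper's Fourier-side kernel bound. Both schemes hinge on the same two facts: the smallness $|\eta|\lesssim 2^{-pm}$ from the cutoff defining $\ell_L$ together with $|\widehat{\ell}(\eta)|\lesssim\epsilon|\eta|^{-1}$, and the $2^{-j}$ gain from Lemma~\ref{lem:Q_lj}.

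Two caveats on your sketch. First, for $\mathcal{M}_{3a},\mathcal{M}_{3b}$ the statement ``localizing $f_+$ to a single dyadic shell $P_k$'' needs qualification: once $k\lesssim -pm$, the output support $|\xi|\sim 2^k$ is much smaller than the width $|\eta|\lesssim 2^{-pm}$ of the $\ell_L$ frequency, so the input $\xi-\eta$ ranges over all frequencies $\lesssim 2^{-pm}$ and is not confined to a single shell. This is precisely the regime in which you must extract the $2^{C_E k^-}$ weight on the right-hand side of \eqref{eq:M3}, and doing so requires the second (crude $L^\infty_\xi$) bound giving an explicit factor $2^k$ from the output support, followed by interpolation, as in the paper's Case~I. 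Without that step the naive $L^1\times L^2$ Young bound gives a quantity independent of $k^-$. Second, and on the other side: your claim that the naive first-order Taylor bound on $\mathcal{M}_{3c}$ ``barely fails'' because of the $t^\delta$ loss in Lemma~\ref{lem:d-xi-L2} is not quite the right diagnosis. After time-integration the $t^\delta$ loss only produces $2^{(1-2p+\delta)m}=2^{(-1/2+\delta)m}$, which comfortably dominates $2^{-\delta m}$; the genuine obstruction for the naive bound is again the missing $2^{C_E k^-}$ factor for $k\ll -pm$, where $\nabla_\xi\widehat{f_+}$ is not square-summable over all low-frequency blocks. The $Q_{l,j}$ decomposition and Lemma~\ref{lem:Q_lj} are indeed what recover this weight, so your instinct to invoke them is right, but for the stated wrong reason.
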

\begin{proof}
Recall the decomposition
$$
f_+
=
\sum_{(l, j)\in \mathcal{J}} f_{+, l, j}, 
$$
and we have 
\begin{align*}
    \phi_k(\xi) \mathcal{M}_3
    =
    \sum_{(l, j)\in \mathcal{J}} \mathfrak{K}_{k; l, j}
\end{align*}
in which
\begin{align*}
    \mathfrak{K}_{k; l, j}
    =
    &-{i\over 2} \phi_k(\xi)  \int_{\mathbb{R}^2} \widehat{\ell_L}(s, \eta) \Big(e^{is(\langle\xi\rangle-\langle \xi-\eta\rangle)}  { \widehat{f_{+,l,j}}(s, \xi-\eta) \over \langle \xi - \eta\rangle} 
    - e^{is {\xi\over \langle\xi\rangle} \cdot \eta} { \widehat{f_{+,l,j}}(s, \xi) \over \langle \xi \rangle}  \Big) \, \d \eta
    \\
    =:&\mathfrak{L}_{11} + \mathfrak{L}_{12}.
\end{align*}
One important observation is that $\widehat{f_{+,l,j}}(\xi)$ is supported around frequency $2^l$.

We recall that
\begin{equation}
\begin{aligned}
    &\big\|\widehat{\ell_L}\big\|_{L^1}
    \lesssim
    C_1 \epsilon \langle s\rangle^{-p},
    \\
    &\big\|\widehat{\ell_L}\big\|
    \lesssim
    \|\ell\|
    \lesssim
    C_1 \epsilon \langle s\rangle^\delta.
\end{aligned}
\end{equation}

\textbf{Case I: Large $j\geq {1\over 2}m$.}

Applying Young's inequality for convolution,  we have
\begin{align*}
    \|\mathfrak{L}_{11}\|
    \lesssim
    \big\| \widehat{\ell_L}\big\|_{L^1} \big\| \widehat{f_{+,l,j}} \big\|
    \lesssim
    (C_1 \epsilon)^2 2^{-pm} 2^{-j} 2^{-\delta l^- -{1\over 2} l^+} 2^{\delta m},
\end{align*}
as well as the bound
\begin{align*}
    \|\mathfrak{L}_{11}\|
    \lesssim
    2^k \big\| \widehat{\ell_L}\big\| \big\| \widehat{f_{+,l,j}}\big\|
    \lesssim
    (C_1 \epsilon)^2 2^{k} 2^{-j} 2^{-\delta l^- - {1\over 2} l^+} 2^{2\delta m}.
\end{align*}
By interpolating these two estimates and the fact $j+l\geq 0$, we get
\begin{align*}
    \|\mathfrak{L}_{11}\|
    \lesssim
    &(C_1 \epsilon)^2 2^{C_E k^-} 2^{D_E k^+} 2^{(-1+2\delta)j} 2^{\delta l^- - {1\over 2} l^+} 2^{(-1+C_E)pm} 2^{3\delta m}.
\end{align*}

We also have
\begin{eqnarray}
    \|\mathfrak{L}_{12}\|
    \lesssim
    &\big\| \widehat{\ell_L}\big\|_{L^1} \big\|\phi_k(\xi) \widehat{f_{+,l,j}} \big\| \notag
    \\
    \lesssim
    &\left\{
\begin{array}{lll}
    (C_1 \epsilon)^2 2^{-pm} 2^{-j} 2^{-\delta l^- -{1\over 2} l^+} 2^{\delta m},
    \qquad\quad &|l-k|\leq 4,
    \\[1ex]
    0, \qquad\quad &|l-k|\geq 5.
\end{array}
\right.
\end{eqnarray}

Therefore, we obtain
\begin{equation}
\begin{aligned}
    \sum_{(l, j)\in \mathcal{J},\, j\geq {1\over 2}m}\|\mathfrak{K}_{k; l, j}\|
    \lesssim
    &\sum_{(l, j)\in \mathcal{J},\, j\geq {1\over 2}m} \big( \|\mathfrak{L}_{11}\| + \|\mathfrak{L}_{12}\| \big)
    \\
    \lesssim
    &(C_1 \epsilon)^2 2^{-(p+{1\over 2} -9\delta)m} 2^{C_E k^-} 2^{D_E k^+}.
\end{aligned}
\end{equation}

\textbf{Case II: Small $j\leq {1\over 2}m$.}

In this case, we need to utilize the cancellation in the integral. 

First, one finds that
\begin{align*}
    &\Big|e^{is(\langle\xi\rangle - \langle \xi-\eta\rangle)} 
    - e^{is {\xi\over \langle\xi\rangle} \cdot \eta} \Big|
    \\
    =
    &\Big|e^{is(\langle\xi\rangle - \langle \xi-\eta\rangle - {\xi\over \langle\xi\rangle} \cdot \eta)} 
    - 1 \Big|
    \\
    \lesssim
    &\Big|s(\langle\xi\rangle - \langle \xi-\eta\rangle - {\xi\over \langle\xi\rangle} \cdot \eta)\Big|
    \lesssim
    2^{-2pm + m}.
\end{align*}

Second, we note
\begin{align*}
    &\Big|{ \widehat{f_{+,l,j}}(s, \xi-\eta) \over \langle \xi - \eta\rangle} - { \widehat{f_{+,l,j}}(s, \xi) \over \langle \xi \rangle} \Big|
    \\
    \lesssim
    &\Big|\int \widehat{Q_{l,j} f_+}(\rho) \Big( {\widehat{\phi_{\leq j+4}}(\xi-\rho) \phi_{[l-2, l+2]}(\xi) \over \langle \xi\rangle} - {\widehat{\phi_{\leq j+4}}(\xi-\rho-\eta) \phi_{[l-2, l+2]}(\xi-\eta) \over \langle \xi-\eta\rangle} \Big) \, \d\rho\Big|.
\end{align*}
Some tedious calculation provides
$$
\Big|  {\widehat{\phi_{\leq j+4}}(\xi-\rho) \phi_{[l-2, l+2]}(\xi) \over \langle \xi\rangle} - {\widehat{\phi_{\leq j+4}}(\xi-\rho-\eta) \phi_{[l-2, l+2]}(\xi-\eta) \over \langle \xi-\eta\rangle}  \Big|
\lesssim
{2^{-pm} 2^j 2^{2j} \over  (1+ 2^j |\xi-\rho|)^{10}},
$$
which further leads us to
\begin{align*}
    \Big|{ \widehat{f_{+,l,j}}(s, \xi-\eta) \over \langle \xi - \eta\rangle} - { \widehat{f_{+,l,j}}(s, \xi) \over \langle \xi \rangle} \Big|
    \lesssim
    &\int |\widehat{Q_{l,j} f_+}|(\rho) {2^{-pm} 2^j 2^{2j} \over  (1+ 2^j |\xi-\rho|)^{10}} \, \d\rho
    \\
    \lesssim
    &\|\widehat{Q_{l,j} f_+}\|  \Big\| {2^{-pm} 2^j 2^{2j} \over  (1+ 2^j |\xi|)^{10}} \Big\|
    \\
    \lesssim
    & C_1 \epsilon 2^{-pm} 2^{\delta l^- - {1\over 2} l^+} 2^{(1+2\delta) j}.
\end{align*}
and consequently
\begin{align*}
    &\Big\|{ \widehat{f_{+,l,j}}(s, \xi-\eta) \over \langle \xi - \eta\rangle} - { \widehat{f_{+,l,j}}(s, \xi) \over \langle \xi \rangle} \Big\|
    \\
    \lesssim
    &\big\|\widehat{Q_{l,j} f_+}\big\| \int  {2^{-pm} 2^j 2^{2j} \over  (1+ 2^j |\xi-\rho|)^{10}} \, \d\rho
    \\
    \lesssim
    &2^{-pm} 2^j \big\|\widehat{Q_{l,j} f_+}\big\|
    \lesssim
    C_1 \epsilon 2^{-pm} 2^{\delta l^- - {1\over 2} l^+} 2^{2\delta j}.
\end{align*}
By using these two bounds, we derive
\begin{align*}
    &\sum_{(l,j)\in\mathcal{J}, \, j\leq {1\over 2}m}\|\mathfrak{K}_{k;l,j}\|
    \\
    \lesssim
    &\sum_{(l,j)\in\mathcal{J}, \, j\leq {1\over 2}m}\big\|\widehat{\ell_L}\big\|_{L^1}\Big( \Big\|\phi_k(\xi) \int |\widehat{Q_{l,j} f_+}|(\rho) {2^{-pm} 2^j 2^{2j} \over  (1+ 2^j |\xi-\rho|)^{10}} \, \d\rho \Big\|
    \\
    &\qquad\qquad
    +
    2^{(-2p+1)m}\Big\|\phi_k(\xi) {\widehat{f_{+, l, j}}(\xi)\over \langle\xi\rangle} \Big\|
    \Big)
    \\
    \lesssim
    &\sum_{\substack{(l, j)\in\mathcal{J}, \, j\leq {1\over 2}m}}
    (C_1 \epsilon)^2 \Big( 2^{-2pm} 2^{ (C_E + \delta)j} +  2^{(-3p+1)m} 2^{-{1\over 2}j}   \Big)2^{\delta l^- - {1\over 2} l^+} 2^{C_E k^-} 2^{D_E k^+} 2^{6\delta m}
    \\
    \lesssim
    &(C_1 \epsilon)^2 2^{C_E k^-} 2^{D_E k^+} 2^{(-3p+1+9\delta)m}.
\end{align*}

To conclude, we obtain
\begin{align}
    \|\phi_k(\xi) e^{i\Theta} \mathcal{M}_3\|
    \lesssim
    \sum_{(l, j)\in \mathcal{J}} \|\mathfrak{K}_{k; l,j}\|
    \lesssim
    (C_1 \epsilon)^2 2^{-\delta m} 2^{C_E k^-} 2^{D_E k^+}.
\end{align}

The estimates in \textbf{Case I} and \textbf{Case II} leads to \eqref{eq:M3}.
The proof is done.
\end{proof}

\begin{proposition}\label{prop:M4}
    For all $m\gg 1$, $t_1, t_2 \in [2^m - 2, 2^{m+1}+2]$, and $k\leq \delta m$, it holds
\begin{align}\label{eq:M4}
\Big\| \phi_k(\xi) \int_{t_1}^{t_2} e^{i\Theta(s, \xi)} \mathcal{M}_4(s, \xi) \, \d s \Big\|
\lesssim
\epsilon^2 2^{-\delta m} 2^{C_E k^-} 2^{D_E k^+}.
\end{align}
\end{proposition}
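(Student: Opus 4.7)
The term $\mathcal{M}_4(s,\xi) = -e^{is\langle\xi\rangle}\widehat{\Delta\mathfrak{m}\cdot E}(s,\xi)$ is structurally different from $\mathcal{M}_1,\mathcal{M}_2,\mathcal{M}_3$: since $\mathfrak{m}$ itself is generated by $|E|^2$ through $-\Box\mathfrak{m}=|E|^2$, the product $\Delta\mathfrak{m}\cdot E$ is effectively cubic in $E$ and already decays fast enough that neither integration by parts in time nor in frequency is required. The plan is therefore to bound the integral directly by Plancherel, estimating
\begin{equation*}
\int_{t_1}^{t_2}\big\|\phi_k(\xi)\,\widehat{\Delta\mathfrak{m}\cdot E}(s,\xi)\big\|\,ds
\end{equation*}
against the $L^2$, $L^1$ and $H^N$ norms of $\Delta\mathfrak{m}\cdot E$ extracted from Proposition~\ref{prop:bound-t}, and to produce the frequency-localization factors $2^{C_E k^-}2^{D_E k^+}$ through Bernstein-type interpolation.

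The core estimate is the sharp time decay
\begin{equation*}
\|\Delta\mathfrak{m}\cdot E\|(s)\lesssim (C_1\epsilon)^2\langle s\rangle^{-3/2},
\end{equation*}
which I obtain by Cauchy--Schwarz distributing the light-cone weight,
\begin{equation*}
\|\Delta\mathfrak{m}\cdot E\|\le \big\|\langle s-r\rangle^{1/2}\Delta\mathfrak{m}\big\|\cdot\big\|\langle s-r\rangle^{-1/2}E\big\|_{L^\infty}.
\end{equation*}
The first factor is $\lesssim C_1\epsilon$ by Proposition~\ref{prop:bound-t} applied to $\partial^I\Gamma^J\mathfrak{m}$ at $|I|=2$, $|J|=0$; the second is $\lesssim C_1\epsilon\langle s\rangle^{-3/2}$ from the pointwise bound $|E|\lesssim C_1\epsilon\langle s+r\rangle^{-5/2}\langle s-r\rangle^{3/2}$, checked separately in the interior $r\le s$ and exterior $r\ge s$ regimes. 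In parallel I will derive $\|\Delta\mathfrak{m}\cdot E\|_{L^1}(s)\lesssim (C_1\epsilon)^2$ via $\|\Delta\mathfrak{m}\|\cdot\|E\|$, and $\|\Delta\mathfrak{m}\cdot E\|_{H^N}(s)\lesssim (C_1\epsilon)^2$ by distributing derivatives through the Leibniz rule, using the higher-order Sobolev bounds of Proposition~\ref{prop:bound-t}. These are then interpolated:
\begin{equation*}
\big\|\phi_k\widehat{u}\big\|\lesssim\big(2^k\|u\|_{L^1}\big)^{1-\theta_1}\|u\|^{\theta_1},\quad k\le0,\qquad \big\|\phi_k\widehat{u}\big\|\lesssim\big(2^{-Nk}\|u\|_{H^N}\big)^{1-\theta_2}\|u\|^{\theta_2},\quad k\ge0.
\end{equation*}
Taking $\theta_1=1-\delta$ in the low-frequency regime, and $\theta_2$ near $\tfrac{2}{3}$ in the high-frequency regime (using $N\ge 16$), after integrating over $[t_1,t_2]\subset[2^m-2,2^{m+1}+2]$ one obtains bounds considerably better than $2^{-\delta m}2^{C_E k^-}2^{D_E k^+}$.

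The main obstacle here is not any integration-by-parts machinery, which is avoided entirely, but rather the careful bookkeeping of weighted $L^2$ and pointwise bounds on $\mathfrak{m}$ and $E$ on constant-$t$ slices; this is where the quadratic structure in $E$ of the forcing term $|E|^2$ for $\mathfrak{m}$, together with the sharp decay of $E$ itself, must combine to beat the $1/s$ barrier that afflicts generic quadratic Klein--Gordon nonlinearities in $\mathbb{R}^{1+2}$. Once the sharp decay $\|\Delta\mathfrak{m}\cdot E\|(s)\lesssim (C_1\epsilon)^2 s^{-3/2}$ is established, the remainder of the argument reduces to Bernstein interpolation and time integration, the implicit constants being handled exactly as in the previous propositions.
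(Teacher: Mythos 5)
Your proposal is correct and follows essentially the same route as the paper: both proofs skip any integration by parts, establish the key decay $\|\Delta\mathfrak{m}\,E\|\lesssim (C_1\epsilon)^2 s^{-3/2}$ by splitting the $\langle s-r\rangle^{1/2}$ weight between $\Delta\mathfrak{m}$ and $E$ using Proposition~\ref{prop:bound-t}, derive an $L^1$ bound the same way, and interpolate to produce the $2^{C_E k^-}2^{D_E k^+}$ localization factors before integrating in time. The only cosmetic differences are that the paper uses an $H^8$ bound (with decay) where you invoke $H^N$, and it records the extra $s^{-1/2}$ decay in the $L^1$ bound that your cruder Cauchy--Schwarz estimate drops; neither change affects the outcome.
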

\begin{proof}
    We rely on the bounds in Proposition \ref{prop:bound-t} to derive
    \begin{align*}
        \| \Delta \mathfrak{m} E\|_{H^{8}}
        \lesssim
        \big\|(\langle s-r\rangle^{1\over 2} \Delta \mathfrak{m}) \cdot (\langle s-r\rangle^{-{1\over 2}} E)\big\|_{H^8}
        \lesssim
        (C_1 \epsilon)^2 s^{-{3\over 2}},
    \end{align*}
    as well as
    \begin{align*}
        \| \Delta \mathfrak{m} E\|_{L^{1}}
        \lesssim
        \big\|(\langle s-r\rangle^{1\over 2} \Delta \mathfrak{m}) \cdot (\langle s-r\rangle^{-{1\over 2}} E)\big\|_{L^1}
        \lesssim
        (C_1 \epsilon)^2 s^{-{1\over 2}}
    \end{align*}
    Thanks to the fast decay of the term $\Delta \mathfrak{m} E$, we have
    \begin{align*}
        \big\|\phi_k(\xi) \widehat{\Delta \mathfrak{m} E} \big\|
        \lesssim
        \| \Delta \mathfrak{m} E\|
        \lesssim
        (C_1 \epsilon)^2 2^{-{3\over 2}m}.
    \end{align*}

One the other hand, we get
\begin{align*}
    \big\|\phi_k(\xi) \widehat{\Delta \mathfrak{m} E} \big\|
    \lesssim
    \|\phi_k(\xi)\| \|{\Delta \mathfrak{m} E} \|_{L^1}
    \lesssim
    (C_1 \epsilon)^2 2^k 2^{-{m\over 2}}.
\end{align*}

Interpolating these two bounds yields
\begin{align}
    \big\|\phi_k(\xi) \widehat{\Delta \mathfrak{m} E} \big\|
    \lesssim
    2^{C_E k^-} 2^{D_E k^+} 2^{\big(-{3\over 2}(1-C_E) + {C_E\over 2} - D_E \delta\big)m}.
\end{align}

The proof is done.
\end{proof}

\subsubsection{Endgame}

We aim to show the following result, which further implies nonlinear scattering of the field $E$.

\begin{proposition}\label{prop:M-all}
For all $m\gg 1$, $t_1, t_2 \in [2^m - 2, 2^{m+1}+2]$, and $k\in \mathbb{N}$, one has
\begin{align}\label{eq:M-all}
    \big\| \phi_k(\xi) (\widehat{f_*}(t_2, \xi) - \widehat{f_*}(t_1, \xi)) \big\|
    \leq {1\over 2} C_1 \epsilon 2^{C_E k^-} 2^{D_E k^+} 2^{-\delta m}.
\end{align}
\end{proposition}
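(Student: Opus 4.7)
The plan is to reduce the statement to an integrated form of the evolution of $\widehat{f_*}$ and then to split into low-- and high--frequency regimes. Since $\Theta(t,\xi)$ is real--valued, we have $|\widehat{f_*}(t,\xi)|=|\widehat{f_+}(t,\xi)|$. Moreover, by the very definition of the phase correction and the profile equation for $f_+$,
\begin{equation*}
\partial_s \widehat{f_*}(s,\xi)=e^{i\Theta(s,\xi)}\bigl(\mathcal{M}_1+\mathcal{M}_2+\mathcal{M}_3+\mathcal{M}_4\bigr)(s,\xi),
\end{equation*}
so that
\begin{equation*}
\phi_k(\xi)\bigl(\widehat{f_*}(t_2,\xi)-\widehat{f_*}(t_1,\xi)\bigr)=\phi_k(\xi)\int_{t_1}^{t_2}e^{i\Theta(s,\xi)}\sum_{\mathfrak{a}=1}^{4}\mathcal{M}_{\mathfrak{a}}(s,\xi)\,\d s.
\end{equation*}
It therefore suffices to bound the right-hand side by $\tfrac14 C_1\epsilon\,2^{C_E k^-}2^{D_E k^+}2^{-\delta m}$ uniformly in $k\in\mathbb{Z}$.

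In the low--frequency regime $k\leq \delta m$, I would directly invoke Propositions~\ref{prop:M1}, \ref{prop:M2}, \ref{prop:M3}, and \ref{prop:M4}, each of which provides the bound $(C_1\epsilon)^2 \, 2^{-\delta m}2^{C_E k^-}2^{D_E k^+}$ (possibly after absorbing small polynomial powers of $m$ into a slight reduction of $\delta$). Summing over $\mathfrak{a}\in\{1,2,3,4\}$ and taking $\epsilon$ small enough so that $(C_1\epsilon)^2 \lesssim \tfrac14 C_1\epsilon$ delivers the claim in this range.

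In the high--frequency regime $k>\delta m$, the detailed frequency--space analysis is replaced by the Sobolev energy bounds from Proposition~\ref{prop:bound-t}. Indeed, $\|f_+\|_{H^{N+1}}=\|E_+\|_{H^{N+1}}\lesssim C_1\epsilon$, and hence by Plancherel together with $|\widehat{f_*}|=|\widehat{f_+}|$,
\begin{equation*}
\bigl\|\phi_k(\xi)\widehat{f_*}(t,\xi)\bigr\|\lesssim C_1\epsilon\,2^{-(N+1)k},\qquad t\in[2^m-2,2^{m+1}+2].
\end{equation*}
Using the triangle inequality on the difference and comparing to the target bound, with $D_E=-\delta$, we need $2^{-(N+1)k}\lesssim 2^{-\delta k-\delta m}$, which reduces to $k(N+1-\delta)\geq \delta m$; since $k>\delta m$ and $N\geq 16$, this holds with enormous margin. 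Choosing $\epsilon$ small enough finishes this regime.

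The main obstacle is of course the low--frequency estimates already carried out in Propositions~\ref{prop:M1}--\ref{prop:M4}, where the modulation from the nonlinear phase $\Theta$ is needed in order to close the $Z$--norm in the regime where $\widehat{n_1}$ does not vanish at zero frequency. Once those propositions are granted, the present statement is a routine assembly: the only delicate point is matching the powers of $2^{C_E k^-}$ and $2^{D_E k^+}$ in the low--frequency bounds and verifying that the cheap $H^N$--bound covers the high--frequency tail with sufficient margin, both of which are transparent given the choices $C_E=\delta$, $D_E=-\delta$, and $N\geq 16$.
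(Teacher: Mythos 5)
Your proof is correct and your treatment of the low-frequency regime $k\le\delta m$ matches the paper exactly: reduce via the evolution equation for $\widehat{f_*}$ and invoke Propositions~\ref{prop:M1}--\ref{prop:M4}, then absorb $(C_1\epsilon)^2$ into $\tfrac12 C_1\epsilon$ for $\epsilon$ small.

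Where you differ is the high-frequency regime $k>\delta m$. The paper writes a Duhamel formula for $\widehat{f_+}$ and exploits the decay $\|nE\|_{H^7}\lesssim (C_1\epsilon)^2\langle s\rangle^{-1+\delta}$ (from Proposition~\ref{prop:bound-t}), so that $\|\phi_k\widehat{f_+}(t)\|\lesssim 2^{-7k^+}(\epsilon+(C_1\epsilon)^2 t^{\delta})$; the growing factor $t^\delta\sim 2^{\delta m}$ is then dominated by $2^{-3k^+}\le 2^{-3\delta m}$, yielding $\lesssim(\epsilon+(C_1\epsilon)^2)2^{-4k^+}2^{-\delta m}$. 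You instead use the uniform-in-time bound $\|f_+\|_{H^{N+1}}=\|E_+\|_{H^{N+1}}\lesssim\|\partial_t E\|_{H^{N+1}}+\|E\|_{H^{N+2}}\lesssim C_1\epsilon$ directly, getting $\|\phi_k\widehat{f_+}(t)\|\lesssim C_1\epsilon\,2^{-(N+1)k}$ without Duhamel, and the large power $N+1$ of $2^{-k}$ trades for the required $2^{-\delta k^+}2^{-\delta m}$ with room to spare. Both routes are valid; yours is marginally more elementary (no Duhamel, no decay of $\|nE\|_{H^7}$ needed), whereas the paper's needs only $H^7$-level regularity and so would survive in a lower-regularity version of the argument. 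In both cases the mechanism is the same: when $k>\delta m$, the spare negative powers of $2^{k}$ absorb the $2^{-\delta m}$ factor. Your observation that $|\widehat{f_*}|=|\widehat{f_+}|$ pointwise (since $\Theta$ is real) is exactly the step the paper also uses implicitly.
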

\begin{proof}

\textbf{Case I: $k\leq \delta m$.}

In this case, Propositions \ref{prop:M1}--\ref{prop:M4} yield the desired results.

\textbf{Case II: $k\geq \delta m$.}

In this case, we note that
\begin{align*}
\|n \, E\|_{H^7}
\lesssim
(C_1 \epsilon)^2 \langle s\rangle^{-1+\delta},  
\end{align*}
which leads us to
\begin{align}
    \|P_k (nE)\|
    \lesssim
    2^{-7k^+} \langle s\rangle^{-1+\delta}.
\end{align}
Therefore, one derives that
\begin{equation}
\begin{aligned}
    &\big\| \phi_k(\xi) (\widehat{f_*}(t_2, \xi) - \widehat{f_*}(t_1, \xi)) \big\|
    \\
    \lesssim
    &\big\|\phi_k(\xi)\widehat{f_+}(t_2, \xi)\big\| + \big\|\phi_k(\xi)\widehat{f_+}(t_1, \xi)\big\|
    \\
    \lesssim
    &2^{-7k^+} \Big(\|f_+(t_0, x)\|_{H^7}
    +\int_{t_0}^{t_1} \| nE\|_{H^7}\, \d s
    + \int_{t_0}^{t_2} \| nE\|_{H^7}\, \d s\Big)
    \\
    \lesssim
    &\big( \epsilon + (C_1 \epsilon)^2\big) 2^{-4k^+} 2^{-\delta m}.
\end{aligned}
\end{equation}

Thus, we complete the proof.
\end{proof}

Finally, we have following bounds for the phase correction $\Theta$.

\begin{proposition}\label{prop:Theta}
    Let $\xi$ be fixed, then it  holds that
\begin{align}
    \lim_{t\to+\infty}|\Theta(t, \xi)|
    =+\infty.
\end{align}
\end{proposition}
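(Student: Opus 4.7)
The plan is to extract an explicit logarithmic asymptotic for $\Theta(t,\xi)$ of the form $\Theta(t,\xi) = \pi \widehat{n_1}(0)\log t + O(1)$, from which $|\Theta(t,\xi)|\to+\infty$ follows immediately since $\widehat{n_1}(0)=\int_{\mathbb{R}^2} n_1\,\d x\neq 0$ by hypothesis.

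I work from the second representation in \eqref{eq:ph-correction}, namely $\Theta(t,\xi) = 2\pi^2\langle\xi\rangle^{-1}\int_{t_0}^t \ell_L(s,sv)\,\d s$ with $v=\xi/\langle\xi\rangle$ (so $|v|<1$ is fixed). Using the explicit Fourier formula for free waves, $\widehat{\ell}(s,\eta) = \cos(s|\eta|)\widehat{n_0}(\eta) + |\eta|^{-1}\sin(s|\eta|)\widehat{n_1}(\eta)$, and rescaling $\eta\mapsto\eta/s$, one has
\[
\ell_L(s,sv)
= (2\pi)^{-2}s^{-2}\!\!\int \phi_{\leq 0}(\eta s^{p-1})\Big[\cos|\eta|\,\widehat{n_0}(\eta/s) + s\tfrac{\sin|\eta|}{|\eta|}\widehat{n_1}(\eta/s)\Big]e^{iv\cdot\eta}\,\d\eta.
\]
Passing to polar coordinates and using the identity $\int_{S^1}e^{i\rho v\cdot\omega}\,\d\omega = 2\pi J_0(\rho|v|)$, the leading piece becomes
\[
(2\pi s)^{-1}\widehat{n_1}(0)\int_0^\infty \phi_{\leq 0}(\rho s^{p-1})\sin(\rho)J_0(\rho|v|)\,\d\rho.
\]

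The second step is to show this truncated Bessel integral converges to $\langle\xi\rangle$ with a quantified error. Since $p=3/4$, the cutoff $\phi_{\leq 0}(\rho s^{p-1})$ is effectively $\mathbf{1}_{\{\rho\lesssim s^{1/4}\}}$, so by Lemma \ref{lem:Bessel} the untruncated integral equals $(1-|v|^2)^{-1/2}=\langle\xi\rangle$. The tail $\int_{s^{1/4}}^\infty \sin(\rho)J_0(\rho|v|)\,\d\rho$ is controlled via the asymptotic expansion $J_0(\rho|v|) = \sqrt{2/(\pi\rho|v|)}\cos(\rho|v|-\pi/4) + O(\rho^{-3/2})$ of Lemma \ref{lem:Bessel2}: product-to-sum yields oscillations of frequencies $1\pm|v|$ (both nonzero since $|v|<1$), and integration by parts produces a bound of size $O(s^{-1/8})$. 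The discrepancy $\widehat{n_1}(\eta/s)-\widehat{n_1}(0)$ carries a factor $|\eta|/s$ which turns the Bessel integrand into $\int_0^\infty \sin(\rho)J_0(\rho|v|)\,\d\rho$ weighted by extra $|\eta|$, producing an $O(s^{-2})$ contribution. Similarly, the $\cos|\eta|\,\widehat{n_0}(\eta/s)$ piece carries prefactor $s^{-2}$ and (interpreted via integration by parts in $\rho$ as in the $J_0$ asymptotic expansion) yields an $O(s^{-2})$ remainder.

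Combining, $\ell_L(s,sv) = (2\pi s)^{-1}\widehat{n_1}(0)\langle\xi\rangle + O(s^{-1-\alpha})$ for some $\alpha>0$. Integrating in $s$ and multiplying by $2\pi^2\langle\xi\rangle^{-1}$ gives $\Theta(t,\xi)=\pi\widehat{n_1}(0)\log t + O(1)$, which is unbounded under \eqref{eq:n1=+}.

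The main obstacle will be the rigorous control of the tail estimate $\int_{s^{1/4}}^\infty \sin(\rho)J_0(\rho|v|)\,\d\rho$: writing $J_0$ via Lemma \ref{lem:Bessel2} and integrating by parts gives the required decay provided one avoids the $|v|\to 1$ limit, which is harmless here because $\xi$ (and hence $|v|=|\xi|/\langle\xi\rangle<1$) is fixed. Uniformity in $\xi$ is not asserted by the proposition, so all estimates may depend on $\langle\xi\rangle$ and $(1-|v|^2)^{-1}$ without concern.
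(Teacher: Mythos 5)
Your strategy coincides in its essentials with the paper's Case~II argument in the proof of Proposition~\ref{prop:Theta}: reduce to the Bessel function $J_0$ via the identity $\int_{\mathbb{S}^1}e^{i\rho v\cdot\omega}\,\d\omega=2\pi J_0(\rho|v|)$, identify the leading logarithmic term by evaluating $\int_0^\infty\sin\rho\, J_0(\rho|v|)\,\d\rho=(1-|v|^2)^{-1/2}=\langle\xi\rangle$ via Lemma~\ref{lem:Bessel}, and control the remaining pieces (tail, $\widehat{n_0}$ contribution, $\widehat{n_1}(\cdot)-\widehat{n_1}(0)$ discrepancy) via the large-argument expansion of $J_0$ in Lemma~\ref{lem:Bessel2}. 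Rescaling $\eta\mapsto\eta/s$ at the outset, so the phase becomes $e^{iv\cdot\eta}$ and the Bessel integral lives in the fixed variable $\rho$ rather than $s^{1-p}\rho$, is a clean piece of bookkeeping, but the analytic content is the same as the paper's change of variables $\rho=|\eta|s^p$, $\sigma=s^{1-p}\rho$.

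There is, however, one genuine gap: your uniform treatment breaks down at $\xi=0$. Lemma~\ref{lem:Bessel} requires $a>b>0$, so it does not apply when $|v|=0$; and the large-argument asymptotic of Lemma~\ref{lem:Bessel2} requires $\rho|v|\gg 1$, which is never satisfied on the tail when $v=0$. You flag the degeneracy as $|v|\to 1$, but the issue is actually at the other end, $|v|\to 0$. This is precisely why the paper runs a separate elementary computation at $\xi=0$ (Case~I), where $J_0(0)\equiv 1$ and the logarithmic growth is extracted directly from $\int_0^1\frac{1-\cos(A\rho)}{\rho}\,\d\rho=\log A+O(1)$. You would need the same separate treatment, or could instead exploit the smooth cutoff and integrate by parts, e.g.\ $\int_0^\infty\phi_{\leq 0}(\rho s^{p-1})\sin\rho\,\d\rho=\phi_{\leq 0}(0)+O(s^{p-1})=1+O(s^{-1/4})$.

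A smaller imprecision: the claimed $O(s^{-2})$ for the $\widehat{n_0}$ piece and for the discrepancy $\widehat{n_1}(\eta/s)-\widehat{n_1}(0)$ is too optimistic. After polar coordinates the extra Jacobian factor $\rho$ costs a power of $s^{1-p}$, and a crude bound gives only $O(s^{-2p})=O(s^{-3/2})$ for both (the tail is $O(s^{-1-(1-p)/2})=O(s^{-9/8})$). These are still summable, so the conclusion $\Theta(t,\xi)=\pi\widehat{n_1}(0)\log t+O(1)$ and hence $|\Theta(t,\xi)|\to\infty$ stands, but you should correct the stated orders.
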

\begin{remark}
    In fact, we have a more precise description on $\Theta$, which reads
    \begin{align*}
        \big|\Theta(t, \xi) - \pi \widehat{n_1}(0) \log(t) \big|
        \lesssim
        C(\xi) |\widehat{n_1}(0)|.
    \end{align*}
\end{remark}

\begin{proof}[Proof of Proposition \ref{prop:Theta}]
  
    Recall that
    \begin{align*}
\Theta(t, \xi) 
= 
&{1\over 2} \langle \xi\rangle^{-1} \int_{t_0}^t \int  \phi_{\leq 0}(\eta \langle s\rangle^p) \widehat{\ell}(s, \eta) e^{is {\xi\over \langle \xi\rangle} \cdot \eta } \, \d\eta \d s
\\
=
&\mathfrak{U}_{1}(\xi) + \mathfrak{U}_2(\xi) +\mathfrak{U}_3(\xi),
\end{align*}
in which
\begin{align*}
    \mathfrak{U}_{1}(\xi)
    =
    &{1\over 2} \langle \xi\rangle^{-1} \int_{t_0}^t \int  \phi_{\leq 0}(\eta \langle s\rangle^p) \cos{((s-t_0)|\eta|)} \widehat{n_0}( \eta) e^{is {\xi\over \langle \xi\rangle} \cdot \eta } \, \d\eta \d s,
    \\
    \mathfrak{U}_{2}(\xi)
    =
    &{1\over 2} \langle \xi\rangle^{-1} \int_{t_0}^t \int  \phi_{\leq 0}(\eta \langle s\rangle^p) {\sin{((s-t_0)|\eta|)} \over |\eta|}\big(\widehat{n_1}(\eta) - \widehat{n_1}(0)\big) e^{is {\xi\over \langle \xi\rangle} \cdot \eta } \, \d\eta \d s,
    \\
    \mathfrak{U}_{3}(\xi)
    =
    &{1\over 2} \langle \xi\rangle^{-1} \int_{t_0}^t \int  \phi_{\leq 0}(\eta \langle s\rangle^p) {\sin{((s-t_0)|\eta|)} \over |\eta|}\widehat{n_1}( 0) e^{is {\xi\over \langle \xi\rangle} \cdot \eta } \, \d\eta \d s.
\end{align*}

    \textbf{Case I: Lower bound at $\xi=0$.}

We first focus on the simple case of $\xi = 0$. We abuse notation a bit by writing $\phi_{\leq 0}(|\eta|) = \phi_{\leq 0}(\eta)$ for instance.

One derives that
\begin{align*}
    &|\mathfrak{U}_1(0)| + |\mathfrak{U}_2(0)|
    \\
    \lesssim
    &\epsilon \int_{t_0}^t \int_{} \phi_{\leq 0}(\eta \langle s\rangle^p) \, \d\eta \d s
    \lesssim
    \epsilon.
\end{align*}

For $\mathfrak{U}_3$, we have
\begin{align*}
    \mathfrak{U}_3(0)
    =
    &\pi \widehat{n_1}(0) \int_{t_0}^t \int_{0}^{+\infty} \phi_{\leq 0}(|\eta| \langle s\rangle^p) \sin{((s-t_0)|\eta|)} \, \d|\eta| \d s
    \\
    =
    &\pi \widehat{n_1}(0) \int_{t_0}^t \int_{0}^{+\infty} \langle s\rangle^{-p} \phi_{\leq 0}(\rho) \sin{((s-t_0)\langle s\rangle^{-p}\rho)} \, \d\rho \d s
    \\
    =
    &\mathfrak{U}_{31} + \mathfrak{U}_{32},
\end{align*}
in which (with $A= (s-t_0) \langle s\rangle^{-p}$)
\begin{align*}
    \mathfrak{U}_{31}
    =&(1-p)^{-1}\pi \widehat{n_1}(0)\int_{0}^{+\infty} {\phi_{\leq 0}(\rho) \over \rho} \big( 1- \cos (A\rho) \big)\, \d\rho,
    \\
    \mathfrak{U}_{32}
    =
    &{p\over 1-p} \pi \widehat{n_1}(0)\int_{0}^{+\infty} {\phi_{\leq 0}(\rho) \over \rho} \int_{0}^{A \rho} \sin \tau {1+ t_0s \over (1-p) s^2 + t_0 p s + 1} \, \d\tau \d\rho,
\end{align*}
with $\tau = (s-t_0) \langle s\rangle^{-p} \rho$.

We further decompose
\begin{align*}
    \mathfrak{U}_{31}
    =
    \mathfrak{U}_{311}
    +
    \mathfrak{U}_{312},
\end{align*}
where
\begin{align*}
    \mathfrak{U}_{311}
    =
    &(1-p)^{-1}\pi \widehat{n_1}(0)\int_{0}^{1} {1 \over \rho} \big( 1- \cos (A\rho) \big)\, \d\rho,
    \\
    \mathfrak{U}_{312}
    =
    &(1-p)^{-1}\pi \widehat{n_1}(0)\int_{1}^{2} {\phi_{\leq 0}(\rho) \over \rho} \big( 1- \cos (A\rho) \big)\, \d\rho.
\end{align*}
One finds that 
\begin{align*}
    |\mathfrak{U}_{312}| 
    \lesssim |\widehat{n_1}(0)|,
    \\
    |\mathfrak{U}_{32}| 
    \lesssim |\widehat{n_1}(0)|.
\end{align*}

 A tedious calculation gives that
 \begin{align*}
     &\int_{0}^1 {1- \cos(A\rho) \over \rho} \, \d\rho
     \\
     =
     &-A\int_0^1 \log \rho \sin(A\rho) \, \d\rho
     \\
     =
     &-\int_0^A (\log \rho_1 - \log A)  \sin(\rho_1) \, \d\rho_1
     \\
     =
     &\log A \cdot (1-\cos A) + \mathfrak{U}_{41} + \mathfrak{U}_{42},
 \end{align*}
in which
\begin{align*}
    \mathfrak{U}_{41}
    =
    &-\int_0^1 \log \rho_1 \sin(\rho_1) \, \d\rho_1,
    \\
    \mathfrak{U}_{42}
    =
    &-\int_1^A \log \rho_1 \sin(\rho_1) \, \d\rho_1.
\end{align*}
Easily, we get $|\mathfrak{U}_{41}|\lesssim 1$, and we also obtain
\begin{align*}
    \mathfrak{U}_{42}
    =
    \log A \cdot \cos(A) - \int_1^A {\cos(\rho_1) \over \rho_1} \, \d\rho_1,
\end{align*}
where
\begin{align*}
    \Big|\int_1^A {\cos(\rho_1) \over \rho_1} \, \d\rho_1 \Big|
    \lesssim 1,
\end{align*}
which further yields
\begin{align*}
         &\int_{0}^1 {1- \cos(A\rho) \over \rho} \, \d\rho
= \log A + \mathcal{O}(1).
\end{align*}

    To conclude, we have
    \begin{align}
        \big|\Theta(t, 0) - (1-p)^{-1} \pi \widehat{n_1}(0) \log ((t-t_0) \langle t\rangle^{-p})\big|
        \lesssim
        |\widehat{n_1}(0)|.
    \end{align}

    \textbf{Case II: Lower bound for fixed $\xi$.}
Below the implicit constant in $\lesssim$ might depend on $\xi$, so in that case we use the notation $\lesssim_\xi$ instead.
One can get
\begin{align*}
    |\mathfrak{U}_1(\xi)| + |\mathfrak{U}_2(\xi)|
    \lesssim
    \epsilon \langle \xi\rangle^{-1}.
\end{align*}

We next, without loss of generality, consider the term
\begin{align*}
    \mathfrak{U}_{4}(\xi)
    =
    & \int_{t_0}^t \int  \phi_{\leq 0}(\eta  s^p) {\sin{(s|\eta|)} \over |\eta|} e^{is {\xi\over \langle \xi\rangle} \cdot \eta } \, \d\eta \d s,
\end{align*}
which is radial in $\xi$. We only need to consider the case of $\xi = (|\xi|, 0)$.
Therefore, we get
\begin{align*}
    \mathfrak{U}_{4}(\xi)
    =
    & \int_{t_0}^t \int  \phi_{\leq 0}(\eta  s^p) {\sin{(s|\eta|)} } e^{is {|\xi|\over \langle \xi\rangle}   |\eta| \cos\theta } \, \d\theta \d |\eta| \d s
    \\
    =
    &2\pi \int_{t_0}^t \int  \phi_{\leq 0}(\eta  s^p) {\sin{(s|\eta|)} } J_0(s|\eta| {|\xi|\over \langle \xi\rangle}) \,  \d|\eta| \d s,
    \end{align*}
in which $J_0$ is the Bessel function of the first kind of order zero.

We decompose $\mathfrak{U}_4$ by
\begin{align*}
    \mathfrak{U}_{4}(\xi)
    =
    \mathfrak{U}_{41}(\xi) + \mathfrak{U}_{42}(\xi),
\end{align*}
in which
\begin{align*}
    \mathfrak{U}_{41}(\xi)
    =
    2\pi\int_{t_0}^t \int_{0}^1  {\sin(s^{1-p}\rho) \over s^p} J_0\big(s^{1-p}\rho {|\xi|\over \langle \xi\rangle}\big) \, \d\rho \d s,
    \\
    \mathfrak{U}_{42}(\xi)
    =2\pi\int_{t_0}^t \int_{1}^2 \phi_{\leq 0}(\rho) {\sin(s^{1-p}\rho) \over s^p} J_0\big(s^{1-p}\rho {|\xi|\over \langle \xi\rangle}\big) \, \d\rho \d s.
\end{align*}
We note that
\begin{align*}
    \mathfrak{U}_{42}(\xi)
    =
    {2\pi \over 1-p} \int_1^2 {\phi_{\leq 0}(\rho) \over \rho} \int_{t_0^{1-p} \rho}^{t^{1-p}\rho} \sin(\tau) J_0\big(\tau {|\xi|\over \langle \xi\rangle}\big) \, \d\tau \d\rho.
\end{align*}
By Lemma \ref{lem:Bessel}, we know
\begin{align*}
    |\mathfrak{U}_{42}(\xi)|
    \lesssim_{\xi}
    1.
\end{align*}
On the other hand, we find that
\begin{align*}
    &\mathfrak{U}_{41}(\xi)
    \\
    =
    &2\pi \int_{t_0}^t s^{-1}\int_{0}^{s^{1-p}} \sin(\sigma) J_0\big(\sigma {|\xi|\over \langle \xi\rangle}\big) \, \d\sigma \d s
    \\
    =
    &2\pi \int_{t_0}^t s^{-1}\int_{0}^{+\infty} \sin(\sigma) J_0\big(\sigma {|\xi|\over \langle \xi\rangle}\big) \, \d\sigma \d s
    +
    2\pi \int_{t_0}^t s^{-1}\int_{s^{1-p}}^{+\infty} \sin(\sigma) J_0\big(\sigma {|\xi|\over \langle \xi\rangle}\big) \, \d\sigma \d s
    \\
    =:
    & \mathfrak{U}_{411}(\xi) + \mathfrak{U}_{412}(\xi).
\end{align*}
We employ Lemma \ref{lem:Bessel2} to derive
\begin{align*}
    |\mathfrak{U}_{412}(\xi)|
    \lesssim_\xi 1,
\end{align*}
and
\begin{align*}
    \mathfrak{U}_{411}(\xi)
    =
    2\pi \langle \xi\rangle \log(t). 
\end{align*}

To conclude, we arrive at
\begin{align*}
    |\mathfrak{U}_4(\xi) - 2\pi \langle\xi\rangle \log(t)|
    \lesssim_\xi 1,
\end{align*}
which leads us to
\begin{align}
    |\mathfrak{U}_3(\xi) - \pi \widehat{n_1}(0) \log(t)|
    \lesssim_\xi |\widehat{n_1}(0)|.
\end{align}

\textbf{Case III: Upper bound.}
For the upper bound, we only derive a rough one, but it covers the whole range of $\xi$.

Following Lemma \ref{lem:Theta}, we deduce that
\begin{align*}
    |\Theta(t, \xi)|
    \lesssim
    \epsilon \langle \xi\rangle^{-1} t^{1-p}.
\end{align*}

The proof is done.
\end{proof}

\subsection{Linear scattering for $E$}
In this part, we aim to prove Theorem \ref{thm:main2}, i.e., linear scattering of $E$ under the additional assumption $\int_{\mathbb{R}^2} n_1(x) \, \d x = 0$.
Denote
\begin{align}
    \mathcal{M}_0
    =
    &-{i\over 2} e^{it\langle\xi\rangle} \int \widehat{\ell_L}(t, \eta)  {e^{-it\langle \xi-\eta\rangle} \widehat{f_+}(t, \xi-\eta) \over \langle \xi - \eta\rangle}.
\end{align}

Our goal is to show the following result.
\begin{proposition}\label{prop:M0}
For all $m\gg 1$, $t_1, t_2 \in [2^m - 2, 2^{m+1}+2]$, and $k\leq \delta m$, it holds
\begin{align}\label{eq:M0}
\Big\| \phi_k(\xi) \int_{t_1}^{t_2}  \mathcal{M}_0(s, \xi) \, \d s \Big\|
\lesssim
(C_1 \epsilon)^2 2^{-\delta m} 2^{C_E k^-} 2^{D_E k^+}.
\end{align}
\end{proposition}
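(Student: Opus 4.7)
The plan is to reduce \eqref{eq:M0} to the bound already obtained for $\mathcal{M}_3$ in Proposition~\ref{prop:M3}, isolating the unique piece of $\mathcal{M}_0$ that was previously absorbed into the phase correction, and then showing that under \eqref{eq:n1=0} this piece is by itself time-integrable. Adding and subtracting the ``frozen-phase'' term inside the integrand of $\mathcal{M}_0$ yields the algebraic identity
\begin{align*}
\mathcal{M}_0(s,\xi) = \mathcal{M}_3(s,\xi) - i\,\partial_s\Theta(s,\xi)\,\widehat{f_+}(s,\xi).
\end{align*}
Because $|e^{i\Theta(s,\xi)}|=1$ pointwise and the $e^{i\Theta}$ factor in Proposition~\ref{prop:M3} is only carried for bookkeeping, removing it preserves the bound; hence
\begin{align*}
\Big\|\phi_k(\xi)\int_{t_1}^{t_2}\mathcal{M}_3(s,\xi)\,\d s\Big\|
\lesssim (C_1\epsilon)^2\, 2^{-\delta m}\, 2^{C_E k^-}\, 2^{D_E k^+}
\end{align*}
follows by repeating that proof verbatim. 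It then remains to estimate the ``Theta term'' $T(s,\xi) := -i\,\partial_s\Theta(s,\xi)\,\widehat{f_+}(s,\xi)$.

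This is where \eqref{eq:n1=0} enters decisively. Lemma~\ref{lem:n1-low} upgrades the low-frequency behaviour of $\widehat{n_1}$ to $|\widehat{n_1}(\eta)|\lesssim \epsilon|\eta|$ on $|\eta|\leq 1$. Inserting this into $\widehat{\ell}(s,\eta) = \cos((s-t_0)|\eta|)\widehat{n_0}(\eta) + |\eta|^{-1}\sin((s-t_0)|\eta|)\,\widehat{n_1}(\eta)$ and using $|\sin(s|\eta|)|/|\eta|\lesssim \min(s,1/|\eta|)$ kills the $|\eta|^{-1}$ singularity arising from the $\widehat{n_1}$ contribution, and gives the uniform-in-time bound $|\widehat{\ell_L}(s,\eta)|\lesssim \epsilon$ on $\{|\eta|\lesssim \langle s\rangle^{-p}\}$. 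Integrating over this disk then sharpens Lemma~\ref{lem:Theta} to
\begin{align*}
|\partial_s \Theta(s,\xi)|
=\frac{1}{2\langle\xi\rangle}\Big|\int \widehat{\ell_L}(s,\eta)\, e^{is\frac{\xi}{\langle\xi\rangle}\cdot\eta}\,\d\eta\Big|
\lesssim \epsilon\,\langle\xi\rangle^{-1}\,\langle s\rangle^{-2p},
\end{align*}
which, crucially, is time-integrable since $p=3/4>1/2$ (contrast with the generic bound $\langle s\rangle^{-p}$ of Lemma~\ref{lem:Theta}).

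The endgame is straightforward: the triangle inequality in $s$, Hölder in $\xi$, and the bootstrap $\|\phi_k(\xi)\widehat{f_+}(s,\cdot)\|\leq C_1\epsilon\,2^{C_E k^-}2^{D_E k^+}$ give
\begin{align*}
\Big\|\phi_k(\xi)\int_{t_1}^{t_2}T(s,\xi)\,\d s\Big\|
\lesssim \int_{t_1}^{t_2}\|\partial_s\Theta(s,\cdot)\|_{L^\infty_\xi}\,\|\phi_k(\xi)\widehat{f_+}(s,\cdot)\|\,\d s
\lesssim (C_1\epsilon)^2\, 2^{(1-2p)m}\, 2^{C_E k^-}\,2^{D_E k^+},
\end{align*}
and the resulting gain $2^{-m/2}$ beats the required $2^{-\delta m}$ whenever $\delta<1/2$. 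The entire technical weight of the argument is concentrated in Lemma~\ref{lem:n1-low}: once the low-frequency vanishing of $\widehat{n_1}$ is secured, the estimate on $T$ is essentially trivial, and the estimate on $\mathcal{M}_3$ is inherited from the modified-scattering analysis of Section~\ref{sec:S-E}.
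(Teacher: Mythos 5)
Your proof is correct, and the route is genuinely different from the paper's. The paper estimates $\|\phi_k(\xi)\mathcal{F}T_{m_{0\mp}}(g_{\pm,L},f_+)\|$ directly by re-running the three-case frequency decomposition ($\mathfrak{P}_{11},\mathfrak{P}_{12},\mathfrak{P}_{13}$), inserting the improved low-frequency bound $|\widehat{g_\pm}(\eta)|\lesssim\epsilon|\eta|$ from Lemma~\ref{lem:n1-low} at each step; the gain is a factor $2^{k_1}\lesssim 2^{-pm}$ in the $\widehat{P_{k_1}g_\pm}$ norms, which after integrating over $[t_1,t_2]$ overcomes the $2^m$ time factor. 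You instead split $\mathcal{M}_0 = \mathcal{M}_3 - i\,\partial_s\Theta\,\widehat{f_+}$, re-use the $\mathcal{M}_3$ bound from Proposition~\ref{prop:M3} (valid without the unimodular $e^{i\Theta}$ factor since that proof never exploits phase cancellation in $s$ — it is a term-by-term $L^2$ estimate after the $Q_{l,j}$ decomposition), and show the ``Theta term'' is independently time-integrable because the $\widehat{n_1}$-vanishing sharpens Lemma~\ref{lem:Theta} from $\langle s\rangle^{-p}$ to $\langle s\rangle^{-2p}$ with $2p>1$. This is conceptually tighter: it identifies precisely the piece of $\mathcal{M}_0$ that was absorbed into the phase correction in the modified-scattering case and shows that, under \eqref{eq:n1=0}, this piece alone is subcritical, while the remainder $\mathcal{M}_3$ is inherited unchanged. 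The paper's approach avoids the need to re-examine the structure of the proof of Proposition~\ref{prop:M3}, but requires re-doing the paraproduct estimates; yours buys a shorter, modular argument at the cost of one structural observation (that $e^{i\Theta}$ is inert in that proof), which, as noted, does hold.
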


We first prepare some lemmas.
\begin{lemma}[Low frequency estimates for $\widehat{n_1}$]\label{lem:n1-low}
    Under the assumption \eqref{eq:n1=0}, we have
    \begin{align}
        |\widehat{n_1}(\eta)|
        \lesssim
        \epsilon |\eta|,
        \qquad
        \text{for } |\eta|\leq 1.
    \end{align}
\end{lemma}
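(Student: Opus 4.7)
The plan is to exploit the vanishing mean condition \eqref{eq:n1=0} together with the weighted $L^2$ smallness of the initial data. Since $\widehat{n_1}(0) = \int_{\mathbb{R}^2} n_1 \, \d x = 0$, I would write
\begin{equation*}
\widehat{n_1}(\eta) = \widehat{n_1}(\eta) - \widehat{n_1}(0) = \int_{\mathbb{R}^2} n_1(x) \big( e^{-ix\cdot\eta} - 1 \big) \, \d x,
\end{equation*}
and apply the elementary inequality $|e^{-ix\cdot\eta} - 1| \leq |x\cdot\eta| \leq |x||\eta|$, which holds for all $x, \eta$ (and is sharper than the trivial bound $2$, giving the desired linear factor in $\eta$).

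This reduces matters to proving $\|x \, n_1\|_{L^1(\mathbb{R}^2)} \lesssim \epsilon$. I would extract this from the smallness hypothesis in Theorem~\ref{thm:main1}, which provides in particular $\|\langle x\rangle^{3} n_1\| < \epsilon$. By Cauchy--Schwarz,
\begin{equation*}
\int_{\mathbb{R}^2} |x| \, |n_1(x)| \, \d x \leq \big\|\langle x\rangle^{3} n_1\big\| \cdot \big\|\langle x\rangle^{-2}\big\|_{L^2(\mathbb{R}^2)} \lesssim \epsilon,
\end{equation*}
where the weight $\|\langle x\rangle^{-2}\|_{L^2(\mathbb{R}^2)}$ is finite because $\int_{\mathbb{R}^2} \langle x\rangle^{-4} \, \d x < +\infty$ in two spatial dimensions. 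Combining the two displays yields $|\widehat{n_1}(\eta)| \lesssim \epsilon \, |\eta|$ for all $\eta$, which is stronger than what is claimed (the restriction $|\eta|\leq 1$ is only relevant downstream when comparing with $\widehat{n_1}$'s behavior at infinity).

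There is essentially no obstacle here: the argument is a one-line Taylor expansion at the origin in Fourier space, with the only technical point being the choice of a weight exponent that is both implied by the data assumption and square-integrable in $\mathbb{R}^2$. The weight $\langle x\rangle^{3}$ available from the hypothesis is comfortably above the threshold needed.
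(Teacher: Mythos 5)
Your proof is correct and uses essentially the same mechanism as the paper: the vanishing of $\widehat{n_1}(0)$ plus a first moment bound $\|x\,n_1\|_{L^1}\lesssim\epsilon$ extracted from the $L^2$-weighted data hypothesis via Cauchy--Schwarz. The paper phrases it through $\partial_{\eta}\widehat{n_1}=-i\,\widehat{x\,n_1}$ and the fundamental theorem of calculus, whereas you expand $e^{-ix\cdot\eta}-1$ directly; these are the same estimate written in two equivalent ways.
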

\begin{proof}
    Thanks to the fast spacial decay of $n_1$ in physical space, one gets
    \begin{align*}
        \sum_a |\partial_{\eta_a} \widehat{n_1}|
        \lesssim
        \|\langle x\rangle n_1\|_{L^1}
        \lesssim
        \epsilon.
    \end{align*}
    On the other hand, the assumption $\int_{\mathbb{R}^2} n_1(x) \, \d x = 0$ implies
    \begin{align*}
        \widehat{n_1}(0,0) = 0.
    \end{align*}
Therefore, one concludes 
\begin{align}
    |\widehat{n_1}(\eta)|
    \lesssim
    \epsilon |\eta|,
\end{align}
    which completes the proof.
\end{proof}

\begin{proof}[Proof of Proposition \ref{prop:M0}]
    We decompose
    \begin{align*}
        \int_{t_1}^{t_2} \mathcal{M}_0(s, \xi) \, \d s
        =
        \mathfrak{M}_{0+} + \mathfrak{M}_{0-},
    \end{align*}
in which
\begin{align*}
     \mathfrak{M}_{0\pm} 
    = &\pm {1\over 4} \int_{t_1}^{t_2} 
         \int e^{is\Phi_{2\mp}} {\widehat{g_{\pm, L}}(\eta) \over |\eta|} { \widehat{f_+}(s, \xi-\eta) \over \langle \xi-\eta\rangle} \, \d\eta \d s
    \\
    = &\pm {1\over 4} \int_{t_1}^{t_2} \mathcal{F} T_{m_{0\mp}}(g_{\pm, L}, f_+) \, \d s,
\end{align*}
with $m_{0\pm} = e^{is\Phi_{2\pm}} |\eta|^{-1} \langle \xi-\eta\rangle^{-1}$.

We find that
\begin{align*}
    \|\phi_k(\xi) \mathcal{F} T_{m_{0\mp}}(g_{\pm, L}, f_+)\|
\lesssim
&\sum_{k_1\geq k_2 + 4} \|\phi_k(\xi)\mathcal{F}{T_{m_{0\mp}} (P_{k_1} g_{\pm,L}, P_{k_2}f_+)} \|
\\
&+
\sum_{k_2\geq k_1 + 4} \|\phi_k(\xi)\mathcal{F}{T_{m_{0\mp}} (P_{k_1} g_{\pm, L}, P_{k_2}f_+)} \|
\\
&+
\sum_{|k_1 - k_2| \leq 3} \|\phi_k(\xi)\mathcal{F}{T_{m_{0\mp}} (P_{k_1} g_{\pm, L}, P_{k_2}f_+)} \|
\\
= 
&\mathfrak{P}_{11} + \mathfrak{P}_{12} + \mathfrak{P}_{13}.
    \end{align*}

We use Lemma \ref{lem:n1-low} to find that
\begin{align*}
    \mathfrak{P}_{11}
    \lesssim
    &\sum_{\substack{|k_1 -k|\leq 6, \, k_1\geq k_2 + 4}} 2^{-k_1} 2^{-k_2^+} \big\| \phi_{\leq 0}(\eta \langle t\rangle^{p}) \widehat{P_{k_1}g_\pm} \big\|  \big\| \widehat{P_{k_2}f_+} \big\|_{L^1}
    \\
\lesssim
&(C_1 \epsilon)^2 \sum_{\substack{|k_1 -k|\leq 6 \\k_1\geq k_2 + 4, \, k_1\leq -p m+4}} 2^{k_1} 2^{k_2} 2^{-k_2^+} 2^{C_E k_2^-} 2^{D_E k_2^+}
\\
\lesssim
&(C_1 \epsilon)^2 2^{(-2p + \delta D_E)  m} 2^{C_E k^-} 2^{D_E k^+}. 
\end{align*}

Employing Lemma \ref{lem:n1-low}, we get
\begin{align*}
    \mathfrak{P}_{12}
    \lesssim
&\sum_{\substack{|k_2 -k|\leq 6, \,k_2\geq k_1 + 4}} 2^{-k_1} 2^{-k_2^+} \big\| \phi_{\leq 0}(\eta \langle t\rangle^{p}) \widehat{P_{k_1}g_\pm} \big\|_{L^1}  \big\| \widehat{P_{k_2}f_+} \big\|
\\
\lesssim
&(C_1 \epsilon)^2 \sum_{\substack{|k_2 -k|\leq 4 \\ k_2\geq k_1 + 4, \, k_1\leq -p_1m+4}} 2^{2k_1} 2^{-k_2^+} 2^{C_E k_2^-} 2^{D_E k_2^+}
\\
\lesssim
&(C_1 \epsilon)^2 2^{-2p m} 2^{C_E k^-} 2^{D_E k^+}. 
\end{align*}

We obtain
\begin{align*}
    \mathfrak{P}_{13}
    \lesssim
&2^k \sum_{\min\{k_1, k_2\}\geq k-6, \,|k_1 - k_2| \leq 3} \|\phi_{[k-1, k+1]}(\xi)\mathcal{F}{T_{m_{0\mp}} (P_{k_1} g_{\pm, L}, P_{k_2}f_+)} \|_{L^\infty}
\\
\lesssim
&2^k \sum_{\min\{k_1, k_2\}\geq k-6, \,|k_1 - k_2| \leq 3} 2^{-k_1} 2^{-k_2^+} \big\| \phi_{\leq 0}(\eta \langle t\rangle^{p}) \widehat{P_{k_1}g_\pm} \big\|_{L^\infty}  \big\| \widehat{P_{k_2}f_+} \big\|_{L^1}
\\
\lesssim
&(C_1 \epsilon)^2 \sum_{\substack{\min\{k_1, k_2\}\geq k-6\\ |k_1- k_2| \leq  3, \, k_1\leq -pm+4}} 2^{k} 2^{k_2} 2^{-k_2^+} 2^{C_E k_2^-} 2^{D_E k_2^+}
\\
\lesssim
&(C_1 \epsilon)^2 2^{-2p m} 2^{C_E k^-} 2^{D_E k^+}. 
\end{align*}

The proof is done.
\end{proof}


\section{Appendix}

\subsection{Proofs of Props. \ref{prop:Sobolev-in}, \ref{prop:Hardy-ex}, and Lemma \ref{lem:Q}}

\begin{proof}[Proof of Proposition \ref{prop:Sobolev-in}]
    Our proof is inspired by the work \cite{Psarelli}, but differs a bit from \cite{Hormander, Psarelli}. We note that it suffices to consider only large $t\gg 1$.

For a function $\phi(t, x)$ of spacetime, we restrict it to the hyperboloid $\mathcal{H}_\tau$, and $t= \sqrt{\tau^2 + |x|^2}$. 
Define 
$$
\cancel{\partial}_r 
= {L_r \over t}
= {x^a\over r} \cancel{\partial}_a,
\qquad
\cancel{\partial}_a
= {L_a \over t},
$$  
and one checks that
\begin{align*}
    \partial_a \big(\psi(\sqrt{1\tau^2+|x|^2}, x) \big)
    =
    (\cancel{\partial}_a \phi \big)(\sqrt{\tau^2+|x|^2}, x).
\end{align*}

\textbf{Case I: Large $r\geq {t\over 3}$.}

In this case, it holds that $r^{-1} \lesssim t^{-1}$.
By the Newton-Leibniz formula, one has
\begin{align*}
    &|\phi(\sqrt{\tau^2+|x|^2}, x)|^2 - |\phi(\sqrt{\tau^2+|x|^2}, x)|^2\big|_{|x|={\tau^2 -1\over 2}}
    \\
    =
    &-2\int_r^{{\tau^2 -1\over 2}} \phi \cancel{\partial}_r \phi \, \d\rho
    \\
    \lesssim
    &r^{-2} \Big(\int_0^{{\tau^2 -1\over 2}} \phi^2 \rho \, \d\rho \Big)^{1\over 2}
    \times \Big(\int_0^{{\tau^2 -1\over 2}} (\cancel{\partial}_r\phi)^2 \rho \, \d\rho \Big)^{1\over 2}.
\end{align*}
Recalling the Sobolev embedding on the circle $\mathbb{S}^1$
\begin{align*}
    |\phi|^2
    \lesssim
    \|\phi\|^2_{L^2(\mathbb{S}^1)} + \|\Omega \phi\|^2_{L^2(\mathbb{S}^1)},
\end{align*}
and this leads to the desired bounds on $\mathcal{H}_\tau$
\begin{align*}
&|\phi(t, x)|^2
\\
\lesssim
&t^{-2} \sum_a\Big(|\phi(t, x)|^2\big|_{|x|={{\tau^2 -1\over 2}}}
+\|\phi\|_{L^2(\mathcal{H}_2)} \|L_a \phi\|_{L^2(\mathcal{H}_2)}
+ \|\Omega \phi\|_{L^2(\mathcal{H}_2)} \|\Omega L_a \phi\|_{L^2(\mathcal{H}_2)}\Big).
\end{align*}

\textbf{Case II: Small $r\leq {t\over 2}$.}

In this case, we note the relation $\tau^{-1} \lesssim t^{-1}$.
We renormalize $\phi(\sqrt{\tau^2 + |x|^2}, x)$ by introducing the new variable $y = {x\over \tau}$ and consequently 
\begin{align*}
    \phi(\sqrt{\tau^2 + |x|^2}, x)
    =
    \phi(\tau \sqrt{1 + |y|^2}, \tau y).
\end{align*}
We then define a function on $\mathcal{H}_1$ by
\begin{align*}
    \psi(\sqrt{1+|y|^2}, y)
    =
    \phi(\tau \sqrt{1 + |y|^2}, \tau y),
\end{align*}
and we find that
\begin{align*}
    \partial_{y^a} \big( \psi(\sqrt{1+|y|^2}, y)  \big)
    =
    {\tau \over t} (L_a \phi )(\tau\sqrt{1+|y|^2}, \tau y).
\end{align*}
We apply the Sobolev embedding to get on $\mathcal{H}_\tau$ that
\begin{align*}
    &\phi(t, x)
    \\
    =
    &|\psi(\sqrt{1+|y|^2}, y)|^2
    \\
    \lesssim
    &\int_{\{z : \,|z |\leq |y| + 1\}} \Big( \psi^2(\sqrt{1 + |z|^2}, z)
    + \sum_a (\partial_{z^a} (\psi (\sqrt{1 + |z|^2}, z)))^2 \Big)  \, \d z
    \\
    \lesssim
    &\int_{\mathcal{H}_\tau} \Big( \phi^2(\tau \sqrt{1 + |\widetilde{y}|^2}, \tau \widetilde{y})
    + \sum_a (L_a \phi)^2(\tau \sqrt{1 + |\widetilde{y}|^2}, \tau \widetilde{y}) \Big)\, \d \widetilde{y}
    \\
    \lesssim
    & \tau^{-2} \Big( \|\phi\|_{L^2(\mathcal{H}_\tau)}^2 + \sum_a\|L_a \phi\|_{L^2(\mathcal{H}_\tau)}^2 \Big).
\end{align*}

We therefore conclude by combining the estimates established in \textbf{Case I} and \textbf{Case II}, and the proof is done.
\end{proof}

\begin{proof}[Proof of Proposition \ref{prop:Hardy-ex}]  

Let $\eta(r): \mathbb{R}^+ \to \R^+$ satisfy 
\begin{equation*}
\eta(r) =  
\begin{cases} 
  1, &\qquad  0\leq r \leq 1,  \\
  -\frac{1}{N_0}\big(r-(N_0+1)\big),  &\qquad  1 <r\leq N_0+1, \\
  0,  &\qquad  r>N_0 +1.
  \end{cases}
\end{equation*}
where $N_0$ is a large integer such that $N_0^{-1} < \gamma$.
By the choice of $\eta(r)$, we have 
$$r|\eta'(r)|\leq 1 + \frac{1}{N_0} < 1+\gamma. $$
Since $r-t \geq -\eta_0 \geq -\frac 32 $, we can get
\begin{align}
\partial_r\big((-\tau+ \frac 32)^{2\gamma +1} r\big) & = (2\gamma+1) (-\tau_-+\frac 32)^{2\gamma} r + (-\tau_-+\frac 32)^{2\gamma +1} \nonumber \\
& \geq (2\gamma+1) (-\tau_-+\frac 32)^{2\gamma} r.
\end{align}
Set $0<\epsilon <{\gamma\over 8}$.  Multiplying by $\eta(\epsilon r) |\phi|^2$ and integrating with respect to $(r, \theta)$ over the domain $\{(r,\theta) | r\geq t-\eta_0,\ 0\leq \theta \leq 2\pi \}$, we can get 
\begin{align}
& \int_0^{2\pi} \int_{\Sigma^{ex}_t} \partial_r\big((-\tau_-+\frac 32)^{2\gamma +1} r\big) \eta(\epsilon r) |\phi|^2 \d r \d \theta \nonumber \\
 \geq &(2\gamma +1)\int_0^{2\pi}\int_{\Sigma^{ex}_t}(-\tau_-+\frac 32)^{2\gamma}  \eta(\epsilon r)|\phi|^2 r\d r \d \theta. \label{ineq:hardy1}
\end{align}
Applying integration by parts, one can find 
\begin{align*}
& \int_{\Sigma^{ex}_t}\partial_r\big((-\tau_-+\frac 32)^{2\gamma +1} r\big) \eta(\epsilon r) |\phi|^2 
\d r  
\\
 = &- (\frac 32 -\eta_0)^{2\gamma+1}(t-\eta_0)_{+} \eta\big(\epsilon(t-\eta_0) \big)|\phi|^2(t-\eta_0,\theta) - \int_{\Sigma^{ex}_t}(-\tau_-+\frac 32)^{2\gamma +1}  \partial_r\big(\eta(\epsilon r) |\phi|^2 \big)
r\d r 
\\
 \leq &-\int_{\Sigma^{ex}_t}(-\tau_-+\frac 32)^{2\gamma +1}  \epsilon \eta'(\epsilon r) |\phi|^2 r\d r  
+ 2\int_{\Sigma^{ex}_t}(-\tau_-+\frac 32)^{2\gamma +1}  \eta(\epsilon r) |\phi| |\partial_r\phi| r\d r 
\\
 \leq & \int_{\Sigma^{ex}_t}(-\tau_-+\frac 32)^{2\gamma}  |\epsilon r \eta'(\epsilon r)| |\phi|^2 r\d r 
+\frac{3}{2} \epsilon \int_{r\geq t -\eta_0}(-\tau_-+\frac 32)^{2\gamma}  | \eta'(\epsilon r)| |\phi|^2 r\d r   
\\ 
&  + 2\int_{\Sigma^{ex}_t}(-\tau_-+\frac 32)^{2\gamma +1}  \eta(\epsilon r) |\phi| |\partial_r\phi| r\d r 
\\
 \leq &(1+\frac 54 \gamma) \int_{\Sigma^{ex}_t}(-\tau_-+\frac 32)^{2\gamma} |\phi|^2 r\d r 
+2\int_{\Sigma^{ex}_t}(-\tau_-+\frac 32)^{2\gamma +1}  |\phi| |\partial_r\phi| r\d r.
\end{align*}
Hence, integrating over the interval $[0,2\pi]$ with variable $\theta$ and using H\"older inequality and Young inequality, we can get
\begin{align*}
& \int_0^{2\pi} \int_{\Sigma^{ex}_t} \partial_r\big((-\tau_-+\frac 32)^{2\gamma +1} r\big) \eta(\epsilon r)|\phi|^2 \d r \d \theta  \\
 \leq &(1+\frac 54 \gamma)\int_{\Sigma^{ex}_t}(-\tau_-+\frac 32)^{2\gamma} |\phi|^2 \d x 
+ 2\int_{\Sigma^{ex}_t}(-\tau_-+\frac 32)^{2\gamma +1}   |\phi| |\partial_r\phi| \d x 
\\
 \leq &(1+\frac 54 \gamma)\| (-\tau_-+{3\over 2})^{\gamma} \phi \|^2_{L^2(\Sigma^{ex}_t) } 
+ \frac{\gamma}{4}\| (-\tau_-+{3\over 2})^{\gamma} \phi \|^2_{L^2(\Sigma^{ex}_t)} \\
&  +  \frac{4}{\gamma}\| (-\tau_-+{3\over 2})^{\gamma+1} \partial_r\phi \|^2_{L^2(\Sigma^{ex}_t)}.
\end{align*}  
This combining with \eqref{ineq:hardy1} infers that
\begin{align*}
& (2\gamma+1)\int_0^{2\pi}\int_{\Sigma^{ex}_t}(-\tau_-+\frac 32)^{2\gamma}  \eta(\epsilon r)|\phi|^2r\d r \d \theta 
\\
\leq &(1+\frac{3}{2}\gamma) \big\| (-\tau_-+{3\over 2})^{\gamma} \phi \big\|^2_{L^2(\Sigma^{ex}_t)} 
+ \frac{4}{\gamma}\big\| (-\tau_-+{3\over 2})^{\gamma+1} \partial_r\phi \big\|^2_{L^2(\Sigma^{ex}_t)}.
\end{align*}
Now let $\epsilon \to 0$ and using the relation $\langle \tau_- \rangle \simeq (-\tau_- + \frac 32)$ for $(x,t) \in \Sigma_{t}^{ex}$, we can get  the desired result.

\end{proof}

\begin{proof}[Proof of Lemma \ref{lem:Q}]

We claim that 
\begin{align} \label{eq:defbequiv}
B_l \simeq 2^{\max\{-l,0\}} \|P_l f\| + \sum_{a=1}^2 \|x_a P_l f\|.
\end{align}
Noticing the almost orthogonality of $\tilde{\phi}_{j}^{(l)}$ and the equality
\begin{align*}
\sum_{j\geq \max\{-l,\, 0\}} \widetilde{\phi}_{j}^{(l)} =1,
\end{align*}
one can get 
\begin{align*}
\frac 14 \leq \sum_{j\geq -l} \big( \widetilde{\phi}_{j}^{(l)} \big)^2 \leq 1.
\end{align*}
This   yields the desired result  \eqref{eq:defbequiv}.  

Let $l \geq 1$,  we can see that
\begin{equation}\label{ineq:weightplf}
\begin{aligned}
 \sum_{a=1}^2 \|x_a P_l f\| 
 &\lesssim   \sum_{a=1}^2 \big\|\partial_{\xi_a}(\phi_l(\xi) \widehat{f}(\xi))\big\| \\
 &\lesssim  \sum_{a=1}^2 \big\|(\partial_{\xi_a} \phi_l(\xi) ) \phi_{[l-4,l+4]}(\xi) \widehat{f}(\xi)\big\| 
 +  \big\|\phi_l(\xi) \partial_{\xi_a} \widehat{f}(\xi)\big\|  
 \\
 & \lesssim \sum_{|l'-l|\leq 4} A_{l'}.  
\end{aligned}
\end{equation}
For the case $l\leq 0$,  by \eqref{eq:defbequiv} we know
\begin{align*}
B_l \lesssim 2^{-l} \|P_l f\| + \sum_{a=1}^2 \|x_a P_l f\|.
\end{align*}
To obtain the desired result, it suffices to show that 
\begin{align}
2^{-l}  \|P_l f\| \lesssim  \langle l\rangle \sup_{l'} \|P_{l'} f\|_{H^1} + \sum_{|l' - l|\leq 6} A_{l'}. \label{ineq:negaplf}
\end{align}
Indeed, let $f_j = x_j f$, $j \in \{1,2\}$, hence, 
\begin{align*}
 f = \frac{1}{1+|x|^2} f + \sum_{j=1}^2 \frac{x_j}{1+|x|^2} f_j,
\end{align*}
and for any $k\in \mathbb{Z}$, 
\begin{align*}
\|P_k f\| + \sum_{j=1}^2 \|P_k f_j\| \lesssim A_k.
\end{align*}
Noticing that  for $j =1, 2$, we have 
\begin{align*}
\Big|\mathcal{F}\Big(\frac{1}{1+|x|^2}  \Big)(\xi) \Big| \lesssim |\xi|^{-1}, \ \ \  \Big|\mathcal{F}\Big(\frac{x_j}{1+|x|^2}  \Big)(\xi) \Big| \lesssim |\xi|^{-1}.
\end{align*}
Thus the proof of \eqref{ineq:negaplf} is reduced to showing with $g \in \{ |\widehat{f}|, \, |\widehat{f_j}|\}$
\begin{align}
2^{-l} \sum_{k\in \mathbb{Z}}\|\phi_l(\xi) \big((\phi_k\cdot g) \ast K \big)(\xi)\| \lesssim \langle l \rangle \sup_{k\in \mathbb{Z}}\|\phi_k \cdot g\|_{H^1} + \sum_{|k-l|\leq 6} \|\phi_k\cdot g\|,  \label{ineq:uppbdplf}
\end{align}
provided $\|\phi_k\cdot g\|_{H^1} \lesssim A_k,\ \forall \ k\in \mathbb{Z}$ and $K(\xi)= |\xi|^{-1}$.  In fact,
\begin{align*}
& 2^{-l} \sum_{k\in \mathbb{Z}}\|\phi_l(\xi) \big((\phi_k\cdot g) \ast K \big)(\xi)\| \\
& \leq 2^{-l} \sum_{|k-l|\leq 6}\|\phi_l(\xi) \big((\phi_k\cdot g) \ast K \big)(\xi)\| 
 +  2^{-l} \sum_{k\geq l+6}\|\phi_l(\xi) \big((\phi_k\cdot g) \ast K \big)(\xi)\|\\
& +  2^{-l} \sum_{k\leq l-6}\|\phi_l(\xi) \big((\phi_k\cdot g) \ast K \big)(\xi)\|\\
& \leq \mathfrak{I}_1 + \mathfrak{I}_2 + \mathfrak{I}_3.
\end{align*}
Using the support property of $\phi_k$, we can bound $\mathfrak{I}_1$ as follows,
\begin{align*}
\mathfrak{I}_1 & = 2^{-l} \sum_{|k-l|\leq 6} \big \|\phi_l(\xi) \big((\phi_k \cdot g) \ast (K\cdot \phi_{\leq l+10} \big)(\xi) \big\| \\
& \lesssim 2^{-l} \sum_{|k-l|\leq 6} \|\phi_k \cdot g\| \|K\cdot \phi_{\leq l+10}\|_{L^1} \\
& \lesssim \sum_{|k-l|\leq 6} \|\phi_k\cdot g\|.
\end{align*}
As for $\mathfrak{I}_2$, we have 
\begin{align*}
\mathfrak{I}_2 & =  2^{-l} \sum_{k\geq l+6}\|\phi_l(\xi) \big((\phi_k\cdot g) \ast (K\cdot \phi_{[k-4,k+4]}) \big)(\xi)\| \\
&  \lesssim \sum_{l+6\leq  k\leq 0} \|\phi_k\cdot g\| \|K\cdot \phi_{[k-4,k+4]}\| + \sum_{ k\geq 1} \|\phi_k\cdot g\| \|K\cdot \phi_{[k-4,k+4]}\| \\
& \lesssim \langle l \rangle \sup_{k\in \mathbb{Z}} \|\phi_k \cdot g\| + \sum_{k\geq 1}2^{-k} \|\nabla (\phi_k\cdot g)\| \\
& \lesssim \langle l \rangle \sup_{k\in \mathbb{Z}} \|\phi_k \cdot g\|_{H^1}.
\end{align*}
Finally we estimate $\mathfrak{I}_3$.  
\begin{align*}
\mathfrak{I}_3 & =2^{-l} \sum_{k\leq l-6}\|\phi_l(\xi) \big((\phi_k g) \ast (K\cdot \phi_{[l-4,l+4]}) \big)(\xi)\| \\
& \lesssim 2^{-l} \sum_{k\leq l-6} \|\phi_k g\|_{L^1} \|K\cdot \phi_{[l-4,l+4]}\|_{L^2} \\
& \lesssim 2^{-l}  \sum_{k\leq l-6} 2^k \|\phi_k g\| \\
& \lesssim \sup_{k\in \mathbb{Z}}  \|\phi_k g\|.
\end{align*}
This completes the proof of \eqref{ineq:uppbdplf}.  The desired result then follows from \eqref{eq:defbequiv}, \eqref{ineq:weightplf} and \eqref{ineq:uppbdplf}.
\end{proof}

\subsection{Proof of Prop. \ref{prop:cascade}}

\begin{proof}[Proof of Proposition \ref{prop:cascade}]\label{sec:App-2}

\textbf{Step 1: Estimates for $\ell$.}

This step is based on \cite[Chapter 4.3]{LiZh}.
    Recall that
    \begin{align*}
        \widehat{\ell}(t, \xi)
        =
        \cos((t-t_0)|\xi|) \widehat{n_0}(\xi)
        +
        {\sin((t-t_0)|\xi|) \over |\xi|} \widehat{n_1}(\xi)
        =: \mathfrak{A}_1 + \mathfrak{A}_2.
    \end{align*}
Easily, we know $\|\mathfrak{A}_1\| \lesssim \epsilon$. For $\mathfrak{A}_2$, we further decompose it as 
\begin{align*}
    \|\mathfrak{A}_2\|^2 = \mathfrak{A}_{21} + \mathfrak{A}_{22},
\end{align*}
where
\begin{align*}
    \mathfrak{A}_{21} 
    =& \int_{\{|\xi|\leq 1\}}{\sin^2((t-t_0)|\xi|) \over |\xi|^2} |\widehat{n_1}|^2(0) \, \d\xi,
    \\
     \mathfrak{A}_{22} 
    = &\int_{\{|\xi|\leq 1\}}{\sin^2((t-t_0)|\xi|) \over |\xi|^2} 
    \big( |\widehat{n_1}|^2(\xi) - |\widehat{n_1}|^2(0)\big) \, \d\xi
    +
    \int_{|\xi|\geq 1} {\sin^2((t-t_0)|\xi|) \over |\xi|^2} |\widehat{n_1}|^2(\xi) \, \d\xi.
\end{align*}
We easily get
$\mathfrak{A}_{22} \lesssim \epsilon^2$.
On the other hand, by the fact that
\begin{align*}
    &\int_{0}^{1} {\sin^2(s|\xi|) \over |\xi|} \, \d|\xi|
    =
    \int_{0}^{s} {\sin^2(\rho) \over \rho} \, \d\rho
    \\
    =
    &{1\over 2}\log s
    +
    \int_{0}^1 {\sin^2(\rho) \over \rho} \, \d\rho
    -
    {1\over 2}\int_1^s {\cos(2\rho) \over \rho} \, \d\rho,
\end{align*}
we derive that
\begin{align*}
    \big|\mathfrak{A}_{21} - {1\over 2}|\widehat{n_1}|^2(0) \log t\big|
    \lesssim
    \epsilon^2.
\end{align*}
This yields
\begin{align*}
    \big|\|\ell\|^2 - {1\over 2}|\widehat{n_1}|^2(0) \log t\big|
    \lesssim
    \epsilon^2.
    \end{align*}

\textbf{Step 2: Estimates for $n$.}

By Proposition \ref{prop:bound-t}, we have
\begin{align*}
    &\big|\|n\|^2 - {1\over 2}|\widehat{n_1}|^2(0) \log t\big|
    \\
    \leq
    &\big|\|\ell\|^2 - {1\over 2}|\widehat{n_1}|^2(0) \log t\big|
    + \| \partial\partial \mathfrak{m}\|^2
    \\
    \lesssim
    &\epsilon^2,
\end{align*}
which leads us to the desired result.

Therefore, the proof is completed.
\end{proof}


\section*{Acknowledgements} 

The authors would like to thank Jia Shen (Nankai), and the author S.D. additionally  thanks Bochen Liu (SUSTech), Qingtang Su (CAS), and Yi Zhou (Fudan) for helpful discussions. The authors also thank Jingya Zhao (SUSTech) for carefully reading the manuscript.






\end{document}